\documentclass[11pt,reqno]{amsart}
\usepackage{amsmath}
\usepackage{amsthm}
\usepackage{graphicx}
\usepackage{hyperref}
\usepackage{mathrsfs}
\usepackage{color}
\usepackage{bm}
\usepackage{amsfonts}
\usepackage{amssymb}
\usepackage{enumerate}
\usepackage{soul}
\usepackage[margin=1.1in]{geometry}
\parindent=.25in
\usepackage[normalem]{ulem}

\makeatletter
\def\@settitle{\begin{center}%
  \baselineskip14\p@\relax
  \bfseries
  \uppercasenonmath\@title
  \@title
  \ifx\@subtitle\@empty\else
     \\[1ex]\uppercasenonmath\@subtitle
     \footnotesize\mdseries\@subtitle
  \fi
  \end{center}%
}
\def\subtitle#1{\gdef\@subtitle{#1}}
\def\@subtitle{}
\makeatother

\numberwithin{equation}{section}

\newtheorem{theorem}{Theorem}[section]
\newtheorem{lemma}[theorem]{Lemma}
\newtheorem{proposition}[theorem]{Proposition}
\newtheorem{corollary}[theorem]{Corollary}
\newtheorem{condition}[theorem]{Condition}

\theoremstyle{definition}

\newtheorem{remark}[theorem]{Remark}

\newenvironment{assumption}[1]
  {\innercustomthm}
  {\endinnercustomthm}

\def\E{{\mathbb E}}
\def\EE{{\mathbb E}}
\def\R{{\mathbb R}}
\def\RR{{\mathbb R}}
\def\N{{\mathbb N}}
\def\FF{{\mathbb F}}
\def\PP{{\mathbb P}}
\def\P{{\mathcal P}}
\def\cP{{\mathcal P}}

\def\cM{{\mathcal M}}
\def\H{{\mathcal H}}

\def\X{{\mathcal X}}

\def\Z{{\mathcal Z}}

\def\cQ{{\mathcal Q}}
\def\L{{\mathcal L}}
\def\cL{{\mathcal L}}

\def\W{{\mathcal W}}

\def\F{{\mathcal F}}

\def\bU{V}
\def\C{{\mathcal C}}
\def\cC{{\mathcal C}}
\def\cR{{\mathcal R}}
\definecolor{darkspringgreen}{rgb}{0.09, 0.45, 0.27}

\def\pexp{{p^*}}

\newcommand{\altmu}{\theta} 
\newcommand{\newmu}{\widetilde{\mu}}
\newcommand{\newY}{\widetilde{Y}}
\newcommand{\newX}{\widetilde{X}}
\newcommand{\newb}{\widetilde{b}}
\newcommand{\newL}{\widetilde{L}}
\newcommand{\tilL}{\widetilde{L}}
\newcommand{\newC}{\widetilde{c}}
\newcommand{\bb}{\bar{b}}
\newcommand{\cc}{\bar{g}}

\newcommand{\nnewX}{\widetilde{X}}
\newcommand{\nnewb}{\widetilde{b}}
\newcommand{\newH}{{\mathcal H}}

\def\Proj{\Pi}
\def\Im{{\rm Im}}
\def\measpath{\gamma}
\def\tJ{\widetilde{J}}
\def\ncW{\mathbb{W}}
\def\newexp{{q}}

\title[Large deviations and concentration for mean field games]{From the master equation to mean field game limit theory: Large deviations and concentration of measure}
\author{Fran\c{c}ois Delarue, Daniel Lacker, and Kavita Ramanan}

\begin{document}

\begin{abstract}  
We study a sequence of symmetric $n$-player stochastic differential games driven by both idiosyncratic and common sources of noise, in which players interact with each other through their empirical distribution. The unique Nash equilibrium empirical measure of the $n$-player game is known to converge, as $n$ goes to infinity, to the unique equilibrium of an associated mean field game. Under suitable regularity conditions, in the absence of common noise,
we complement this law of large numbers result with non-asymptotic concentration bounds for the Wasserstein distance between the $n$-player Nash equilibrium empirical measure and the mean field equilibrium. We also show that the sequence of Nash equilibrium empirical measures satisfies a weak large deviation principle, which can be strengthened to a full large deviation principle only in the absence of common noise. For both sets of results, we first use the master equation, an infinite-dimensional partial differential equation that characterizes the value function of the mean field game, to construct an associated McKean-Vlasov interacting $n$-particle system that is exponentially close to the Nash equilibrium dynamics of the $n$-player game for large $n$, by refining estimates obtained in our companion paper. Then we establish a weak large deviation principle for McKean-Vlasov systems in the presence of common noise. In the absence of common noise, we upgrade this to a full large deviation principle and obtain new concentration estimates for McKean-Vlasov systems. Finally, in two specific examples that do not satisfy the assumptions of our main theorems, we show how to adapt our methodology to establish large deviations and concentration results. 
\end{abstract}

\maketitle

\noindent 
{\bf Key Words.}  Mean field games, master equation, McKean-Vlasov limit, interacting particle systems, common noise, large deviation principle, concentration of measure, transport inequalities, linear-quadratic, systemic risk.

\tableofcontents

\section{Introduction}
\label{sec-intro}

    {\em Description of the Model. }  In this article, we study  Nash equilibria for a  class of symmetric 
$n$-player stochastic differential games, for large $n$.
To describe
our main results, we first provide an informal description of the
$n$-player game  (see Section \ref{se:Nashsystems} for a complete description).
Let  the empirical measure of a vector $\bm{x}=(x_1,\ldots,x_n)$ in $(\R^d)^n$ be denoted by 
\[
m^n_{\bm{x}} = \frac{1}{n}\sum_{k=1}^n\delta_{x_k}, 
\]
where $\delta_x$ is the Dirac delta mass at $x \in \R^d$,  which lies in 
 ${\mathcal P}(\R^d)$, the space of probability measures on $\R^d$. 
Given independent $\R^d$-valued Wiener processes $W$ and $B^1,\ldots,B^n$, a time horizon $T < \infty$,
an action space $A$, and a drift functional $b: \R^d \times {\mathcal P}(\R^d) \times A \rightarrow \R^d$, 
the state of  the $n$-player game at time $t$ is given by $\bm{X}_t = (X^1_t, \ldots, X^n_t)$, where 
the state $X^i$ of the $i$th agent follows the dynamics
\begin{align}
dX^i_t &= b(X^i_t,m^n_{\bm{X}_t},\alpha^i(t,\bm{X}_t))dt + \sigma dB^i_t + \sigma_0 dW_t. \label{intro:dynamics}
\end{align}
Here, $\alpha^i: [0,T] \times (\R^d)^n \rightarrow A$ is a Markovian control that 
is chosen to  minimize the $i$th objective function 
\begin{align}
\label{eq:intro:cost:functional}
J^n_i(\alpha^1,\ldots,\alpha^n) = \E\left[\int_0^Tf(X^i_t,m^n_{\bm{X}_t},\alpha^i(t,\bm{X}_t))dt + g(X^i_T,m^n_{\bm{X}_T})\right],
\end{align}
for suitable cost functionals $f$ and $g$. 
An $n$-tuple $(\alpha^1, \ldots, \alpha^n)$ is said to be a Nash equilibrium of this game (in closed-loop strategies) if  for every
$i = 1, \ldots, n$, and Markov control $\beta$,
\[  J^n_i (\alpha^1, \ldots, \alpha^{i-1}, \alpha^i, \alpha^{i+1}, \ldots, \alpha^n)
\leq J^n_i (\alpha^1, \ldots, \alpha^{i-1}, \beta, \alpha^{i+1}, \ldots, \alpha^n).  
\]

Under
suitable conditions, it was shown in \cite{cardaliaguet-delarue-lasry-lions} 
this game has a unique 
Nash equilibria that can be  characterized in terms of the
classical solution of a certain partial differential equation (PDE) system called the Nash system,
introduced in Section \ref{se:Nashsystems}.  If $\bm{X} = \{\bm{X}_t = (X^1_t, \ldots, X^n_t), t \in [0,T]\}$, is the associated state process, then $(m_{\bm{X}_t}^n)_{t \in [0,T]}$ is referred to as the associated Nash equilibrium empirical measure. 
Under additional regularity conditions, it was also shown in 
\cite{cardaliaguet-delarue-lasry-lions} that  $(m_{\bm{X}_t}^n)_{t \in [0,T]}$
converges, as $n$ goes to infinity, to the unique equilibrium {$(\mu_{t})_{t \in [0,T]}$} of a certain associated
mean field game (MFG), described in Section \ref{se:mfgdesc}.  The equilibrium $\mu = (\mu_t, t \in [0,T])$ 
is itself a stochastic flow of probability measures, and can be described in terms of 
the value function of the MFG, which is the unique solution to 
 an infinite-dimensional PDE referred to as the so-called master equation
(see Section \ref{se:mfgdesc} for full details). {As we clarify below, the convergence of 
$(m_{\bm{X}_t}^n)_{t \in [0,T]}$ to $(\mu_{t})_{t \in [0,T]}$
must be regarded as a Law of Large Numbers (LLN) for games of type 
\eqref{intro:dynamics}--\eqref{eq:intro:cost:functional}.}
\vskip 4pt

 {\em  Main Results and Strategy of Proof.  }  
 {This is the second article in a two-part series, with the first part \cite{dellacram18a} complementing the aforementioned LLN with a functional central limit theorem; see \cite{dellacram18a} for a more thorough introduction and bibliography.} 
 In this work, we refine the law of large numbers (LLN) convergence result
 of \cite{cardaliaguet-delarue-lasry-lions} mentioned above by establishing 
 non-asymptotic concentration bounds and large deviation results.

 We first 
 construct a related  interacting diffusion system $\overline{X} = (\overline{X}^1, \ldots, \overline{X}^n)$ of McKean-Vlasov 
 type: 
 \begin{equation}
   \label{MKVsystem1}
   d\overline{X}_t^i = \widetilde{b}(t, \overline{X}_t^i, m_{\bm{\overline{X}}_t}^n) dt + \sigma dB_t^i + \sigma_0 dW_t,
 \end{equation}
 for a suitable drift $\widetilde{b}$ defined in terms of the drift $b$ and the solution to the master equation. 
 We then  show that this McKean-Vlasov
 system is exponentially close to  the Nash system. 
 More precisely, under suitable assumptions (see Assumptions A, B and B' below) we prove (see Theorem  \ref{th:concentration})
 that there exist constants $C < \infty$ and $\delta > 0$ such that for every $a > 0$ and $n \ge C/a$ we have
\begin{align}
\PP\left(\W_{2,\C^d}(m^n_{\bm{X}},m^n_{\bm{\overline{X}}}) > a \right) \le 2ne^{-\delta a^2 n^2}, \label{intro:exponentialequivalence}
\end{align}
where  $\W_{p,\C^d}$ denotes the $p$-Wasserstein distance on the space of probability
measures on the path space $\C^d:=C([0,T];\R^d)$ with finite $p^{\textrm{\rm th}}$ moment.  This is a 
refinement of cruder  estimates obtained in  \cite{cardaliaguet-delarue-lasry-lions} and \cite[relation (4.27)]{dellacram18a}, 
which are used to  characterize LLN and (central limit) fluctuations of the 
Nash equilibrium empirical measure from the MFG equilibrium, respectively.
The exponential equivalence estimate \eqref{intro:exponentialequivalence} 
reduces the problem of
establishing concentration estimates or LDPs for the (sequence of) Nash systems to  that of
establishing analogous results for the (sequence of) McKean-Vlasov systems. 

The following is the summary of our main results in the absence of common noise (i.e., when $\sigma_0 = 0$): 

\begin{enumerate}
\item
  We obtain concentration results for McKean-Vlasov systems of the form \eqref{MKVsystem1} (see Section \ref{se:MKVconcentration-sub} and, in particular, Theorem \ref{th:ap:MKVexpectationbound}).
  The only prior results we know of on concentration for McKean-Vlasov systems are those of \cite{bolley-guillin-villani,bolley2010trend}, which deal only with gradient drift coefficients. Hence, our results on concentration of measure for McKean-Vlasov systems are new and  potentially interesting in their own right.    The proofs rely on transport inequalities, crucially using a result of \cite{djellout-guillin-wu}.   
  \item 
 We use the exponential equivalence along with the result in (1) above to
  obtain concentration results for quantities like
\[
\PP\left(\sup_{t \in [0,T]} {\mathcal W}_{p,\R^d} (m_{\bm{X}_t}^n, \mu_t) \ge \epsilon\right),
\]  
for $\epsilon > 0$ and for exponents $p \in \{1,2\}$ (see Corollaries \ref{co:concentration-W1} and \ref{co:concentration-W2});
here, ${\mathcal W}_{p,\R^d}$
is the $p$-Wasserstein distance on the space of probability measures on $\RR^d$ with finite $p^{\textrm{\rm th}}$ moment.
  In fact, these bounds are consequences of more powerful results we obtain on concentration of Lipschitz functions of $\bm{X}$ (see Theorems \ref{th:concentration-T1} and \ref{th:concentration-T2}). Notably, we show that as soon as the i.i.d.\ initial states $(X^i_0)_{i=1}^n$ obey a dimension-free concentration of measure property, then so do the Nash systems.  
\item
  We show (in Theorem \ref{th:LDP}) that the sequence $((m_{\bm{X}_t}^n)_{t \in [0,T]})_{n \in \N}$ obeys a large deviation principle (LDP) in the space  
  of continous paths taking values in the space ${\mathcal P}(\R^d)$, equipped with the ${\mathcal W}_{1,\R^d}$ metric. {We explicitly identify the rate function in a form similar to that of} Dawson-G\"{a}rtner \cite{dawson-gartner}.
Our LDP can be obtained essentially by  bootstrapping known large deviations results for McKean-Vlasov systems, such as  those in \cite{dawson-gartner,benarous-brunaud,budhiraja-dupuis-fischer}. Indeed, the  result then nearly follows from {the exponential equivalence} \eqref{intro:exponentialequivalence} and \cite{dawson-gartner}, except that our drift coefficient $\widetilde{b}$ in \eqref{MKVsystem1} is (necessarily)
  time-dependent.  In any case, we provide a complete proof because, in our setting with constant volatility coefficients, a relatively simple argument is available based on contraction mapping and, furthermore, because a similar argument
  is required for the LDP in the presence of common noise described below, for which there are no previous results.
\end{enumerate}

In the presence of common noise (i.e., $\sigma_0 \neq 0$), {the LDP we obtain for $((m_{\bm{X}_t}^n)_{t \in [0,T]})_{n \in \N}$ is in fact a \emph{weak LDP}}, with a rate
function that fails to be a good rate function; that is, the rate function {does} not have compact level sets (see Theorem \ref{thm:sec:3:weak:ldp}). 

Our results on concentration and large deviations appear to be the first of their kind for diffusion-based MFGs. Moreover, in the McKean-Vlasov setting, our concentration bounds and our weak LDP in the case with common noise appear to be new as well.
The recent papers \cite{cecchin2017probabilistic,cecchin2017convergence,bayraktar2017analysis}
develop similar techniques for MFGs with finite state space and without common noise,
using the (finite-dimensional) master equation to connect the $n$-player equilibrium to a more
classical interacting particle system, and then transferring limit theorems (specifically,
a LLN, CLT, and LDP) from the latter to the former. Notably, the second and third author recently developed in \cite{LacRam17} a quite general LDP for static (i.e., one-shot) mean field games, but the methods used therein do not seem adaptable to dynamic settings. To the best of our knowledge, there are no
prior results on LDPs in the presence of  common noise or concentration bounds for MFGs, whether in finite or infinite state space, or for static or dynamic games.
\vskip 4pt

\textit{Required assumptions and examples.}
As further elaborated  in \cite{dellacram18a}, the above results are all proven under admittedly very strong hypotheses,
namely Assumptions \ref{assumption:A}, and Assumption \ref{assumption:B} or \ref{assumption:B'}, which
are spelled out in Section \ref{se:assumptions}.
{That said, the same strategy of connecting the $n$-player equilibrium and a corresponding McKean-Vlasov system in order to transfer limit theorems seems to be more widely applicable than our rather restrictive assumptions might suggest.
We illustrate this in Section \ref{se:examples} via two models, the linear-quadratic model of \cite{carmona-fouque-sun} and the Merton-type model of \cite{lacker-zariphopoulou}, which admit explicit solutions for both the $n$-player and mean field games.
Taking advantage of the explicit solutions, we are able to  derive similar concentration bounds and LDPs for these systems in spite of unbounded coefficients and other technical impediments.}
\vskip 4pt

{\em Organization of the Paper.}  In Section \ref{se:Nash-Master} we introduce common notation, 
    describe the Nash system, the master equation, the MFG and the main sets of assumptions.  
    In Section \ref{se:statements} we give precise statements of the main results, with the concentration bounds
    in Section \ref{se:concentration-statements},  and the large deviations results in Section
    \ref{se:largedeviations-statements}.  The proofs of the concentration bounds and LDP are
    given in Sections \ref{se:concentration-proofs} and \ref{se:largedeviations-proofs}, respectively. 
    These rely on exponential estimates between the Nash system and the master equation, which are first
    developed in Section \ref{se:mainestimates}.  Section \ref{se:examples} provides two examples that are
    not covered by the main theorem, but for which the general methodology can still be shown to apply.
Finally, we discuss some open problems in Section \ref{se:conclusion}.

\section{Nash systems and Master equations} \label{se:Nash-Master}

\subsection{Notation and model inputs}
\label{subs-notation}

For a topological space $E$, let $\P(E)$ denote the set of Borel probability measures on $E$.
Throughout the paper we make use of the standard notation $\langle \mu,\varphi\rangle := \int_E\varphi\,d\mu$ for integrable functions $\varphi$ on $E$ and measures $\mu$ on $E$.
Given $n \in \N$, we often use boldface $\bm{x}=(x_1,\ldots,x_n)$ for an element of $E^n$, and we write
\[
m^n_{\bm{x}} := \frac{1}{n}\sum_{i=1}^n\delta_{x_i}
\]
for the associated empirical measure, which lies in $\P(E)$. 
 When $(E, \|\cdot\|)$ is a normed space, given  $p \in [1,\infty)$, we write $\P^p(E,\|\cdot\|)$, or simply $\P^p(E)$ if the norm is understood, for the set of $\mu \in \P(E)$ satisfying $\langle \mu, \|\cdot\|^p\rangle < \infty$. 
For a separable Banach space $(E,\|\cdot\|)$, we always endow $\P^p(E,\|\cdot\|)$ with the $p$-Wasserstein metric $\W_{p,(E,\|\cdot\|)}$ defined by  
 \begin{align}
\label{Wassp}
\W_{p,(E,\|\cdot\|)}(\mu,\nu) := \inf_\pi \left(\int_{E\times E}||x-y||^p\pi(dx,dy)\right)^{1/p},
\end{align}
where the infimum is over all probability measures $\pi$ on $E \times E$ with marginals $\mu$ and $\nu$. When the space $E$ and/or the norm $\|\cdot\|$ is understood, we may omit it from the subscript in $\W_{p,(E,\|\cdot\|)}$, e.g., by writing $\W_p$, or $\W_{p,E}$, or $\W_{p,\|\cdot\|}$.

For a positive integer $k$, we always equip $\R^k$ with the Euclidean norm, denoted $|\cdot|$, unless stated otherwise. For fixed $T \in (0, \infty)$, we will make use of the path spaces
\[
\C^k := C([0,T];\R^k), \quad k \in \N,
\]
which are always endowed with the supremum norm $\|x\|_\infty = \sup_{t \in [0,T]}|x_t|$.
For $m \in \P(\C^k)$ and $t \in [0,T]$, we write $m_t$ for the time-$t$ marginal of $m$, i.e., the image of $m$ under  the map $\C^k \ni x \mapsto x_t \in \R^k$.

\subsection{Derivatives on Wasserstein space}
\label{subse:derivatives:m:P2}

The formulation of the master equation requires a suitable derivative for functions of probability measures. 
{This section defines this notion of derivative, but it is worth noting that this paper will make no use of this notion of derivative except to state the master equation and the assumptions we impose on its solution. The main estimates derived in the companion paper \cite[Section 4]{dellacram18a} make use of properties of this derivative, but in this paper we simply apply these estimates.}

For an exponent $\newexp \in [1,\infty)$, we say that a function $\bU : \P^\newexp(\R^d) \rightarrow \R$ is ${{\mathscr C}^1}$ if there exists a continuous map $\frac{\delta \bU}{\delta m} : \P^\newexp(\R^d) \times \R^d \rightarrow \R$ satisfying
\begin{enumerate}[(i)]
\item For every $\W_{\newexp,\R^d}$-compact set $K \subset \P^\newexp(\R^d)$, there exists $c <  \infty$ such that $\sup_{m \in K}|\frac{\delta \bU}{\delta m}(m,v)| \le c(1+|v|^\newexp)$ for all $v \in \R^d$.
\item For every $m,m' \in \P^\newexp(\R^d)$,
\begin{align}
\bU(m')-\bU(m) = \int_0^1\int_{\R^d}\frac{\delta \bU}{\delta m}((1-t)m + tm',v)\,(m'-m)(dv)\,dt. \label{def:measure-derivative}
\end{align}
\end{enumerate}
Note that the condition (i) is designed to make the integral in (ii) well-defined. Only one function $\frac{\delta \bU}{\delta m}$ can satisfy \eqref{def:measure-derivative}, up to a constant shift; that is, if $\frac{\delta \bU}{\delta m}$ satisfies \eqref{def:measure-derivative} then so does $\frac{\delta \bU}{\delta m} + c$ for any $c \in \R$. For concreteness we always choose the shift to ensure
\begin{align*}
\int_{\R^d}\frac{\delta \bU}{\delta m}(m,v)\, m(dv)=0.
\end{align*}

If $\frac{\delta \bU}{\delta m}(m,v)$ is continuously differentiable in $v$, we define its \emph{intrinsic derivative} $D_m\bU : \P^{\newexp}(\R^d) \times \R^d \rightarrow \R^d$ by
\begin{align*}
D_m\bU(m,v) = D_v\left(\frac{\delta \bU}{\delta m}(m,v)\right),
\end{align*}
where we use the notation $D_v$ for the gradient in $v$. 
If, for each $v \in \R^d$, the map $m \mapsto \frac{\delta \bU}{\delta m}(m,v)$ is ${{\mathscr C}^1}$, then we say that $\bU$ is ${{\mathscr C}^2}$ and let $\frac{\delta^2 \bU}{\delta m^2}$ denote its derivative, or more explicitly, 
\begin{align*}
\frac{\delta^2\bU}{\delta m^2}(m,v,v') = \frac{\delta}{\delta m}\left(\frac{\delta \bU}{\delta m}(\cdot,v)\right)(m,v').
\end{align*}
We will also make some use of the derivative
\[
D_v D_m\bU(m,v) = D_v[D_m\bU(m,v)],
\]
when it exists, and we note that $D_vD_m\bU$ takes values in $\R^{d\times d}$; 
for some results, we will also consider higher order derivatives $D^k_{v} D_{m} \bU(m,v)$ with values in $\R^{d \times \ldots \times d}
\cong
\R^{d^{k+1}}$ for $k \in \N$.
Finally, if $\bU$ is ${{\mathscr C}^2}$ and if $\frac{\delta^2\bU}{\delta m^2}(m,v,v')$ is twice continuously differentiable in $(v,v')$, we let
\[
{D_{m}^2}\bU(m,v,v') = D^2_{v,v'}\frac{\delta^2\bU}{\delta m^2}(m,v,v')
\]
denote the $d \times d$ matrix of partial derivatives $(\partial_{v_i}\partial_{v'_j}
[\delta^2 \bU/\delta m^2](m,v,v'))_{i,j}$. Equivalently (see \cite[Lemma 2.4]{cardaliaguet-delarue-lasry-lions}),
\[
{D_{m}^2}\bU(m,v,v') = D_m(D_m\bU(\cdot,v))(m,v').
\]

\subsection{Nash systems and $n$-player games} \label{se:Nashsystems}

We fix throughout the paper a filtered probability space $(\Omega,\F,\FF=(\F_t)_{t \in [0,T]},\PP)$, supporting independent $\FF$-Wiener processes $W$ of dimension $d_0$ (called \emph{common noise}) and 
$(B^i)_{i=1}^\infty$
of dimension $d$
(called \emph{idiosyncratic noises})
(we choose the dimension of the idiosyncratic noises {$(B^i)_{i=1}^\infty$ to be} equal to the dimension of the state space for convenience only), as well as a sequence of 
i.i.d.\ ${\mathcal F}_{0}$-measurable $\R^d$-valued initial states $(X^i_0)_{i=1}^{\infty}$ with distribution $\mu_0$.

We describe the $n$-player game and PDE systems first, deferring a precise statement of assumptions to Section \ref{se:assumptions}. 
We are given an exponent $\pexp \ge 1$, an action space $A$, assumed to be a Polish space, and Borel measurable functions
\begin{align*}
(b,f) &: \R^d \times \P^{\pexp}(\R^d) \times A \rightarrow \R^d \times \R, \\
g &: \R^d \times \P^{\pexp}(\R^d) \rightarrow \R,
\end{align*}
along with two matrices $\sigma \in \R^{d \times d}$ and $\sigma_0 \in \R^{d \times d_0}$.

In the $n$-player game, players $i=1,\ldots,n$ control the state process $(\bm{X}_t=(X^1_t,\ldots,X^n_t))_{t \in [0,T]}$, given by
\begin{align}
dX^i_t = b(X^i_t,m^n_{\bm{X}_t},\alpha^i(t,\bm{X}_t))dt + \sigma dB^i_t + \sigma_0 dW_t, \label{def:SDEnplayer}
\end{align}
where we recall that $m^n_{\bm{X}_t}$ denotes the empirical measure  {associated with} the vector $\bm{X}_t$.
Here $\alpha^i$ is the control chosen by player $i$ in feedback form. The objective of player $i$ is to try to  choose $\alpha^i$ to minimize
\[
J^{n,i}(\alpha^1,\ldots,\alpha^n) = \E\left[\int_0^Tf(X^i_t,m^n_{\bm{X}_t},\alpha^i(t,\bm{X}_t))dt + g(X^i_T,m^n_{\bm{X}_T})\right].
\]
A (closed-loop) Nash equilibrium is defined in the usual way as a vector of feedback functions $(\alpha^1,\ldots,\alpha^n)$,
where $\alpha^i : [0,T] \times (\R^d)^n \rightarrow A$ are such that the SDE \eqref{def:SDEnplayer} is unique in law, and 
\[
J^{n,i}(\alpha^1,\ldots,\alpha^n) \le J^{n,i}(\alpha^1,\ldots,\alpha^{i-1},\widetilde{\alpha},\alpha^{i+1},\ldots,\alpha^n),
\]
for any alternative choice of feedback control $\widetilde \alpha$.

From the work of \cite{bensoussan1983nonlinear}, we know that a Nash equilibrium can be built using a system of HJB equations.
Define the Hamiltonian $H : \R^d \times \P^{\pexp}(\R^d) \times \R^d \rightarrow \R$ by
\begin{align*}
H(x,m,y) = \inf_{a \in A}\bigl[b(x,m,a) \cdot y + f(x,m,a)\bigr].
\end{align*}
Assume that this infimum is attained for each $(x,m,y)$, and
let $\widehat{\alpha}(x,m,y)$ denote a minimizer; we will place assumptions on the function $\widehat{\alpha}$ in the next section.
It is convenient to define the functionals $\widehat{b}$ and $\widehat{f}$ on $\R^d \times \P^{\pexp}(\R^d) \times \R^d$ by
\begin{align}
\label{hatfns}
\widehat{b}(x,m,y) = b(x,m,\widehat{\alpha}(x,m,y) ) \quad \mbox{ and } \quad \widehat{f}(x,m,y) = f(x,m,\widehat{\alpha}(x,m,y) ), 
\end{align}
and note that then 
\begin{align} 
\label{exp-ham}
H(x,m,y) = \widehat{b}(x,m,y) \cdot y + \widehat{f}(x,m,y).
\end{align}
The \emph{$n$-player Nash system} is a PDE system for $n$ functions, $(v^{n,i} : [0,T] \times (\R^d)^n \rightarrow \R)_{i=1}^{\infty}$, given by
\begin{equation}
\label{def:Nashsystem} 
\begin{array}{l}
\displaystyle
 \partial_tv^{n,i}(t,\bm{x}) + H\left(x_i,m^n_{\bm{x}},D_{x_i}v^{n,i}(t,\bm{x})\right) + \sum_{j=1,j \neq i}^n D_{x_j}v^{n,i}(t,\bm{x}) \cdot \widehat{b}\left(x_j,m^n_{\bm{x}},D_{x_j}v^{n,j}(t,\bm{x})\right)  \\
 \displaystyle \hspace{0.5in} + \frac{1}{2}\sum_{j = 1}^n\mathrm{Tr}\left[D^2_{x_j,x_j}v^{n,i}(t,\bm{x})\sigma \sigma^\top \right] + \frac{1}{2}\sum_{j,k=1}^n\mathrm{Tr}\left[D^2_{x_j,x_k}v^{n,i}(t,\bm{x})\sigma_0 \sigma_0^\top \right] = 0, 
\end{array}
\end{equation}
with terminal condition $v^{n,i}(T,\bm{x}) = g(x_i,m^n_{\bm{x}})$.

Using  (classical) solutions to the $n$-player Nash system, we may construct  an equilibrium for the $n$-player game. The $i^\text{th}$ agent uses the feedback control
\[
[0,T] \times (\R^d)^n \ni (t,\bm{x}) \mapsto \widehat{\alpha}\left(\bm{x},m^n_{\bm{x}},D_{x_i}v^{n,i}(t,\bm{x})\right).
\]
As a result, the in-equilibrium state process $\bm{X}=(X^1,\ldots,X^n)$ is governed by
\begin{align}
dX^i_t &= \widehat{b}(X^i_t,m^n_{\bm{X}_t},D_{x_i}v^{n,i}(t,\bm{X}_t))dt + \sigma dB^i_t + \sigma_0 dW_t,  \label{def:Nash-SDEsystem}
\end{align}
with $\widehat{b}$ defined in \eqref{hatfns}.
Under Assumption \ref{assumption:A} of Section \ref{se:assumptions} below, the SDE \eqref{def:Nash-SDEsystem} is uniquely solvable. 
Indeed,
due to Assumption \ref{assumption:A}(4), 
$D_{x_{i}} v^{n,i}$ is at most of linear growth; moreover,
the second derivatives of $v^{n,i}$ exist and are continuous, which ensures that $D_{x_i}v^{n,i}$ is locally Lipschitz. Also, Assumption \ref{assumption:A}(1) and the fact that $\bm{x} \mapsto m^n_{\bm{x}}$ is a Lipschitz function from $(\R^d)^n$ to $(\P^{\pexp}(\R^d),\W_{\pexp,\R^d})$ ensure that
the SDE system \eqref{def:Nash-SDEsystem} has a unique strong solution.

\subsection{The mean field game and master equation}
\label{se:mfgdesc}

The master equation is a PDE for a function $U : [0,T] \times \R^d \times \P^{\pexp}(\R^d) \rightarrow \R$, given by
\begin{align}
0 = \ &\partial_tU(t,x,m) + H(x,m,D_xU(t,x,m)) \nonumber \\
	&+ \frac{1}{2}\mathrm{Tr}\left[(\sigma\sigma^\top+\sigma_0\sigma_0^\top) D_x^2U(t,x,m)\right] \nonumber \\
&+ \int_{\R^d}\widehat{b}(v,m,D_xU(t,v,m)) \cdot D_mU(t,x,m,v) \,dm(v) \nonumber \\
&+ \frac{1}{2}\int_{\R^d}\mathrm{Tr}\left[(\sigma\sigma^\top+\sigma_0\sigma_0^\top) D_vD_mU(t,x,m,v)\right]\,dm(v)   \label{def:masterequation}  \\
&+ \frac{1}{2}\int_{\R^d}\int_{\R^d}\mathrm{Tr}\left[\sigma_0 \sigma_0^\top D^2_mU(t,x,m,v,v')\right]\,dm(v)\,dm(v') \nonumber \\
&+ \int_{\R^d}\mathrm{Tr}\left[\sigma_0 \sigma_0^\top D_xD_mU(t,x,m,v)\right]\,dm(v), \nonumber
\end{align}
for $(t,x,m) \in (0,T) \times \R^d \times \P^{\pexp}(\R^d)$, with terminal condition $U(T,x,m) = g(x,m)$. The connection between the Nash system and the master equation is clarified in 
\cite{cardaliaguet-delarue-lasry-lions}
and
\cite[Proposition 4.1]{dellacram18a}; roughly speaking, $v^{n,i}(t,\bm{x})$ is expected to be close to $U(t,x_{i},m^n_{\bm{x}})$ as $n$ tends to infinity. 

Just as the $n$-player Nash system was used to build an equilibrium for {the} $n$-player {game,} we will use the master equation to describe an equilibrium for the {associated} mean field game, {described below.} First, consider the McKean-Vlasov equation
\begin{align}
d\X_t = \widehat{b}(\X_t,\mu_t,D_xU(t,\X_t,\mu_t))dt + \sigma dB^1_t + \sigma_0 dW_t, \quad \X_0 = X^1_0, \quad \mu = \L(\X | W), \label{def:MKV-conditional}
\end{align}
where $\L(\X | W)$ denotes the conditional law of $\X$ given (the path) $W$, viewed as a random element of $\P^{\pexp}(\C^d)$.
Here, a solution $\X=(\X_t)_{t \in [0,T]}$ is required to be adapted to the filtration generated by the process $(X^1_0,W_t,B^1_t)_{t \in [0,T]}$. Notice that necessarily $\mu_t=\L(\X_t | W) = \L(\X_t | (W_s)_{s \in [0,t]})$ a.s., for each $t \in [0,T]$, because $(W_s-W_t)_{s \ge t}$ is independent of $(\X_s,W_s)_{s \le t}$.
Assumptions \ref{assumption:A}(1) and \ref{assumption:A}(5), stated in Section \ref{se:assumptions} below, ensure that there is a unique strong solution to \eqref{def:MKV-conditional}; this follows from a straightforward adaptation of the arguments of Sznitman \cite[Chapter 1]{sznitman1991topics} (cf. \cite[Section 7]{carmona2016probabilistic}
and {\cite[Chapter 2, Section 2.1]{CarmonaDelarue_book_II}}).
For the reader who is more familiar with the PDE formulation of mean field games, we emphasize that the process
$(\mu_t)_{t \in [0,T]}$ is a weak solution to the stochastic Fokker-Planck equation
\begin{equation*}
d \mu_{t}=
- \textrm{\rm div} \bigl( 
\widehat b(\cdot,\mu_{t},D_x  U(t,\cdot,\mu_{t})) \mu_{t}
\bigr) dt + \tfrac12
\textrm{\rm Tr}[ 
D^2_x \mu_{t}
( \sigma \sigma^\top 
+ \sigma_{0} \sigma_{0}^\top )  ]
 dt 
- 
\bigl( \sigma^{\top}_0 D_x \mu_{t}
\bigr) \cdot
dW_{t},  
\end{equation*}
for $t \in [0,T]$, which follows from a straightforward application of It\^o's formula to
the process 
$(\phi(X_{t}))_{t \in [0,T]}$ for smooth test functions $\phi$.

Since $U$ is a classical solution to the master equation with bounded derivatives (see Assumptions \ref{assumption:A}(1) and \ref{assumption:A}(5) in Section \ref{se:assumptions} below), it is known that the measure flow $\mu$ constructed from the McKean-Vlasov equation \eqref{def:MKV-conditional} is the unique equilibrium of the mean field game; see for instance \cite[Proposition 5.106]{CarmonaDelarue_book_I}. A mean field game equilibrium is usually defined as a fixed point of the map $\Phi$ that sends a $W$-measurable random measure $\mu$ on $\C^d$ (such that $(\mu_t)_{t \in [0,T]}$ is adapted to the filtration generated by $W$) to a new random measure $\Phi(\mu)$, defined as follows:
\begin{enumerate}[(i)]
\item Solve the stochastic optimal control problem, with $\mu$ fixed:
\begin{align*}
\begin{cases}
\sup_\alpha \E\left[\int_0^Tf(X_t,\mu_t,\alpha_t)dt + g(X_T,\mu_T)\right], \\
\text{s.t. } dX_t = b(X_t,\mu_t,\alpha_t)dt + \sigma dB^1_t + \sigma_0 dW_t.
\end{cases}
\end{align*}
\item Letting $X^*$ denote the optimally controlled state process, set $\Phi(\mu) = \L(X^*|W)$.
\end{enumerate}
Note that if the optimization problem in step (i) has multiple solutions, the map $\Phi$ may be set-valued, and we seek $\mu$ such that $\mu \in \Phi(\mu)$.
The original formulation of Lasry and Lions \cite{lasrylions} is a forward-backward PDE system, which is essentially equivalent to this fixed point procedure, when $\sigma_0=0$. When $\sigma_0 \neq 0$, the forward-backward PDE becomes stochastic, but the same connection remains. For more details on the connection between the master equation and more common PDE or probabilistic formulations of mean field games, see \cite{bensoussan2015master,bensoussan2017interpretation,carmona2014master} or \cite[Section 1.2.4]{cardaliaguet-delarue-lasry-lions}. For our purposes, we simply take the McKean-Vlasov equation \eqref{def:MKV-conditional} as the definition of $\mu$.

\subsection{Assumptions} \label{se:assumptions}
The following standing assumption holds throughout the paper, and this is notably the same standing assumption as in the companion paper \cite[Assumption A]{dellacram18a}:

\begin{assumption}{\textbf{A}} \label{assumption:A} $\ $

\begin{enumerate}
\item A minimizer $\widehat{\alpha}(x,m,y) \in \arg\min_{a \in A}\bigl[b(x,m,a) \cdot y + f(x,m,a)\bigr]$ exists for every $(x,m,y) \in \R^d \times \P^{\pexp}(\R^d) \times \R^d$, {for some $p^* \in {[1,2]}$} such that the function $\widehat{b}(x,m,y)$ defined in \eqref{hatfns} is Lipschitz in all variables. That is, there exists $C <  \infty$ such that, for all $x,x',y,y' \in \R^d$ and $m,m' \in \P^{\pexp}(\R^d)$, 
\begin{align*}
|\widehat{b}(x,m,y) - \widehat{b}(x',m',y')| \le C\left(|x-x'| + \W_{\pexp}(m,m') + |y-y'|\right),
\end{align*}
where $\W_{\pexp}$ is shorthand for $\W_{\pexp,(\R^d, |\cdot|)}$.
\item The $d \times d$ matrix $\sigma$ is non-degenerate. 
\item The initial states $(X^i_0)_{i=1}^\infty$ are i.i.d. with law $\mu_0 \in \P^{p'}(\R^d)$ for some $p' > 4$.
\item For each $n$, the $n$-player Nash system \eqref{def:Nashsystem} has a classical solution $(v^{n,i})_{i=1}^n$, in the sense that each function $v^{n,i}(t,\bm{x})$ is continuously differentiable in $t$ and twice continuously differentiable in $\bm{x}$. Moreover, $D_{x_j}v^{n,i}$ has {at most}  linear growth and $v^{n,i}$ has at most quadratic growth, for each fixed $n,i,j$. That is, there
exist $L_{n,i}  < \infty$ and $L_{n,i,j} { < \infty}$ such that, 
for all $t \in [0,T]$ and $\bm{x} \in (\R^d)^n$,  
\begin{align*}
|D_{x_j}v^{n,i}(t,\bm{x})| &\le L_{n,i,j}\left(1 + |\bm{x}|\right), \\
|v^{n,i}(t,\bm{x})| &\le L_{n,i}\left(1 + |\bm{x}|^2\right).
\end{align*}
\item The master equation admits a classical solution $U: [0,T] \times \RR^d \times {\mathcal P}^2(\RR^d) \ni (t,x,m) 
\mapsto U(t,x,m)$. The derivative $D_xU(t,x,m)$ exists and is Lipschitz in $(x,m)$, uniformly in $t$  (with respect to the metric $\W_{\pexp}$ for the argument $m \in \P^{\pexp}(\R^d)$), and $U$ admits continuous derivatives $\partial_tU$, $D_xU$, $D_mU$, $D_x^2U$, $D_vD_mU$, $D_xD_mU$, and $D_{m}^2U$. Moreover, $D_xU$, $D_mU$, $D_xD_mU$, and $D_m^2U$ are assumed to be bounded.
\end{enumerate}
\end{assumption}
Recall that $|\bm{x}|$ in \ref{assumption:A}(4) is the Euclidean norm of $\bm{x} \in (\RR^d)^n$; in some places, we denote it by $\| \bm{x} \|_{n,2}$ in order to distinguish it explicitly from other norms, as in Section \ref{se:concentration-statements} below.
We also need some assumptions on the growth of the function $\widehat{f}$, defined in \eqref{hatfns}, using of course the same function $\widehat{\alpha}$ from Assumption \ref{assumption:A}(1). We provide two alternatives:

\begin{assumption}{\textbf{B}} \label{assumption:B} $\ $
$\widehat{f}(x,m,y)$ is Lipschitz in $y$, uniformly in $(x,m)$. That is, there exists $ {C <  \infty}$ such that, for all $x,y,y' \in \R^d$ and $m \in \P^{\pexp}(\R^d)$, 
\begin{align*}
|\widehat{f}(x,m,y) - \widehat{f}(x,m,y')| \le C|y-y'|.
\end{align*}
\end{assumption}

\begin{assumption}{\textbf{B'}} \label{assumption:B'} $\ $

\begin{enumerate}
\item The solution $U$ to the master equation is uniformly bounded.
\item The Nash system solutions 
$(v^{n,i})_{i=1}^{n}$ are bounded, uniformly in $n$ and $i$.
\item $\widehat{f}(x,m,y)$ is locally Lipschitz in $y$ with quadratic growth, uniformly in $(x,m)$. That is, there exists $ {C <  \infty}$ such that, for all $x,y,y' \in \R^d$ and $m \in \P^{\pexp}(\R^d)$, 
\begin{align*}
|\widehat{f}(x,m,y) - \widehat{f}(x,m,y')| \le C(1 + |y| + |y'|)|y-y'|.
\end{align*}
\end{enumerate}
\end{assumption}

These are admittedly very heavy assumptions, but they do cover a broad class of models. We refer the reader to
the end of Section 1 and Section 2.4 of \cite{dellacram18a}  for a detailed discussion and references. Notice that we do not place any assumptions directly on the terminal cost function $g$, but \ref{assumption:A}(5) along with the boundary condition $U(T,x,m)=g(x,m)$ impose implicit requirements on $g$.

\section{Statements of main results} \label{se:statements}

This section summarizes the main results on the $n$-player Nash equilibrium empirical measures 
$(m^n_{\bm{X}})_{n \geq 1}$ and on their
marginal flows 
$((m^n_{\bm{X_{t}}})_{t \in [0,T]})_{n \geq 1}$, defined by the SDE \eqref{def:Nash-SDEsystem}.
Proofs are deferred to later sections. It is helpful to first recall the associated law of large numbers associated, regarding the convergence of $(m^n_{\bm{X}})_{n \geq 1}$ to $\mu$, where $\mu$ is defined by the McKean-Vlasov equation \eqref{def:MKV-conditional}. The first part is quoted from \cite{dellacram18a},
and we elaborate here on the rate of convergence in various metrics. Define, for $p \in [1,2]$, the constants:
\begin{align}
r_{n,p} = \begin{cases}
n^{-1/2} &\text{if } d < 2p \\
n^{-1/2}\log(1+n) &\text{if } d = 2p \\ 
n^{-p/d} &\text{if } d > 2p.
\end{cases} \label{def:rnp}
\end{align}
The following law of large numbers is a slight elaboration on
 {\cite[Theorem 3.1]{dellacram18a} and \cite[Theorem 2.13]{cardaliaguet-delarue-lasry-lions}}, with the short proof deferred to the end of Section \ref{se:MKVexpectationbounds}.

\begin{theorem} 
\label{th:LLN}
Suppose Assumption \ref{assumption:A} holds, as well as either  {Assumption} \ref{assumption:B} or \ref{assumption:B'}. 
Then, with $p^* \in {[1,2]}$ as in Assumption \ref{assumption:A},
\[
\lim_{n\rightarrow\infty}\E[\W_{2,\C^d}^2(m^n_{\bm{X}},\mu)] =0,
\]
and there exists $ {C <  \infty}$ such that
\begin{align*}
\sup_{t \in [0,T]}
\E\left[\W_{\pexp, \R^d}^{\pexp}(m^n_{\bm{X}_t},\mu_t)\right] &\le Cr_{n,\pexp}, \\
\E\left[\sup_{t \in [0,T]}\W_{2,\R^d}^2(m^n_{\bm{X}_t},\mu_t)\right] &\le Cn^{-2/(d+8)}.
\end{align*}
\end{theorem}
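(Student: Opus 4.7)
The plan is to use the triangle inequality to reduce everything to two separate tasks: (i) control the discrepancy between the Nash empirical measure $m^n_{\bm{X}}$ and the empirical measure $m^n_{\bm{\overline{X}}}$ of the associated McKean-Vlasov system \eqref{MKVsystem1} (which has drift $\widetilde{b}(t,x,m) = \widehat{b}(x,m,D_{x}U(t,x,m))$), and (ii) control the convergence $m^n_{\bm{\overline{X}}} \to \mu$. That is, I would write
\begin{align*}
\W_{2,\C^d}(m^n_{\bm{X}},\mu) \le \W_{2,\C^d}(m^n_{\bm{X}},m^n_{\bm{\overline{X}}}) + \W_{2,\C^d}(m^n_{\bm{\overline{X}}},\mu),
\end{align*}
and analogously for the time-$t$ marginals with $\W_{\pexp,\R^d}$. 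The first term is handled by results already cited in the paper: the exponential estimate \eqref{intro:exponentialequivalence} (or the cruder bound from \cite[(4.27)]{dellacram18a}) immediately implies that $\E[\W_{2,\C^d}^2(m^n_{\bm{X}},m^n_{\bm{\overline{X}}})]$ decays at least as fast as $1/n^2$, which is much faster than any rate appearing on the right-hand side of the theorem, so this term is negligible.

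The content then lies in the McKean-Vlasov-to-MFG comparison. The crucial observation is that the $(\overline{X}^i)_{i=1}^n$ are driven by independent idiosyncratic noises $(B^i)$ together with the \emph{shared} common noise $W$, and their coefficients depend only on the state $\overline{X}^i$ and on $\mu_t$ (not on the other particles); hence, conditional on $W$, they are independent copies of the solution $\mathcal{X}$ of \eqref{def:MKV-conditional}, whose conditional law given $W$ is precisely $\mu$. I would then apply the Fournier--Guillin empirical measure convergence estimate conditionally on $W$: for each fixed $t$,
\begin{align*}
\E\bigl[\W_{\pexp,\R^d}^{\pexp}(m^n_{\bm{\overline{X}}_t},\mu_t)\,\big|\,W\bigr] \le C\,r_{n,\pexp},
\end{align*}
where the constant $C$ depends on the $p'$-moment of $\mu_t$ (which is finite and bounded in $t$ under Assumption \ref{assumption:A}(3), with $p' > 4 > 2\pexp$). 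Taking expectations yields the second bound of the theorem.

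For the pathwise bound with rate $n^{-2/(d+8)}$, I would apply a discretization argument: pick a time grid $0=t_0<t_1<\cdots<t_K=T$ with mesh $T/K$, and use
\begin{align*}
\sup_{t \in [0,T]} \W_{2,\R^d}^2(m^n_{\bm{\overline{X}}_t},\mu_t) \le \max_{k \le K} \W_{2,\R^d}^2(m^n_{\bm{\overline{X}}_{t_k}},\mu_{t_k}) + C\max_{k}\sup_{t \in [t_{k-1},t_k]}(\cdots),
\end{align*}
where the remainder is bounded via the Lipschitz estimate of $t \mapsto \overline{X}^i_t$ in the $L^2$ sense combined with the analogous modulus of continuity for $\mu$; both give a $O(T/K)$ contribution to $\W_2^2$. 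The first term is bounded by $K \cdot C n^{-2/d}$ in expectation (using Fournier--Guillin for each $t_k$ and a union bound on the max together with Jensen), and optimizing over $K$ yields $K \sim n^{2/(d+8)}$ and the stated rate. Finally, the first convergence statement $\E[\W_{2,\C^d}^2(m^n_{\bm{X}},\mu)] \to 0$ follows from the pathwise Fournier--Guillin result applied conditionally on $W$ (no rate needed, merely qualitative convergence), combined with uniform integrability supplied by the $p'>4$ moment assumption. The main (mild) technical issue is treating the common-noise conditioning cleanly when invoking Fournier--Guillin, which is handled by noting that the conditional distribution of $\overline{X}^i$ given $W$ is tight and has uniformly controlled moments.
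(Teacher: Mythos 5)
Your overall architecture---triangle inequality through the auxiliary particle system $\bm{\overline{X}}$, with the Nash-to-auxiliary discrepancy controlled by \eqref{def:estimate-|X-Xbar|}---is exactly the paper's. But there is a genuine gap in your second step: you assert that the coefficients of $\overline{X}^i$ ``depend only on the state $\overline{X}^i$ and on $\mu_t$ (not on the other particles)'' and hence that, conditionally on $W$, the $\overline{X}^i$ are i.i.d.\ copies of $\mathcal{X}$ with conditional law $\mu$. This is false: by \eqref{eq:sec4:overlineX} (equivalently \eqref{MKVsystem1}) the drift of $\overline{X}^i$ is $\widetilde{b}(t,\overline{X}^i_t, m^n_{\bm{\overline{X}}_t})$, i.e.\ it depends on the empirical measure of \emph{all} $n$ particles, so the particles are genuinely coupled and their (conditional) law is not a product of copies of $\mu$; you cannot apply Fournier--Guillin to them directly. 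The missing ingredient is a propagation-of-chaos coupling: introduce $Y^i$ solving the limiting dynamics $dY^i_t = \widetilde{b}(t,Y^i_t,\mu_t)\,dt + \sigma dB^i_t + \sigma_0 dW_t$ with $Y^i_0 = X^i_0$ and the \emph{same} noises, so that the $Y^i$ genuinely are conditionally i.i.d.\ given $W$ with law $\mu$; then use the Lipschitz continuity of $\widetilde{b}$ in $(x,m)$ and Gronwall to obtain $\W_p^p(m^n_{\bm{\overline{X}}_t},\mu_t) \le C\,\W_p^p(m^n_{\bm{Y}_t},\mu_t) + C\int_0^t \W_p^p(m^n_{\bm{Y}_s},\mu_s)\,ds$; and only then invoke the i.i.d.\ empirical-measure rates. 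This is precisely the content and proof of Theorem \ref{th:ap:MKVexpectationbound}, which is what the paper's proof of Theorem \ref{th:LLN} invokes.

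Two smaller points. For the uniform-in-time bound the paper does not discretize but cites \cite[Theorem 1.3]{horowitz1994mean}; your grid argument, with pointwise rate $n^{-2/d}$ and an $O(1/K)$ modulus term, does not reproduce the exponent $2/(d+8)$ for all $d$ (with your choice $K\sim n^{2/(d+8)}$ the max term $K n^{-2/d}$ dominates $n^{-2/(d+8)}$ once $d>8$), so you would still need the cited result. And for the first assertion, Fournier--Guillin is a finite-dimensional estimate on $\R^d$ and does not apply to empirical measures on the path space $\C^d$; the paper simply quotes $\E[\W^2_{2,\C^d}(m^n_{\bm{X}},\mu)]\to 0$ from \cite[Theorem 3.1]{dellacram18a}, and a self-contained proof would combine the coupling above with a qualitative Glivenko--Cantelli argument for i.i.d.\ path-valued samples plus uniform integrability, rather than with \cite{fournier-guillin}.
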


The two different ways of estimating the rate of convergence in Theorem \ref{th:LLN} (with the supremum over $t$ inside or outside of the supremum) are somewhat standard in the theory of McKean-Vlasov equations and related particle systems. See, for instance, \cite{cardaliaguet-delarue-lasry-lions} and \cite[Chapter 6]{CarmonaDelarue_book_II} for earlier applications in the framework of MFGs.
A key point is that 
the distance between the initial sample in the $n$-player game and the initial theoretical distribution is kept stable under the Nash equilibrium dynamics. As a result, all known estimates for the rate of convergence in Theorem \ref{th:LLN} do depend on the dimension $d$,
which is a consequence of existing results on the fluctuations of the empirical distribution of a sample of i.i.d.\ random variables in $\R^d$ (see, for instance, \cite{fournier-guillin}).
In the central limit theorem of our companion paper \cite[Theorem 3.2]{dellacram18a}, the dimension $d$  also plays a notably role in the smoothness assumptions required of $b$ and in the precise space in which the limit is formulated.

\subsection{Concentration inequalities in the absence of common noise}
\label{se:concentration-statements}

We next look for a concentration bound for the empirical measure $m^n_{\bm{X}}$ of the Nash system, in the case of no common noise, i.e., $\sigma_0=0$. Precisely, we work here with the empirical measure of the full paths, so that $m^n_{\bm{X}}$ is a random element of $\P(\C^d)$. We derive in this section an estimate on
\[
\PP\left(\W_{\pexp,\C^d}(m^n_{\bm{X}},\mu) > \epsilon \right), \quad \epsilon >0. 
\]
The proofs of the {main results, Theorems \ref{th:concentration-T1} and \ref{th:concentration-T2},} 
of this section are given in Section {\ref{se:MFGconcentrationproofs}}.

In the following, we consider two different choices of norms on $(\C^d)^n$, namely the $\ell^1$ and $\ell^2$ norms. For $\bm{x}=(x^1,\ldots,x^n) \in (\C^d)^n$, let
\begin{align*}
\|\bm{x}\|_{n,1} := \sum_{i=1}^n\|x^i\|_\infty, \quad\quad \|\bm{x}\|_{n,2} := \sqrt{\sum_{i=1}^n\|x^i\|^2_\infty}.
\end{align*}
Note that we still always use the standard sup-norm $\|\cdot\|_\infty$ on $\C^d$, defined by $\|x\|_\infty = \sup_{t \in [0,T]}|x_t|$, where $|\cdot|$ is the usual Euclidean norm on $\R^d$.
For a normed space $(E,\|\cdot\|)$, write $\mathrm{Lip}(E,\|\cdot\|)$ for the set of $1$-Lipschitz functions, i.e., the set of $f : E \rightarrow \R$ with $|f(x)-f(y)|\le \|x-y\|$ for all $x,y \in E$. If the norm is understood, we write simply $\mathrm{Lip}(E)$.

Recall in the following that $\mu_0$ is the law of the initial state (see Assumption \ref{assumption:A}(3)).
We now state our first  concentration result.

\begin{theorem} \label{th:concentration-T1}
Assume $\pexp=1$ and $\sigma_0=0$, and suppose Assumption \ref{assumption:A} holds, as well as 
either  Assumption \ref{assumption:B} or \ref{assumption:B'}. 
Assume there exists $\kappa > 0$ such that 
\begin{align}
\int_{\R^d}\exp(\kappa|x|^2)\mu_0(dx) < \infty. \label{def:initial-integrability}
\end{align}
Then there exist $C < \infty,\delta > 0$ such that, for every $a \ge C$, every $n \geq 1$, and every $\Phi \in \mathrm{Lip}((\C^d)^n,\|\cdot\|_{n,1})$, we have:
\begin{align}
\PP\left(\Phi(\bm{X}) - \E\Phi(\bm{X}) > a \right) \le  3n\exp(-\delta a^2/n). \label{def:concentration-full-T1}
\end{align}
\end{theorem}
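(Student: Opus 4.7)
The proof strategy is to establish the concentration first for the auxiliary McKean--Vlasov interacting system $\bm{\overline{X}}$ from \eqref{MKVsystem1}, and then transfer it to $\bm{X}$ using the exponential equivalence \eqref{intro:exponentialequivalence}. The absence of common noise ($\sigma_0=0$) is essential throughout, since in its presence the McKean--Vlasov particles are genuinely coupled through the random empirical flow driven by $W$, and sub-Gaussian concentration is not expected.

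First I would establish a $T_1$ transportation-entropy inequality for the law of $\bm{\overline{X}}$, viewed as a random element of $((\C^d)^n,\|\cdot\|_{n,1})$, with $T_1$ constant scaling linearly in $n$ (equivalently, a dimension-free $T_1$ constant in $\|\cdot\|_{n,2}$). The ingredients are: the Lipschitz property of the effective drift $\widetilde{b}(t,x,m)=\widehat{b}(x,m,D_xU(t,x,m))$, uniformly in $t$, coming from Assumptions \ref{assumption:A}(1) and \ref{assumption:A}(5); the Gaussian integrability of $\mu_0$ from \eqref{def:initial-integrability}; and the Djellout--Guillin--Wu criterion \cite{djellout-guillin-wu} that $T_1$ is equivalent to finite Gaussian exponential moments. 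The plan is to set up a Picard iteration for the McKean--Vlasov fixed point, pass $T_1$ through each iterate via Girsanov and Gaussian tail estimates for the SDE with frozen empirical measure, and close the argument by contractivity; this is the content of Theorem \ref{th:ap:MKVexpectationbound}. Combined with Bobkov--G\"otze, it yields the intermediate bound
\[
\PP\bigl(\Phi(\bm{\overline{X}})-\E\Phi(\bm{\overline{X}})>a\bigr) \le \exp\bigl(-\delta' a^2/n\bigr)
\]
for every $\Phi\in\mathrm{Lip}((\C^d)^n,\|\cdot\|_{n,1})$, with $\delta'>0$ independent of $n$.

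Next I would transfer the bound to the Nash system using the strengthened exponential equivalence from Section \ref{se:mainestimates}, which controls the pathwise coupling $\|\bm{X}-\bm{\overline{X}}\|_{n,1}$ rather than merely the symmetric $\W_2$-distance between empirical measures as in \eqref{intro:exponentialequivalence}; this should give an estimate of the form
\[
\PP\bigl(\|\bm{X}-\bm{\overline{X}}\|_{n,1}>a\bigr) \le 2n\exp\bigl(-\delta'' a^2/n\bigr).
\]
Since $\Phi$ is 1-Lipschitz in $\|\cdot\|_{n,1}$, we have $|\Phi(\bm{X})-\Phi(\bm{\overline{X}})|\le\|\bm{X}-\bm{\overline{X}}\|_{n,1}$ and $|\E\Phi(\bm{X})-\E\Phi(\bm{\overline{X}})|\le\E\|\bm{X}-\bm{\overline{X}}\|_{n,1}$, which is $O(1)$ uniformly in $n$ (this is what the threshold $a\geq C$ absorbs). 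A union bound on
\[
\bigl\{\Phi(\bm{X})-\E\Phi(\bm{X})>a\bigr\} \subset \bigl\{\Phi(\bm{\overline{X}})-\E\Phi(\bm{\overline{X}})>a/3\bigr\} \cup \bigl\{\|\bm{X}-\bm{\overline{X}}\|_{n,1}>a/3\bigr\}
\]
then yields \eqref{def:concentration-full-T1} after adjusting $\delta$; the prefactor $3n$ in the statement is inherited from the $2n$ in the exponential equivalence plus the single term from the McKean--Vlasov concentration.

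The main obstacle is the first step. Although each particle of $\bm{\overline{X}}$ is, for a frozen empirical flow, a Lipschitz-drift diffusion for which $T_1$ holds with dimension-independent constant by Girsanov plus Djellout--Guillin--Wu, the nonlinear self-consistency makes the joint law of $\bm{\overline{X}}$ genuinely non-product, so standard tensorization of $T_1$ does not apply directly. One must therefore propagate the uniform Gaussian exponential moment through the Picard iteration for the McKean--Vlasov fixed point, controlling the Lipschitz perturbation between successive iterates via a Grönwall estimate, and carefully track the correct $n$-scaling of the $T_1$ constant so as to obtain the sub-Gaussian rate $a^2/n$ rather than something worse.
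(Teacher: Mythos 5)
Your overall architecture matches the paper's: split $\Phi(\bm{X})-\E\Phi(\bm{X})$ into the fluctuation of $\Phi(\bm{\overline{X}})$ around its mean, the pathwise discrepancy $\Phi(\bm{X})-\Phi(\bm{\overline{X}})$, and the difference of means; handle the second via Theorem \ref{th:concentration} (using that $\frac1n\sum_i\|X^i-\overline{X}^i\|_\infty\le(\frac1n\sum_i\|X^i-\overline{X}^i\|_\infty^2)^{1/2}$) and the third via \eqref{def:estimate-|X-Xbar|} and Cauchy--Schwarz, absorbed by the threshold $a\ge C$. Those two steps are essentially as in the paper.

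The gap is in your treatment of the first and central step, the concentration of $\Phi(\bm{\overline{X}})$. You propose to ``set up a Picard iteration for the McKean--Vlasov fixed point'' and propagate a $T_1$ inequality through the iterates, flagging the non-product structure of the law of $\bm{\overline{X}}$ as the main obstacle. But $\bm{\overline{X}}$ is not the solution of a fixed-point equation: it is a single $nd$-dimensional SDE with drift $B_n(t,\bm{x})=(\widetilde{b}(t,x_i,m^n_{\bm{x}}))_{i\le n}$, and the crucial (elementary) observation is that $B_n(t,\cdot)$ is Lipschitz on $(\R^d)^n$ with a constant \emph{uniform in $n$}, because $\bm{x}\mapsto m^n_{\bm{x}}$ is $n^{-1/2}$-Lipschitz into $(\P^2,\W_2)$. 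One then conditions on $\bm{\overline{X}}_0$: the conditional law $P_{\bm{x}}$ satisfies a $T_1$ inequality with an $n$-independent constant directly by the Djellout--Guillin--Wu criterion (Theorem \ref{th:ap:T1}) applied to this one big Lipschitz SDE --- no iteration and no tensorization of the path laws is needed. Tensorization (Theorem \ref{th:T2-general}(i)) is applied only to the genuinely product object $\mu_0^n$, where \eqref{def:initial-integrability} gives $T_1$ for $\mu_0$ and the $\ell^1$-tensorized constant scales like $n$, producing the $a^2/n$ rate. The last ingredient is that $\bm{x}\mapsto\langle P_{\bm{x}},\Phi\rangle$ is Lipschitz, which follows from a synchronous coupling and Gr\"onwall. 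This is Theorem \ref{th:concentration-mckeanvlasov-T1}; note also that you cite Theorem \ref{th:ap:MKVexpectationbound} for this step, but that result is the $L^1$/$L^2$ rate of convergence of $m^n_{\bm{\overline{X}}}$ to $\mu$ (used for the LLN and for centering), not a transport inequality. As written, your Picard-iteration plan for the finite-$n$ particle system does not have a well-defined object to iterate on, and even reinterpreted as an argument about the limit law $\mu$ it would not yield concentration for the joint law of the $n$ interacting particles; so this step needs to be replaced by the direct conditioning-plus-DGW argument above.
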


We quickly obtain a probabilistic rate of convergence, complementing Theorem \ref{th:LLN}:

\begin{corollary} \label{co:concentration-W1}
Under the assumptions of Theorem \ref{th:concentration-T1},
there exist $C < \infty$ and $\delta > 0$ such that, for every $a > 0$ and every $n \ge C/\min\{a,a^{d+8}\}$, we have:
\begin{align}
\PP\left(\sup_{s \in [0,T]}\W_{1,\R^d}(m^n_{\bm{X}_s},\mu_s) > a \right) \le 3n\exp(-\delta a^2n). \label{def:concentration-W1}
\end{align}
\end{corollary}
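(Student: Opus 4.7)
The plan is a two-step reduction that combines Theorem \ref{th:concentration-T1} with the mean bound from Theorem \ref{th:LLN}. Since $\sigma_0 = 0$, the measure flow $(\mu_s)_{s \in [0,T]}$ of \eqref{def:MKV-conditional} is deterministic, so
\[
\Phi(\bm{x}) := n \sup_{s \in [0,T]} \W_{1,\R^d}(m^n_{\bm{x}_s}, \mu_s)
\]
is a well-defined deterministic functional on $(\C^d)^n$. First I would verify that $\Phi \in \mathrm{Lip}((\C^d)^n, \|\cdot\|_{n,1})$: for any $\bm{x}, \bm{y} \in (\C^d)^n$ and any $s \in [0,T]$, the coupling $\frac{1}{n}\sum_{i=1}^n \delta_{(x^i_s, y^i_s)}$ gives
\[
\W_{1,\R^d}(m^n_{\bm{x}_s}, m^n_{\bm{y}_s}) \le \frac{1}{n}\sum_{i=1}^n |x^i_s - y^i_s| \le \frac{1}{n}\|\bm{x} - \bm{y}\|_{n,1},
\]
so the triangle inequality for $\W_{1,\R^d}$ together with taking the supremum in $s$ shows that $\Phi$ is $1$-Lipschitz.

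Applying Theorem \ref{th:concentration-T1} to $\Phi$ with threshold $an/2$, valid as soon as $an/2$ exceeds the universal constant of that theorem (i.e.\ $n \gtrsim 1/a$), and dividing by $n$ inside the event, yields
\[
\PP\!\left( \sup_{s \in [0,T]} \W_{1,\R^d}(m^n_{\bm{X}_s}, \mu_s) > \frac{a}{2} + \E\!\left[\sup_{s \in [0,T]} \W_{1,\R^d}(m^n_{\bm{X}_s}, \mu_s)\right]\right) \le 3n \exp\!\left(-\frac{\delta}{4}\, a^2 n\right).
\]
To absorb the centering term into $a/2$, I would bound the expectation using Theorem \ref{th:LLN}, monotonicity of Wasserstein in $p$, and Jensen's inequality:
\[
\E\!\left[\sup_{s \in [0,T]} \W_{1,\R^d}(m^n_{\bm{X}_s}, \mu_s)\right] \le \sqrt{\E\!\left[\sup_{s \in [0,T]} \W_{2,\R^d}^2(m^n_{\bm{X}_s}, \mu_s)\right]} \le C_1 n^{-1/(d+8)}.
\]
Imposing $C_1 n^{-1/(d+8)} \le a/2$, equivalently $n \gtrsim a^{-(d+8)}$, then delivers the target probability bound.

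The two thresholds $n \gtrsim 1/a$ and $n \gtrsim a^{-(d+8)}$ bind respectively for $a \ge 1$ and $a \le 1$, so by enlarging the constant they combine into the single requirement $n \ge C/\min\{a, a^{d+8}\}$, after which a final relabeling of $\delta$ gives the stated inequality. This is essentially bookkeeping: the serious work has already been done in Theorem \ref{th:concentration-T1}, and the only thing to verify is that the polynomial rate from Theorem \ref{th:LLN} is strong enough to be absorbed into the $a/2$-half of the deviation. The main (modest) subtlety is the case analysis in $a$, handled by the use of $\min\{a,a^{d+8}\}$; no genuine obstacle is expected.
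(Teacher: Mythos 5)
Your proposal is correct and follows essentially the same route as the paper's own proof: verify that $\bm{x}\mapsto\sup_{s}\W_{1,\R^d}(m^n_{\bm{x}_s},\mu_s)$ is $(1/n)$-Lipschitz for $\|\cdot\|_{n,1}$, split the event into the deviation from the mean (handled by Theorem \ref{th:concentration-T1}) and the mean itself (handled by the $Cn^{-1/(d+8)}$ bound from Theorem \ref{th:LLN}), and combine the two thresholds on $n$ into $n\ge C/\min\{a,a^{d+8}\}$. The bookkeeping is sound; nothing is missing.
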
 
\begin{proof}
Note that $\bm{x} \mapsto \sup_{s \in [0,T]}\W_{1,\R^d}(m^n_{\bm{x}_s},\mu_s)$ is $(1/n)$-Lipschitz from $((\C^d)^n,\|\cdot\|_{n,1})$ to $\R$. 
Observe also from Theorem \ref{th:LLN} that $\E[\sup_{s \in [0,T]}\W_{1,\R^d}(m^n_{\bm{X}_s},\mu_s)] \le cn^{-1/(d+8)}$ for some  {$c < \infty$}.  Then, for any $a>0$, 
\begin{align*}
\PP&\left(\sup_{s \in [0,T]}\W_{1,\R^d}(m^n_{\bm{X}_s},\mu_s) > a \right) \\
	&\quad\quad\quad\le \PP\left(\sup_{s \in [0,T]}\W_{1,\R^d}(m^n_{\bm{X}_s},\mu_s) - \E\left[\sup_{s \in [0,T]}\W_{1,\R^d}(m^n_{\bm{X}_s},\mu_s)\right] > a/2 \right) \\
	& \quad\quad\quad\quad + \PP\left(\E\left[\sup_{s \in [0,T]}\W_{1,\R^d}(m^n_{\bm{X}_s},\mu_s)\right] > a/2 \right).
\end{align*}
The second term vanishes if $c n^{-1/(d+8)} \leq a/2$. The first term is bounded by the right-hand side of \eqref{def:concentration-W1} when $an \ge 2{\tilde{c}}$, with ${\tilde{c}}$ being defined as the constant  $C$ in the statement of Theorem \ref{th:concentration-T1}.
The corollary then holds with $C = \max ((2c)^{d+8}, 2 \tilde{c})$.
\end{proof}

{The proof of Theorem \ref{th:concentration-T1} relies on the following
well known result} {of} concentration of measure, borrowed from \cite[Theorem 2.3]{djellout-guillin-wu} and \cite[Theorem 3.1]{bobkov-gotze}, which asserts that the following are equivalent:
\begin{enumerate}[(i)] 
\item $\mu_0$ satisfies \eqref{def:initial-integrability} for some $\kappa > 0$.
\item There exists $\kappa > 0$ such that, for every $\varphi \in \mathrm{Lip}(\R^d)$, we have: 
\[
\mu_0(\varphi - \langle\mu_0,\varphi\rangle > a) \le \exp(-a^2/2\kappa).
\]
\item There exists {a finite} constant  $\kappa > 0$ such that  
\begin{align}
\W_{1, \R^d}(\mu_0,\nu) \le \sqrt{2\kappa \cR(\nu | \mu_0)}, \text{ for every } \nu \in \P^1(\R^d) \text{ with } \nu \ll \mu_0. \label{def:T1-inequality}
\end{align}
\end{enumerate}
Here $\cR$ denotes relative entropy, defined by
\begin{align}
\cR(\nu |\mu_0) = \begin{cases}
\displaystyle \int \frac{d\nu}{d\mu_0} \log \frac{d\nu}{d\mu_0}d\mu_0 &\text{if } \nu \ll \mu_0, \\
\infty &\text{otherwise,}
\end{cases} \label{def:relativeentropy}
\end{align}
where  $\nu \ll \mu_0$ denotes that $\nu$ is absolutely continuous with respect to $\mu_0$.  
In fact, the change in the constant $\kappa$ required between each of the conditions (i-iii) is universal, in particular independent of both $\mu_0$ and the underlying metric space.  
We refer the reader to the book of Ledoux \cite{ledoux2005concentration} for more discussion on concentration of measure and alternative formulations of (ii), some of which we collect in Section \ref{se:concentration-facts}.
The idea behind the proof of Theorem \ref{th:concentration-T1}, given in Section
 {\ref{se:MFGconcentrationproofs}},  
is to show that the law of the solution $\bm{X}$ on the path space $(\C^d)^n$ satisfies a transport inequality like \eqref{def:T1-inequality} with a constant that  depends optimally on the dimension $n$.

If we are willing to strengthen the condition \eqref{def:initial-integrability}, then we may sharpen Theorem \ref{th:concentration-T1} to make it \emph{dimension-free}, in the sense that the bound  {will no longer depend} on $n$.
The proof of Theorem \ref{th:concentration-T2} {below} has a similar flavor to that of Theorem \ref{th:concentration-T1}. 
The starting point for our strengthening of Theorem \ref{th:concentration-T1}, in Theorem \ref{th:concentration-T2}, is the remarkable result of Gozlan \cite{gozlan2009characterization} that shows that \emph{dimension-free} concentration is equivalent to the following quadratic transport inequality: 
\begin{align}
\W_{2,\R^d}(\mu_0,\nu) \le \sqrt{2\kappa \cR(\nu|\mu_0)}, \text{ for every } \nu \in \P^2(\R^d) \text{ with } \nu \ll \mu_0. \label{def:T2-inequality}
\end{align}
More precisely, there exists a finite constant  $\kappa >  0$ such that \eqref{def:T2-inequality} holds if and only if there exists $\delta > 0$ such that for every $n \in \N$, every $f \in \mathrm{Lip}((\R^d)^n)$ (using the usual Euclidean metric on $(\R^d)^n$), and every $a > 0$ we have:
\begin{align*}
\mu_0^n\left(f - \langle\mu_0^n,f\rangle > a\right) \le \exp(-\delta a^2).
\end{align*}
By now, many probability measures are known to satisfy \eqref{def:T2-inequality}. The standard Gaussian measure on $\R^d$, for instance, satisfies \eqref{def:T2-inequality} with $\kappa=1$. More generally, if $\mu_0(dx) = e^{-V(x)}dx$ for some twice continuously differentiable function $V$ on $\R^d$ with Hessian bounded below (in semidefinite order) by $c I$ for some $c > 0$, then $\mu_0$ satisfies \eqref{def:T2-inequality} with $\kappa = 1/c$; see \cite[Corollary 7.2]{gozlan-leonard}. Of course, Dirac measures satisfy \eqref{def:T2-inequality} trivially.

The following theorem is analogous to Theorem \ref{th:concentration-T1} but assumes \eqref{def:T2-inequality} in place of \eqref{def:T1-inequality}, or equivalently \eqref{def:initial-integrability}.

\begin{theorem} \label{th:concentration-T2}
Assume $\sigma_0=0$, and suppose Assumption \ref{assumption:A} holds, as well as either Assumptions \ref{assumption:B} or \ref{assumption:B'}. Assume there exists a finite constant $\kappa >  0$ such that \eqref{def:T2-inequality} holds.
Then there exist $C < \infty$ and $\delta_1,\delta_2 > 0$ such that, for every $a > 0$, every $n \ge C/a^2$, and every $\Phi \in \mathrm{Lip}((\C^d)^n,\|\cdot\|_{n,2})$, we have:
\begin{align}
\PP\left(\Phi(\bm{X}) - \E\Phi(\bm{X}) > a \right) \le 2n\exp(-\delta_1 a^2 n) + 2\exp(-\delta_2 a^2). \label{def:concentration-full-T2}
\end{align}
\end{theorem}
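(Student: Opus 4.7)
The plan is to first derive a dimension-free sub-Gaussian concentration inequality for the McKean--Vlasov particle system $\bm{\overline{X}}=(\overline{X}^1,\ldots,\overline{X}^n)$ via a quadratic transport inequality on path space, and then transfer it to the Nash system $\bm{X}$ using the pathwise version of the exponential equivalence \eqref{intro:exponentialequivalence}. The fact that the second term in \eqref{def:concentration-full-T2} carries no factor of $n$ in the exponent mirrors Gozlan's dimension-free characterization of $T_2$ on the initial law.

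First, I would view $\bm{\overline{X}}$ as a single SDE in $\R^{nd}$ driven by $(B^1,\ldots,B^n)$, with drift $\bm{B}(t,\bm{x}) = (\widetilde{b}(t,x_i,m^n_{\bm{x}}))_{i=1}^n$ and block-diagonal volatility $\mathrm{diag}(\sigma,\ldots,\sigma)$. The central observation is that $\bm{B}$ is Lipschitz in $\|\cdot\|_{n,2}$ with a constant \emph{independent} of $n$: squaring and summing the per-particle Lipschitz bound on $\widetilde{b}$ (with respect to $\W_{\pexp}\le \W_2$, valid since $\pexp\le 2$), and using that $\bm{x}\mapsto m^n_{\bm{x}}$ is $(1/\sqrt{n})$-Lipschitz from $((\R^d)^n,|\cdot|)$ into $(\P^2(\R^d),\W_2)$, exactly cancels the extra factor of $n$ coming from the summation. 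Under hypothesis \eqref{def:T2-inequality}, Gozlan's dimension-free characterization (equivalently, tensorization of $T_2$) gives that $\mu_0^n$ satisfies $T_2(\kappa)$ on $((\R^d)^n,|\cdot|)$ with the same $\kappa$ for every $n$. Invoking the Djellout--Guillin--Wu stability of $T_2$ under SDE flows, one then obtains that the path law of $\bm{\overline{X}}$ on $(\C^d)^n$ satisfies $T_2(\widetilde{\kappa})$ in the metric $\|\cdot\|_{n,2}$, with $\widetilde{\kappa}<\infty$ independent of $n$. The Bobkov--G\"otze characterization yields the dimension-free concentration
\[
\PP\bigl(\Phi(\bm{\overline{X}})-\E\Phi(\bm{\overline{X}}) > a \bigr) \le \exp(-\delta_2 a^2), \qquad a>0,
\]
for every $\Phi \in \mathrm{Lip}((\C^d)^n,\|\cdot\|_{n,2})$, with $\delta_2 = 1/(2\widetilde{\kappa})$ independent of $n$.

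Second, I would transfer the concentration from $\bm{\overline{X}}$ to $\bm{X}$. The proof of \eqref{intro:exponentialequivalence} actually furnishes the stronger pathwise bound $\PP(\|\bm{X}-\bm{\overline{X}}\|_{n,2} > b) \le 2n \exp(-\delta_1 b^2 n)$ for $b \ge C/\sqrt{n}$, via the synchronous coupling of $\bm{X}$ and $\bm{\overline{X}}$ (using $\W_{2,\C^d}^2(m^n_{\bm{X}},m^n_{\bm{\overline{X}}}) \le (1/n)\|\bm{X}-\bm{\overline{X}}\|_{n,2}^2$ to relate the two formulations). Since $|\Phi(\bm{X})-\Phi(\bm{\overline{X}})| \le \|\bm{X}-\bm{\overline{X}}\|_{n,2}$, integrating the exponential tail bounds $|\E\Phi(\bm{X})-\E\Phi(\bm{\overline{X}})|$ by $C'/\sqrt{n}$, which is at most $a/2$ once $n \ge 4C'^2/a^2$. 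Decomposing $\Phi(\bm{X})-\E\Phi(\bm{X})$ via addition and subtraction of $\Phi(\bm{\overline{X}})$ and $\E\Phi(\bm{\overline{X}})$, and applying a union bound with the threshold split $a = a/2 + a/2$, produces the desired estimate \eqref{def:concentration-full-T2}.

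The principal obstacle is not analytical but careful bookkeeping: every constant in the chain (the Lipschitz constant of $\bm{B}$, the Djellout--Guillin--Wu constant $\widetilde{\kappa}$, and the rate in the exponential equivalence) must be tracked to ensure it is free of $n$-dependence. The essential structural ingredients for this uniformity are the condition $\pexp \le 2$ in Assumption \ref{assumption:A}(1), which allows $\W_{\pexp}$ to be dominated by $\W_2$, and the uniform boundedness of $D_xU$, $D_mU$ from Assumption \ref{assumption:A}(5), which propagates to a dimension-free Lipschitz constant for $\widetilde{b}$.
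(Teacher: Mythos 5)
Your proposal is correct and follows essentially the same route as the paper: dimension-free concentration for $\bm{\overline{X}}$ is obtained exactly as in Theorem \ref{th:concentration-mckeanvlasov-T2} (Lipschitz constant of the $n$-particle drift independent of $n$, Djellout--Guillin--Wu on path space, Gozlan/tensorization of \eqref{def:T2-inequality} for the initial law), and the transfer to $\bm{X}$ uses the same three-term decomposition with Theorem \ref{th:concentration} for the pathwise difference and the $O(n^{-1/2})$ bound on $\E\|\bm{X}-\bm{\overline{X}}\|_{n,2}$ for the bias. The only imprecision is your claim that the path law of $\bm{\overline{X}}$ satisfies a full $T_2$ inequality in $\|\cdot\|_{n,2}$: the paper instead uses the $\W_1\le\sqrt{2\kappa\cR}$ inequality for the laws conditional on $\bm{\overline{X}}_0$ (Theorem \ref{th:ap:T1}) and handles the initial randomness separately by conditioning, which is all that is needed for sub-Gaussian concentration of Lipschitz functionals.
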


We immediately obtain an improvement of Corollary \ref{co:concentration-W1}:

\begin{corollary} \label{co:concentration-W2}
Under the assumptions of Theorem \ref{th:concentration-T2},
there exist $C < \infty$ and $\delta_{1},\delta_{2} > 0$ 
such that, for every $a > 0$ and every $n \ge C/\min(a,a^{d+8})$, we have: 
\begin{align}
\PP\left(\sup_{s \in [0,T]}\W_{2,\R^d}(m^n_{\bm{X}_s},\mu_s) > a \right) \le 2n\exp(-\delta_1 a^2 n^2) + 2\exp(-\delta_2 a^2 n). \label{def:concentration-W2}
\end{align}
\end{corollary}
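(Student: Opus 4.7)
The plan is to mimic the proof of Corollary \ref{co:concentration-W1} but with the $\|\cdot\|_{n,2}$ norm on path space and $\W_2$ in place of $\W_1$. The key geometric input is the Lipschitz constant of the map $\Psi : \bm{x} \mapsto \sup_{s \in [0,T]} \W_{2,\R^d}(m^n_{\bm{x}_s},\mu_s)$ from $((\C^d)^n, \|\cdot\|_{n,2})$ to $\R$. Using the synchronous coupling and the triangle inequality,
\[
\W_{2,\R^d}^2(m^n_{\bm{x}_s},m^n_{\bm{y}_s}) \le \frac{1}{n}\sum_{i=1}^n |x^i_s - y^i_s|^2 \le \frac{1}{n}\sum_{i=1}^n \|x^i-y^i\|_\infty^2 = \frac{1}{n}\|\bm{x}-\bm{y}\|_{n,2}^2,
\]
so $\Psi$ is $(1/\sqrt{n})$-Lipschitz with respect to $\|\cdot\|_{n,2}$. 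Equivalently, $\sqrt{n}\,\Psi$ is $1$-Lipschitz, which is the correct normalization to feed into Theorem \ref{th:concentration-T2}.

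Next I would bound the mean. Theorem \ref{th:LLN} gives $\E[\sup_s \W_{2,\R^d}^2(m^n_{\bm{X}_s},\mu_s)] \le C n^{-2/(d+8)}$, hence by Jensen's inequality $\E[\Psi(\bm{X})] \le c\, n^{-1/(d+8)}$ for some finite $c$. In particular, the mean $\E[\Psi(\bm{X})]$ does not exceed $a/2$ as soon as $n \ge (2c/a)^{d+8}$.

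Now I would split
\[
\PP\bigl(\Psi(\bm{X}) > a\bigr) \le \PP\bigl(\Psi(\bm{X}) - \E[\Psi(\bm{X})] > a/2\bigr) + \PP\bigl(\E[\Psi(\bm{X})] > a/2\bigr),
\]
so that the second term vanishes once $n \ge (2c/a)^{d+8}$. For the first term, apply Theorem \ref{th:concentration-T2} to the $1$-Lipschitz function $\sqrt{n}\,\Psi$ at level $a\sqrt{n}/2$, obtaining
\[
\PP\bigl(\Psi(\bm{X}) - \E[\Psi(\bm{X})] > a/2\bigr) \le 2n\exp\!\bigl(-\delta_1 a^2 n^2/4\bigr) + 2\exp\!\bigl(-\delta_2 a^2 n/4\bigr),
\]
valid whenever the hypothesis $n \ge C/(a\sqrt{n}/2)^2$ of Theorem \ref{th:concentration-T2} holds, which simplifies to $n \ge 2\sqrt{C}/a$. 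Taking $C$ large enough that the single condition $n \ge C/\min(a,a^{d+8})$ implies both $n \ge 2\sqrt{C}/a$ and $n \ge (2c/a)^{d+8}$, and relabeling the constants $\delta_1/4 \mapsto \delta_1$, $\delta_2/4 \mapsto \delta_2$, yields the claimed bound.

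There is no real obstacle here; the only subtle point is bookkeeping the exponents of $n$: the Lipschitz constant $1/\sqrt{n}$ rather than $1/n$ is what upgrades the tail from $\exp(-\delta a^2 n)$ in Corollary \ref{co:concentration-W1} to the two-regime form $\exp(-\delta_1 a^2 n^2) + \exp(-\delta_2 a^2 n)$ here, the second term reflecting the dimension-free piece of Theorem \ref{th:concentration-T2}.
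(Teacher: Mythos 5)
Your proof is correct and follows exactly the route the paper takes: the $n^{-1/2}$-Lipschitz property of $\bm{x}\mapsto\sup_s\W_{2,\R^d}(m^n_{\bm{x}_s},\mu_s)$ with respect to $\|\cdot\|_{n,2}$, the mean bound $cn^{-1/(d+8)}$ from Theorem \ref{th:LLN} via Jensen, and the mean/deviation split fed into Theorem \ref{th:concentration-T2}. The paper's own proof is just a terser version of the same argument, so nothing further is needed.
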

\begin{proof}
Similar to Corollary \ref{co:concentration-W1}, this follows from Theorem \ref{th:concentration-T2}: Note first that the mapping $\bm{x} \mapsto \sup_{s \in [0,T]}\W_{2,\R^d}(m^n_{\bm{x}_s},\mu_s)$ is $n^{-1/2}$-Lipschitz from $((\C^d)^n,\|\cdot\|_{n,2})$ to $\R$. Then, by Theorem \ref{th:LLN}, we have $\E[\sup_{s \in [0,T]}\W_{\R^d,2}(m^n_{\bm{X}_s},\mu_s)] \le c n^{-1/(d+8)}$ for a constant $c < \infty$.
\end{proof}

A final notable corollary allows us to estimate the distance between the $n$-player and $k$-player games, for different population sizes $n$ and $k$.  This follows immediately from Corollaries \ref{co:concentration-W1} and \ref{co:concentration-W2}, using the triangle inequality:

\begin{corollary} \label{co:concentration-nk}
Under the assumptions of Theorem \ref{th:concentration-T1},
there exist $C < \infty$ and $\delta > 0$ such that, for every $a > 0$ and every $n,k \ge C/\min\{a,a^{d+8}\}$, we have:
\begin{align*}
\PP\left(\sup_{s \in [0,T]}\W_{1,\R^d}(m^n_{\bm{X}_s},m^k_{\bm{X}_s}) > a \right) \le 3n\exp(-\delta a^2n) + 3k\exp(-\delta a^2k).
\end{align*}
Alternatively, under the assumptions of Theorem \ref{th:concentration-T2},
there exist $C < \infty$ and $\delta_{1},\delta_{2} > 0$ such that, for every $a > 0$ and every $n,k \ge C/\min(a,a^{d+8})$, we have: 
\begin{align*}
\PP\left(\sup_{s \in [0,T]}\W_{2,\R^d}(m^n_{\bm{X}_s},m^k_{\bm{X}_s}) > a \right) \le \,&2n\exp(-\delta_1 a^2 n^2) + 2\exp(-\delta_2 a^2 n) \\
	&+ 2k\exp(-\delta_1 a^2 k^2) + 2\exp(-\delta_2 a^2 k).
\end{align*}
\end{corollary}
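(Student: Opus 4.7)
The plan is to decouple the two empirical measures through the mean field limit $\mu$ via the triangle inequality for the Wasserstein distance, reducing the claim immediately to Corollaries \ref{co:concentration-W1} and \ref{co:concentration-W2}.

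First, for each $s \in [0,T]$ and each $p \in \{1,2\}$, the triangle inequality for $\W_{p,\R^d}$ yields
\[
\W_{p,\R^d}(m^n_{\bm{X}_s},m^k_{\bm{X}_s}) \le \W_{p,\R^d}(m^n_{\bm{X}_s},\mu_s) + \W_{p,\R^d}(m^k_{\bm{X}_s},\mu_s).
\]
Taking the supremum over $s \in [0,T]$ on both sides (and using that the sup of a sum is bounded by the sum of the sups) preserves the inequality. Consequently, the event $\{\sup_{s} \W_{p,\R^d}(m^n_{\bm{X}_s},m^k_{\bm{X}_s}) > a\}$ is contained in the union of the events $\{\sup_{s}\W_{p,\R^d}(m^n_{\bm{X}_s},\mu_s) > a/2\}$ and $\{\sup_{s}\W_{p,\R^d}(m^k_{\bm{X}_s},\mu_s) > a/2\}$, so a union bound gives
\[
\PP\!\left(\sup_{s \in [0,T]} \W_{p,\R^d}(m^n_{\bm{X}_s},m^k_{\bm{X}_s}) > a\right) \le \PP\!\left(\sup_{s}\W_{p,\R^d}(m^n_{\bm{X}_s},\mu_s) > a/2\right) + \PP\!\left(\sup_{s}\W_{p,\R^d}(m^k_{\bm{X}_s},\mu_s) > a/2\right).
\]

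Second, I apply Corollary \ref{co:concentration-W1} (in the $p=1$ case) or Corollary \ref{co:concentration-W2} (in the $p=2$ case) to each term on the right, with $a/2$ replacing $a$. This yields bounds of the advertised exponential form in each case, with constants $\delta,\delta_1,\delta_2$ modified by the factor $1/4$ coming from $(a/2)^2$, and with the threshold condition $n,k \geq C'/\min\{a/2,(a/2)^{d+8}\}$ absorbed into a new constant $C$ by enlarging the original $C$ appropriately (specifically, by a factor of $2^{d+8}$).

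There is essentially no obstacle here beyond bookkeeping of constants: the only subtlety is ensuring that the threshold $n,k \ge C/\min\{a,a^{d+8}\}$ in the statement is large enough to cover the threshold needed to apply the corollaries at level $a/2$, which is why a larger $C$ (depending on $d$) must be chosen. The replacement of $a$ by $a/2$ forces the halving of the exponents $\delta$, $\delta_1$, $\delta_2$, but since these constants are generic and are redefined in the statement of Corollary \ref{co:concentration-nk}, no issue arises.
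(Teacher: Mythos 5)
Your proof is correct and is exactly the argument the paper intends: the paper's proof of this corollary is simply the remark that it "follows immediately from Corollaries \ref{co:concentration-W1} and \ref{co:concentration-W2}, using the triangle inequality," which is precisely your decomposition through $\mu_s$ plus a union bound at level $a/2$. Your bookkeeping of the constants ($\delta \mapsto \delta/4$ and $C \mapsto 2^{d+8}C$) is accurate and fills in the only details the paper leaves implicit.
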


\begin{remark}
The exponent $d+8$ that  appears in all of the corollaries of this section is suboptimal, stemming from our application of the  second  part of Theorem \ref{th:LLN} (which hinges on results of \cite{horowitz1994mean}).
But we obtained a better rate (coming from \cite{fournier-guillin}) in Theorem \ref{th:LLN} by taking the supremum \emph{outside} of the expectation. 
With this in mind, one easily derives analogs of Corollaries \ref{co:concentration-W1}, \ref{co:concentration-W2}, and \ref{co:concentration-nk} in which the supremum is outside of the probability and expectation. For instance, in the setting of Corollary \ref{co:concentration-W1}, there exist constants $C < \infty$ and $\delta > 0$ such that for every $a > 0$ and $n \in \N$ satisfying $a \ge C\max\{n^{-1},r_{n,1}\}$ we have:
\begin{align*}
\sup_{s \in [0,T]}\PP\left(\W_{1,\R^d}(m^n_{\bm{X}_s},\mu_s) > a \right) \le 3n\exp(-\delta n a^2).
\end{align*}
The key advantage is that the requirement $a \ge C\max\{n^{-1},r_{n,1}\}$ is much weaker; for a fixed $a$ this inequality ``kicks in'' for much smaller $n$, as $r_{n,1} \le n^{-1/(d+8)}$.
\end{remark}

\begin{remark}
When there is common noise, it is natural to wonder what remains of these concentration bounds. One certainly cannot expect exactly the same results to hold, because concentration requires a degree of independence; for example, in the degenerate case where $X^i \equiv W$ for all $i$, and Theorems \ref{th:concentration-T1} and \ref{th:concentration-T2} clearly fail. See Remark \ref{re:concentration-commonnoise} for a brief discussion of this possibility.
\end{remark}

\subsection{Large deviations} \label{se:largedeviations-statements}
In this section, we state a large deviation principle (LDP) for the sequence $(m^n_{\bm{X}_t})_{t \in [0,T]}$ regarded as a sequence of random variables with values in the space $C([0,T];\P^1(\R^d))$, where $\P^1(\R^d)$ is equipped with the $1$-Wasserstein distance, 
and $C([0,T];\P^1(\R^d))$ is equipped with the resulting uniform topology.  Below, let 
$C^\infty_c(\RR^d)$  {denote} the  {space of} smooth compactly supported functions on $\R^d$. 
It is convenient here to define
\begin{align}
\widetilde{b}(t,x,m) := \widehat{b}(x,m,D_xU(t,x,m)) = b(x,m,\widehat{\alpha}(x,m,D_xU(t,x,m))), \label{def:btilde}
\end{align}
 {with $\widehat{\alpha}$ being the minimizer in Assumption \ref{assumption:A}(1).}  

Following \cite{dawson-gartner},  
 {we now introduce the action functional, which requires the following definition:} 
 we say that a distribution-valued path $t \mapsto \nu_t$ defined on $[0,T]$ is absolutely continuous if,
 for each compact set $K \subset \R^d$, there exists a neighborhood $U_{K}$ of $0$ (for the inductive topology)
 in the space $C_{K}(\R^d)$ of functions in $C^\infty_c(\RR^d)$ whose support is included in $K$ and 
an absolutely continuous function $\delta_{K} : [0,T] \rightarrow \RR$ such that 
\begin{equation*}
\bigl\vert \langle \mu_{t},f \rangle - \langle \mu_{s},f \rangle 
\bigr\vert \leq \bigl\vert \delta_{K}(t) - \delta_{K}(s) \bigr\vert, \quad s,t \in [0,T], \quad f \in U_K.  
\end{equation*}
We refer to \cite{dawson-gartner} for more details. 
 { The action functional $I : C([0,T];\P^1(\R^d)) \rightarrow [0,\infty]$ is then
given by} 
\begin{align}
I(\nu) = \begin{cases}
\frac{1}{2}\int_0^T\|\dot{\nu}_t - \L^*_{t,\nu_t}\nu_t\|_{\nu_t}^2dt &\text{if } t \mapsto \nu_t \text{ is absolutely continuous,} \\
\infty &\text{otherwise},
\end{cases} \label{def:rate-function}
\end{align}
where, for $(t,m) \in [0,T] \times \P^1(\R^d)$, $\L_{t,m}^*$ is the formal adjoint of the operator  
\begin{equation*}
\cL_{t,m} \varphi = \frac{1}{2}\textrm{\rm Tr}\bigl[ \sigma \sigma^\top D^2_x \varphi \bigr] + D_x \varphi \cdot \widetilde{b}(t,\cdot,m),
\end{equation*}
for $\varphi \in C^{\infty}_c(\RR^d)$, and the seminorm $\|\cdot\|_m$ acts on Schwartz distributions by
\[
\|\gamma\|^2_m := \sup_{\stackrel{\varphi \in C^\infty_c(\RR^d)}{\langle m,|D_x \varphi|^2\rangle \neq 0}}\frac{\langle \gamma, \varphi\rangle^2}{\langle m, |D_x \varphi|^2\rangle},
\]
the notation $\langle \cdot, \cdot \rangle$ here denoting the duality bracket.

We may now state the first main LDP, which covers the case without common noise ($\sigma_0=0$).

\begin{theorem}
  \label{th:LDP}
Assume $\pexp=1$ and $\sigma_0=0$, and suppose Assumption \ref{assumption:A} and either Assumption \ref{assumption:B} or \ref{assumption:B'} hold.  Suppose also that
\[
\int_{\R^d}\exp\left(\lambda|x|\right)\mu_0(dx) < \infty, \text{ for all } \lambda > 0.
\]
Then the sequence $(m^n_{\bm{X}_t}, t \in [0,T])_{n \in \N}$
satisfies a large deviation principle on $C([0,T];\P^1(\R^d))$, with good rate function 
$\nu = (\nu_{t})_{t \in [0,T]} \mapsto I(\nu) + \cR(\nu_{0} | \mu_{0})$, where $I$ is given by \eqref{def:rate-function} and $\cR$ is as in \eqref{def:T1-inequality}.
\end{theorem}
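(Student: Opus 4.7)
The plan is to proceed in two main stages: first reduce the LDP for $(m^n_{\bm{X}_t})_{t \in [0,T]}$ to an LDP for the McKean-Vlasov empirical measure $(m^n_{\bm{\overline{X}}_t})_{t \in [0,T]}$ via the exponential equivalence \eqref{intro:exponentialequivalence}, and then establish that LDP directly using a Girsanov change of measure together with Sanov's theorem. For the first stage, observe that $\W_{1,\R^d}(m^n_{\bm{x}_t}, m^n_{\bm{y}_t}) \le \W_{1,\C^d}(m^n_{\bm{x}}, m^n_{\bm{y}}) \le \W_{2,\C^d}(m^n_{\bm{x}}, m^n_{\bm{y}})$ for every $t \in [0,T]$, so the path-space estimate $\PP(\W_{2,\C^d}(m^n_{\bm{X}}, m^n_{\bm{\overline{X}}}) > a) \le 2n e^{-\delta a^2 n^2}$ controls the uniform-in-$t$ $\W_{1,\R^d}$-distance between the two measure flows. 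Since the prefactor $2n$ is sub-exponential at speed $n$, this provides exponential equivalence at speed $n$ in the target space $C([0,T]; \P^1(\R^d))$, and the standard transfer principle for exponentially equivalent sequences reduces the theorem to proving the LDP for $(m^n_{\bm{\overline{X}}_t})_{t \in [0,T]}$.

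Next, I would establish the LDP for $m^n_{\bm{\overline{X}}}$ viewed as a random element of $\P^1(\C^d)$, and then deduce the target LDP on $C([0,T]; \P^1(\R^d))$ via the contraction principle applied to the continuous marginal map $\pi \mapsto (\pi_t)_{t \in [0,T]}$. For the path-space LDP, introduce a reference measure $\mathbb{Q}$ under which $\overline{X}^1,\dots,\overline{X}^n$ are i.i.d., each with the law $Q_0$ of $x + \sigma B^1$ with $x \sim \mu_0$ independent of $B^1$. Under $\mathbb{Q}$, the empirical measure $m^n_{\bm{\overline{X}}}$ satisfies Sanov's theorem in $\W_{1,\C^d}$ with rate function $\cR(\,\cdot\, | Q_0)$; this upgraded topology (as opposed to weak convergence) is justified by the full exponential integrability assumed of $\mu_0$, which propagates through the bounded drift $\widetilde{b}$. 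The Radon-Nikodym derivative $d\PP/d\mathbb{Q}$ factors as a product of exponential martingales in $\widetilde{b}(t, \overline{X}^i_t, m^n_{\bm{\overline{X}}_t})$, where $\widetilde{b}$ is bounded and Lipschitz by Assumption \ref{assumption:A}, so $d\PP/d\mathbb{Q} = \exp(n\,\Psi(m^n_{\bm{\overline{X}}}, \bm{B}))$ is a continuous functional of the empirical measure modulo negligible terms. A Laplace-principle / Varadhan-type calculation combining Sanov under $\mathbb{Q}$ with the Girsanov weight then produces a full LDP for $m^n_{\bm{\overline{X}}}$ under $\PP$.

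The final step is to identify the resulting rate function with $\cR(\nu_0 | \mu_0) + I(\nu)$. Writing the Sanov-plus-Girsanov rate as $\cR(\nu | P^\nu)$, where $P^\nu$ is the law on $\C^d$ of the single-particle SDE with drift $\widetilde{b}(t,\cdot,\nu_t)$ and initial law $\mu_0$, the chain rule for relative entropy decomposes this as $\cR(\nu_0|\mu_0) + \E_{\nu_0}[\cR(\nu^{\,\cdot\,|x}\,|\,P^{\nu,\cdot|x})]$. The conditional term is then matched to the Dawson-Gärtner functional $I(\nu)$ via the variational characterization of the $\|\cdot\|_{\nu_t}$ seminorm: for absolutely continuous flows, the squared seminorm $\|\dot\nu_t - \L^*_{t,\nu_t}\nu_t\|^2_{\nu_t}$ equals the minimal $L^2(\nu_t)$-cost of a vector field whose addition to the drift $\widetilde{b}(t,\cdot,\nu_t)$ produces the flow $\nu_t$, which is precisely the Girsanov cost. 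Goodness of the rate function follows from coercivity of relative entropy combined with the exponential integrability of $\mu_0$, which guarantees tightness of sub-level sets in $\W_1$. I expect the main obstacle to be this clean identification of the rate function in Dawson-Gärtner form, since the seminorm $\|\cdot\|_{\nu_t}$ is defined implicitly and the time-dependence of $\widetilde{b}$, though not a serious difficulty given its uniform Lipschitz and boundedness properties, requires a mild extension of the argument in \cite{dawson-gartner} (the constant-diffusion structure allows the contraction-style argument the authors mention to go through without further regularity).
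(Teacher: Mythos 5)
Your first stage (exponential equivalence at speed $n$ via Theorem \ref{th:concentration}, followed by a transfer principle) is exactly the paper's reduction (Corollary \ref{co:exponential-equivalence}), and your last stage (identifying the rate function as $\cR(\nu_0|\mu_0)$ plus a Dawson--G\"artner action via a chain rule for relative entropy) is in the same spirit as the paper's Lemma \ref{lem:id:rate:function}, which does this through the Donsker--Varadhan formula and \cite[Lemma 4.6]{dawson-gartner}. The middle stage, however, is where you diverge from the paper and where there is a genuine gap. You propose the Ben Arous--Brunaud route: tilt to a product reference measure $\mathbb{Q}$, apply Sanov in $\W_{1,\C^d}$, and transfer back by Varadhan's lemma, asserting that $d\PP/d\mathbb{Q}=\exp(n\,\Psi(m^n_{\bm{\overline{X}}},\bm{B}))$ with $\Psi$ ``a continuous functional of the empirical measure modulo negligible terms.'' That assertion is precisely the hard point and is not true as stated: the Girsanov exponent contains the term $\sum_{i}\int_0^T \sigma^{-1}\widetilde{b}(t,\overline{X}^i_t,m^n_{\bm{\overline{X}}_t})\cdot dW^i_t$, a stochastic integral which is \emph{not} a continuous (or even pathwise-defined) functional of the empirical measure on $\P^1(\C^d)$. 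For gradient-type drifts one removes it by It\^o's formula (this is why \cite{benarous-brunaud} is restricted to such drifts), but here $\widetilde{b}(t,x,m)=\widehat{b}(x,m,D_xU(t,x,m))$ is a general time-dependent, measure-dependent Lipschitz field with no gradient structure, so Varadhan's lemma does not apply directly and a substantial approximation argument (or a switch to the weak-convergence method of \cite{budhiraja-dupuis-fischer}) would be needed to close this step.

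The paper avoids this entirely by a different contraction: it forms the empirical measure $\bar\cQ^n=\frac1n\sum_i\delta_{(X^i_0,B^i)}$ of the \emph{inputs} (initial states and driving noises), proves Sanov in $\W_1$ for $(\bar\cQ^n,W)$ (Proposition \ref{pr:weakLDP-driftless}), shows the McKean--Vlasov solution map $\Phi$ is uniformly continuous from $\P^1(\R^d\times\C^d_0)\times\C^d_0$ into $C([0,T];\P^1(\R^d))$ (Lemma \ref{le:Phi-uniformlycontinuous}, using only that $\widetilde{b}$ is bounded and Lipschitz and $\sigma,\sigma_0$ are constant), and then applies the contraction principle; goodness of the rate function in the case $\sigma_0=0$ is then automatic (Proposition \ref{prop:LDP:F:compact}) rather than requiring the separate coercivity argument you sketch. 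If you wish to keep your Girsanov route, you must either impose gradient structure on $\widetilde{b}$ or supply the missing exponential-approximation argument for the stochastic-integral term; otherwise I would recommend replacing your second stage with the input-contraction argument.
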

\begin{proof}
The claim will follow from Theorem \ref{th:largedeviations:b} and
Proposition \ref{prop:LDP:F:compact} after observing that the rate
function $\tilde{J}^{\sigma_0,\mu_0}$ therein coincides with the
the rate function $I(\nu) + \cR(\nu_{0} | \mu_{0})$ given above, thanks to Theorem 
\ref{thm:expression:Jbeta}. 
\end{proof}

This follows almost immediately from the results of \cite{dawson-gartner} on large deviations for McKean-Vlasov particle systems, once the exponential equivalence of the Nash system and the McKean-Vlasov system is established. However, we revisit this classical question of large deviations from the McKean-Vlasov limit and provide a simpler self-contained proof based on the contraction principle, which is possible in our setting because the volatility coefficients are constant. Our main interest in providing our own proof is in addressing the case with common noise, for which there are no known results. 
This leads to the \emph{weak LDP}
of Theorem \ref{thm:sec:3:weak:ldp} below, for which we must first develop some notation.  

We first introduce $(\tau_{x} : \RR^d \ni z \mapsto z-x)_{x \in \RR^d}$ the group of translations on $\RR^d$, as well as  the orthogonal projection {${{\Proj}_{\sigma^{-1}\sigma_0}}$ from $\RR^d$ onto the image of $\sigma^{-1}\sigma_{0}$}.
Then, for any continuous path $\phi$ from $[0,T]$ into $\RR^d$, we 
define $\widetilde{I}^{\phi}$ to be the rate function as given by \eqref{def:rate-function}, but modified by replacing the drift $\widetilde{b}$ with $(t,x,m) \mapsto \widetilde{b}(t,x+\phi_{t},m \circ \tau_{-\phi_{t}}^{-1})$ where it appears in the operator $\cL_{t,m}$.
Also, for a path $\nu \in C([0,T];\cP^1(\RR^d))$, we let
\[  
  {\mathbb M}^{{\widetilde{b}},\nu}_t := 
\biggl({\sigma{\Proj}_{\sigma^{-1}\sigma_0}\sigma^{-1}} \biggl(\int_{\R^d}x\,d(\nu_t-\nu_0)(x)
-\int_{0}^t \langle \nu_{s},\widetilde{b}(s,\cdot,\nu_{s}) \rangle ds \biggr)\biggr)_{t \in [0,T]}. 
\] 
This allows us to define the following functional: 
\begin{equation*}
\begin{split}
&J^{\sigma_0}(\nu) = \widetilde{I}^{{\mathbb M}^{ {\widetilde{b}},\nu}}\left((\nu_t \circ \tau_{{\mathbb M}^{ {\widetilde{b}},\nu}_t}^{-1})_{t \in [0,T]}\right).
\end{split}
\end{equation*}
We may now state the weak LDP, valid even when there is common noise. Recall in the following that $\cR$ denotes the relative entropy, defined in \eqref{def:relativeentropy}.

\begin{theorem} \label{thm:sec:3:weak:ldp}
Assume $\pexp=1$, and suppose Assumption \ref{assumption:A} and either Assumptions \ref{assumption:B} or \ref{assumption:B'} hold.  Suppose also that
\[
\int_{\R^d}\exp\left(\lambda|x|\right)\mu_0(dx) < \infty, \text{ for all } \lambda > 0.
\]
Then the sequence ${(m^n_{\bm{X}_t}, t \in [0,T])_{n \in \N}}$ 
satisfies the following weak large deviation principle in $C([0,T];\P^{1}(\R^d))$:
\begin{enumerate}[(i)]
\item For any open subset $O$ of $C([0,T];\cP^{1}(\RR^d))$, 
\begin{equation*}
\begin{split}
\liminf_{n \rightarrow \infty} \frac1n \log \PP( m^n_{\bm{X}_\cdot} \in O) 
&\geq  - \inf_{\nu \in O} \bigl( J^{\sigma_{0}}(\nu) + \cR(\nu_{0}| \mu_{0})\bigr).
\end{split}
\end{equation*} 
\item For any compact subset $K$ of $C([0,T];\cP^{1}(\RR^d))$, 
\begin{equation*}
\begin{split}
\limsup_{n \rightarrow \infty} \frac1n \log \PP( m^n_{\bm{X}_\cdot} \in K) 
&\leq  - \inf_{\nu \in K} \bigl( J^{\sigma_{0}}(\nu) + \cR(\nu_{0}| \mu_{0})\bigr).
\end{split}
\end{equation*}
\item For any closed subset $F$ of $C([0,T];\cP^{1}(\RR^d))$, 
\begin{equation*}
\begin{split}
\limsup_{n \rightarrow \infty} \frac1n \log \PP( m^n_{\bm{X}_\cdot} \in F ) 
&\leq - \lim_{\delta \searrow 0}
\inf_{\nu \in F_{\delta}} \bigl( J^{\sigma_{0}}(\nu) + \cR(\nu_{0}| \mu_{0}) \bigr).
 \end{split}
\end{equation*}
where $F_{\delta} = \{\nu \in C([0,T];\cP^{1}(\RR^d)): 
\inf_{\widetilde{\nu} \in F} \sup_{t \in [0,T]} \W_{1}(\widetilde{\nu}_{t},\nu_{t}) \leq \delta \}$.    
\end{enumerate}
\end{theorem}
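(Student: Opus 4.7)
The overall strategy is a three-step reduction: first transfer the LDP from the Nash system to the McKean-Vlasov system via exponential equivalence; then derive a conditional LDP given the common noise $W$; and finally integrate out $W$ to extract the rate function $J^{\sigma_0}$.

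\emph{Step 1: reduction via exponential equivalence.} The bound $2ne^{-\delta a^2 n^2}$ in \eqref{intro:exponentialequivalence} is superexponential at speed $n$, so $(m^n_{\bm{X}})$ and $(m^n_{\overline{\bm{X}}})$ are exponentially equivalent in $\W_{2,\C^d}$, and \emph{a fortiori} in the coarser uniform topology on $C([0,T];\cP^1(\R^d))$. By the standard transfer principle for LDPs, it therefore suffices to establish bounds of the form (i)--(iii) for the McKean-Vlasov empirical measures $m^n_{\overline{\bm{X}}}$ defined by \eqref{MKVsystem1} with drift $\widetilde{b}$ from \eqref{def:btilde}.

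\emph{Step 2: conditional LDP given $W$.} Fix a realization $W=\omega$ (with $\omega_0=0$). The translated particles $Y^i_t := \overline{X}^i_t - \sigma_0\omega_t$ satisfy $Y^i_0 = X^i_0 \sim \mu_0$ and the no-common-noise McKean-Vlasov SDE
\begin{equation*}
dY^i_t = \widetilde{b}\bigl(t,\,Y^i_t+\sigma_0\omega_t,\,m^n_{\bm{Y}_t}\circ\tau^{-1}_{-\sigma_0\omega_t}\bigr)\,dt + \sigma\,dB^i_t,
\end{equation*}
whose drift coincides exactly with the $(\sigma_0\omega)$-shifted drift appearing in the definition of $\widetilde{I}^{\sigma_0\omega}$. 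A parametric version of the no-common-noise McKean-Vlasov LDP underlying Theorem \ref{th:LDP}---obtained by the same contraction-principle argument applied to the i.i.d.\ driving pairs $(X^i_0,B^i)$ via Sanov's theorem, composed with the Lipschitz solution map of the shifted McKean-Vlasov equation (Lipschitz jointly in the measure argument and in $\omega$)---then yields a conditional LDP at speed $n$ for $m^n_{\bm{Y}}$, uniformly for $\omega$ in compact subsets of $C([0,T];\R^{d_0})$, with rate function $\widetilde{\nu}\mapsto \widetilde{I}^{\sigma_0\omega}(\widetilde{\nu})+\cR(\widetilde{\nu}_0|\mu_0)$. Converting back to $\nu := m^n_{\overline{\bm{X}}}$ via $m^n_{\bm{Y}_t}=\nu_t\circ\tau^{-1}_{\sigma_0\omega_t}$ gives the conditional rate function $\nu \mapsto \widetilde{I}^{\sigma_0\omega}((\nu_t\circ\tau^{-1}_{\sigma_0\omega_t})_t)+\cR(\nu_0|\mu_0)$.

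\emph{Step 3: integrating out $W$.} Writing $\PP(m^n_{\overline{\bm{X}}}\in A)=\int \PP(m^n_{\overline{\bm{X}}}\in A\mid W=\omega)\,\mathrm{d}\PP_W(\omega)$ and using that the law $\PP_W$ of $W$ charges every open neighborhood of every continuous path, the open-set lower bound (i) follows by selecting $\omega$ almost-minimizing the conditional rate over $O$. The infimum over $\omega$ is attained when $\sigma_0\omega_t={\mathbb M}^{\widetilde{b},\nu}_t$: finiteness of $\widetilde{I}^{\sigma_0\omega}$ at $(\nu_t\circ\tau^{-1}_{\sigma_0\omega_t})_t$ forces a drift-matching identity for the mean of $\nu$ that determines $\sigma_0\omega_t$ uniquely modulo $\ker(\sigma_0)$, and the projection $\sigma\Proj_{\sigma^{-1}\sigma_0}\sigma^{-1}$ in the definition of ${\mathbb M}^{\widetilde{b},\nu}$ picks out the canonical representative in $\Im(\sigma_0)$. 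This identifies the effective rate as $J^{\sigma_0}(\nu)+\cR(\nu_0|\mu_0)$ and yields (i). The compact upper bound (ii) follows analogously via a finite covering of the compact $K$ combined with the uniform conditional upper bound supplied by exponential tightness of $m^n_{\bm{Y}}$ for $\omega$ in compact sets; (iii) is then the standard closure argument via the thickenings $F_\delta$.

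The main obstacle, and the reason only a \emph{weak} LDP is obtained, is that $W$ contributes no rate-function penalty, since it does not scale with $n$. Consequently $J^{\sigma_0}$ need not have compact sublevel sets: one may let $\omega$ escape to infinity in directions of $\Im(\sigma_0)$ while keeping $\widetilde{I}^{\sigma_0\omega}((\nu_t\circ\tau^{-1}_{\sigma_0\omega_t})_t)$ bounded. This structural obstruction prevents extension of the upper bound from compact to arbitrary closed sets, forcing the $F_\delta$-thickened form of (iii). The most delicate technical point in carrying out the plan is establishing the conditional LDP of Step 2 with sufficient uniformity in $\omega$---in particular, exponential tightness of $m^n_{\bm{Y}}$ uniformly for $\omega$ in compact subsets---so that the integration against $\PP_W$ in Step 3 can be performed rigorously.
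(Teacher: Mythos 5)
Your Step 1 and your lower bound are sound and essentially parallel the paper's argument: exponential equivalence reduces everything to the McKean--Vlasov system, and the open-set lower bound can indeed be obtained by picking a near-optimal $\omega$, invoking the support theorem for Brownian motion, and using Sanov's lower bound for the i.i.d.\ inputs. The genuine gap is in your upper bounds (ii)--(iii). You propose to integrate a conditional upper bound that is uniform only for $\omega$ in \emph{compact} subsets of $\C^d_0$. But $\PP_W$ is not exponentially tight at speed $n$: for any fixed compact $K_R\subset \C^d_0$, $\PP(W\notin K_R)$ is a strictly positive constant independent of $n$, so in the decomposition $\PP(m^n_{\overline{\bm X}}\in K)\le \sup_{\omega\in K_R}\PP(m^n_{\overline{\bm X}}\in K\mid W=\omega)+\PP(W\notin K_R)$ the second term forces $\limsup_n \tfrac1n\log\PP(m^n_{\overline{\bm X}}\in K)\le 0$, which is vacuous. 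To make your scheme work you would need a conditional upper bound that holds uniformly over \emph{all} $\omega\in\C^d_0$ simultaneously, which is a substantially stronger statement than the conditional LDP you describe and is not supplied by your Step 2. The paper avoids this entirely by proving a joint weak LDP for the pair $\bigl(\tfrac1n\sum_i\delta_{(X^i_0,B^i)},\,W\bigr)$ in which the closed-set upper bound simply discards the $W$-coordinate (bounding $\PP((\bar\cQ^n,W)\in F)$ by $\PP(\bar\cQ^n\in \mathrm{cl}(F'))$ with $F'$ the projection of $F$) and then pushes the result through the uniformly continuous solution map $\Phi$ via the contraction principle; the $F_\delta$-thickening in (iii) arises precisely from closing the projected set.

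A secondary gap: your Step 3 identification of the effective rate with $J^{\sigma_0}(\nu)$ as defined in the statement (i.e.\ $\widetilde I^{{\mathbb M}^{\widetilde b,\nu}}\bigl((\nu_t\circ\tau^{-1}_{{\mathbb M}^{\widetilde b,\nu}_t})_t\bigr)$) is asserted via a ``drift-matching'' heuristic, but finiteness of $\widetilde I^{\sigma_0\omega}$ does not force $\sigma_0\omega_t={\mathbb M}^{\widetilde b,\nu}_t$. What your construction naturally produces is the variational expression $\inf_{\omega}\widetilde I^{\sigma_0\omega}\bigl((\nu_t\circ\tau^{-1}_{\sigma_0\omega_t})_t\bigr)$, and showing that this infimum is attained at $\sigma_0\omega={\mathbb M}^{\widetilde b,\nu}$ requires the explicit computation in Proposition \ref{prop:expression:Jbeta} (a completion of squares in the Dawson--G\"artner variational formula, first for $\phi\in\H^1_0$ and then extended by continuity), together with the change-of-variables argument of Theorem \ref{thm:expression:Jbeta}. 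As written, this identification step is not proved.
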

\begin{proof}
The claims will follow from Theorem \ref{thm:LDP} after observing that the rate
function $\tilde{J}^{\sigma_0,\mu_0}(\nu)$ therein coincides with the
the rate function $J^{\sigma_0}(\nu) + \cR(\nu_{0} | \mu_{0})$ given above, thanks to Theorem 
\ref{thm:expression:Jbeta}.
\end{proof}

It must be stressed that $J^{\sigma_{0}}$ coincides with $I$ when 
$\sigma_{0}=0$ since the image of 
$\sigma_{0}$ reduces to $\{0\}$, 
the process ${\mathbb M}^{ {\widetilde{b}},\nu}$ is null, {and $\tilde{I}^0 = I$. } 

We also emphasize that other forms of the rate function $J^{\sigma_{0}}$ are given in 
Section \ref{se:largedeviations-proofs}. For instance, the formulation provided in Proposition
\ref{prop:expression:Jbeta} is certainly 
more tractable than the one given just prior to Theorem \ref{thm:sec:3:weak:ldp}, but it has the major drawback of holding only for a special class of paths $\nu$. 
In fact, all these different expressions for $J^{\sigma_{0}}$ convey the same idea: 
As soon as $\sigma_{0}$ differs from the null matrix, the rate function is not a good rate function, that is to say, its level sets are not compact. The reason is quite clear: the common noise permits to shift for free the mean of 
$\nu$ in the directions included in the image of 
$\sigma_{0}$. In words, $J^{\sigma_{0}}(\nu)$ may remain bounded even if the mean path of $\nu$ has higher and higher oscillations. 

To illustrate the latter fact, 
let $\phi \in \C^d$ with $\phi_0=0$, call $\overline{X}^{\phi}$ the solution to the McKean-Vlasov equation: 
\begin{equation*}
d\overline{X}^{\phi}_{t} = \widetilde{b}\bigl(t,\overline{X}_{t}^{\phi},\cL(\overline X_{t}^{\phi}) \bigr) dt + \sigma dB_{t}^1 + \sigma_{0} \dot{\phi}_{t} dt, \quad t \in [0,T],
\end{equation*}
and let $\nu =( \cL(\overline{X}^{\phi}_{t}))_{t \in [0,T]}$ denote its flow of marginal laws. 
In that case, ${\mathbb M}^{b,\nu}$ coincides with $\sigma_0\phi$, and thus $(\nu_t \circ \tau_{{\mathbb M}^{b,\nu}_t}^{-1})_{t \in [0,T]}$ is the flow of marginal laws of $(\overline X_{t}^{\phi}- \sigma_{0}\phi_{t})_{t \in [0,T]}$, the latter solving the McKean-Vlasov equation (with no common noise) with drift $ {\widetilde{b}}$ given by $(t,x,m) \mapsto b(t,x+\sigma_0\phi_{t},m \circ \tau_{-\sigma_0\phi_{t}}^{-1})$.
As a result, $\widetilde{I}^{{\mathbb M}^{ {\widetilde{b}},\nu}}\bigl((\nu_t \circ \tau_{{\mathbb M}^{ {\widetilde{b}},\nu}_t}^{-1})_{t \in [0,T]}\bigr)$ is null, whatever $\phi$ is.

\section{Main estimates} \label{se:mainestimates}

The results announced in Section \ref{se:statements} hinge on the estimates developed in this section.   We begin by recalling two key estimates from {\cite{dellacram18a}}, which we then use to derive the central exponential approximation of Theorem \ref{th:concentration}.

In the following results and proofs, $U$ is the classical solution to the master equation \eqref{def:masterequation}. The letter $C$ denotes a generic positive constant, which may change from line to line but is universal in the sense that it never depends on $i$ or $n$, though it may of course depend on model parameters, including, e.g., the bounds on {the growth and the regularity of $U$ and its derivatives}, the Lipschitz constants of $b$ and $f$, and the time horizon $T$.

To proceed, we define an $n$-particle SDE system of McKean-Vlasov type, which we will compare to the true Nash system. Precisely,  let $\bm{\overline{X}} = (\overline{X}^1,\ldots,\overline{X}^n)$ solve the {approximating $n$-particle} system
\begin{align}
\label{eq:sec4:overlineX}
d\overline{X}^i_t &= \widehat{b}
\bigl(\overline{X}^i_t,m^n_{\bm{\overline{X}}_t},D_xU(t,\overline{X}^i_t,m^n_{\bm{\overline{X}}_t})
\bigr)dt + \sigma dB^i_t + \sigma_0 dW_t, \quad \overline{X}^i_0=X^i_0.
\end{align}
Because of Assumptions \ref{assumption:A}(1) and \ref{assumption:A}(5), this SDE system admits a unique strong solution.

We make the following abbreviations: For $(t,\bm{x}) \in [0,T] \times (\R^d)^n$, define 
\[
u^{n,i}(t,\bm{x}) = U(t,x_i,m^n_{\bm{x}}). 
\]
Also, in what follows, 
for $i=1,\ldots,n$, define: 
\begin{align}
\label{def-M}
M^i_t &= \int_0^t\sum_{j=1}^n \left(D_{x_{j}} v^{n,i}(s,\bm{X}_s) - D_{x_{j}} u^{n,i}(s,\bm{X}_s)\right)  \cdot \sigma dB^j_s \\
	&\quad\quad + \int_0^t\sum_{j=1}^n (D_{x_{j}} v^{n,i}(s,\bm{X}_s) - D_{x_{j}} u^{n,i}(s,\bm{X}_s)) \cdot  \sigma_0 dW_s, \\
\label{def-N}
N^i_t &= \int_0^t (v^{n,i}(s,\bm{X}_s) - u^{n,i}(s,\bm{X}_s))  dM^i_{s}.
\end{align}
We may now state the main estimates from \cite[Theorems 4.2 and 4.6]{dellacram18a}. These two estimates are quite similar, but one holds under Assumption \ref{assumption:B} and the other under Assumption \ref{assumption:B'}.

\begin{theorem} \label{th:mainestimate}
Suppose Assumptions \ref{assumption:A} and \ref{assumption:B} hold.
Then, there exists $C <  \infty$ such that, for each $n$,
\begin{align}
\frac{1}{n}\sum_{i=1}^n\E\left[\int_0^T\left|D_{x_i}v^{n,i}(t,\bm{X}_t) - D_xU(t,X^i_t,m^n_{\bm{X}_t})\right|^2dt\right] &\le \frac{C}{n^2}, \label{def:estimate-Dx} \\
\E\left[\frac{1}{n}\sum_{i=1}^n\|X^i - \overline{X}^i\|_\infty^2\right] &\le \frac{C}{n^2}. \label{def:estimate-|X-Xbar|}
\end{align}
Moreover, 
\begin{align}
&\frac{1}{n}\sum_{i=1}^n \|X^i - \overline{X}^i \|_{\infty}^2 \le \frac{C}{n}\sum_{i=1}^n[M^i]_T + \frac{C}{n^2},
\label{eq:Dv:Du:Z:cZ}
\end{align}
{and for all $t \in [0,T]$,}
\begin{align}
&\frac{1}{n}\sum_{i=1}^n[N^i]_t \le \frac{C}{n^3}\sum_{i=1}^n[M^i]_t, \quad \text{ and } \quad \frac{1}{n}\sum_{i=1}^n[M^i]_T \le \frac{C}{n^2} + \frac{C}{n}\sum_{i=1}^n|N^i_T|. \label{def:estimates-N,M}
\end{align}
\end{theorem}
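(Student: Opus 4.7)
The plan is to compare $v^{n,i}$ with the mean-field ansatz $u^{n,i}(t,\bm{x}) := U(t,x_{i},m^{n}_{\bm{x}})$ and to control the error $Z^{n,i}_{t} := v^{n,i}(t,\bm{X}_{t}) - u^{n,i}(t,\bm{X}_{t})$ via the backward equation it satisfies. Since $v^{n,i}(T,\bm{x}) = g(x_{i},m^{n}_{\bm{x}}) = U(T,x_{i},m^{n}_{\bm{x}})$, we have $Z^{n,i}_{T}=0$. Applying It\^o's formula along \eqref{def:Nash-SDEsystem} and using the Nash PDE \eqref{def:Nashsystem} to simplify $\partial_t v^{n,i}$ produces a BSDE of the form $dZ^{n,i}_{t} = -R^{n,i}_{t}\,dt + dM^{i}_{t}$ whose martingale part is exactly the $M^{i}$ from \eqref{def-M}.

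The core step is to show that the driver satisfies the pathwise bound
\begin{equation*}
|R^{n,i}_{t}| \le \frac{C}{n} + C\sum_{j=1}^{n} \bigl| D_{x_{j}} v^{n,j}(t,\bm{X}_{t}) - D_{x}U(t,X^{j}_{t},m^{n}_{\bm{X}_{t}}) \bigr|.
\end{equation*}
To obtain this, I would expand $D_{x_{i}}u^{n,i} = D_{x}U + \tfrac{1}{n}D_{m}U(\cdot,x_{i})$ and $D_{x_{j}}u^{n,i} = \tfrac{1}{n}D_{m}U(\cdot,x_{j})$ for $j\neq i$, with analogous identities for the second derivatives involving $D_{v}D_{m}U$, $D_{x}D_{m}U$, and $D_{m}^{2}U$. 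Plugging these into the Nash operator and subtracting the master equation \eqref{def:masterequation} at $(t,x_{i},m^{n}_{\bm{x}})$, the boundedness conditions in Assumption \ref{assumption:A}(5) render the resulting diagonal $(j=i)$ remainders of size $C/n$, while the remaining contributions assemble into $\tfrac{1}{n}\sum_{j}D_{m}U\cdot[\widehat{b}(x_{j},m^{n}_{\bm{x}},D_{x_{j}}v^{n,j})-\widehat{b}(x_{j},m^{n}_{\bm{x}},D_{x}U(t,x_{j},m^{n}_{\bm{x}}))]$, plus an analogous contribution from the Hamiltonian decomposition $H=\widehat{b}\cdot y+\widehat{f}$; the Lipschitz conditions in Assumption \ref{assumption:A}(1) and Assumption \ref{assumption:B} then bound these by the displayed gradient mismatches.

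With this driver estimate in hand, \eqref{def:estimate-Dx} follows from the classical BSDE $L^{2}$ bound: square the equation, take expectations, use non-degeneracy of $\sigma$ (Assumption \ref{assumption:A}(2)) to convert $\E[M^{i}]_{T}$ into $\E\int_{0}^{T}\sum_{j}|D_{x_{j}}v^{n,i}-D_{x_{j}}u^{n,i}|^{2}\,ds$, and sum over $i$ to extract a Gronwall inequality for $\tfrac{1}{n}\sum_{i}\E\int|D_{x_{i}}v^{n,i}-D_{x}U|^{2}\,ds$ with forcing $C/n^{2}$. The estimate \eqref{def:estimate-|X-Xbar|} is then a Gronwall argument after coupling $\bm{X}$ and $\bm{\overline X}$ with the same Brownian motions, using Lipschitzness of $\widehat{b}$ and $D_{x}U$ to bound the drift difference by $C|X^{i}-\overline X^{i}| + C\W_{\pexp}(m^{n}_{\bm{X}},m^{n}_{\bm{\overline X}}) + C|D_{x_{i}}v^{n,i}(t,\bm{X}_{t})-D_{x}U(t,X^{i}_{t},m^{n}_{\bm{X}_{t}})|$ and invoking \eqref{def:estimate-Dx}. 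The pathwise statements \eqref{eq:Dv:Du:Z:cZ} and \eqref{def:estimates-N,M} require keeping the argument pathwise: It\^o's formula for $|Z^{n,i}_{t}|^{2}$ together with $Z^{n,i}_{T}=0$ yields the identity $[M^{i}]_{T} = -|Z^{n,i}_{0}|^{2} + 2\int_{0}^{T}Z^{n,i}_{s}R^{n,i}_{s}\,ds - 2N^{i}_{T}$, which combined with the uniform bound on $\|Z^{n,i}\|_{\infty}$ (immediate under Assumption \ref{assumption:B'} and derivable under Assumption \ref{assumption:B}) plus Cauchy--Schwarz on the $R^{n,i}$ term yields both halves of \eqref{def:estimates-N,M}; \eqref{eq:Dv:Du:Z:cZ} then follows by pathwise Gronwall on $\|X^{i}-\overline X^{i}\|_{\infty}^{2}$, using that the integrated gradient-mismatch square is pathwise controlled by $Cn^{-2}[M^{i}]_{T}$ via the BSDE. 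The principal obstacle throughout is the algebraic bookkeeping in the derivation of $R^{n,i}$: each Wasserstein derivative acting on $m^{n}_{\bm{x}}$ contributes an explicit $1/n$, and all such factors must combine with the master equation so as to produce the sharp $1/n^{2}$ rates without any residual of lower order.
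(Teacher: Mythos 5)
First, note that this paper does not prove Theorem \ref{th:mainestimate} at all: it is imported verbatim from the companion paper \cite[Theorems 4.2 and 4.6]{dellacram18a}. Your strategy — set $u^{n,i}(t,\bm{x})=U(t,x_i,m^n_{\bm{x}})$, derive the backward equation for $Z^{n,i}_t=v^{n,i}(t,\bm{X}_t)-u^{n,i}(t,\bm{X}_t)$ with martingale part $M^i$, exploit that each derivative of $m\mapsto U(\cdot,\cdot,m)$ hit by $m^n_{\bm{x}}$ carries an explicit $1/n$, and close with Gronwall and the quadratic-variation identities — is exactly the route taken there (and in Cardaliaguet--Delarue--Lasry--Lions), so in outline your proposal is the intended proof.

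Two points need repair. First, your displayed driver bound reads $|R^{n,i}_t|\le \frac{C}{n}+C\sum_{j=1}^n|D_{x_j}v^{n,j}-D_xU(t,X^j_t,m^n_{\bm{X}_t})|$, with no $1/n$ in front of the sum; as written this cannot yield \eqref{def:estimate-Dx}, since the resulting inequality for $A_n:=\frac1n\sum_i\E\int|\Delta_i|^2$ is $A_n\le Cn^{-2}+C\sqrt{A_n}$ rather than $A_n\le Cn^{-2}+Cn^{-1}\sqrt{A_n}$. The correct bound, which your prose actually derives (the off-diagonal terms come weighted by $D_{x_j}u^{n,i}=\frac1n D_mU(t,x_i,m^n_{\bm{x}},x_j)$, bounded by Assumption \ref{assumption:A}(5)), is $|R^{n,i}_t|\le \frac{C}{n}+C|\Delta^M_i|+\frac{C}{n}\sum_{j}|\Delta^M_j|$, the undamped diagonal term coming from the $\widehat{f}$-difference in the Hamiltonian, which Assumption \ref{assumption:B} makes Lipschitz. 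Second, and more substantively, the first inequality in \eqref{def:estimates-N,M} forces $\sup_t|Z^{n,i}_t|\le C/n$ pathwise (since $[N^i]_t=\int_0^t|Z^{n,i}_s|^2\,d[M^i]_s$), and the same bound is what turns $2\int Z^{n,i}R^{n,i}$ into an $O(n^{-2})$ forcing term in your $L^2$ argument. This first-order estimate is not a remark: it is a separate nontrivial step, proved in \cite{dellacram18a} by linearizing the driver and using a Girsanov change of measure to represent $Z^{n,i}_t$ as a conditional expectation of the $O(1/n)$ remainder, followed by a Gronwall argument uniform in $i$. Your parenthetical claim that it is ``immediate under Assumption \ref{assumption:B'}'' is also off the mark — boundedness of $U$ and $v^{n,i}$ gives only $O(1)$, which is precisely why Theorem \ref{th:mainestimate2} must replace $N^i$ by the $\sinh$-weighted functional $Q^i$. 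With the $1/n$ weight restored and the first-order bound supplied, the rest of your argument (the coupling Gronwall for \eqref{def:estimate-|X-Xbar|} and \eqref{eq:Dv:Du:Z:cZ}, and the It\^o identity $[M^i]_T=-|Z^{n,i}_0|^2+2\int_0^T Z^{n,i}R^{n,i}\,dt-2N^i_T$ for \eqref{def:estimates-N,M}) goes through as you describe.
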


\begin{theorem} \label{th:mainestimate2}
Suppose Assumptions \ref{assumption:A} and \ref{assumption:B'} hold.
Then \eqref{eq:Dv:Du:Z:cZ}
holds, and, 
for sufficiently large $n$, the estimates \eqref{def:estimate-Dx} and \eqref{def:estimate-|X-Xbar|} hold.
For $i=1,\ldots,n$ and a constant $\eta >0$, define $M^i$ as in
\eqref{def-M}  and $Q^i$ by 
\begin{align*}
Q^i_t &= \int_0^t\left[2(v^{n,i}(s,\bm{X}_s) - u^{n,i}(s,\bm{X}_s)) + \eta\sinh(\eta(v^{n,i}(s,\bm{X}_s) - u^{n,i}(s,\bm{X}_s)))\right]dM^i_s.
\end{align*}
Then, for sufficiently large $n$ and $\eta$, we have for all $t \in [0,T]$, 
\begin{align}
\frac{1}{n}\sum_{i=1}^n[Q^i]_t &\le \frac{C}{n^3}\sum_{i=1}^n[M^i]_t, \quad \text{ and } \quad \frac{1}{n}\sum_{i=1}^n[M^i]_T \le \frac{C}{n^2} + \frac{C}{n}\sum_{i=1}^n|Q^i_T|. \label{def:estimates-Q,M}
\end{align}
\end{theorem}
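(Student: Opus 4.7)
The proof follows the same blueprint as Theorem~\ref{th:mainestimate}, with the key substitution of the quadratic comparison $x \mapsto x^2$ by the \emph{exponentially tilted} function $F(x) := x^2 + \cosh(\eta x) - 1$. This choice is tailored so that $F'(x) = 2x + \eta\sinh(\eta x)$, making $Q^i$ precisely the martingale part of $F(\phi^i_t) - F(\phi^i_0)$ under It\^o's formula (with $\phi^i_t := v^{n,i}(t,\bm{X}_t) - u^{n,i}(t,\bm{X}_t)$), and $F''(x) = 2 + \eta^2 \cosh(\eta x) \geq 2 + \eta^2$. The large coefficient $\eta^2$ in $F''$ provides a ``free'' amount of quadratic variation in the It\^o expansion that, via AM-GM, absorbs the extra quadratic dependence on $y$ of $\widehat f$ permitted by Assumption~\ref{assumption:B'}(3). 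The uniform boundedness of $U$ and of $(v^{n,i})_{n,i}$ from Assumptions~\ref{assumption:B'}(1)--(2) is crucial: it guarantees that $|\phi^i|$ is uniformly bounded in $(n,i,t,\omega)$, so that $F, F', F''$ evaluated along $\phi^i$ remain bounded despite the exponential growth of $\cosh$.

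I would first establish \eqref{eq:Dv:Du:Z:cZ} by a direct Gr\"onwall comparison between the SDEs \eqref{def:Nash-SDEsystem} and \eqref{eq:sec4:overlineX}, using only the Lipschitz continuity of $\widehat b$ and $D_xU$ from Assumptions~\ref{assumption:A}(1) and~(5); the growth of $\widehat f$ plays no role here, and the argument is identical to the one used for the corresponding bound in Theorem~\ref{th:mainestimate}. The bulk of the work is then to establish \eqref{def:estimates-Q,M}. Writing $d\phi^i_t = R^i_t\,dt + dM^i_t$, where the drift $R^i$ is obtained by applying It\^o to $v^{n,i}$ using the Nash system~\eqref{def:Nashsystem} and to $u^{n,i}(t,\bm{x})=U(t,x_i,m^n_{\bm{x}})$ using the master equation~\eqref{def:masterequation}, Assumption~\ref{assumption:B'}(3) controls the dominant Hamiltonian-difference term in $R^i$ by $C(1+|D_{x_i}v^{n,i}|+|D_xU|)\,|D_{x_i}v^{n,i}-D_xU|$, with remainders of order $1/n$ coming from the chain-rule terms generated by the empirical-measure dependence of $u^{n,i}$.

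Applying It\^o to $F(\phi^i_t)$ yields
\begin{equation*}
\tfrac12 \int_0^T F''(\phi^i_s)\,d[M^i]_s = F(\phi^i_T) - F(\phi^i_0) - Q^i_T - \int_0^T F'(\phi^i_s)\,R^i_s\,ds .
\end{equation*}
Since $F'(0)=0$, one has $|F'(\phi^i_s)| \leq C|\phi^i_s| \leq C/n$, invoking the a priori bound $|v^{n,i}-u^{n,i}|\leq C/n$ already available from \cite[Theorem 2.13]{cardaliaguet-delarue-lasry-lions}. This yields the first inequality in \eqref{def:estimates-Q,M}, since $[Q^i]_t = \int_0^t F'(\phi^i_s)^2\,d[M^i]_s \leq (C/n^2)[M^i]_t$. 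For the second inequality, one rearranges the displayed identity: using $F''\geq 2+\eta^2$ on the left-hand side and splitting the cross term $|F'(\phi^i_s)|\,(1+|D_{x_i}v^{n,i}|+|D_xU|)\,|D_{x_i}v^{n,i}-D_xU|$ by AM-GM into $\tfrac14\eta^2|D_{x_i}v^{n,i}-D_xU|^2 + (C/\eta^2)(1+|D_{x_i}v^{n,i}|+|D_xU|)^2|F'(\phi^i_s)|^2$, the first piece is absorbed into the $F''\,d[M^i]$ term for $\eta$ sufficiently large, while the second piece is of order $1/n^2$ after summing over $i$, using the $1/n$ bound on $F'(\phi^i)$ and the moment estimates on $\bm{X}$.

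The main obstacle is precisely this AM-GM absorption, since the factor $(1+|D_{x_i}v^{n,i}|+|D_xU|)^2$ is only linearly bounded by Assumption~\ref{assumption:A}(4) and not uniformly controlled; tracking the moments of $\bm{X}$ under the Nash dynamics and choosing $\eta$ large enough to strictly dominate the Lipschitz constants in the quadratic growth of $\widehat f$ must be done carefully. Once \eqref{def:estimates-Q,M} is in hand, \eqref{def:estimate-Dx} follows for large $n$ by closing a fixed-point-type inequality: taking expectations, applying the BDG inequality to obtain $\E|Q^i_T| \leq \sqrt{\E[Q^i]_T}$, and using Cauchy--Schwarz over $i$ yields $\E\sum_i [M^i]_T \leq C/n + (C/\sqrt n)\sqrt{\E\sum_i [M^i]_T}$, which forces $\E\sum_i [M^i]_T \leq C/n$ for $n$ large. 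Invoking the non-degeneracy of $\sigma$ (Assumption~\ref{assumption:A}(2)) translates this martingale bound into \eqref{def:estimate-Dx}, and plugging the resulting control of $\sum_i[M^i]_T$ into \eqref{eq:Dv:Du:Z:cZ} yields \eqref{def:estimate-|X-Xbar|}.
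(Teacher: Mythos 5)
First, a caveat on the comparison: this paper does not actually prove Theorem~\ref{th:mainestimate2} --- it is quoted verbatim from the companion paper \cite[Theorem 4.6]{dellacram18a}, and the only hint given here about its proof is the remark in Section~\ref{se:conclusion} that it rests on a quadratic FBSDE estimate. Against that benchmark, you have correctly reverse-engineered the intended mechanism: the function $F(x)=x^2+\cosh(\eta x)-1$ is exactly what makes $Q^i$ the martingale part of $F(\phi^i)$, the uniform bounds of Assumption~\ref{assumption:B'}(1)--(2) are what keep $F,F',F''$ under control along $\phi^i$, the terminal identification $v^{n,i}(T,\cdot)=u^{n,i}(T,\cdot)=g$ closes the It\^o identity, and your Gr\"onwall argument for \eqref{eq:Dv:Du:Z:cZ} and the expectation bootstrap $\E\sum_i[M^i]_T\le C/n+(C/\sqrt n)(\E\sum_i[M^i]_T)^{1/2}$ for \eqref{def:estimate-Dx}--\eqref{def:estimate-|X-Xbar|} are sound.

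There is, however, a genuine gap at the pivotal step. The first inequality of \eqref{def:estimates-Q,M} needs $|F'(\phi^i_s)|\le C/n$, hence $|v^{n,i}-u^{n,i}|\le C/n$, and you import this from \cite[Theorem 2.13]{cardaliaguet-delarue-lasry-lions}. That result is established under CDLL's hypotheses (periodic data on the torus, monotonicity, their regularity framework), not under Assumptions~\ref{assumption:A} and~\ref{assumption:B'}; in the present setting the $O(1/n)$ sup-norm bound on $v^{n,i}-u^{n,i}$ is itself one of the conclusions the companion paper's argument must deliver, not an input. The fix lives inside your own identity: take conditional expectations between $t$ and $T$, use $F(\phi^i_T)=0$, absorb the quadratic part of $F'(\phi^i)R^i$ into $\tfrac12\int F''(\phi^i)\,d[M^i]$, and run Gr\"onwall backward to get $F(\phi^i_t)\le C/n^2$, whence $|\phi^i_t|\le C/n$. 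Two secondary points in your absorption step also need repair. First, the driver should be reduced pointwise: since $D_xU$ and $D_mU$ are bounded (Assumption~\ref{assumption:A}(5)), one has $|D_{x_i}v^{n,i}|\le C+|Z^i|$ with $Z^i=D_{x_i}v^{n,i}-D_{x_i}u^{n,i}$, so the cross term is genuinely $C(|Z^i|+|Z^i|^2)+O(1/n)$ and no moment estimates on $\bm{X}$ are required --- your proposed route through moments of $\bm{X}$ misidentifies where the difficulty sits. Second, the absorption of $|F'(\phi^i)|\,|Z^i|^2$ cannot rely on $F''\ge 2+\eta^2$ alone, because $|F'(\phi^i)|$ is itself of order $\eta\sinh(\eta K)\sim \eta e^{\eta K}\gg\eta^2$; what saves the argument is the pointwise comparison $\eta\sinh(\eta|x|)\le \eta^{-1}\cdot\eta^2\cosh(\eta x)$, which shows $|F'|/F''\le |x|+\eta^{-1}$ and lets $\eta$ large (relative to the quadratic-growth constant of $\widehat f$, the ellipticity of $\sigma$, and the uniform bound $K$ on $|\phi^i|$) dominate the driver.
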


The main estimate for our purposes is the following theorem, which provides an exponential estimate of the distance between the solutions $\bm{X}$ and $\bm{\overline{X}}$ of the SDEs \eqref{def:Nash-SDEsystem} and \eqref{eq:sec4:overlineX}, respectively.  These estimates will also serve us well in our study of large deviations in Section \ref{se:largedeviations-proofs}.

\begin{theorem} \label{th:concentration}
Suppose Assumption \ref{assumption:A} holds, as well as either Assumption \ref{assumption:B} or \ref{assumption:B'}. Then, there exist constants $\kappa_1,\kappa_{2}  \in (0,\infty)$ such that for every $\epsilon > 0$ and $n \ge \kappa_1/\epsilon$ we have:
\begin{align}
\PP\left({{\mathcal W}_{2,\C^d}}(m^n_{\bm{X}},m^n_{\bm{\overline{X}}}) > \epsilon \right) \le \PP\left(\frac{1}{n}\sum_{i=1}^n\|X^i-\overline{X}^i\|_{{\infty}}^2 > \epsilon^2 \right) \le 2n\exp\left(-\frac{\epsilon^2 n^2}{\kappa_2}\right). \label{def:concentration}
\end{align}
The constants $\kappa_1$ and $\kappa_2$ depend (in an increasing manner) only on the Lipschitz constants and uniform bounds of the coefficients in Assumptions \ref{assumption:A} and \ref{assumption:B} or \ref{assumption:B'}.
\end{theorem}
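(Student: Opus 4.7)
My strategy is to first observe that the first inequality in \eqref{def:concentration} is immediate: the empirical coupling $\pi=\frac{1}{n}\sum_{i=1}^n\delta_{(X^i,\overline X^i)}$, which has marginals $m^n_{\bm X}$ and $m^n_{\bm{\overline X}}$, is admissible in \eqref{Wassp} and gives $\W_{2,\C^d}^2(m^n_{\bm X},m^n_{\bm{\overline X}})\leq \frac{1}{n}\sum_{i=1}^n\|X^i-\overline X^i\|_\infty^2$ directly. So I need only establish the Gaussian-type tail bound on $\frac{1}{n}\sum_i\|X^i-\overline X^i\|_\infty^2$.

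Under Assumption \ref{assumption:B}, I would chain the estimates of Theorem \ref{th:mainestimate}. Substituting the second inequality of \eqref{def:estimates-N,M} into \eqref{eq:Dv:Du:Z:cZ} yields a pathwise bound of the form
\begin{equation*}
\frac{1}{n}\sum_{i=1}^n\|X^i-\overline X^i\|_\infty^2 \leq \frac{C}{n^2} + \frac{C}{n}\sum_{i=1}^n|N^i_T|.
\end{equation*}
Taking $n\geq \kappa_1/\epsilon$ with $\kappa_1$ large enough, the $C/n^2$ term is at most $\epsilon^2/2$, so the event of interest is contained in $\{\sum_i|N^i_T|>c\,n\epsilon^2\}$ for some absolute constant $c>0$. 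I would then introduce the stopping time $\tau:=\inf\{t\in[0,T]:\tfrac{1}{n}\sum_i[M^i]_t>\theta\}\wedge T$ for $\theta=c'\epsilon^2$. On $\{\tau=T\}$, the bound $\tfrac{1}{n}\sum_i[M^i]_T\leq \theta$ plugged into \eqref{eq:Dv:Du:Z:cZ} yields $\tfrac{1}{n}\sum_i\|X^i-\overline X^i\|_\infty^2\leq C\theta+C/n^2\leq\epsilon^2$ pathwise, so the problem reduces to bounding $\PP(\tau<T)$.

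On $\{\tau<T\}$, continuity of $[M^i]$ forces $\tfrac{1}{n}\sum_i[M^i]_\tau=\theta$; applying the second inequality of \eqref{def:estimates-N,M} at time $\tau$ (the derivation carries over to the stopped system) lower-bounds $\sum_i|N^i_\tau|\gtrsim n\theta$. At the same time, the first inequality of \eqref{def:estimates-N,M} forces $\sum_i[N^i]_\tau\leq C\theta/n$, a very small \emph{aggregate} quadratic variation. I would then estimate $\PP(\sum_i|N^i_\tau|\geq cn\theta)$ by an exponential martingale argument: writing $\sum_i|N^i_\tau|=\max_{\eta\in\{\pm1\}^n}\sum_i\eta_i N^i_\tau$, the signed sum $\sum_i\eta_i N^{i,\tau}_t$ is a continuous martingale whose quadratic variation, by Kunita--Watanabe and Cauchy--Schwarz, is at most $n\sum_i[N^i]_\tau\leq C\theta$. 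The standard exponential (Bernstein) inequality gives $\PP(\sum_i\eta_i N^i_\tau>b)\leq \exp(-b^2/(2C\theta))$ for each fixed $\eta$; a tensorized exponential-supermartingale argument over the sign patterns combined with the choice $b=cn\theta$ is then intended to produce the target tail $2n\exp(-n^2\theta/\kappa_2)=2n\exp(-n^2\epsilon^2/\kappa_2')$.

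Under Assumption \ref{assumption:B'} the scheme is identical with $Q^i$ from Theorem \ref{th:mainestimate2} playing the role of $N^i$; the hyperbolic-sine term in the definition of $Q^i$ is precisely what makes \eqref{def:estimates-Q,M} available when $\widehat f$ has quadratic rather than linear growth, the control on $\sum_i[Q^i]_\tau$ substituting for the control on $\sum_i[N^i]_\tau$. The main obstacle I foresee is extracting the $n^2$ (and not merely $n$) factor in the exponent: a naive per-particle union bound over $i$ loses a factor of $n$ because it effectively allows each $[N^i]_\tau$ to saturate the aggregate bound $C\theta/n$, whereas the smallness of $\sum_i[N^i]_\tau$ is genuinely aggregate. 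The signed-sum/tensorized route circumvents this, but one must verify that the resulting combinatorial price (either a $2^n$ union cost or an entropy-of-sphere factor) is dominated by the main exponent $n^2\theta$; this is precisely where the regime $n\geq\kappa_1/\epsilon$ with $\kappa_1$ sufficiently large enters in a nontrivial way.
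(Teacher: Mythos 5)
Your treatment of the first inequality and your identification of the relevant estimates from Theorems \ref{th:mainestimate} and \ref{th:mainestimate2} match the paper, but the core of your argument has two genuine gaps. First, the second inequality of \eqref{def:estimates-N,M} is a statement at the terminal time $T$ only: its derivation in the companion paper exploits the matching terminal conditions $v^{n,i}(T,\cdot)=u^{n,i}(T,\cdot)=g(\cdot,m^n_{\cdot})$ to kill a boundary term, so your parenthetical claim that it ``carries over to the stopped system'' at a stopping time $\tau<T$ is unjustified — at $\tau$ the boundary term $(v^{n,i}(\tau,\bm{X}_\tau)-u^{n,i}(\tau,\bm{X}_\tau))^2$ does not vanish. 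Second, and more seriously, the sign-pattern union bound does not close in the claimed regime. For each fixed $\eta\in\{\pm1\}^n$ you correctly get a tail of order $\exp(-c\,n^2\theta)=\exp(-c'\,n^2\epsilon^2)$, but the union over the $2^n$ patterns costs a factor $e^{n\log 2}$, and the hypothesis $n\ge\kappa_1/\epsilon$ only guarantees $n^2\epsilon^2\ge\kappa_1^2$, a \emph{constant}; it does not make $n^2\epsilon^2$ dominate $n$. Taking, say, $\epsilon=\kappa_1/n$ (the boundary of the allowed regime) gives $2^n e^{-c'\kappa_1^2}\to\infty$, so the bound is vacuous exactly where the theorem is supposed to apply. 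Your own diagnosis — that the per-particle union bound ``loses a factor of $n$'' — is what leads you astray here.

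The paper avoids both issues by never stopping and never taking a maximum over sign patterns. For each $i$ it uses the self-normalized supermartingale inequality
\begin{equation*}
\PP\Bigl(\pm\gamma N^i_T \ge \delta\gamma + \tfrac{\gamma^2}{2}[N^i]_T\Bigr) \le \exp(-\delta\gamma),
\end{equation*}
which holds with \emph{no} a priori control on $[N^i]_T$, and takes a union bound over the $n$ particles: the cost is only the prefactor $2n$, not anything in the exponent. On the complementary event one has the deterministic per-particle bound $|N^i_T|\le\delta+\tfrac{\gamma}{2}[N^i]_T$; summing over $i$ and feeding this into the two aggregate inequalities of \eqref{def:estimates-N,M} closes the loop, since the term $\tfrac{c_1c_3\gamma}{2n^3}\sum_i[M^i]_T$ can be absorbed once $n^2\gtrsim\gamma$. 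Choosing $\gamma=n^2/(c_1c_3)$ and $\delta\sim\epsilon^2$ then yields $\delta\gamma\sim\epsilon^2n^2$ directly, i.e.\ the bound $2n\exp(-\epsilon^2n^2/\kappa_2)$. The point you missed is that one never needs each $[N^i]_T$ to be small individually: the complement of the bad event gives a per-particle inequality whose \emph{sum} is controlled by the aggregate bound $\tfrac1n\sum_i[N^i]_T\le\tfrac{c_1}{n^3}\sum_i[M^i]_T$, so the per-particle union bound is lossless in the exponent.
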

\begin{proof}
The first inequality in \eqref{def:concentration} is an immediate consequence of the definition \eqref{Wassp} of the 
{$2$-Wasserstein}  metric. 
Turning to the second inequality, we prove the case where Assumption \ref{assumption:B} holds; the proof under Assumption \ref{assumption:B'} 
is obtained by simply replacing every occurrence of $N^i$, Theorem \ref{th:mainestimate}, and the estimate \eqref{def:estimates-N,M} with $Q^i$, Theorem \ref{th:mainestimate2}, and \eqref{def:estimates-Q,M}, respectively.
Recall the definitions of $M^i$ and $N^i$ from \eqref{def-M} and \eqref{def-N}. 
Use \eqref{eq:Dv:Du:Z:cZ} to get
\begin{align}
\frac{1}{n}\sum_{i=1}^n\|X^i-\overline{X}^i\|_{{\infty}}^2 &\le \frac{c_0}{n}\sum_{i=1}^n[M^i]_T + \frac{c_0}{n^2},  \label{pf:exp-equiv-est1}
\end{align}
where $c_0 <  \infty$ is a constant (independent of $n$), which  we will now keep track of to clarify the following arguments. 
From Theorem \ref{th:mainestimate}, we have the estimates:
\begin{align}
\frac{1}{n}\sum_{i=1}^n[N^i]_t &\le \frac{c_1}{n^3}\sum_{i=1}^n[M^i]_t, \quad \text{ and } \quad \frac{1}{n}\sum_{i=1}^n[M^i]_T \le \frac{c_2}{n^2} + \frac{c_3}{n}\sum_{i=1}^n|N^i_T|, \label{pf:exp-equiv-est-key}
\end{align}
where the constants $c_1,c_2,c_3  < \infty$ do not depend on $i$ or $n$. Fix $i$ for the moment, as well as $\delta,\gamma > 0$, to be determined later.   Note that for every continuous local martingale $R$, we have   $\E[\exp(R_T - \frac{1}{2}[R]_T)] \le 1$.  Combining this 
with  Markov's inequality, we have for  each $i=1,\ldots,n$, 
\begin{align*}
\PP\left(\gamma N^i_T \ge \delta\gamma + \frac{\gamma^2}{2}[N^i]_T\right) \le 
\exp(-\delta\gamma) \quad \mbox{ and }  \quad 
\PP\left(-\gamma N^i_T \ge \delta\gamma + \frac{\gamma^2}{2}[N^i]_T\right) \le 
\exp(-\delta\gamma).
\end{align*}
Thus, defining  the event  $A_n = \{\exists i \in \{1,\ldots,n\} : \gamma |N^i_T| \ge \delta\gamma + \frac{\gamma^2}{2}[N^i]_T\}$,  we have
\begin{align*}
\PP \left( A_n \right) &\le \sum_{i=1}^n\PP\left(\gamma |N^i_T| \ge \delta\gamma + \frac{\gamma^2}{2}[N^i]_T\right) 
	\le 2n\exp(-\delta\gamma).
\end{align*}
On the other hand, on $A^c_n$,
\begin{align*}
\frac{1}{n}\sum_{i=1}^n[M^i]_T &\le \frac{c_2}{n^2} + \frac{c_3}{n}\sum_{i=1}^n|N^i_T| \\
	&\le \frac{c_2}{n^2} + c_3\delta + \frac{c_3\gamma}{2 n}\sum_{i=1}^n [N^i]_T \\
	&\le \frac{c_2}{n^2} + c_3\delta + c_1c_3\frac{\gamma}{2 n^3}\sum_{i=1}^n [M^i]_T,
\end{align*}
and for $n^2 \ge (c_1 c_3 \gamma) \vee (c_2/c_3\delta)$ it holds that $\frac{1}{n}\sum_{i=1}^n[M^i]_T \le 4c_3\delta$.
Thus, for any such $n$, 
\begin{align*}
\PP\left(\frac{1}{n}\sum_{i=1}^n[M^i]_T > 4c_3\delta\right) &\le \PP (A_n) \le 2n\exp\left(-\delta\gamma\right).
\end{align*}
Recalling \eqref{pf:exp-equiv-est1}, we may choose $\epsilon > 0$ and set $\delta = \epsilon^2/8c_3c_0$ to get:
\begin{align*}
\PP\left(\frac{1}{n}\sum_{i=1}^n\|X^i-\overline{X}^i\|_{{\infty}}^2 > \epsilon^2 \right) &\le \PP\left(\frac{1}{n}\sum_{i=1}^n[M^i]_T > \frac{\epsilon^2}{c_0} - \frac{1}{n^2} \right) \\
	&\le \PP\left(\frac{1}{n}\sum_{i=1}^n[M^i]_T > \frac{\epsilon^2}{2c_0} \right) \\
	&\le 2n\exp\left(-\frac{\epsilon^2\gamma}{8c_3c_0}\right),
\end{align*}
whenever $n^2 \ge (c_1c_3\gamma) \vee (8c_0c_2/\epsilon^2) \vee (2c_0/\epsilon^2)$.
In particular, choose $\gamma = n^2/c_1c_3$ to deduce \eqref{def:concentration}, with $\kappa_1 = \sqrt{(8c_2c_0) \vee (2c_0)}$ and $\kappa_2 = 16c_0c_1c_3^2$.
\end{proof}

\section{Proofs of concentration inequalities} \label{se:concentration-proofs}

In this section we prove the claims of Section \ref{se:concentration-statements}. 
Due to Theorem \ref{th:concentration}, it remains only to find concentration estimates for the McKean-Vlasov system $\bm{\overline{X}}$.
We did not find directly applicable results for this, so we develop our own in
 {Sections \ref{se:concentration-facts}--\ref{se:MKVexpectationbounds} below.}  
Finally, in Section \ref{se:MFGconcentrationproofs} we  address the MFG system.

\subsection{Review of concentration inequalities} \label{se:concentration-facts}

We begin by reviewing known results characterizing concentration in terms of transport inequalities, combining well known facts about subgaussian random variables with \cite[Proposition 6.3]{gozlan-leonard} and \cite[Theorem 3.1]{bobkov-gotze}. Recall the definition of relative entropy $\cR$ from \eqref{def:relativeentropy}.

\begin{theorem} \label{th:T1-general}
Let $(E,\|\cdot\|)$ be a separable Banach space and $\altmu \in \P^1(E)$. Let $\kappa > 0$. Consider the following statements:
\begin{enumerate}[(i)]
\item For all $\nu \ll \altmu$,
\[
\W_{1,E}(\altmu,\nu) \le \sqrt{2\kappa\cR(\nu | \altmu)}.
\]
\item For every $\lambda \in \R$ and $\varphi\in \mathrm{Lip}(E,\|\cdot\|)$,
\[
\int_E\exp\left(\lambda\left(\varphi - \langle \altmu, \varphi \rangle\right)\right)\altmu(dx) \le \exp(\kappa \lambda^2/2).
\]
\item For every $a > 0$ and $\varphi \in \mathrm{Lip}(E,\|\cdot\|)$, 
\[
\altmu\left(\varphi - \langle \altmu, \varphi \rangle > a \right) \le \exp(-a^2/2\kappa).
\]
\item We have $\int_E\exp(\|x\|^2 / 6\kappa)\altmu(dx) < \infty$.
\end{enumerate}
Then $(i) \Leftrightarrow (ii) \Rightarrow (iii) \Rightarrow (iv)$. Moreover, if $(iv)$ holds for a given $\kappa$, then  $(i)$ holds with $\kappa$ replaced by
\[
\kappa' = 6\Big(1 + 4 \log \int_E \exp(\|x\|^2 / 6\kappa) \, \mu(dx) \Big).
\]
In particular (i--iv) are equivalent up to a universal change in the constant $\kappa$.
\end{theorem}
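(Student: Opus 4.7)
The plan is to chain the four implications, leaning on well-known duality and integration arguments; none of the steps are genuinely novel (the result is a compilation of facts from Bobkov–Götze, Djellout–Guillin–Wu, and Gozlan–Léonard), but one has to be careful to track the constants needed to close the loop $(iv)\Rightarrow(i)$ with the explicit $\kappa'$.

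For $(i)\Leftrightarrow(ii)$ I would invoke Kantorovich duality, writing
\[
\W_{1,E}(\altmu,\nu) = \sup_{\varphi \in \mathrm{Lip}(E,\|\cdot\|)} \int_E \varphi\, d(\nu-\altmu),
\]
together with the Donsker–Varadhan variational representation
\[
\log\int_E e^{\Psi}\, d\altmu = \sup_{\nu \ll \altmu}\Bigl[\int_E \Psi\, d\nu - \cR(\nu|\altmu)\Bigr].
\]
Applying the variational formula with $\Psi = \lambda(\varphi - \langle\altmu,\varphi\rangle)$ turns $(ii)$ into the bilinear inequality $\lambda\int \varphi\, d(\nu-\altmu) \le \kappa\lambda^2/2 + \cR(\nu|\altmu)$; optimizing in $\lambda$ and taking the supremum over $1$-Lipschitz $\varphi$ produces $(i)$, and reversing the optimization gives the converse.

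The implication $(ii)\Rightarrow(iii)$ is the standard Chernoff bound: apply Markov's inequality to $e^{\lambda(\varphi-\langle\altmu,\varphi\rangle)}$, use $(ii)$, and optimize in $\lambda>0$ (minimizer $\lambda = a/\kappa$). For $(iii)\Rightarrow(iv)$ I would specialize $(iii)$ to $\varphi(x)=\|x\|$ (which is $1$-Lipschitz), obtaining $\altmu(\|x\|>t)\le \exp(-(t-M)^2/2\kappa)$ for $t\ge M:=\int\|x\|d\altmu$, and then integrate the tail via the layer-cake formula
\[
\int_E e^{\|x\|^2/(6\kappa)}\, d\altmu = 1 + \int_1^\infty \altmu\bigl(\|x\|>\sqrt{6\kappa\log s}\bigr)\, ds,
\]
which is finite since the gaussian tail decays faster than the polynomial $s^{-1-\epsilon}$.

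The main obstacle is the quantitative statement $(iv)\Rightarrow(i)$ with the explicit constant $\kappa'$: this is where the loop closes but with a controlled loss. Here I would follow the route of Djellout–Guillin–Wu \cite{djellout-guillin-wu} or, equivalently, \cite[Proposition 6.3]{gozlan-leonard}. The idea is to use Pinsker-type inequalities combined with the exponential integrability hypothesis to bound $\int \varphi\, d(\nu-\altmu)$ for $1$-Lipschitz $\varphi$ in terms of $\cR(\nu|\altmu)$; concretely, one splits $\varphi$ against $\nu-\altmu$ on a level set $\{\|x\|\le R\}$ using Kantorovich–Rubinstein and total variation, and off the level set using the tail control coming from $(iv)$. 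Optimizing the radius $R$ in terms of $\cR(\nu|\altmu)$ yields a $T_1$-inequality with constant $\kappa'= 6\bigl(1 + 4\log\int_E e^{\|x\|^2/(6\kappa)}\, d\altmu\bigr)$, matching the stated expression, which completes the cycle and proves the final equivalence assertion.
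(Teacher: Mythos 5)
Your proposal is correct and follows essentially the same route as the paper, which proves nothing itself but assembles the theorem from \cite{bobkov-gotze} (the Donsker--Varadhan/Kantorovich duality argument for $(i)\Leftrightarrow(ii)$), the standard Chernoff and layer-cake computations for $(ii)\Rightarrow(iii)\Rightarrow(iv)$, and \cite[Proposition 6.3]{gozlan-leonard} / \cite{djellout-guillin-wu} for the quantitative $(iv)\Rightarrow(i)$. The only step you do not actually carry out is the verification that the level-set splitting produces precisely the constant $\kappa' = 6\bigl(1 + 4\log\int_E e^{\|x\|^2/6\kappa}\,d\altmu\bigr)$, but since the paper likewise takes this constant directly from the cited reference, this matches the paper's own level of detail.
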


In addition, we will need two well known tensorization results, both of which follow from \cite[Proposition 1.9]{gozlan-leonard}.   In what follows, given a separable Banach space $(E,\|\cdot\|)$ and $p \ge 1$, 
by $(E^n, \|\cdot\|_{n,p})$ we will mean  $E^n$ equipped with the $\ell^p$ norm,
\begin{equation}
\label{l1l2norms}
\|\bm{x}\|_{n,p} = \left(\sum_{i=1}^n\|x_i\|^p\right)^{1/p},
\end{equation}
for $\bm{x}=(x_1,\ldots,x_n) \in E^n$. The subscript in $\|\cdot\|_{n,p}$ indicates that we are using the $\ell^p$ norm on the $n$-fold product space; while one might more descriptively include the space $E^n$ itself in the subscript, the underlying space $E$ should always be clear from context.
Typically, $p$ will be either $1$ or $2$.

\begin{theorem} \label{th:T2-general}
Let $(E,\|\cdot\|)$ be a separable Banach space, $\kappa > 0$, and $\altmu \in \P^1(E)$. 
\begin{enumerate}[(i)]
\item Suppose $\W_{1,E}(\altmu,\nu) \le \sqrt{2 \kappa \cR(\nu | \altmu)}$, for all $\nu \ll \altmu$. Then, for all $\nu \ll \altmu^n$, we have
\[
\W_{1,(E^n,\|\cdot\|_{n,1})}(\altmu^n,\nu) \le \sqrt{2 n \kappa \cR(\nu | \altmu^n)}.
\]
\item Suppose $\W_{2,E}(\altmu,\nu) \le \sqrt{2 \kappa \cR(\nu | \altmu)}$, for all $\nu \ll \altmu$. Then, for all $\nu \ll \altmu^n$, we have
\[
\W_{1,(E^n,\|\cdot\|_{n,2})}(\altmu^n,\nu) \le \W_{2,(E^n,\|\cdot\|_{n,2})}(\altmu^n,\nu) \le \sqrt{2 \kappa \cR(\nu | \altmu^n)}.
\]
\end{enumerate}
\end{theorem}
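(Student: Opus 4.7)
The plan is to apply a standard Marton-style tensorization argument based on the chain rule for relative entropy, which is exactly what is done in \cite[Proposition 1.9]{gozlan-leonard}. Write $\nu \in \P(E^n)$ with $\nu \ll \mu^n$ in terms of its successive conditional kernels: let $\nu^{(i)}(\cdot\,|\,y_1,\ldots,y_{i-1})$ denote the conditional law of the $i$-th coordinate given the preceding ones under $\nu$. The chain rule then gives
\[
\cR(\nu\,|\,\mu^n) \;=\; \int_{E^n} \sum_{i=1}^n \cR\bigl(\nu^{(i)}(\cdot\,|\,y_{1:i-1})\,\big|\,\mu\bigr)\,d\nu(y_1,\ldots,y_n),
\]
which is the only structural identity we will need on the entropy side.

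For part (i), I would construct a coupling $\pi$ of $\mu^n$ with $\nu$ by gluing together, coordinate by coordinate, an optimal $\W_{1,E}$-coupling of $\mu$ with $\nu^{(i)}(\cdot\,|\,y_{1:i-1})$ for each realization of the previous coordinates under $\nu$. Because $\|\cdot\|_{n,1}$ is additive across coordinates,
\[
\W_{1,(E^n,\|\cdot\|_{n,1})}(\mu^n,\nu) \;\le\; \int_{E^n} \sum_{i=1}^n \W_{1,E}\bigl(\mu,\nu^{(i)}(\cdot\,|\,y_{1:i-1})\bigr)\,d\nu.
\]
Applying the hypothesized $T_1$ inequality to each conditional, followed by Cauchy-Schwarz in the sum over $i$ (producing the factor $\sqrt n$) and then Jensen in the integral against $\nu$, the sum/integral of $\sqrt{2\kappa\,\cR(\nu^{(i)}(\cdot\,|\,y_{1:i-1})\,|\,\mu)}$ is bounded by $\sqrt{2n\kappa\,\cR(\nu\,|\,\mu^n)}$, which is exactly the claim.

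For part (ii), the same gluing procedure yields a coupling whose cost is an additive sum at the level of squared $\ell^2$-norms, so one obtains
\[
\W_{2,(E^n,\|\cdot\|_{n,2})}^2(\mu^n,\nu) \;\le\; \int_{E^n} \sum_{i=1}^n \W_{2,E}^2\bigl(\mu,\nu^{(i)}(\cdot\,|\,y_{1:i-1})\bigr)\,d\nu.
\]
Applying the hypothesized $T_2$ inequality inside the integrand gives $\sum_i 2\kappa\,\cR(\nu^{(i)}(\cdot\,|\,y_{1:i-1})\,|\,\mu)$, and the chain rule collapses the integral against $\nu$ into $2\kappa\,\cR(\nu\,|\,\mu^n)$, with no factor of $n$. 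The inequality $\W_1 \le \W_2$ on $(E^n,\|\cdot\|_{n,2})$ is immediate from Jensen.

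There is no real obstacle here; the only point to watch is the asymmetry between the two parts. In part (i), because the cost is linear in the distance, passing from a sum of square roots to a square root of a sum via Cauchy-Schwarz costs a factor $\sqrt n$; in part (ii), because the cost is quadratic and the squared $\ell^2$ norm is already additive, no such factor appears and the resulting inequality is dimension-free. This is precisely the feature that drives the difference between the dimension-dependent concentration of Theorem \ref{th:concentration-T1} and the dimension-free concentration of Theorem \ref{th:concentration-T2}.
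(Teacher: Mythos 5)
Your proof is correct, and it is essentially the argument the paper relies on: the paper does not prove this theorem itself but invokes \cite[Proposition 1.9]{gozlan-leonard}, whose proof is exactly the Marton-style tensorization you describe (chain rule for relative entropy plus coordinate-by-coordinate gluing of optimal couplings, with the $\sqrt{n}$ arising from Cauchy--Schwarz in the $\ell^1$ case and disappearing in the additive $\ell^2$-squared case). The only detail left implicit is the measurable selection of the conditional optimal couplings, which is standard.
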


The key difference between (i) and (ii) in Theorem \ref{th:T2-general} is of course that (ii) is \emph{dimension-free}.
Before we can apply these general principles to the study of concentration of interacting diffusions of McKean-Vlasov type, we first quote a slight modification of  \cite[Corollary 4.1]{djellout-guillin-wu} (alternatively, see \cite[Theorem 1]{ustunel2012transportation}):

\begin{theorem} \label{th:ap:T1}
For $k \in \N$, suppose $b : [0,T] \times \R^k \rightarrow \R^k$ is jointly measurable and there exists $L <  \infty$ such that
\[
|b(t,x) - b(t,y)| \le L|x-y|, \text{ for all } x,y \in \R^k.
\]
Assume also that
\begin{align}
\sup_{t \in [0,T]}|b(t,0)| < \infty. \label{def:th:ap:T1-bsup}
\end{align}
For another $k' \in \N$, let $\sigma \in \R^{k \times k'}$, and let $\|\sigma\|_{op} = \sup\{|\sigma x| : x \in \R^{k'},  |x| \leq 1\}$ denote the operator norm. Fix a probability space supporting a $k'$-dimensional Wiener process $W$.
Finally, let $X^x = (X^x_t)_{t \in [0,T]}$ denote the unique strong solution to the SDE
\[
dX^x_t = b(t,X^x_t)dt + \sigma dW_t, \ \ X_0 = x,
\]
and let $P_x \in \P(C([0,T];\R^k))$ denote the law of $X^x$.
Then there exists $\kappa < \infty$, depending only on $T$, $L$, and $\|\sigma\|_{op}$ (and not on the values of $k$, $k'$, \eqref{def:th:ap:T1-bsup}), such that, for all $x \in \R^k$ we have
\begin{align}
\W_{1,(\C^k,\|\cdot\|_{k,2})}(Q,P_x) \le \sqrt{2\kappa\cR(Q|P_x)}, \text{ for all } Q \in \P^1(\C^k) \text{ with } Q \ll P_x, \label{def:T1-inequality-MKV}
\end{align}
In particular, it holds for every $a > 0$ and  $\Phi \in \mathrm{Lip}(\C^k,\|\cdot\|_{k,2})$ that
\begin{align*}
P_x\left(\Phi - \langle P_x,\Phi\rangle > a\right) \le \exp\left(- a^2/2\kappa\right).
\end{align*}
\end{theorem}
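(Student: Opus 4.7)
The plan is to establish the transport--entropy inequality \eqref{def:T1-inequality-MKV} by the classical Girsanov--coupling argument and then invoke the Bobkov--Götze equivalence (Theorem~\ref{th:T1-general}) to extract the subgaussian concentration of Lipschitz functions. The only delicate point---and the main obstacle to a naive execution---is retaining a \emph{dimension-free} constant $\kappa$ through the Gronwall step.

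Fix $Q \in \P^1(\C^k)$ with $Q \ll P_x$ and $\cR(Q|P_x) < \infty$. By the martingale representation theorem together with Girsanov's theorem, there exist, on a suitable probability space, a $k'$-dimensional Brownian motion $B$ and a progressively measurable process $h=(h_s)_{s \in [0,T]}$ with values in $\R^{k'}$ such that the unique strong solution $Y$ of
\begin{equation*}
dY_t = b(t,Y_t)\,dt + \sigma h_t\,dt + \sigma\,dB_t, \qquad Y_0 = x,
\end{equation*}
has law $Q$, and moreover
\begin{equation*}
\cR(Q|P_x) = \tfrac12\,\E\!\left[\int_0^T |h_s|^2\,ds\right].
\end{equation*}
On the same space, driving the unperturbed SDE by the same $B$ produces a realization $X^x$ with law $P_x$, so that $(X^x,Y)$ is a coupling of $(P_x,Q)$.

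Exploiting the cancellation of the Brownian integrals, the pathwise difference $D_t := Y_t - X^x_t$ satisfies
\begin{equation*}
D_t = \int_0^t \bigl(b(s,Y_s) - b(s,X^x_s)\bigr)\,ds + \sigma\int_0^t h_s\,ds.
\end{equation*}
A naive approach would be to bound $\sup_t|D_t|$ first and then use the trivial estimate $\sum_{i=1}^k\sup_t|D_t^i|^2 \le k\sup_t|D_t|^2$, losing a factor of $k$. To avoid this, I would apply Cauchy--Schwarz \emph{coordinatewise},
\begin{equation*}
\sup_{t \le T}|D_t^i|^2 \le 2T\int_0^T |b^i(s,Y_s)-b^i(s,X^x_s)|^2\,ds + 2T\int_0^T |(\sigma h_s)^i|^2\,ds,
\end{equation*}
and only then sum over $i=1,\dots,k$ before invoking the Lipschitz estimate $|b(s,Y_s)-b(s,X^x_s)|\le L|D_s|$ and the operator-norm bound $|\sigma h_s|\le\|\sigma\|_{op}|h_s|$. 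Setting $\Phi(t):=\sum_{i=1}^k\sup_{s\le t}|D_s^i|^2$ and using $|D_s|^2\le \Phi(s)$, this produces
\begin{equation*}
\Phi(T) \le 2TL^2\int_0^T \Phi(s)\,ds + 2T\|\sigma\|_{op}^2\int_0^T |h_s|^2\,ds,
\end{equation*}
to which Gronwall's lemma applies and yields a constant $\kappa<\infty$ depending only on $T$, $L$, and $\|\sigma\|_{op}$ with $\|Y-X^x\|_{k,2}^2 = \Phi(T) \le 2\kappa\int_0^T|h_s|^2\,ds$. Crucially, neither $k$ nor $k'$ appears in $\kappa$.

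Jensen's inequality applied to the coupling and the Girsanov identity then combine to give
\begin{equation*}
\W_{1,(\C^k,\|\cdot\|_{k,2})}(Q,P_x) \le \E\bigl[\|Y-X^x\|_{k,2}\bigr] \le \sqrt{2\kappa\,\E\!\left[\int_0^T |h_s|^2\,ds\right]} = \sqrt{4\kappa\,\cR(Q|P_x)},
\end{equation*}
which, after absorbing the factor $2$ into $\kappa$, is \eqref{def:T1-inequality-MKV}. The subgaussian concentration for $\Phi \in \mathrm{Lip}(\C^k,\|\cdot\|_{k,2})$ is then the implication (i) $\Rightarrow$ (iii) of Theorem~\ref{th:T1-general} applied to the Banach space $(\C^k,\|\cdot\|_{k,2})$. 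Note finally that the bound \eqref{def:th:ap:T1-bsup} enters only to ensure $P_x, Q \in \P^1(\C^k)$ so that the left-hand side above is finite, and has no effect on the value of $\kappa$.
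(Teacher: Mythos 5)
Your proof is correct and is essentially the argument the paper relies on: the paper's own proof simply cites \cite[Corollary 4.1]{djellout-guillin-wu} (equivalently \cite[Theorem 1]{ustunel2012transportation}) and remarks that their Girsanov--coupling--Gronwall proof goes through with the operator norm in place of the Hilbert--Schmidt norm and with a constant independent of $k$, $k'$, and $\sup_t|b(t,0)|$ --- your coordinatewise Cauchy--Schwarz step followed by summation over $i$ is precisely that ``straightforward check'' written out, and it correctly yields $\|Y-X^x\|_{k,2}^2=\Phi(T)$ controlled by $T$, $L$, $\|\sigma\|_{op}$ only. The passage to the concentration statement via the implication $(i)\Rightarrow(iii)$ of Theorem \ref{th:T1-general} also matches the paper.
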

\begin{proof}
This would follow immediately from \cite[Corollary 4.1]{djellout-guillin-wu} (or \cite[Theorem 1]{ustunel2012transportation}), except that we are using the operator norm instead of the Hilbert-Schmidt (Frobenius) norm for $\sigma$. It is straightforward to check that their proof goes through with no change and that the constant $\kappa$ does not depend on the values of $k$, $k'$, or $\sup_{t \in [0,T]}|b(t,0)|$. The final claim (``in particular") follows from the implication $(i) \Rightarrow (iii)$ in Theorem \ref{th:T1-general}.
\end{proof}

\subsection{McKean-Vlasov concentration inequalities} \label{se:MKVconcentration-sub}

We now specialize this result to obtain concentration bounds for interacting diffusions.  
Let $B^1,\ldots,B^n$ be i.i.d.\ standard Wiener processes of dimension $d$.
{We are given a parameter $p \in [1,2]$, to be specified later, and a drift $\newb : [0,T] \times \R^d \times {\mathcal P}^p(\R^d) \rightarrow \R^d$ which is Lipschitz in the space and measure arguments; more precisely, there exists $\newL < \infty$ such that}
\begin{equation}
\label{newb-Lip}
|\newb(t,x,m) - \newb(t',x',m')| \le \newL\left(|x-x'| + {\W_p}(m,m')\right), \  t \in [0,T]. 
\end{equation}
Assume  also that
\begin{align}
\sup_{t \in [0,T]} |\newb(t,0,\delta_0)| < \infty. \label{def:newb-sup}
\end{align}
Lastly, we are given $\sigma \in \R^{d \times d}$.
Now, consider the $n$-particle system 
$\bm{\newX}=(\newX^1,\ldots,\newX^n)$ that is the unique strong solution to the SDE system
\begin{align}
d\newX^i_t &= \newb(t,\newX^i_t,m^n_{\bm{\newX}_t})dt + \sigma dB^i_t, \label{def:ap:MKVSDE}
\end{align}
with initial conditions $\newX_0^1,\ldots,\newX_0^n$ which are i.i.d.\ with law $\newmu_0$ satisfying $\E[|\newX_0^1|^2] < \infty$. 

For $\bm{x} \in (\R^d)^n$, let $P_{\bm{x}} \in \P((\C^d)^n)$ denote the law of the solution to the SDE system \eqref{def:ap:MKVSDE} started from initial states $({\newX^1_0},\ldots,{\newX^n_0}) = \bm{x}$. Then $\bm{x} \mapsto P_{\bm{x}}$ is a version of the conditional law of ${\bm{\newX}}$ given ${\bm{\newX}_0}$.
Moreover, for any $\bm{x}$ and $\bm{y}$ in $(\R^d)^n$ we can couple $P_{\bm{x}}$ and $P_{\bm{y}}$ in the usual way, by solving the SDE system from the two initial states with the same Brownian motion. Let $\pi_{\bm{x},\bm{y}}$ denote this coupling.  
In what follows, we will make use of the following  standard estimates: Under assumption \eqref{newb-Lip}, there exists a constant $c$ that depends only on $T$, $p$, and $\newL$ (and not on $n$ or the value of \eqref{def:newb-sup}), such that 
\begin{align}
\left|\langle P_{\bm{x}},\Phi\rangle - \langle P_{\bm{y}},\Phi\rangle\right|^p &\le \int \|x'-y'\|^p_p \,\pi_{\bm{x},\bm{y}}(dx',dy') \le c \|\bm{x}-\bm{y} \|_{n,p}^p, \quad \forall \Phi \in \mathrm{Lip}((\C^d)^n,\|\cdot\|_{n,p}). \label{pf:PxLipschitz}
\end{align}
For our first concentration result, recall that $\|x\|_{{\infty}} = \sup_{s \in [0,T]} |x(s)|$, and that on $(\C^d)^n$ we make use of the corresponding 
$\ell^1$ and $\ell^2$ norms on the product space   as in \eqref{l1l2norms}.

\begin{theorem} \label{th:concentration-mckeanvlasov-T2}
Assume that the Lipschitz condition \eqref{newb-Lip} holds with $p=2$. Assume also that there exists $\kappa_0 <  \infty$ such that
\begin{align}
\W_2(\newmu_0,\nu) \le \sqrt{2\kappa_0 \cR(\nu|\newmu_0)}, \qquad \text{ for } \nu \ll \newmu_0. \label{def:T2-inequality-ap}
\end{align}
Then there exist a constant $\delta > 0$, independent of $n$, such that for every $a > 0$ and every $\Phi \in \mathrm{Lip}((\C^d)^n,\|\cdot\|_{n,2})$ we have
\begin{align*}
\PP\left(\Phi(\bm{{\newX}}) - \E \Phi(\bm{{\newX}}) > a \right) \le 2e^{-\delta a^2}.
\end{align*}
\end{theorem}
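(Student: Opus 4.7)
The plan is to reduce the problem to two concentration statements that can be combined via a union bound: one for the conditional law of $\bm{\newX}$ given its initial state $\bm{\newX}_0$, and one for the law $\newmu_0^n$ of the initial state itself. The key to dimension-independence will be that both concentration estimates can be obtained with a constant that does not depend on $n$, the first because the $n$-particle drift has a dimension-free Lipschitz constant in the $\ell^2$ norm on $(\R^d)^n$, and the second because the T2 inequality tensorizes dimension-freely.

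More concretely, I would first view $\bm{\newX}$ as the unique strong solution of an $nd$-dimensional SDE with drift $\bm{b}(t,\bm{x}) = (\newb(t,x^i, m^n_{\bm{x}}))_{i=1}^n$ and diffusion $\mathrm{diag}(\sigma,\ldots,\sigma)$. Using the Lipschitz bound \eqref{newb-Lip} with $p=2$ together with the elementary inequality $\W_2^2(m^n_{\bm{x}}, m^n_{\bm{y}}) \le \frac{1}{n}\sum_{i=1}^n |x^i-y^i|^2$, one checks that $\bm{b}$ is globally Lipschitz in $\ell^2$ with a constant independent of $n$, and the operator norm of the diffusion matrix equals $\|\sigma\|_{op}$. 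Theorem \ref{th:ap:T1} then gives a constant $\kappa$, independent of $n$ and $\bm{x}$, such that the conditional law $P_{\bm{x}}$ satisfies
\[
\W_{1,((\C^d)^n,\|\cdot\|_{n,2})}(Q, P_{\bm{x}}) \le \sqrt{2\kappa\, \cR(Q|P_{\bm{x}})}.
\]
By the equivalence $(i)\Rightarrow(iii)$ in Theorem \ref{th:T1-general}, this yields, for every $\Phi \in \mathrm{Lip}((\C^d)^n, \|\cdot\|_{n,2})$,
\[
\PP\bigl(\Phi(\bm{\newX}) - \langle P_{\bm{\newX}_0},\Phi\rangle > a/2 \,\bigm|\, \bm{\newX}_0 \bigr) \le \exp(-a^2/(8\kappa)).
\]

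For the second piece, set $\Psi(\bm{x}) := \langle P_{\bm{x}},\Phi\rangle$; the standard coupling estimate \eqref{pf:PxLipschitz} with $p=2$ shows that $\Psi \in \mathrm{Lip}(((\R^d)^n, \|\cdot\|_{n,2}))$ with Lipschitz constant bounded independently of $n$. The assumed T2 inequality \eqref{def:T2-inequality-ap} for $\newmu_0$ tensorizes dimension-freely to give T2 for $\newmu_0^n$ on $((\R^d)^n, \|\cdot\|_{n,2})$ with the \emph{same} constant $\kappa_0$, by Theorem \ref{th:T2-general}(ii). Since $\W_1 \le \W_2$, this yields T1 for $\newmu_0^n$, and hence by Theorem \ref{th:T1-general} subgaussian concentration for $\Psi$ applied to $\bm{\newX}_0$, with a constant independent of $n$. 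Since $\E\Phi(\bm{\newX}) = \E\Psi(\bm{\newX}_0)$, a union bound applied to
\[
\Phi(\bm{\newX}) - \E\Phi(\bm{\newX}) = \bigl[\Phi(\bm{\newX}) - \Psi(\bm{\newX}_0)\bigr] + \bigl[\Psi(\bm{\newX}_0) - \E\Psi(\bm{\newX}_0)\bigr]
\]
gives the desired bound with $\delta$ the minimum of the two exponential rates.

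The main obstacle is conceptual rather than technical: securing dimension-free constants at both stages. This hinges on (i) the fact that $\W_2$-Lipschitz dependence of $\newb$ on the measure argument, combined with the trivial transport inequality $\W_2^2(m^n_{\bm{x}}, m^n_{\bm{y}}) \le \frac{1}{n}|\bm{x}-\bm{y}|^2$, produces an $n$-independent $\ell^2$-Lipschitz constant for the full drift $\bm{b}$, and (ii) the dimension-free tensorization of T2, in contrast to T1, which would have introduced an undesirable factor of $n$. Once these two points are in place, everything else is a routine conditioning argument.
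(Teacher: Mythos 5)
Your proposal is correct and follows essentially the same route as the paper's proof: the same conditioning on $\bm{\newX}_0$, the same dimension-free $\ell^2$-Lipschitz bound on the $n$-particle drift feeding into Theorem \ref{th:ap:T1}, the same use of the dimension-free tensorization in Theorem \ref{th:T2-general}(ii) together with the coupling estimate \eqref{pf:PxLipschitz}, and the same union bound yielding $\delta$ as the worse of the two rates.
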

\begin{proof}
To apply Theorem \ref{th:ap:T1}, we first check that the constant $\kappa$ in \eqref{def:T1-inequality-MKV} does not grow with the dimension $n$. To this end, define $B_n : [0,T] \times (\R^d)^n \rightarrow (\R^d)^n$ by $B_n(t,\bm{x}) = (\newb(t,x_1,m^n_{\bm{x}}),\ldots,\newb(t,x_n,m^n_{\bm{x}}))$. Define also the $nd \times nd$ volatility matrix $\Sigma_n$ by
\begin{align*}
\Sigma_n = \left(\begin{matrix}
\sigma & \ & \ & \  \\
\ & \sigma &  \ & \  \\
\ & \ & \ddots & \  \\
\ & \ & \ & \sigma 
\end{matrix}\right),
\end{align*}
with omitted entries understood to be zero.
This way, we can write
\[
d\bm{{\newX}}_t = B_n(t,\bm{{\newX}}_t)dt + \Sigma_nd\bm{W}_t,
\]
where $\bm{W} = (B^1,\ldots,B^n)$. We wish to show that $B_n(t,\cdot)$ is Lipschitz, uniformly in $t$ and $n$, and that $\sup_n\|\Sigma_n\|_{op} < \infty$.
First notice that for $\bm{x}=(x_1,\ldots,x_n) \in (\R^d)^n$ and $\bm{y}=(y_1,\ldots,y_n) \in (\R^d)^n$ we have for $t \in [0,T]$, 
\begin{align*}
\left|\newb(t,x_i,m^n_{\bm{x}}) - \newb(t,y_i,m^n_{\bm{y}})\right| &\le \newL\left(|x_i-y_i| + \W_2(m^n_{\bm{x}},m^n_{\bm{y}})\right) \\
	&\le \newL\left(|x_i-y_i| + \sqrt{\frac{1}{n}\sum_{j=1}^n|x_j-y_j|^2}\right) \\
	&= \newL |x_i-y_i| + \newL n^{-1/2}|\bm{x}-\bm{y}|,
\end{align*}
where $|\bm{x}-\bm{y}|$ as usual denotes the Euclidean distance. Hence,
\begin{align*}
\left|B_n(t,\bm{x}) - B_n(t,\bm{y})\right|	&\le \sqrt{\sum_{i=1}^n\left(\newL|x_i-y_i| + \newL n^{-1/2}|\bm{x}-\bm{y}|\right)^2} \\ 
	&\le 2\newL |\bm{x}-\bm{y}|.
\end{align*}
This shows that the Lipschitz constant $L$  of $B_n$  is uniform in $n$. It is clear that $\|\Sigma_n\|_{op} \le \|\sigma\|_{op}$.

Now, for $\bm{x} \in (\R^d)^n$ recall that  $\bm{x} \mapsto P_{\bm{x}}$ is a version of the conditional law of $\bm{{\newX}}$ given $\bm{{\newX}}_0=\bm{x}$.  By Theorem \ref{th:ap:T1}, there is a constant $\newC > 0$, independent of $n$ due to the above considerations, such that for any $\Phi \in \mathrm{Lip}((\C^d)^n,\|\cdot\|_{n,2})$ we have
\[
P_{\bm{x}}(\Phi - \langle P_{\bm{x}},\Phi\rangle > a) \le \exp(-a^2/2\newC), \text{ for all } a > 0.
\]
Moreover, combining Theorem \ref{th:T2-general}(ii) with Theorem \ref{th:T1-general}, the assumption \eqref{def:T2-inequality-ap} ensures that  for every $a > 0$ and $\varphi \in \mathrm{Lip}((\R^d)^n,\|\cdot\|_{n,2})$ we have
\[
\newmu_0^n(\varphi - \langle\newmu_0^n,\varphi\rangle > a) \le \exp(-a^2/2\kappa_0).
\]
Finally, fix any $\Phi \in \mathrm{Lip}((\C^d)^n,\|\cdot\|_{n,2})$. Then by \eqref{pf:PxLipschitz}, the map $\bm{x} \mapsto \langle P_{\bm{x}},\Phi\rangle$ is $c$-Lipschitz on $(\R^d)^n$ with respect to the Euclidean norm.  
Use this along with the previous two  inequalities 
(together with the fact that $\widetilde \mu_{0}^n$ is the law of $\bm{{\newX}}_0$)
to conclude 
\begin{align*}
\PP\left(\Phi(\bm{{\newX}}) - \E\Phi(\bm{{\newX}}) > a\right) &\le \E\left[\PP\left(\left.\Phi(\bm{{\newX}}) - \langle P_{\bm{{\newX}}_0},\Phi\rangle > a/2\right| \bm{{\newX}}_0\right)\right] 
\\
&\hspace{15pt}+ \PP\left(\langle P_{\bm{{\newX}}_0},\Phi\rangle - \E\langle P_{\bm{{\newX}}_0},\Phi\rangle > a/2\right) \\
	&\le \exp(-a^2/8\newC) + \exp(-a^2/8\kappa_0c^2).
\end{align*}
The assertion of the theorem follows with $\delta = 1/(8\max\{\newC,\kappa_0c^2\})$.
\end{proof}

We now treat the case where $p=1$ in \eqref{newb-Lip} and $\newmu_0$ satisfies the much weaker assumption
\begin{align}
\W_{1,\R^d}(\newmu_0,\nu) \le \sqrt{2\kappa_0 \cR(\nu|\newmu_0)}, \text{ for } \nu \ll \newmu_0. \label{def:T1-inequality-ap}
\end{align}
Adapting the proof of Theorem \ref{th:concentration-mckeanvlasov-T2} yields the following:

\begin{theorem} \label{th:concentration-mckeanvlasov-T1}
Assume that the Lipschitz condition \eqref{newb-Lip} holds with $p=1$. Assume  also that \eqref{def:T1-inequality-ap} holds for some $\kappa_0 < \infty$. Then there exist constants $c,\delta > 0$, independent of $n$, such that for every $a > 0$ and every $\Phi \in \mathrm{Lip}((\C^d)^n,\|\cdot\|_{n,1})$, we have
\begin{align*}
\PP\left(\Phi(\bm{\newX}) - \E\Phi(\bm{\newX}) > a  \right) \le 2\exp(-\delta a^2/n).
\end{align*}
\end{theorem}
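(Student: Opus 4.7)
The plan is to mirror the proof of Theorem \ref{th:concentration-mckeanvlasov-T2} step-for-step, substituting $\|\cdot\|_{n,1}$ for $\|\cdot\|_{n,2}$ throughout and replacing the $T_2$-based concentration inputs by $T_1$-based ones. The source of the dimension-dependent rate $\exp(-\delta a^2/n)$ will be twofold: the norm comparison $\|\cdot\|_{n,1}\le\sqrt{n}\,\|\cdot\|_{n,2}$ inflates the effective Lipschitz constant of $\Phi$ by $\sqrt n$ when switching norms on path space, and the $T_1$ inequality tensorizes with a factor of $n$ (Theorem \ref{th:T2-general}(i)), whereas the $T_2$ inequality tensorizes dimension-freely.

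For the conditional-concentration step, I would condition on $\bm{\newX}_0$ and let $P_{\bm{x}}$ denote the conditional law of $\bm{\newX}$ given $\bm{\newX}_0=\bm{x}$. Writing the system \eqref{def:ap:MKVSDE} as $d\bm{\newX}_t = B_n(t,\bm{\newX}_t)\,dt + \Sigma_n\,d\bm{W}_t$ exactly as in the proof of Theorem \ref{th:concentration-mckeanvlasov-T2}, the Lipschitz bound \eqref{newb-Lip} with $p=1$ combined with the crude estimate $\W_1(m^n_{\bm x},m^n_{\bm y}) \le n^{-1/2}|\bm{x}-\bm{y}|$ shows that $B_n$ is Lipschitz in the Euclidean norm with a constant independent of $n$, and $\|\Sigma_n\|_{\rm op}\le\|\sigma\|_{\rm op}$. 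Theorem \ref{th:ap:T1} then supplies a constant $\newC$, independent of $n$, yielding a $T_1$ inequality for $P_{\bm x}$ on $(\C^d)^n$ with respect to $\|\cdot\|_{n,2}$. For $\Phi\in\mathrm{Lip}((\C^d)^n,\|\cdot\|_{n,1})$, the norm comparison makes $\Phi$ into a $\sqrt{n}$-Lipschitz function with respect to $\|\cdot\|_{n,2}$, whence the conditional estimate
\[
P_{\bm x}\bigl(\Phi-\langle P_{\bm x},\Phi\rangle > a\bigr) \le \exp\bigl(-a^2/(2n\newC)\bigr).
\]

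For the conditional-mean step, the Lipschitz estimate \eqref{pf:PxLipschitz} with $p=1$ yields that $\bm x\mapsto\langle P_{\bm x},\Phi\rangle$ is $c$-Lipschitz from $((\R^d)^n,\|\cdot\|_{n,1})$ to $\R$, for a constant $c$ depending only on $T$ and $\newL$. Applying Theorem \ref{th:T2-general}(i) to the hypothesis \eqref{def:T1-inequality-ap} provides a tensorized $T_1$ inequality with constant $n\kappa_0$ in the $\|\cdot\|_{n,1}$ norm, and the implication $(i)\Rightarrow(iii)$ of Theorem \ref{th:T1-general} then gives
\[
\PP\bigl(\langle P_{\bm{\newX}_0},\Phi\rangle - \E\langle P_{\bm{\newX}_0},\Phi\rangle > a\bigr) \le \exp\bigl(-a^2/(2n\kappa_0 c^2)\bigr).
\]
Splitting $\{\Phi(\bm\newX)-\E\Phi(\bm\newX)>a\}$ into its two natural pieces at threshold $a/2$ and summing produces the stated bound with $\delta=1/(8\max\{\newC,\kappa_0 c^2\})$.

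The only real obstacle is the bookkeeping confirming that no constant surreptitiously depends on $n$; this is handled exactly as in Theorem \ref{th:concentration-mckeanvlasov-T2}, since the Lipschitz constant of $B_n$, the operator norm of $\Sigma_n$, and the constant $c$ in \eqref{pf:PxLipschitz} are all uniform in $n$ by their explicit formulas, and Theorem \ref{th:ap:T1} was explicitly noted to produce a constant independent of the underlying dimensions.
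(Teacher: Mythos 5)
Your proposal is correct and follows the paper's own proof essentially verbatim: the same conditional decomposition at threshold $a/2$, the same use of Theorem \ref{th:ap:T1} with the $n$-uniform Lipschitz bound on $B_n$ and the norm comparison $\|\cdot\|_{n,1}\le\sqrt{n}\,\|\cdot\|_{n,2}$ for the conditional piece, and the same tensorized $T_1$ inequality via Theorem \ref{th:T2-general}(i) together with \eqref{pf:PxLipschitz} for the conditional-mean piece, yielding the identical constant $\delta=1/(8\max\{\newC,\kappa_0 c^2\})$. No gaps.
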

\begin{proof}
We proceed as in the proof of Theorem \ref{th:concentration-mckeanvlasov-T2}.
It follows from \eqref{def:T1-inequality-ap} and Theorem \ref{th:T2-general}(i) that
\begin{align}
\W_{1,((\R^d)^n,{\|\cdot\|_{n,1}})}(\newmu_0^n,\nu) \le \sqrt{2n\kappa_0 \cR(\nu|\newmu^n_0)}, \text{ for } \nu \ll \newmu^n_0. \label{def:T1-inequality-tensorized}
\end{align}
Thus, for any function $\varphi \in \mathrm{Lip}((\R^d)^n,{\|\cdot\|_{n,1}})$, Theorem \ref{th:T1-general} yields 
\begin{align}
\newmu_0^n\left(\varphi - \langle\newmu_0^n,\varphi\rangle > a\right) \le \exp(-a^2/ 2n\kappa_0). \label{pf:T1-ap-1}
\end{align}
Fix $\Phi \in \mathrm{Lip}((\C^d)^n,\|\cdot\|_{n,1})$, and note that $\Phi$ is $\sqrt{n}$-Lipschitz with respect to $\|\cdot\|_{n,2}$ because of the elementary inequality $\|\cdot\|_{n,1} \le \sqrt{n}\|\cdot\|_{n,2}$. Recall that $(\R^d)^n \ni \bm{x} \mapsto P_{\bm{x}}$ is a version of the conditional law of $\bm{X}$ given $\bm{X}_0$.  By Theorem \ref{th:ap:T1}, there is a constant $\newC > 0$, independent of $n$ and $\Phi$ (as argued in the proof of Theorem \ref{th:concentration-mckeanvlasov-T2}), such that 
\begin{align}
P_{\bm{x}}(\Phi - \langle P_{\bm{x}},\Phi\rangle > a) \le \exp(-a^2/2\newC n), \text{ for all } a > 0. \label{pf:T1-ap-2}
\end{align}
Moreover, the map $\bm{x} \mapsto \langle P_{\bm{x}},\Phi\rangle$ is $c$-Lipschitz on $((\R^d)^n,\|\cdot \|_{n,1})$ due to  \eqref{pf:PxLipschitz}.
Use \eqref{pf:T1-ap-1} along with \eqref{pf:T1-ap-2} to get
\begin{align*}
\PP\left(\Phi(\bm{{\newX}}) - \E\Phi(\bm{{\newX}}) > a\right) &\le \E\left[\PP\left(\left.\Phi(\bm{{\newX}}) - \langle P_{\bm{{\newX}}_0},\Phi\rangle > a/2\right| \bm{{\newX}}_0\right)\right] 
\\
&\hspace{15pt}+ \PP\left(\langle P_{\bm{{\newX}}_0},\Phi\rangle - \E\langle P_{\bm{{\newX}}_0},\Phi\rangle > a/2\right)
 \\
	&\le \exp(-a^2/8n\newC ) + \exp(-a^2/8n\kappa_0c^2).
\end{align*}
The assertion of the theorem follows with $\delta = 1/(8\max\{\newC,\kappa_0 c^2\})$.
\end{proof}

\subsection{McKean-Vlasov expectation bounds} \label{se:MKVexpectationbounds}

The results of the previous subsection (the notation of which we keep here) pertain to the concentration of a function $\Phi(\bm{{\newX}})$ around its mean but tell us nothing about the size of $\E\Phi(\bm{{\newX}})$.
In this section, we study the rate of convergence of $(m^n_{\bm{\newX}_t})_{t \in [0,T]}$ to its limit $(\newmu_t)_{t \in [0,T]}$, defined through the McKean-Vlasov SDE
\begin{align*}
d\newY^1_t &= \newb(t,\newY^1_t,\newmu_t)dt + \sigma dB^1_t, \quad \newY^1_0 = \newX^1_0, \quad\quad \newmu_t = \text{Law}(\newY^1_t). 
\end{align*}
The assumptions on $\newb$ in Section \ref{se:MKVconcentration-sub} ensure the existence of a unique strong solution $(\newY^1,\newmu)$ to this equation (see, e.g., \cite[Section 7]{carmona2016probabilistic} or 
\cite[Chapter 2, Section 2.1]{CarmonaDelarue_book_II}).
We next provide some quantitative bounds on $\E[\W_{p,\R^d}^p(m^n_{\bm{{\newX}}_t},\newmu_t)]$ for fixed $t$ as well as a uniform bound, $\E[\sup_{t \in [0,T]}\W_{p,\R^d}^p(m^n_{\bm{\newX}_t},\newmu_t)]$.
The results are essentially known but are provided for the sake of completeness.

\begin{theorem} \label{th:ap:MKVexpectationbound}
Fix $n \in \N$, and assume \eqref{newb-Lip} holds for some $p \in [1,2]$.  Recall the definition of $r_{n,p}$ from \eqref{def:rnp}.
If $\E[|X^1_0|^{2p + \delta}] < \infty$ for some $\delta > 0$, then there exists $C <  \infty$ such that for each $n$ and each $t \in [0,T]$ we have 
\begin{align}
\E\left[\W_p^p(m^n_{\bm{{\newX}}_t},\newmu_t)\right] &\le Cr_{n,p}. \label{def:MKVexpectationbound1}
\end{align}
If $\E[|X^1_0|^{d+5}] < \infty$, then there exists $C <  \infty$ such that for each $n$ we have
\begin{align}
\E\left[\sup_{s \in [0,T]}\W_2^2(m^n_{\bm{\newX}_s},\newmu_s) \right] &\le Cn^{-2/(d+8)}. \label{def:MKVexpectationbound2}
\end{align}
\end{theorem}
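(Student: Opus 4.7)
The plan is to carry out a standard synchronous coupling argument in which I couple the interacting system $\bm{\newX}$ with an i.i.d.\ system $\bm{\newY}=(\newY^1,\ldots,\newY^n)$ driven by the same Brownian motions and started from the same initial states, where each $\newY^i$ solves the (decoupled) McKean-Vlasov SDE $d\newY^i_t = \newb(t,\newY^i_t,\newmu_t)dt + \sigma dB^i_t$ with $\newY^i_0 = \newX^i_0$. By construction, the $\newY^i$ are i.i.d.\ with $\L(\newY^i_t)=\newmu_t$, so the empirical measure $m^n_{\bm{\newY}_t}$ is a classical i.i.d.\ empirical measure converging to $\newmu_t$, and the triangle inequality gives
\begin{align*}
\W_p^p(m^n_{\bm{\newX}_t},\newmu_t) \le 2^{p-1}\W_p^p(m^n_{\bm{\newX}_t},m^n_{\bm{\newY}_t}) + 2^{p-1}\W_p^p(m^n_{\bm{\newY}_t},\newmu_t) \le \frac{2^{p-1}}{n}\sum_{i=1}^n|\newX^i_t - \newY^i_t|^p + 2^{p-1}\W_p^p(m^n_{\bm{\newY}_t},\newmu_t),
\end{align*}
where the first bound uses the coupling $(\newX^i_t,\newY^i_t)$ to estimate the Wasserstein distance between the empirical measures. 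The second (i.i.d.)\ term is controlled by invoking the quantitative Fournier--Guillin estimates (from \cite{fournier-guillin}, requiring $\E[|X^1_0|^{2p+\delta}]<\infty$ to ensure the necessary moment bounds propagate to $\newmu_t$ via a Grönwall argument on $(1+|\newY^1_t|^{2p+\delta})$), yielding $\E[\W_p^p(m^n_{\bm{\newY}_t},\newmu_t)]\le C r_{n,p}$.

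To control the coupling error, I would write $\newX^i_t-\newY^i_t = \int_0^t [\newb(s,\newX^i_s,m^n_{\bm{\newX}_s})-\newb(s,\newY^i_s,\newmu_s)]ds$ and invoke \eqref{newb-Lip} to get
\begin{align*}
|\newX^i_t - \newY^i_t|^p \le \newL^p t^{p-1}\int_0^t\bigl(|\newX^i_s-\newY^i_s|^p + \W_p^p(m^n_{\bm{\newX}_s},\newmu_s)\bigr)ds.
\end{align*}
Averaging over $i$, noting $\W_p^p(m^n_{\bm{\newX}_s},m^n_{\bm{\newY}_s}) \le \frac{1}{n}\sum_{i}|\newX^i_s-\newY^i_s|^p$, combining with the triangle inequality to bound $\W_p^p(m^n_{\bm{\newX}_s},\newmu_s)$, and then applying Grönwall's lemma produces
\begin{equation*}
\sup_{t\in[0,T]}\E\bigl[\W_p^p(m^n_{\bm{\newX}_t},\newmu_t)\bigr]\le C\sup_{t\in[0,T]}\E\bigl[\W_p^p(m^n_{\bm{\newY}_t},\newmu_t)\bigr]\le Cr_{n,p},
\end{equation*}
which is \eqref{def:MKVexpectationbound1}.

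For the uniform-in-time bound \eqref{def:MKVexpectationbound2} with $p=2$, the main new difficulty is that pulling the $\sup_{s\in[0,T]}$ inside the expectation around the i.i.d.\ error $\W_2^2(m^n_{\bm{\newY}_s},\newmu_s)$ costs a worse rate; here I would invoke the estimate of Horowitz--Karandikar \cite{horowitz1994mean} (in the form recalled e.g.\ in \cite[Chapter 6]{CarmonaDelarue_book_II}), which gives $\E[\sup_{s\in[0,T]}\W_2^2(m^n_{\bm{\newY}_s},\newmu_s)]\le Cn^{-2/(d+8)}$ under the moment assumption $\E[|X^1_0|^{d+5}]<\infty$ (sufficient to propagate the required moments of $\newY^1$). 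Plugging the sup \emph{inside} the coupling argument, using BDG on the difference SDE is not needed since the diffusion coefficient is a common constant matrix $\sigma$ (so Brownian terms cancel in $\newX^i-\newY^i$), and a pathwise Grönwall estimate on $\sup_{s\le t}\W_2^2(m^n_{\bm{\newX}_s},\newmu_s)$ then transfers the i.i.d.\ rate directly. The main obstacle is simply keeping track of the moment hypotheses needed for Fournier--Guillin vs.\ Horowitz--Karandikar and verifying they are inherited from $\mu_0$ via the Lipschitz/linear-growth of $\newb$; this is a routine but bookkeeping-intensive Grönwall argument. Finally, Theorem \ref{th:LLN} follows by specializing $\newb=\widetilde b$ defined in \eqref{def:btilde} (which is Lipschitz with the right $p$ thanks to Assumption \ref{assumption:A}(1,5)) and combining with the identification $\mu=\L(\newY^1\mid W)$ from \eqref{def:MKV-conditional}, noting that when $\sigma_0=0$ this coupling reduces exactly to the case treated here.
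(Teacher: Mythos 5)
Your proposal is correct and follows essentially the same route as the paper: a synchronous coupling with i.i.d.\ copies of the McKean--Vlasov solution (Brownian terms cancelling since $\sigma$ is constant), a Gr\"onwall argument exploiting the Lipschitz bound \eqref{newb-Lip}, the triangle inequality to reduce to the i.i.d.\ empirical measure, and then \cite{fournier-guillin} for the fixed-time rate and \cite{horowitz1994mean} for the uniform-in-time rate. The only cosmetic difference is that the paper applies Gr\"onwall at the level of $|\newX^i_t-\newY^i_t|$ before taking $p$-th powers, whereas you take powers first; both yield the key inequality \eqref{pf:MKVexpectationbound}.
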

\begin{proof}
The proof begins with a standard coupling argument.   
Construct i.i.d.\ copies of the unique solution $\bm{\newY}$ of the McKean-Vlasov equation, where 
$\bm{\newY} = (\newY^1,\ldots,\newY^n)$, with 
\begin{align*}
d\newY^i_t &= \newb(t,\newY^i_t,\newmu_t)dt + \sigma dB^i_t, \quad \newY^i_0 = \newX^i_0, \quad\quad \newmu_t = \text{Law}(\newY^i_t). 
\end{align*}
Together with  \eqref{def:ap:MKVSDE}, this implies 
\begin{align*}
\left|\newX^i_t - \newY^i_t\right| &\le \int_0^t\left|\newb(s,\newX^i_s,m^n_{\bm{\newX}_s}) - \newb(s,\newY^i_s,\newmu_s)\right|ds \\
	&\le \newL\int_0^t\left(\left|\newX^i_s - \newY^i_s\right| + \W_p(m^n_{\bm{\newX}_s},\newmu_s)\right)ds.
\end{align*}
By Gronwall's inequality, we have 
\[
\left|\newX^i_t - \newY^i_t\right| \le C\int_0^t\W_p(m^n_{\bm{{\newX}}_s},\newmu_s)ds
\]
Taking the power to the $p$ and averaging the left-hand side of the last inequality  over $i=1,\ldots,n$, we get
\begin{align*}
\W_p^p(m^n_{\bm{\newX}_t},m^n_{\bm{\newY}_t}) \le \frac{1}{n}\sum_{i=1}^n\left|\newX^i_t - \newY^i_t\right|^p &\le C\int_0^t\W_p^p(m^n_{\bm{\newX}_s},\newmu_s)ds.
\end{align*}
Use the triangle inequality and Gronwall's inequality once more to obtain 
\[
\W_p^p(m^n_{\bm{\newX}_t},m^n_{\bm{\newY}_t}) \le  C\int_0^t\W_p^p(m^n_{\bm{\newY}_s},\newmu_s)ds.
\]
Using again the triangle inequality, we have 
\begin{align}
\W_p^p(m^n_{\bm{\newX}_t},\newmu_t) \le C\W_p^p(m^n_{\bm{\newY}_t},\newmu_t) + C\int_0^t\W_p^p(m^n_{\bm{\newY}_s},\newmu_s)ds.  \label{pf:MKVexpectationbound}
\end{align}
Now,  \eqref{def:MKVexpectationbound2} fits exactly \cite[Theorem 1.3]{horowitz1994mean}. 
To prove \eqref{def:MKVexpectationbound1}, it suffices to show that 
\begin{align}
\E\left[\W_p^p(m^n_{\bm{\newY}_t},\newmu_t)\right] \le Cr_{n,p}. \label{pf:MKVexpectationbound2}
\end{align}
To this end, note that $\newY^i_t$ are i.i.d.\ with law $\newmu_t$. Hence, by \cite[Theorem 1]{fournier-guillin},
\begin{align*}
\E\left[ \W_p^p(m^n_{\bm{\newY}_t},\newmu_t) \right] \le Cr_{n,p}\E[|\newY^1_t|^{2p+\delta} ]^{p/(2p+\delta)},
\end{align*}
where $C$ depends only on $p$, $\delta$, and $d$. 
Finally, it suffices to note that standard estimates yield
\[
\sup_{t \in [0,T]}\E[|\newY^1_t|^{2p+\delta}] \le C\left(1 + \E[|\newY^1_0|^{2p+\delta}]\right) < \infty.
\]
\end{proof}

These estimates allow us to now provide a proof of the law of large numbers for the MFG system, stated in Theorem \ref{th:LLN}:

\begin{proof}[Proof of Theorem \ref{th:LLN}]
The first claim is  {proved in \cite[Theorem 3.1]{dellacram18a}}.   
To prove the other two claims, note first that \eqref{def:estimate-|X-Xbar|} implies
\begin{align}
\E\left[W_{2,\C^d}^{2}(m^n_{\bm{X}},m^n_{\bm{\overline{X}}})\right] \le \frac{C}{n^2},\label{pf:LLN1}
\end{align}
with $\bm{X}$ as in  \eqref{def:Nash-SDEsystem} and $\overline{\bm{X}}$ as in \eqref{eq:sec4:overlineX}.
We now simply simply use \eqref{pf:LLN1} along with the rates of convergence for the McKean-Vlasov empirical measures $m^n_{\bm{\overline{X}}}$, which were just identified in Theorem \ref{th:ap:MKVexpectationbound}.
\end{proof}

\subsection{Proofs of Theorems \ref{th:concentration-T1} and \ref{th:concentration-T2}} \label{se:MFGconcentrationproofs}

Using the developments of Section \ref{se:MKVconcentration-sub}, we are now ready to prove the main results on concentration for the MFG system.

\begin{proof}[Proof of Theorem \ref{th:concentration-T1}]
Note that for $\Phi \in \mathrm{Lip}((\C^d)^n,\|\cdot\|_{n,1})$ we have:
\begin{align}
\PP\left(\Phi(\bm{X}) - \E\Phi(\bm{X}) > a\right) \le \ &\PP\left(\Phi(\bm{X}) - \Phi(\bm{\overline{X}}) > \frac{a}{3}\right) + \PP\left(\Phi(\bm{\overline{X}}) - \E\Phi(\bm{\overline{X}}) > \frac{a}{3}\right) \nonumber \\
	&+ \PP\left(\E\Phi(\bm{\overline{X}}) - \E\Phi(\bm{X})  > \frac{a}{3}\right),\label{pf:concentrationMFG-1}
\end{align}
with $\bm{X}$ as in  \eqref{def:Nash-SDEsystem} and $\overline{\bm{X}}$ as in \eqref{eq:sec4:overlineX}.
The result of Theorem \ref{th:concentration-mckeanvlasov-T1} bounds the second term by $2\exp(-\delta a^2/n)$. The third term vanishes for 
$a \ge 3\sqrt{C}$, with $C$ as in  Theorem \ref{th:mainestimate}, because by \eqref{def:estimate-|X-Xbar|} therein and the Cauchy-Schwarz inequality,  we have 
\begin{align*}
\E\Phi(\bm{\overline{X}}) - \E\Phi(\bm{X}) \le  \E\sum_{i=1}^n\|X^i-\overline{X}^i\|_\infty \le n^{1/2}  \left(\E \sum_{i=1}^n\|X^i-\overline{X}^i\|^2_\infty\right)^{1/2} \leq \sqrt{C}. 
\end{align*}
Finally, using Theorem \ref{th:concentration} with $\epsilon =  a/3n$, we know there exist $\kappa_1 < \infty,$ $\kappa_2 > 0$ such that for $a \geq\kappa_1$, 
\begin{align*}
\PP\left(\Phi(\bm{X}) - \Phi(\bm{\overline{X}}) > \frac{a}{3}\right) &\le \PP\left(\frac{1}{n}\sum_{i=1}^n\|X^i-\overline{X}^i\|_\infty > \frac{a}{3n}\right) \le \PP\left(\frac{1}{n}\sum_{i=1}^n\|X^i-\overline{X}^i\|^2_\infty > \frac{a^2}{9n^2}\right) \\
	&\le 2n\exp\left(-\frac{a^2}{9\kappa_2}\right).
\end{align*}
Combining the above results we find that for  a suitable $\delta$ (smaller than  the above, if necessary), and $a$ sufficiently large, we have for $n \geq 2$: 
\begin{align*}
\PP\left(\Phi(\bm{X}) - \E\Phi(\bm{X}) > a\right) \le 3n\exp\left(-\frac{\delta a^2}{n}\right).
\end{align*}
\end{proof}

\begin{proof}[Proof of Theorem \ref{th:concentration-T2}]
Fix  $\Phi \in \mathrm{Lip}((\C^d)^n,\|\cdot\|_{n,2})$. We start with the same inequality \eqref{pf:concentrationMFG-1} as in the previous proof.
The result of Theorem \ref{th:concentration-mckeanvlasov-T2} bounds the second term therein by $2\exp(-\delta a^2)$. The third term is zero for $n \ge 9C/a^2$, with $C$ as in  Theorem \ref{th:mainestimate}, because by \eqref{def:estimate-|X-Xbar|} therein, and Jensen's inequality,  we have 
\begin{align*}
\E\Phi(\bm{\overline{X}}) - \E\Phi(\bm{X})  
\le \E \sqrt{\sum_{i=1}^n\|X^i-\overline{X}^i\|_\infty^2} 
\le  \sqrt{\E \sum_{i=1}^n\|X^i-\overline{X}^i\|_\infty^2} \le \frac{\sqrt{C}}{\sqrt{n}}. 
\end{align*}
Finally, use  the Lipschitz continuity of $\Phi$ and  Theorem \ref{th:concentration} with $\epsilon = a/(3\sqrt{n})$ to get:
\begin{align*}
\PP\left(\Phi(\bm{X}) - \Phi(\bm{\overline{X}}) > \frac{a}{3}\right) &\le \PP\left(\sqrt{\frac{1}{n}\sum_{i=1}^n\|X^i-\overline{X}^i\|_\infty^2} > \frac{a}{3 \sqrt{n}} \right) \\
	&\le 2n\exp\left(-\frac{a^2n}{9\kappa_2}\right).
\end{align*}
Letting $\delta_{1}:=1/(9 \kappa_{2})$ and $\delta_{2}:=\delta$, we find for $n \ge 9 C/a^2$: 
\begin{align*}
\PP\left(\Phi(\bm{X}) - \E\Phi(\bm{X}) > a\right) \le 2n\exp(-\delta_1 a^2 n) + 2\exp(-\delta_2 a^2).
\end{align*}
\end{proof}

\begin{remark} \label{re:concentration-commonnoise}
It is worth commenting on a natural idea for extending the arguments of this section to the case with common noise.
For the McKean-Vlasov system $\bm{\overline{X}}$, one can bootstrap the arguments of Sections \ref{se:MKVconcentration-sub} and \ref{se:MKVexpectationbounds} by studying the shifted paths $\overline{X}^i_t - \sigma_0W_t$. This line of reasoning leads to various \emph{conditional} concentration estimates, for example on expressions of the form
\[
\PP\left(\Phi(\bm{\overline{X}}) - \E[\Phi(\bm{\overline{X}}) \, | \, W] > \epsilon \, | \, W\right).
\]
However, we are unable to transfer such estimates to the Nash system $\bm{X}$, because our main estimate (Theorem \ref{th:concentration}) of the distance between the two systems $\bm{X}$ and $\bm{\overline{X}}$ does not appear to have a conditional analogue.
\end{remark}

\section{Large deviations of the empirical measure} \label{se:largedeviations-proofs}
In this section, we prove an LDP  for the sequence 
$(m^n_{\bm{X}})_{n \geq 1}$ regarded as a sequence of random variables
with values in the space $C([0,T];\P^1(\R^d))$, where $\P^1(\R^d)$ is
equipped with the $1$-Wasserstein distance and $C([0,T];
\P^1(\R^d))$ is equipped with the resulting uniform topology.  
A key result is the following \emph{exponential equivalence} of the sequences 
$(m^n_{\bm{X}_t})_{t \in [0,T]}$ and $(m^n_{\bm{\overline{X}}_t})_{t
  \in [0,T]}$, i.e., the empirical measure flows associated with the
$n$-player Nash equilibrium dynamics and  the approximating $n$-particle system,
respectively:

\begin{corollary} 
\label{co:exponential-equivalence}
Suppose Assumptions \ref{assumption:A} and either \ref{assumption:B} or \ref{assumption:B'} hold, with $\pexp=1$. Then, for every $\epsilon > 0$,
\begin{align*}
\lim_{n\rightarrow\infty}\frac{1}{n}\log\PP\left(\sup_{t \in [0,T]}
\mathcal W_{2} 
(m^n_{\bm{X}_t},m^n_{\bm{\overline{X}}_t})  > \epsilon\right) = -\infty.
\end{align*}
\end{corollary}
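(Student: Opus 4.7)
The plan is to derive Corollary \ref{co:exponential-equivalence} as a nearly immediate consequence of Theorem \ref{th:concentration}, by first reducing the supremum-in-time Wasserstein distance on $\R^d$ to the pathwise Wasserstein distance on $\C^d$, and then plugging in the stretched-exponential bound already established for the latter.

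First I would record the basic coupling inequality
\begin{equation*}
\sup_{t \in [0,T]} \mathcal{W}_{2,\R^d}\bigl(m^n_{\bm{X}_t},m^n_{\bm{\overline{X}}_t}\bigr)
\le \mathcal{W}_{2,\C^d}\bigl(m^n_{\bm{X}},m^n_{\bm{\overline{X}}}\bigr).
\end{equation*}
This is standard: any coupling of $m^n_{\bm{X}}$ and $m^n_{\bm{\overline{X}}}$ on $\C^d\times\C^d$ pushes forward under the evaluation map $x \mapsto x_t$ to a coupling of the marginals $m^n_{\bm{X}_t}$ and $m^n_{\bm{\overline{X}}_t}$, and $|x_t-y_t|^2\le \|x-y\|_\infty^2$ for every fixed $t$. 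In fact the ``synchronous'' coupling $\frac{1}{n}\sum_{i=1}^n \delta_{(X^i,\overline{X}^i)}$ already yields
\begin{equation*}
\sup_{t \in [0,T]} \mathcal{W}_{2,\R^d}^2\bigl(m^n_{\bm{X}_t},m^n_{\bm{\overline{X}}_t}\bigr)
\le \frac{1}{n}\sum_{i=1}^n \|X^i - \overline{X}^i\|_\infty^2,
\end{equation*}
which is the quantity already controlled in Theorem \ref{th:concentration}.

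Next I would invoke Theorem \ref{th:concentration}: for constants $\kappa_1,\kappa_2$ depending only on model parameters, for every $\epsilon>0$ and every $n \ge \kappa_1/\epsilon$,
\begin{equation*}
\PP\left(\sup_{t \in [0,T]} \mathcal{W}_{2,\R^d}(m^n_{\bm{X}_t},m^n_{\bm{\overline{X}}_t})>\epsilon\right)
\le \PP\left(\frac{1}{n}\sum_{i=1}^n \|X^i-\overline{X}^i\|_\infty^2>\epsilon^2\right)
\le 2n\exp\!\left(-\frac{\epsilon^2 n^2}{\kappa_2}\right).
\end{equation*}
Taking logarithms and dividing by $n$, we obtain, for all $n$ large enough,
\begin{equation*}
\frac{1}{n}\log \PP\left(\sup_{t \in [0,T]} \mathcal{W}_{2,\R^d}(m^n_{\bm{X}_t},m^n_{\bm{\overline{X}}_t})>\epsilon\right)
\le \frac{\log(2n)}{n} - \frac{\epsilon^2 n}{\kappa_2},
\end{equation*}
which tends to $-\infty$ as $n\to\infty$ for each fixed $\epsilon>0$. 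This gives the claim.

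There is essentially no obstacle here: all the work is already done in Theorem \ref{th:concentration}, and Corollary \ref{co:exponential-equivalence} is really a restatement in the form needed for the LDP transfer. The only thing to be mindful of is the assumption $\pexp=1$: it is imposed here because the subsequent large deviations analysis is carried out in $C([0,T];\P^1(\R^d))$, but the bound from Theorem \ref{th:concentration} is uniform in that choice since the constants $\kappa_1,\kappa_2$ depend only on the Lipschitz/boundedness data in Assumptions \ref{assumption:A} and \ref{assumption:B}/\ref{assumption:B'}, which are already in force.
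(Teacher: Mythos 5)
Your proposal is correct and is exactly the paper's argument: the paper's proof of Corollary \ref{co:exponential-equivalence} simply states that it follows immediately from Theorem \ref{th:concentration}, and your write-up spells out the (standard) reduction of $\sup_{t}\W_{2,\R^d}$ to $\frac{1}{n}\sum_i\|X^i-\overline{X}^i\|_\infty^2$ followed by taking $\frac{1}{n}\log$ of the bound $2n\exp(-\epsilon^2n^2/\kappa_2)$. Nothing further is needed.
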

\begin{proof}
This follows immediately from Theorem \ref{th:concentration}. 
\end{proof}

\subsection{LDP for weakly interacting diffusions in 
  the presence of common noise}
A simple and well-known result of large deviations theory is that if a
sequence satisfies an LDP, then any 
exponentially equivalent sequence also  satisfies an LDP with the same
rate function  (e.g., \cite[Theorem 4.2.13]{dembo-zeitouni}). In
particular, due to Corollary \ref{co:exponential-equivalence}, to
derive an LDP  for the sequence   $(m^n_{\bm{X}})_{n \ge 1}$ of empirical measure flows of the Nash
equilibrium dynamics,  it
suffices to prove an LDP  for the sequence
$(m^n_{\bm{\overline{X}}})_{n \ge 1}$ of empirical measure flows 
of the approximating $n$-particle system of weakly interacting diffusions. 
{While there exist several forms of  LDPs  
for the empirical measures of 
McKean-Vlasov or 
weakly  interacting diffusions \cite{dawson-gartner,benarous-brunaud,budhiraja-dupuis-fischer},
all of them are obtained in
the absence of  common noise (i.e., $\sigma_{0}=0$) and, strictly speaking, for time-independent coefficients and non-random initial states.}

This prompts us to revisit the aforementioned results and to first
establish  an LDP  for the sequence of empirical measures of a
general $n$-particle system of weakly interacting diffusions that has the following form:    
\begin{equation}
\label{eq:simplified:LDP}
d\nnewX_{t}^i = \nnewb(t,\nnewX_{t}^i,m^n_{\bm{\nnewX}_t}) dt + \sigma dB_{t}^i + \sigma_0 dW_{t}, 
\end{equation} 
with some initial condition $\nnewX_{0}^i$, where 
$\sigma \in \R^{d\times d}$,  $\sigma_0 \in \R^{d \times d_0}$, $B$
and $W$ are  independent 
Brownian motions as specified in
  Section \ref{se:Nashsystems}, 
 the families $(\nnewX_{0}^i)_{i \ge 1}$ and 
$((B^i)_{i \geq 1},W)$
are all independent, and  the drift $\nnewb$ maps 
$[0,T] \times \R^d \times {\mathcal P}^1 (\R^d)$ to $\R^d$.  
As usual, we denote  $\bm{\nnewX}_t = (\nnewX_t^1, \ldots, \nnewX_t^n)$. 
Observe that, except for the fact that $\sigma_{0} \not =0$, \eqref{eq:simplified:LDP} is similar to \eqref{def:ap:MKVSDE}.

\begin{remark}
\label{rem-conn}
 Note that with the 
 particular choice  
\begin{equation*}
\nnewb(t,x,m) = \widehat{b}(x,m,D_x U(t,x,m)), \quad
t \in [0,T], \ x \in \RR^d, \ m \in \cP^{1}(\RR^d), 
\end{equation*}
the general  $n$-particle system $\bm{\nnewX}$ coincides with $\bm{\overline{X}}$, the $n$-particle 
approximation to the Nash equilibrium dynamics proposed in 
{\eqref{eq:sec4:overlineX}}, which is the primary object of interest.
\end{remark}
  
We  impose the following conditions on the general 
  $n$-particle system dynamics. 

\begin{condition}
\label{cond-simplified} The following conditions are satisfied: 
\begin{enumerate}
\item 
\label{cond-a} 
The initial conditions
${(\nnewX_{0}^i)_{i \geq 1}}$ are i.i.d. random variables with
common law $\mu_0$ and finite exponential moments of any order, namely
\begin{equation}
\label{eq:exp:integrable}
\forall \lambda >0, \quad \EE \bigl[ \exp (\lambda \vert {\nnewX_{0}^1} \vert )
\bigr]  = \int_{\R^d} \exp (\lambda |y| ) \mu_0 (dy) 
< \infty. 
\end{equation}
\item 
\label{cond-b} 
The drift function $\nnewb:[0,T] \times \R^d \times {\mathcal P}^1 (\R^d) \rightarrow
\R^d$ is  bounded, 
continuous and Lipschitz continuous in the 
 last two arguments, uniformly in time.   
\end{enumerate}
\end{condition}

\subsubsection{Form of the rate function}
In this section, we use informal arguments to 
conjecture the form of the rate function for  $(m^n_{\bm{\nnewX}})_{n \ge 1}$ 
 (see Theorem 
\ref{thm:expression:Jbeta} and Corollary \ref{cor:expression:Jbeta} below),
and then show in the 
subsequent section that $(m^n_{\bm{\nnewX}})_{n \ge 1}$ does
indeed satisfy an LDP with this rate function. 

The general strategy to allow $\sigma_0$ to be non-zero
entails first freezing the common noise. Indeed, by the
standard support theorem for  trajectories of  Brownian motion
(see, e.g., \cite[Lemma 3.1]{StrVar72}), the  path of $W$
lives with  positive probability in any open ball of the path space
$\C_0^d := \{ \phi \in \C^d : \phi_{0}=0\}$. 
Then, for any $\phi$ in the Cameron-Martin space  $\newH^1_0([0,T];\RR^d)$, let 
$(\bm{\newX}^\phi_t = (\newX^{1,\phi}_t,\ldots,\newX^{n,\phi}_t))_{t
  \in [0,T]}$ denote the unique strong solution to the SDE
\begin{equation}
\label{xphi} 
d\newX_{t}^{i,\phi} = \newb(t,\newX_{t}^{i,\phi},m^n_{\bm{\newX}^\phi_t}) dt + \sigma dB_{t}^i + \dot{\phi}_{t} dt,
\end{equation}
with $\newX_{0}^{i,\phi} = {\nnewX_{0}^i}$ as initial condition. 
Here, recall that 
$\newH^1_0([0,T];\RR^d) = \{ \phi \in \newH^1([0,T];\RR^d) : \phi_{0}=0\}$,
where $\newH^1([0,T];\RR^d)$ is the Hilbert space  of $\RR^d$-valued absolutely
continuous functions $\phi$ on $[0,T]$ whose weak derivative
$\dot{\phi}$ is also square integrable on
$[0,T]$, equipped with the norm $\| \phi \|_{\newH^1} = \left(
  \int_0^T |\phi(t)|^2 dt\right)^{1/2}  + \left(  \int_0^T
  |\dot{\phi}(t)|^2 dt\right)^{1/2}$. 
  
 {The dynamics in \eqref{xphi} fail to  fall  under the scope of 
\cite{budhiraja-dupuis-fischer} because 
$\newb$  is not continuous with respect to the weak topology on
$\cP(\RR^d)$. Moreover, while the results of \cite{dawson-gartner} permit more general continuity assumptions, they do not quite cover our dynamics \eqref{xphi} because of the time-dependence in $\newb$ and $\dot{\phi}$ and the randomness of the initial states.}
Nevertheless, we borrow the associated rate function obtained in  
\cite{dawson-gartner}. 

Recall from Section \ref{se:largedeviations-statements} the notation for the seminorm $\|\cdot\|_m$ acting on Schwartz distributions,, for $m\in\P^1(\R^d)$, as well as the definition of absolutely continuous distribution-valued functions.
 Following the notation in \cite{dawson-gartner},  for each $\phi \in
 \newH^1([0,T];\RR^d)$,  we define the corresponding action functional 
$I^{\phi}: C ([0,T]: \P^1 (\R^d)) \rightarrow [0,\infty)$ by:
\begin{equation}
\label{eq:Ibetaphi}
I^{\phi}(\nu) :=
\left\{
\begin{array}{ll}
\displaystyle 
 \frac12 \int_{0}^T \| \dot{\nu_{t}} - \cL_{t,\nu_{t}}^{*} \nu_{t} +
 \textrm{\rm div}(\nu_{t}\dot{\phi}_{t}) \|^2_{\nu_{t}} dt \ \
 &\textrm{if } t \mapsto \nu_t \textrm{ is absolutely continuous,  }
 \\
 \displaystyle \infty 
 &\textrm{ otherwise,}
 \end{array}
 \right.
\end{equation}
where, for $(t,m) \in [0,T] \times \P^1(\R^d)$, $\cL_{t,m}^*$ is the formal adjoint of the operator 
\begin{equation}
\label{eq-operator}
\L_{t,m} h(x) = \frac{1}{2}\textrm{\rm Tr}\bigl[\sigma\sigma^\top D^2
h(x)\bigr] + D h(x) \cdot \nnewb(t,x,m), \quad h \in
  C^\infty_c(\RR^d). 
\end{equation}
Observe that
the operator 
${\mathcal L}_{t,\nu_{t}}^*(\cdot) - \textrm{\rm div}(\dot{\phi}_{t} \, \cdot \, )$
in 
\eqref{eq:Ibetaphi} 
is the adjoint of ${\mathcal L}_{t,\nu_{t}}(\cdot) 
+  \dot{\phi}_{t} \cdot D (\cdot)$. 
Below, we will often use the action functional $I^0$, given by $I^0= I^\phi$ for $\phi \equiv 0$.

The functional $I^{\phi}$ admits several
  alternative representations.  Lemma
  \ref{lem:id:cost functional} presents  one that will be used
   to extend the definition of $I^{\phi}$ to continuous
  $\phi$. To present this representation, we first need to   
  introduce  some more notation. Let $(\tau_{x} : \RR^d \ni z \mapsto
  z-x)_{x \in \RR}$ denote the group of translations on $\RR^d$. For
  $(t,m) \in [0,T] \times \P^1(\R^d)$ and a path $\phi \in
  {\mathcal C}^d_0$,  define $\widetilde{\cL}^*_{t,m}[\phi]$ to be the formal adjoint of the operator 
\begin{equation*}
\widetilde{\cL}_{t,m}[\phi]h(x) = \frac{1}{2}\textrm{\rm
  Tr}\bigl[\sigma\sigma^\top D^2 h(x) \bigr] + D h(x) \cdot \widetilde{b}(t,x +
\phi_t, m \circ \tau_{-\phi_t}^{-1}), \quad h \in C^\infty_c(\RR^d). 
\end{equation*}
Finally, 
define the modified action functional $\widetilde{I}^\phi: C([0,T];\P^1(\R^d)) \rightarrow [0,\infty)$ by 
\begin{equation}
\label{modifiedI}
\widetilde{I}^{\phi}(\nu) := \left\{
\begin{array}{ll}
\displaystyle 
 \frac12 \int_{0}^T \| \dot{\nu_{t}} -
 \widetilde{\cL}^*_{t,\nu_{t}}[\phi] \nu_{t} \|^2_{\nu_{t}} dt \ \
 &\textrm{if } t \mapsto \nu_t \textrm{ is absolutely continuous,}
 \\
 \displaystyle \infty 
 &\textrm{ otherwise.}
 \end{array}
 \right.
\end{equation}
In other words, this is the action functional corresponding to the drift $(t,x,m) \mapsto \nnewb(t,x+ \phi_{t},m \circ \tau_{-\phi_{t}}^{-1})$.

We then have the following relationship between $I^{\phi}$ and $\widetilde{I}^{\phi}$.

\begin{lemma}
\label{lem:id:cost functional}
For $\phi \in \H^1_0([0,T];\RR^d)$, 
\begin{equation}
\label{def-extratefn}
I^{\phi}(\nu) = \widetilde{I}^{\phi}\left((\nu_t \circ \tau_{\phi_t}^{-1})_{t \in [0,T]}\right).
\end{equation}
\end{lemma}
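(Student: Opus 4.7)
The strategy is a direct change of variables via the group of translations $(\tau_{x})_{x \in \RR^d}$. For each $t$, set $\widetilde{\nu}_{t} := \nu_{t} \circ \tau_{\phi_{t}}^{-1}$, so that $\int h\, d\widetilde{\nu}_{t} = \int h(z - \phi_{t}) \, \nu_{t}(dz)$ for any test function $h$. Since $\phi$ is absolutely continuous on $[0,T]$, the map $t \mapsto \widetilde{\nu}_{t}$ is absolutely continuous in the sense of \cite{dawson-gartner} if and only if $t \mapsto \nu_{t}$ is (if $h$ is supported in a compact set $K$, then $h \circ \tau_{\phi_{t}}$ is supported in the fixed compact set $K + \phi([0,T])$ for $t \in [0,T]$). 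We may therefore assume both $t \mapsto \nu_{t}$ and $t \mapsto \widetilde{\nu}_{t}$ are absolutely continuous, for otherwise both sides of \eqref{def-extratefn} are infinite and the identity holds trivially.

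The key step is to derive a distributional identity relating the integrands of \eqref{eq:Ibetaphi} and \eqref{modifiedI}. Fix $h \in C_{c}^{\infty}(\RR^d)$ and differentiate $\langle \widetilde{\nu}_{t}, h \rangle = \int h(z - \phi_{t}) \, \nu_{t}(dz)$ in $t$. The contribution from the variation of $\nu_{t}$ produces $\langle \dot{\nu}_{t}, h \circ \tau_{\phi_{t}} \rangle$, while the contribution from the variation of $\phi_{t}$ equals $-\dot{\phi}_{t} \cdot \int D h(z - \phi_{t}) \, \nu_{t}(dz) = \langle \mathrm{div}(\nu_{t} \dot{\phi}_{t}), h \circ \tau_{\phi_{t}} \rangle$ by a standard distributional integration by parts. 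On the other hand, using $\widetilde{\nu}_{t} \circ \tau_{-\phi_{t}}^{-1} = \nu_{t}$ and the substitution $x = z - \phi_{t}$, the invariance $D[h \circ \tau_{\phi_{t}}](z) = D h(z - \phi_{t})$ (and the analogous invariance for the Hessian) yield
\[
\bigl\langle \widetilde{\nu}_{t}, \widetilde{\cL}_{t,\widetilde{\nu}_{t}}[\phi] h \bigr\rangle = \int \bigl( \cL_{t,\nu_{t}}(h \circ \tau_{\phi_{t}}) \bigr)(z) \, \nu_{t}(dz) = \bigl\langle \cL_{t,\nu_{t}}^{*} \nu_{t}, h \circ \tau_{\phi_{t}} \bigr\rangle.
\]
Subtracting the last two calculations gives the central identity
\[
\bigl\langle \dot{\widetilde{\nu}}_{t} - \widetilde{\cL}_{t,\widetilde{\nu}_{t}}^{*}[\phi] \widetilde{\nu}_{t}, h \bigr\rangle = \bigl\langle \dot{\nu}_{t} - \cL_{t,\nu_{t}}^{*} \nu_{t} + \mathrm{div}(\nu_{t} \dot{\phi}_{t}), h \circ \tau_{\phi_{t}} \bigr\rangle.
\]

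Finally, since $h \mapsto h \circ \tau_{\phi_{t}}$ is a bijection of $C_{c}^{\infty}(\RR^d)$, and since the change of variables $x = z - \phi_{t}$ gives $\langle \widetilde{\nu}_{t}, |D h|^{2} \rangle = \langle \nu_{t}, |D(h \circ \tau_{\phi_{t}})|^{2} \rangle$, the suprema defining the seminorms $\|\cdot\|_{\widetilde{\nu}_{t}}$ and $\|\cdot\|_{\nu_{t}}$ range over the same families of (numerator, denominator) pairs via the correspondence $h \leftrightarrow h \circ \tau_{\phi_{t}}$. Therefore the two integrands in \eqref{eq:Ibetaphi} and \eqref{modifiedI} (the latter evaluated at $\widetilde{\nu}$) coincide at every $t$, and integrating over $[0,T]$ yields \eqref{def-extratefn}. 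The only technical care required is the correct bookkeeping of signs in the integration by parts that produces the $\mathrm{div}(\nu_{t} \dot{\phi}_{t})$ term and in identifying the translated adjoint; no deeper obstacle arises.
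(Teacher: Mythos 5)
Your proposal is correct and follows essentially the same route as the paper's proof: differentiate $t\mapsto\langle\nu_t\circ\tau_{\phi_t}^{-1},h\rangle$ by the product rule to produce the $\mathrm{div}(\nu_t\dot\phi_t)$ term, verify the adjoint identity $\langle\widetilde{\cL}^*_{t,\widetilde\nu_t}[\phi]\widetilde\nu_t,h\rangle=\langle\cL^*_{t,\nu_t}\nu_t,h(\cdot-\phi_t)\rangle$ by change of variables, and match the seminorms through the bijection $h\leftrightarrow h\circ\tau_{\phi_t}$. The only difference is that the paper first carries out the differentiation for continuously differentiable $\phi$ and then passes to general $\phi\in\H^1_0$ by an approximation argument (using that $\H^1$-convergence implies sup-norm convergence), a technicality you differentiate through directly; this does not affect the validity of the argument.
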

The proof of Lemma \ref{lem:id:cost functional}  is deferred to
Section \ref{ldpsubs:pfaux}. 
Its importance arises from the fact that it allows one to extend the
definition of the actional 
functional $I^{\phi}(\cdot)$ to functions $\phi$ that are merely
continuous.   Indeed, note that, whenever 
$\phi \in \C^d $ and $\nu \in C([0,T];\cP^1(\RR^d))$, the 
path $(\nu_t\circ\tau_{\phi_t}^{-1})_{0 \le t \le T}$ is continuous
due to the  fact that  
\begin{equation}
\label{eq:regularity:W1:tau}
\W_1  \bigl( \nu_t\circ\tau_{\phi_t}^{-1}, \nu_s\circ\tau_{\phi_s}^{-1} \bigr) \leq \vert \phi_{t}- \phi_{s} \vert + \W_1 (\nu_{t},\nu_{s}), \quad s,t \in [0,T]. 
\end{equation}
This ensures that the cost $\widetilde{I}^{\phi}(\nu)$ is well
defined. So, in the rest of the presentation of our main results, we take the
identity in \eqref{def-extratefn} as  the definition of
the cost functional $I^{\phi}$ for just continuous $\phi$ with $\phi_0  = 0$. Observe that this extension is especially
meaningful since $I^{\phi}(\nu)$ may be finite even when $\phi$ does
not lie in the Cameron-Martin space $\H^1_0([0,T]; \R^d)$.  For instance, if $b \equiv 0$ and $(\nu_{t}=\delta_{\phi_{t}})_{0 \le t \le T}$ for some $\phi \in \C_0^d  $, then we have $\nu_t \circ \tau_{\phi}^{-1} =\delta_{0}$ for all $t \in [0,T]$ and then $I^{\phi}(\nu)=0$.

Roughly speaking, \cite{dawson-gartner} asserts that whenever 
the common law of  {$(\nnewX_{0}^i)_{i \geq 1}$} reduces to a Dirac
mass, 
$(m^n_{\bm{\nnewX}^\phi})_{n \ge 1}$ satisfies {an LDP} with $I^{\phi}$ as rate function. 
Returning to \eqref{eq:simplified:LDP}, and denoting $\sigma_0\phi$ by
 the path $t \mapsto \sigma_0\phi_t$, this leads naturally to the conjecture  that the collection 
$(m^n_{\bm{\nnewX}})_{n \ge 1}$ should then satisfy an LDP with rate function
\begin{equation}
\label{Jsigma}
J^{\sigma_0}(\nu) := \inf_{\phi \in \C^d_0}I^{\sigma_0\phi}(\nu), 
\end{equation} 
provided that $\nu \in C([0,T]; \P^1 (\R^d))$ is such that
  $\nu_0$ is equal to the common law of $(\nnewX_{0}^i)_{i \ge 1}$.   The
intuitive argument behind this assertion is  that, by the standard support theorem for Brownian motion, the common noise $(\sigma_0 W_{t})_{t \in [0,T]}$ lives with a positive probability in the neighborhood of
$\sigma_{0} \phi$, 
for 
 any $\phi$ in $\C_0^d  $. In other words, the cost for $(\sigma_0 W_{t})_{t \in [0,T]}$ to be in the neighborhood of $\phi$ is null; as a result, the minimal cost for $m^n_{\bm{\nnewX}}$ to be in the neighborhood of 
some 
$\nu \in C([0,T];{\mathcal P}^1(\RR^d))$ is the
infimum of $I^{\sigma_0\phi}(\nu)$ over all 
$\phi$ in $\C_0^d$. 
Of course,  
when $\sigma_0 =0$, $I^{\sigma_0\phi}(\nu)$ is independent of $\phi$ and 
$J^{0}$ coincides with $I^0$. Observe that,
whenever $\sigma_0  \neq 0$,
{$J^{\sigma_0}(\nu)$ depends on $\sigma_0$ only through its image
  space $\Im(\sigma_0)$ } 
This latter fact becomes apparent  with the following explicit 
expression for $J^{\sigma_0}(\nu)$ in Proposition \ref{prop:expression:Jbeta}, when $\nu$ is  smooth. First, define the \emph{mean path} of a measure flow $\nu \in C([0,T];\cP^1(\RR^d))$ by
\begin{align}
\label{meannu}
{\mathbb M}^\nu = \left({\mathbb M}^\nu_t := \int_{\R^d} x\,d\nu_t(x)\right)_{t \in [0,T]} \in \C^d.
\end{align}
{In the following, let $\Proj_{\sigma^{-1}\sigma_0} \in \R^{d\times d}$ denote the orthogonal projection onto the image of $\sigma^{-1}\sigma_0$.}

\begin{proposition}
\label{prop:expression:Jbeta}
Let $\nu \in C([0,T];\cP^1(\RR^d))$ be such that its mean path
${\mathbb M}^\nu$ from \eqref{meannu} lies in $\H^1([0,T];\RR^d)$.
  Then, the functionals  $I^0$ defined  in
\eqref{eq:Ibetaphi}, with $\phi = 0,$ and $J^{\sigma_0}$ defined in
\eqref{Jsigma},  satisfy 
\begin{equation*}
J^{\sigma_0}(\nu) = I^{0}(\nu) - \frac12 \int_{0}^T \left| {\Proj_{\sigma^{-1}\sigma_0}\sigma^{-1}}\left(\dot{\mathbb M}^{\nu}_t - \langle
    \nu_{t},\nnewb(t,\cdot,\nu_{t}) \rangle\right) \right|^2 dt.  
\end{equation*}
\end{proposition}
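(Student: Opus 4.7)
My plan is to carry out the infimum $J^{\sigma_0}(\nu) = \inf_{\phi \in \C_0^d} I^{\sigma_0\phi}(\nu)$ by a direct expansion of the Hilbert seminorm, reducing it to a pointwise-in-time quadratic minimization over $\dot\phi_t$ whose optimizer can be read off explicitly. Since $I^{\sigma_0\phi}(\nu)$ is finite only for sufficiently regular $\phi$, and the hypothesis ${\mathbb M}^{\nu} \in \newH^1([0,T];\RR^d)$ will ensure the minimizer $\phi^\star$ lies in $\newH^1_0([0,T];\RR^{d_0})$, I will restrict to $\phi \in \newH^1_0$ during the optimization and then invoke Lemma \ref{lem:id:cost functional} together with lower semicontinuity to extend the infimum to the full space $\C_0^d$ at the end.

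Setting $\gamma_t := \dot\nu_t - \cL^*_{t,\nu_t}\nu_t$, I will write $I^{\sigma_0\phi}(\nu) = \tfrac12\int_0^T \|\gamma_t + \mathrm{div}(\nu_t \sigma_0\dot\phi_t)\|^2_{\nu_t}\,dt$ and polarize. The pure quadratic piece $\tfrac12 \|\mathrm{div}(\nu_t \sigma_0\dot\phi_t)\|_{\nu_t}^2$ is computed directly: since $\sigma_0\dot\phi_t$ is a constant (hence gradient) vector field and the seminorm carries the $\sigma\sigma^\top$ normalization inherited from the Dawson--G\"artner formalism, it evaluates to $\tfrac12|\sigma^{-1}\sigma_0\dot\phi_t|^2$. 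For the cross term, I will test $\gamma_t$ against suitably truncated versions of the linear function $x \mapsto \sigma^{-1}\sigma_0\dot\phi_t \cdot \sigma^{-1}x$; unraveling the definition of $\gamma_t$ and using $\langle\nu_t,x\rangle = {\mathbb M}^{\nu}_t$ together with $\cL_{t,\nu_t}(x_i) = \nnewb^{i}(t,\cdot,\nu_t)$, this reduces the cross term to $-\sigma^{-1}\sigma_0\dot\phi_t \cdot \sigma^{-1}\bigl(\dot{{\mathbb M}}^{\nu}_t - \langle\nu_t,\nnewb(t,\cdot,\nu_t)\rangle\bigr)$.

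Writing $\xi_t := \sigma^{-1}\sigma_0\dot\phi_t$ and $u_t := \sigma^{-1}\bigl(\dot{{\mathbb M}}^{\nu}_t - \langle\nu_t,\nnewb(t,\cdot,\nu_t)\rangle\bigr)$, the minimization then reduces to the pointwise quadratic problem $\min_{\xi \in \Im(\sigma^{-1}\sigma_0)}(\tfrac12|\xi|^2 - \xi \cdot u_t)$, whose minimizer is $\xi_t^\star = \Proj_{\sigma^{-1}\sigma_0}u_t$ with value $-\tfrac12|\Proj_{\sigma^{-1}\sigma_0}u_t|^2$. Integrating in $t$ then yields exactly the claimed formula. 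Admissibility of the optimizer will follow since $u \in L^2([0,T];\RR^d)$, thanks to ${\mathbb M}^{\nu} \in \newH^1$ and the boundedness of $\nnewb$ from Condition \ref{cond-simplified}; a corresponding $\dot\phi^\star \in L^2([0,T];\RR^{d_0})$ is recovered via the Moore--Penrose pseudoinverse of $\sigma^{-1}\sigma_0$, placing $\phi^\star$ in $\newH^1_0$.

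The main obstacle I anticipate is the rigorous handling of the seminorm and its pairing against the constant, non-compactly-supported vector field $\sigma_0\dot\phi_t$. I will need to show (i) that $\gamma_t$ possesses a gradient-type representative in $L^2(\nu_t)$ for which the polarization identity is valid, and (ii) that the formal pairing $\langle\gamma_t, x_i\rangle$ really equals $\dot{{\mathbb M}}^{\nu,i}_t - \langle\nu_t,\nnewb^{i}\rangle$. Both will be handled by approximating $x \mapsto x_i$ by a sequence of cutoffs $x_i \chi_n(x)$ with $\chi_n \in C^\infty_c$, and controlling the approximation error using the first-moment integrability of $\nu_t$ (from $\nu \in C([0,T];\cP^1(\RR^d))$) together with the uniform bound on $\nnewb$.
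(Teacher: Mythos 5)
Your route is correct but genuinely different from the paper's. The paper never expands the seminorm directly: it starts from the Donsker--Varadhan/Dawson--G\"artner \emph{dual} representation of $I^{\sigma_0\phi}(\nu)$ as a supremum over test functions $\psi\in C_c^{1,2}([0,T]\times\R^d)$, performs the change of test function $\widetilde\psi_t(x)=\psi_t(x)-\sigma_0\dot\phi_t\cdot(\sigma\sigma^\top)^{-1}x$ (first for $\phi\in C^2_0$, then extended to $\H^1_0$ via the continuity estimates \eqref{pf:Jinequality1}--\eqref{pf:Jinequality2}), and thereby obtains the identity $I^{\sigma_0\phi}(\nu)=I^0(\nu)+\int_0^T\bigl[-\dot{\mathbb M}^\nu_t+\langle\nu_t,\nnewb(t,\cdot,\nu_t)\rangle+\tfrac12\sigma_0\dot\phi_t\bigr]\cdot(\sigma\sigma^\top)^{-1}\sigma_0\dot\phi_t\,dt$ --- exactly the quadratic-plus-cross-term expansion your polarization produces --- before minimizing the pointwise quadratic over $\Im(\sigma^{-1}\sigma_0)$ just as you do. The trade-off is symmetric: the dual route must justify enlarging the supremum to test functions of linear growth (which the paper gets from $\nu\in C([0,T];\P^1(\R^d))$), while your primal route must justify the Riesz/gradient representation of $\dot\nu_t-\cL^*_{t,\nu_t}\nu_t$ and the pairing against the unbounded linear function via cutoffs; these are the same analytic difficulty in two guises, and you have correctly isolated it. Two cautions. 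First, your computation of $\tfrac12\|\mathrm{div}(\nu_t\sigma_0\dot\phi_t)\|^2_{\nu_t}=\tfrac12|\sigma^{-1}\sigma_0\dot\phi_t|^2$ tacitly uses the $\sigma$-weighted seminorm (denominator $\langle m,|\sigma^\top D_x\varphi|^2\rangle$); this is indeed the normalization consistent with the quoted variational formula and with the final answer, but note it differs from the literal display defining $\|\cdot\|_m$ in Section \ref{se:largedeviations-statements}, so state which convention you use. Second, ``lower semicontinuity'' is not what reduces the infimum from $\C_0^d$ to $\H^1_0$: the correct mechanism is that finiteness of $\widetilde I^{\sigma_0\phi}\bigl((\nu_t\circ\tau_{\sigma_0\phi_t}^{-1})_{t\in[0,T]}\bigr)$ forces ${\mathbb M}^\nu-\sigma_0\phi$ into $\H^1([0,T];\R^d)$ (Proposition \ref{prop:level:sets}(ii), proved by exactly the cutoff test functions you propose), so the hypothesis ${\mathbb M}^\nu\in\H^1$ gives $\sigma_0\phi\in\H^1$ and $\phi$ may be replaced by an $\H^1_0$ path with the same value of the functional; make that step explicit rather than appealing to semicontinuity.
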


The proof of Proposition \ref{prop:expression:Jbeta} is relegated to
Section \ref{ldpsubs-pfmain3}. In the general case, when the mean path is not necessarily absolutely continuous,
we have another expression for $J^{\sigma_0}$, based on the same
factorization as in Lemma  \ref{lem:id:cost functional}.  This
 may be regarded as our main statement on the form of the rate function. 
{See the discussion following Theorem \ref{thm:sec:3:weak:ldp} for intuition regarding this form of the rate function.}

\begin{theorem}
\label{thm:expression:Jbeta}
Take $\nu \in C([0,T];\cP^1(\RR^d))$ and with ${\mathbb M}^\nu$ as in
\eqref{meannu},  let
\[
{\mathbb M}^{\nnewb,\nu}_t := 
 {\sigma\Proj_{\sigma^{-1}\sigma_0}\sigma^{-1}}
\biggl(
{\mathbb M}^\nu_t - {\mathbb M}^\nu_0 - \int_{0}^t \langle \nu_{s},\nnewb(s,\cdot,\nu_{s}) \rangle ds \biggr), \quad \text{ for } t\in [0,T].
\]
Then, $J^{\sigma_0}$ in \eqref{Jsigma} satisfies 
\begin{align*}
J^{\sigma_0}(\nu) = \left\{ 
\begin{array}{ll} 
\displaystyle \widetilde{I}^{{\mathbb M}^{\nnewb,\nu}}\left(
\bigl(\nu_t \circ \tau_{{\mathbb M}^{\nnewb,\nu}_t}^{-1}
\bigr)_{t \in [0,T]}\right)
& \mbox{ if } \sigma_0  \neq 0, \\
\displaystyle I^{0}(\nu),  & \mbox{ if } \sigma_0  =  0, 
\end{array}
\right. 
\end{align*}
where $I^0$ and $\widetilde{I}^\phi$ are defined in 
\eqref{eq:Ibetaphi} and \eqref{modifiedI}, respectively. 
\end{theorem}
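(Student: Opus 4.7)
The case $\sigma_0=0$ is immediate: the set $\{\sigma_0\phi:\phi\in\C^d_0\}$ collapses to $\{0\}$, so the infimum \eqref{Jsigma} reduces to $I^0(\nu)$, while $\Pi_{\sigma^{-1}\sigma_0}$ is the zero projection and $\mathbb{M}^{\widetilde b,\nu}\equiv 0$, giving $\widetilde I^0((\nu_t)_t)=I^0(\nu)$ by inspection of \eqref{eq:Ibetaphi} and \eqref{modifiedI}. For $\sigma_0\neq 0$, I begin by invoking Lemma~\ref{lem:id:cost functional}, extended to merely continuous shifts via the identification \eqref{def-extratefn}, to rewrite
\[
J^{\sigma_0}(\nu)=\inf_\phi\widetilde I^{\sigma_0\phi}\bigl((\nu_t\circ\tau_{\sigma_0\phi_t}^{-1})_{t\in[0,T]}\bigr),
\]
where the functional depends on $\phi$ only through $\sigma_0\phi$, which lies in $\Im(\sigma_0)=\sigma\cdot\Im(\sigma^{-1}\sigma_0)$. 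This reduces matters to identifying the optimal shift, with the candidate being $\mathbb{M}^{\widetilde b,\nu}$.

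For the upper bound, one checks that $\mathbb{M}^{\widetilde b,\nu}$ is admissible: it starts at $0$, is continuous in the uniform topology (by $\W_1$-continuity of $\nu$ and boundedness of $\widetilde b$), and takes values in $\Im(\sigma_0)$ by construction. A continuous right inverse of $\sigma_0$ on its image (e.g.\ its Moore--Penrose pseudoinverse) thus produces a continuous $\phi^\ast$ with $\sigma_0\phi^\ast_t=\mathbb{M}^{\widetilde b,\nu}_t$, and plugging this $\phi^\ast$ into the infimum yields
\[
J^{\sigma_0}(\nu)\le\widetilde I^{\mathbb{M}^{\widetilde b,\nu}}\bigl((\nu_t\circ\tau_{\mathbb{M}^{\widetilde b,\nu}_t}^{-1})_{t\in[0,T]}\bigr).
\]

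The matching lower bound is the heart of the proof. I would first handle the case where the mean path $\mathbb{M}^\nu$ lies in $\H^1([0,T];\R^d)$, in which case $\mathbb{M}^{\widetilde b,\nu}\in\H^1$ as well. There Proposition~\ref{prop:expression:Jbeta} supplies the explicit formula $J^{\sigma_0}(\nu)=I^0(\nu)-\tfrac12\int_0^T|\Pi_{\sigma^{-1}\sigma_0}\sigma^{-1}(\dot{\mathbb M}^\nu_t-\langle\nu_t,\widetilde b(t,\cdot,\nu_t)\rangle)|^2dt$. On the other hand, Lemma~\ref{lem:id:cost functional} applied with $\psi=\mathbb{M}^{\widetilde b,\nu}$ gives $\widetilde I^{\mathbb{M}^{\widetilde b,\nu}}((\nu_t\circ\tau^{-1}_{\mathbb{M}^{\widetilde b,\nu}_t})_t)=I^{\mathbb{M}^{\widetilde b,\nu}}(\nu)$, which is a concrete quantity in the sense of \eqref{eq:Ibetaphi}. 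A direct algebraic manipulation—exploiting that $\dot{\mathbb M}^{\widetilde b,\nu}$ was engineered exactly to subtract the $\Im(\sigma_0)$-component of $\dot{\mathbb M}^\nu-\langle\nu,\widetilde b\rangle$ in the appropriate $\sigma$-weighted metric—shows that $I^{\mathbb{M}^{\widetilde b,\nu}}(\nu)$ coincides with the right-hand side of Proposition~\ref{prop:expression:Jbeta}, delivering the lower bound. The general case follows by an approximation argument: mollify $\nu$ in time to obtain $\nu^\varepsilon\to\nu$ in $C([0,T];\P^1(\R^d))$ with $\mathbb M^{\nu^\varepsilon}\in\H^1$, apply the previous step, and pass to the limit using lower semicontinuity of the action functional together with the uniform convergence $\mathbb M^{\widetilde b,\nu^\varepsilon}\to\mathbb M^{\widetilde b,\nu}$.

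The main obstacle will be this last limiting step: one must pass $\nu^\varepsilon\to\nu$ through both the shift $\mathbb{M}^{\widetilde b,\nu^\varepsilon}$ and the drift inside $\widetilde{\mathcal L}[\mathbb{M}^{\widetilde b,\nu^\varepsilon}]$ simultaneously without losing lower semicontinuity, which is delicate when $\nu$ has poor regularity. A cleaner alternative, which I would pursue if the mollification approach becomes technical, is to work directly with the variational dual representation of $\widetilde I^\psi$ as a supremum over smooth test functions, exchange the $\inf$ over $\phi$ with this supremum, and deduce the optimality of $\mathbb{M}^{\widetilde b,\nu}$ from the pointwise-in-$t$ quadratic minimization over $\dot\psi_t\in\Im(\sigma_0)$, whose unique minimizer is precisely $\dot{\mathbb M}^{\widetilde b,\nu}_t$.
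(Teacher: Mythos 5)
Your argument for the case where the mean path ${\mathbb M}^\nu$ lies in $\H^1([0,T];\R^d)$ is sound and is, at bottom, the same computation the paper uses: the identity \eqref{pf:Jexpression1} with $\sigma_0\dot\phi_t=\dot{\mathbb M}^{\nnewb,\nu}_t=\sigma\Proj_{\sigma^{-1}\sigma_0}\sigma^{-1}(\dot{\mathbb M}^\nu_t-\langle\nu_t,\nnewb(t,\cdot,\nu_t)\rangle)$ makes the cross term collapse to $-\tfrac12|\Proj_{\sigma^{-1}\sigma_0}\sigma^{-1}(\dot{\mathbb M}^\nu_t-\langle\nu_t,\nnewb\rangle)|^2$, matching Proposition \ref{prop:expression:Jbeta}. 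The upper bound $J^{\sigma_0}(\nu)\le\widetilde I^{{\mathbb M}^{\nnewb,\nu}}((\nu_t\circ\tau^{-1}_{{\mathbb M}^{\nnewb,\nu}_t})_t)$ also holds in full generality, since ${\mathbb M}^{\nnewb,\nu}$ takes values in $\Im(\sigma_0)$ and the extended definition \eqref{def-extratefn} applies to merely continuous shifts.

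The genuine gap is the lower bound for general $\nu$, which is precisely the content the theorem exists to supply (the paper introduces it because Proposition \ref{prop:expression:Jbeta} "has the major drawback of holding only for a special class of paths"). The interesting regime is $J^{\sigma_0}(\nu)<\infty$ but $I^0(\nu)=\infty$: by (the argument of) Proposition \ref{prop:level:sets}, $I^0(\nu)<\infty$ already forces ${\mathbb M}^\nu\in\H^1$, so whenever ${\mathbb M}^\nu\notin\H^1$ the formula of Proposition \ref{prop:expression:Jbeta} reads $\infty-\infty$ and cannot be the basis of an approximation. Your mollification step then fails for a structural reason: $I^0$, $\widetilde I^\psi$ and $\cR$ are only \emph{lower} semicontinuous, so knowing $J^{\sigma_0}(\nu^\varepsilon)=\widetilde I^{{\mathbb M}^{\nnewb,\nu^\varepsilon}}(\cdots)$ along a mollified sequence gives one-sided inequalities in the limit that point the wrong way for the bound you still need (and $J^{\sigma_0}$ itself is not lower semicontinuous along such sequences, as its level sets are not closed under uniform convergence of the mean path). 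The minimax alternative you sketch presupposes differentiability of $\psi$ and an inf/sup exchange that is not justified. The paper's fix is to \emph{shift first}: since ${\mathbb M}^{\nnewb,\nu}=\sigma_0\widetilde\phi$ for some $\widetilde\phi\in\C^d_0$, the substitution $\phi\mapsto\phi+\widetilde\phi$ (a bijection of $\C^d_0$) combined with Lemma \ref{lem:id:cost functional} rewrites $J^{\sigma_0}(\nu)$ as the analogous infimum for the recentered path $\tilde\nu=(\nu_t\circ\tau^{-1}_{{\mathbb M}^{\nnewb,\nu}_t})_t$ and the shifted drift $(t,x,m)\mapsto\nnewb(t,x+\sigma_0\widetilde\phi_t,m\circ\tau^{-1}_{-\sigma_0\widetilde\phi_t})$. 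Because $(I-\sigma\Proj_{\sigma^{-1}\sigma_0}\sigma^{-1})\sigma_0=0$, Proposition \ref{prop:level:sets}(ii) shows that ${\mathbb M}^{\tilde\nu}$ \emph{is} in $\H^1$ whenever $J^{\sigma_0}(\nu)<\infty$, so the analogue of Proposition \ref{prop:expression:Jbeta} applies to $\tilde\nu$ rather than to $\nu$; the correction integral there vanishes identically because ${\mathbb M}^{\nnewb,\nu}$ was built to annihilate the projected mean velocity of $\tilde\nu$. You need some version of this recentering (or another device) to close the case ${\mathbb M}^\nu\notin\H^1$.
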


 The proof of Theorem \ref{thm:expression:Jbeta}  is
given in Section \ref{ldpsubs-pfmain3}.  As this proof  shows, 
 the above expression may be restated in terms of 
the mean constant path $(\nu_t \circ \tau^{-1}_{{\mathbb M}^{\nu}_t-{\mathbb M}^{\nu}_0})_{t \in [0,T]}$. (Observe that, if $\nnewX_t$ is a random variable with law $\nu_t$, 
then $\nnewX_t-\EE[\nnewX_t]$ has distribution $\nu_t \circ \tau^{-1}_{{\mathbb M}^\nu_t}$, which justifies the terminology, ``mean constant path''.)

As a corollary we obtain the following result, whose proof is also
deferred to  Section \ref{ldpsubs-pfmain3}.

\begin{corollary}
\label{cor:expression:Jbeta}
Take $\nu \in C([0,T];\cP^1(\RR^d))$ and $\sigma_0 \neq 0$. Then, 
\begin{equation*}
\begin{split}
&J^{\sigma_0}(\nu) = \widetilde{I}^{-{\mathbb M}^{\nu}+{\mathbb M}^{\nu}_0}\left((\nu_t \circ \tau_{{\mathbb M}^\nu_t-{\mathbb M}^\nu_0}^{-1})_{t \in [0,T]}\right) - \frac{1}{2}
\int_{0}^T \vert
 {\Proj_{\sigma^{-1}\sigma_0}\sigma^{-1}}
\langle \nu_{t},\nnewb(t,\cdot,\nu_{t}) \rangle\vert^2 dt.  
\end{split}
\end{equation*}
\end{corollary}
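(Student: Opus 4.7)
The plan is to deduce Corollary \ref{cor:expression:Jbeta} from Theorem \ref{thm:expression:Jbeta} by a direct computation that replaces the optimal translation ${\mathbb M}^{\nnewb,\nu}$ featured in the theorem with the natural mean-centering translation ${\mathbb M}^\nu - {\mathbb M}^\nu_0$ used in the corollary. The theorem's translation lies in $\sigma\,\mathrm{Im}(\sigma^{-1}\sigma_0) = \mathrm{Im}(\sigma_0)$, reflecting the fact that only shifts in those directions are furnished ``for free'' by the common noise; the corollary's translation, by contrast, exactly centers the measure $\nu$ about its initial mean ${\mathbb M}^\nu_0$ but typically has components outside $\mathrm{Im}(\sigma_0)$, so the extra cost associated with those components must be tracked explicitly.

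Concretely, I would expand both $\widetilde{I}^{{\mathbb M}^{\nnewb,\nu}}(\nu \circ \tau_{{\mathbb M}^{\nnewb,\nu}}^{-1})$ and $\widetilde{I}^{-{\mathbb M}^\nu + {\mathbb M}^\nu_0}(\nu \circ \tau_{{\mathbb M}^\nu - {\mathbb M}^\nu_0}^{-1})$ from the defining formula $\widetilde{I}^\phi(\hat\nu) = \tfrac12 \int_0^T \|\dot{\hat\nu}_t - \widetilde{\cL}^*_{t,\hat\nu_t}[\phi]\hat\nu_t\|_{\hat\nu_t}^2\,dt$. In each case the time derivative $\dot{\hat\nu}_t$ differs from $\dot\nu_t$ by a divergence of the form $-\mathrm{div}(\nu_t v_t)$ with a translation-dependent vector $v_t$, while the shifted drift entering $\widetilde{\cL}[\phi]$ can be pulled back to a coefficient on $\nu$. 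Using the identity $\|{-}\mathrm{div}(m v)\|_m^2 = |v|^2$ together with the corresponding bilinear pairing (both obtained by testing against linear functions $\varphi(x) = c\cdot x$), both expressions reduce to $I^0(\nu)$ plus explicit quadratic forms in $\dot{\mathbb M}^\nu$ and $\langle \nu_t,\nnewb\rangle$, with factors of $\sigma^{-1}$ that arise once the formal adjoint is applied.

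Forming the difference and decomposing the vectors $\sigma^{-1}\dot{\mathbb M}^\nu$ and $\sigma^{-1}\langle\nu_t,\nnewb\rangle$ into their components along $\mathrm{Im}(\sigma^{-1}\sigma_0)$ (captured by $\Pi$) and along the orthogonal complement (captured by $I-\Pi$), a Pythagorean cancellation removes all terms associated with directions inaccessible to the common noise, since those components contribute identically to both parametrizations. What remains collapses to $\tfrac12 \int_0^T |\Pi \sigma^{-1} \langle\nu_t,\nnewb\rangle|^2\,dt$, precisely the correction asserted in the corollary.

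The chief obstacle is the bookkeeping: the translation of the measure and the shift entering $\widetilde{\cL}[\phi]$ do not match in the corollary's expression, so one cannot directly invoke Lemma \ref{lem:id:cost functional}; the rate function must be expanded from first principles, unwinding the action of the adjoint $\widetilde{\cL}^*[\phi]$ on the shifted measure. Once everything is organized via the orthogonal projection $\Pi$ in the $(\sigma\sigma^\top)^{-1}$-weighted inner product, the proof reduces to a completion-of-squares calculation closely parallel to the one underlying Proposition \ref{prop:expression:Jbeta}.
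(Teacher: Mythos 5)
The paper offers no argument for this corollary beyond the remark that it is ``similar'' to the proof of Theorem \ref{thm:expression:Jbeta}, so the only question is whether your derivation is sound. Your overall strategy --- deduce the corollary from the theorem by expanding both $\widetilde{I}$-expressions through the quadratic identity underlying Proposition \ref{prop:expression:Jbeta}, and your use of testing against linear $\varphi$ to evaluate $\|\mathrm{div}(m v)\|_m$ --- is the natural route. The gap is the step you call the ``Pythagorean cancellation.'' Write $u_t = \sigma^{-1}\dot{\mathbb M}^\nu_t$, $w_t = \sigma^{-1}\langle\nu_t,\nnewb(t,\cdot,\nu_t)\rangle$ and $\Pi = \Proj_{\sigma^{-1}\sigma_0}$. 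The identity \eqref{pf:Jexpression1}, extended to an arbitrary shift $\psi \in \H^1_0([0,T];\R^d)$ with $v_t=\sigma^{-1}\dot\psi_t$, combined with Lemma \ref{lem:id:cost functional}, gives
\begin{equation*}
\widetilde I^{\psi}\bigl((\nu_t\circ\tau_{\psi_t}^{-1})_{t\in[0,T]}\bigr) \;=\; I^0(\nu) \;-\; \tfrac12\int_0^T|u_t-w_t|^2\,dt \;+\; \tfrac12\int_0^T\bigl|v_t-(u_t-w_t)\bigr|^2\,dt .
\end{equation*}
For the theorem's translation one has $v=\Pi(u-w)$ and the last term equals $\tfrac12\int|(I-\Pi)(u-w)|^2$; for the corollary's translation by the full mean increment one has $v=u$ and the last term equals $\tfrac12\int|\Pi w|^2+\tfrac12\int|(I-\Pi)w|^2$. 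The components along the directions inaccessible to the common noise therefore do \emph{not} contribute identically to the two parametrizations: the difference of the two right-hand sides is $-\tfrac12\int|\Pi w|^2$ \emph{plus} the extra term $\tfrac12\int\bigl(|(I-\Pi)(u-w)|^2-|(I-\Pi)w|^2\bigr)dt$, which does not vanish in general.

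This is not a repairable bookkeeping slip: the discrepancy shows the stated identity itself fails whenever $\mathrm{Im}(\sigma^{-1}\sigma_0)\subsetneq\R^d$. For instance, take $d=2$, $\sigma=I_2$, $\sigma_0$ the first coordinate vector, $\nnewb\equiv 0$, and let $\nu_t$ be the law of $Z_0+(0,t)+B_t$ with $Z_0\sim N(0,I_2)$. Then $J^{\sigma_0}(\nu)=I^0(\nu)=T/2$ by Proposition \ref{prop:expression:Jbeta} (the common noise cannot supply drift in the second coordinate), whereas the mean-centered path solves the heat equation, so the first term on the corollary's right-hand side is $0$, and the second term vanishes because $\nnewb\equiv 0$. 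Your proposed proof would succeed exactly when $\Pi=I$, i.e., when $\sigma^{-1}\sigma_0$ is surjective, and in that case the cancellation you describe is genuine. Two further cautions: you are right that the superscript $-{\mathbb M}^\nu+{\mathbb M}^\nu_0$ does not match the translation $\tau^{-1}_{{\mathbb M}^\nu_t-{\mathbb M}^\nu_0}$, but rather than ``expanding from first principles'' around this mismatch you should recognize it as a sign inconsistency with Lemma \ref{lem:id:cost functional} and Theorem \ref{thm:expression:Jbeta} (with the printed sign the shifted drift becomes $\nnewb(t,x-2\rho_t,\nu_t\circ\tau^{-1}_{2\rho_t})$ with $\rho={\mathbb M}^\nu-{\mathbb M}^\nu_0$, which is surely not intended); and your final correction term should in any case be checked against a concrete example such as the one above before asserting the collapse to $\tfrac12\int|\Pi\sigma^{-1}\langle\nu_t,\nnewb\rangle|^2\,dt$.
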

Observe that the first term on the right-hand side does not depend upon 
$\sigma_{0}$. This is in contrast with the second term, which attains its minimum when 
$\sigma_{0}$ is null and its maximum when 
$\sigma_{0}$ has full rank.

\subsubsection{The form of the LDP}
\label{ldpsubs-form}

We now provide the form of the LDP.  
The conjectured form of the rate function of the previous subsection did not take into account the random initial states $(\nnewX_{0}^i)_{i \geq 1}$, which we recall are i.i.d.\ with law $\mu_0$.
Sanov's theorem suggests the true rate function should take the form 
 \begin{equation}
\label{eq-tJ}
  C([0,T];\P^1(\R^d)) \ni \nu \mapsto  \tJ^{\sigma_0, \mu_0} (\nu) := J^{\sigma_0}(\nu) + \cR(\nu_{0} | \mu_0), 
\end{equation}
where {$\cR$ denotes relative entropy, defined in
  \eqref{def:relativeentropy},}  and  $J^{\sigma_0}$ is as defined in
\eqref{Jsigma}. 

The precise large deviation principle for the sequence 
 $(m^n_{\bm{\nnewX}})_{n \geq 1}$ takes the following form; its proof
 is given in Section \ref{ref:subse:proof:ldp}. 

\begin{theorem}
 \label{th:largedeviations:b}
Under the stated assumptions, the sequence 
 $(m^n_{\bm{\nnewX}})_{n \geq 1}$, as defined by 
\eqref{eq:simplified:LDP},   satisfies a weak large
deviation principle in $C([0,T];\P^{1}(\R^d))$ with rate function
$\tJ^{\sigma_0, \mu_0}$ defined  in \eqref{eq-tJ}. That is, the following
hold: 
\begin{enumerate}[(i)]
\item For any open subset $O$ of $C([0,T];\cP^{1}(\RR^d))$, 
\begin{equation*}
\begin{split}
\liminf_{n \rightarrow \infty} \frac1n \log \PP(m^n_{\bm{\nnewX}} \in O) 
&\geq \inf_{\nu \in O}  \tJ^{\sigma_0, \mu_0} (\nu).
\end{split}
\end{equation*} 
\item For any closed subset $F$ of $C([0,T];\cP^{1}(\RR^d))$, 
\begin{equation*}
\begin{split}
\limsup_{n \rightarrow \infty} \frac1n \log \PP(m^n_{\bm{\nnewX}} \in F ) 
&\leq - \lim_{\delta \searrow 0}
\inf_{\nu \in F_{\delta}} \tJ^{\sigma_0, \mu_0} (\nu), 
 \end{split}
\end{equation*}
where $F_{\delta} = \{\nu \in C([0,T];\cP^{1}(\RR^d)): 
\inf_{\widetilde{\nu} \in F} \sup_{t \in [0,T]} \W_1 (\widetilde{\nu}_{t},\nu_{t}) \leq \delta \}$.   
\end{enumerate}
\end{theorem}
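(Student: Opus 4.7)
The plan is to reduce the LDP with common noise to a conditional LDP without common noise, by comparing $\bm{\nnewX}$ with the frozen-common-noise system $\bm{\newX}^\phi$ from \eqref{xphi}, and then to free the common noise by invoking the support theorem for Brownian motion.

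First I will establish, for each fixed $\phi \in \H^1_0([0,T];\R^d)$, a weak LDP in $C([0,T];\P^1(\R^d))$ for $(m^n_{\bm{\newX}^\phi})_n$ with rate function $\nu \mapsto I^\phi(\nu) + \cR(\nu_0 | \mu_0)$, extending to merely continuous $\phi$ via Lemma \ref{lem:id:cost functional}. Since $\sigma$ is constant and $\newb$ is bounded and Lipschitz by Condition \ref{cond-simplified}, a contraction-principle strategy is viable: the map from the empirical measure of the i.i.d.\ inputs $((\nnewX_0^i, B^i))_i$ on $\R^d \times \C^d$ (for which Sanov applies) to the output empirical measure $m^n_{\bm{\newX}^\phi}$ can be set up as a continuous function, and Sanov's theorem is pushed forward through it. The rate function is identified via a variational computation combining the Sanov contribution $\cR(\nu_0 | \mu_0)$ with the action functional $I^\phi$. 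This parallels \cite{dawson-gartner}, but the time-dependent drift and random initial data force the argument to be redone with uniform control as $\phi$ ranges over compact subsets.

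Next I will transfer this conditional LDP to the full system using the support theorem. For any path $\phi$ and $\delta > 0$, the event $\{\|\sigma_0 W - \phi\|_\infty < \delta\}$ has strictly positive probability, and on it Gronwall applied to $\nnewX^i - \newX^{i,\phi}$ yields $\sup_i \|\nnewX^i - \newX^{i,\phi}\|_\infty = O(\delta)$. For the open-set lower bound, given $\nu \in O$ I pick $\phi$ nearly achieving $J^{\sigma_0}(\nu) = \inf_\phi I^{\sigma_0 \phi}(\nu)$, then combine the first-step lower bound on a neighborhood of $\nu$ inside $O$ with the strictly positive (hence $o(n)$-cost on the log-scale) probability of the support-theorem event. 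For the closed-set upper bound with the $F_\delta$-enlargement, I will truncate $W$ to $\{\|W\|_\infty \leq R\}$ (Gaussian tail), cover $\{\phi : \|\phi\|_\infty \leq R\}$ by finitely many $\delta$-balls around representatives $\phi_k$, and apply the first-step upper bound on each piece with drift determined by $\phi_k$.

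The main obstacle will be the first step: existing LDPs for McKean-Vlasov particle systems (e.g.\ \cite{dawson-gartner,budhiraja-dupuis-fischer}) do not directly cover our time-dependent drift with random initial states, so the LDP machinery must be redone carefully, with uniformity on compact sets of $\phi$. A secondary but important difficulty is that $J^{\sigma_0}$ fails to be a good rate function when $\sigma_0 \neq 0$ (its level sets are not compact, since the common noise permits shifting the mean of $\nu$ for free in $\Im(\sigma_0)$), so only the weaker $-\lim_{\delta \to 0}\inf_{F_\delta}$ form of the upper bound can be obtained; this requires delicate covering in $\phi$-space to prevent the infimum from escaping to non-compact regions.
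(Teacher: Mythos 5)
Your first step (a frozen-$\phi$ LDP obtained by pushing Sanov's theorem for $\frac1n\sum_i\delta_{(\nnewX^i_0,B^i)}$ through the continuous solution map) and your lower bound (pick a near-optimal $\phi$, use Gronwall on the event $\{\|\sigma_0W-\phi\|_\infty<\delta\}$, and exploit the independence of that event from the idiosyncratic data together with the support theorem) are essentially the paper's argument, reorganized: the paper applies a single contraction principle to the pair $(\bar\cQ^n,W)=(\frac1n\sum_i\delta_{(\nnewX^i_0,B^i)},W)$ via the uniformly continuous map $\Phi$ of Lemma \ref{le:Phi-uniformlycontinuous}, with the rate function identified through the entropy computation of Lemma \ref{lem:id:rate:function}. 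Up to that point the two routes are interchangeable.

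The upper bound is where your plan breaks down, in two places. First, the set $\{\phi\in\C^d_0:\|\phi\|_\infty\le R\}$ is bounded but not totally bounded in the uniform norm, so it admits no finite cover by $\delta$-balls; the covering step you propose cannot be carried out. Second, and more fundamentally, truncating to $\{\|W\|_\infty\le R\}$ with $R$ fixed leaves a remainder event of probability bounded away from zero \emph{independently of $n$}, so $\frac1n\log\PP(\|W\|_\infty>R)\to 0$ rather than $-\infty$; since the target upper bound $-\lim_{\delta}\inf_{F_\delta}\tJ^{\sigma_0,\mu_0}$ is in general strictly negative, the truncated estimate $\max(A,0)$ is useless. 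There is no exponential tightness for $W$ at speed $n$ (indeed this is exactly the source of the non-goodness of $J^{\sigma_0}$), so no truncation/covering in the $\phi$-variable can work. The paper sidesteps this by noting that the rate function $(\cQ,\phi)\mapsto\cR(\cQ|\mu_0\times\ncW)$ assigns \emph{zero} cost to the $\phi$-coordinate: for the upper bound one simply discards $W$, bounding $\PP(m^n_{\bm{\nnewX}}\in F)\le\PP\bigl(\bar\cQ^n\in \mathrm{cl}(F')\bigr)$ with $F'=\{\cQ:\exists\phi,\ \Phi(\cQ,\phi)\in F\}$, applies the full Sanov upper bound to the closed set $\mathrm{cl}(F')$, and then absorbs the passage to the closure into the $F_\delta$-enlargement using the uniform continuity of $\Phi$. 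That projection step is precisely what produces the weak (rather than full) upper bound, and your argument needs to be replaced by it.
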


\begin{remark}
\label{rem-noncompact}
It is worth mentioning that $J^{\sigma_0}$, and therefore,
$\tJ^{\sigma_0, \mu_0}$, is not a good rate function (i.e., does not
have compact level sets) except when $\sigma_0=0$, see Proposition 
\ref{prop:LDP:F:compact} below. When $\sigma_0 \neq 0$, we can easily see that the level set $\{ J^{\sigma_0} \leq 0\}=\{ J^{\sigma_0} =0\}$ is not compact. 
This can be seen either from 
Theorem 
\ref{thm:expression:Jbeta} or 
via a direct computation (but very much in the spirit of the statement of the theorem). 
Indeed, for any $\phi \in \H^1_0([0,T];\RR^d)$, as in Section 
\ref{se:largedeviations-statements}, we may call $\bar{X}^{\phi}$ the
unique solution to the McKean-Vlasov equation 
\begin{equation*}
d\bar{X}^{\phi}_{t} = \nnewb\bigl(t,\bar{X}_{t}^{\phi},\cL(\bar X_{t}^{\phi}) \bigr) dt + \sigma dB_{t}^1 + \sigma_0\dot{\phi}_{t} dt, \quad t \in [0,T],
\end{equation*}
with $\bar{X}^{\phi}_{0}= {\nnewX_{0}^1}$ as initial condition. 
Then the path $(\nu^\phi_{t} = \L(\bar{X}^{\phi}_{t}))_{t \in [0,T]}$ solves the 
Fokker-Planck equation {(see 
\cite{sznitman1991topics})} 
\begin{equation*}
\dot{\nu}^\phi_{t} - \cL^{*}_{t,\nu^\phi_{t}} \nu^\phi_{t} + \textrm{\rm div}(\nu^\phi_{t} \sigma_0\dot{\phi}_{t}) = 0, \quad t \in [0,T],
\end{equation*}
in the distributional sense, with the initial condition $\nu^\phi_{0}=\mu_0$. Also, $I^{\sigma_0\phi}(\nu^\phi) + \cR(\nu^\phi_{0} | \mu_0)=0$; hence, 
$J^{\sigma_{0}}(\nu^\phi) + \cR(\nu^\phi_{0} | \mu_0)=0$. However, 
taking the mean in the McKean-Vlasov dynamics, we see that 
\begin{equation*}
\dot {\mathbb M}^{\nu^\phi}_t = \langle \nu^\phi_{t}, \nnewb(t,\cdot,\nu^\phi_{t}) \rangle + \sigma_0 \dot{\phi}_{t}, \quad t \in [0,T].
\end{equation*}
Recalling that $\nnewb$ is bounded and that $\phi$ may be arbitrarily chosen in 
$\H^1_0([0,T];\RR^d)$, we deduce that {$\{{\mathbb M}^{\nu^\phi} :
  \phi \in \H^1_0([0,T];\R^d)\}$ is unbounded, and in particular it is
  not pre-compact in $\C^d $. This clearly implies that the set
  $\{\nu^\phi : \phi \in \H^1_0([0,T];\R^d)\}$, which is contained in
  $\{J^{\sigma_0} \le 0\}$ by construction, is not pre-compact in
  $C([0,T];\cP^1(\RR^d))$.}
\end{remark}

As explained in Remark \ref{rem-noncompact}, the lack of compactness
of the level sets of $J^{\sigma_0}$ explains the need for the
additional limit over $\delta$ in  $(ii)$ in the statement of Theorem 
\ref{th:largedeviations:b}. 
Fortunately, there is no longer need for such a relaxation when $F$ is compact.

\begin{proposition}
\label{prop:LDP:F:compact}
Assume that $\sigma_{0} \not =0$ and that $K$ is a compact subset of $C([0,T];\cP^{1}(\RR^d))$. Then, 
\begin{equation*}
 \lim_{\delta \searrow 0}
\inf_{\nu \in K_{\delta}}
\bigl( J^{\sigma_0}(\nu) + \cR(\nu_{0} | \mu_0)\bigr)
 = 
 \inf_{\nu \in K}
\bigl( J^{\sigma_0}(\nu) + \cR(\nu_{0} | \mu_0)\bigr).
\end{equation*} 
If $\sigma_0 =0$, the above holds true for any closed (instead of compact) set $F \subset C([0,T];\cP^{1}(\RR^d))$. In the latter case,  $(m^n_{\bm{\nnewX}})_{n \geq 1}$ satisfies a standard LDP with a good rate function. 
\end{proposition}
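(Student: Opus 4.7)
Since $K \subseteq K_{\delta}$ for every $\delta > 0$, the inequality $\lim_{\delta \searrow 0}\inf_{K_{\delta}} \widetilde{J}^{\sigma_{0},\mu_{0}} \leq \inf_{K} \widetilde{J}^{\sigma_{0},\mu_{0}}$ is immediate, where I write $\widetilde{J}^{\sigma_{0},\mu_{0}} := J^{\sigma_{0}} + \cR(\,\cdot\,|\mu_{0})$. For the non-trivial reverse inequality, the plan is a minimizing-sequence argument: assuming the left-hand side is finite (otherwise there is nothing to prove), pick $\delta_{n} \searrow 0$ and $\nu^{n} \in K_{\delta_{n}}$ along which $\widetilde{J}^{\sigma_{0},\mu_{0}}(\nu^{n})$ tends to $\lim_{\delta \searrow 0} \inf_{K_{\delta}} \widetilde{J}^{\sigma_{0},\mu_{0}}$. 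By definition of $K_{\delta_{n}}$ there exist $\widetilde{\nu}^{n} \in K$ with $\sup_{t \in [0,T]} \W_{1}(\nu^{n}_{t}, \widetilde{\nu}^{n}_{t}) \leq \delta_{n}$; compactness of $K$ produces a subsequence along which $\widetilde{\nu}^{n} \to \nu^{\star} \in K$, and hence $\nu^{n} \to \nu^{\star}$ in $C([0,T];\cP^{1}(\R^{d}))$. The inequality $\inf_{K}\widetilde{J}^{\sigma_{0},\mu_{0}} \leq \widetilde{J}^{\sigma_{0},\mu_{0}}(\nu^{\star}) \leq \liminf_{n} \widetilde{J}^{\sigma_{0},\mu_{0}}(\nu^{n})$ then follows from the lower semicontinuity of $\widetilde{J}^{\sigma_{0},\mu_{0}}$ on $C([0,T];\cP^{1}(\R^{d}))$, which is thus the crucial preliminary.

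Lower semicontinuity of $\nu \mapsto \cR(\nu_{0}|\mu_{0})$ is standard, since $\nu \mapsto \nu_{0}$ is continuous and relative entropy is $\W_{1}$-l.s.c.\ on $\cP^{1}(\R^{d})$. For $J^{\sigma_{0}}$ with $\sigma_{0} \neq 0$, I would invoke the representation of Theorem \ref{thm:expression:Jbeta}: the maps $\nu \mapsto \mathbb{M}^{\widetilde{b},\nu} \in \C^{d}$ and $(\nu,\phi) \mapsto (\nu_{t} \circ \tau_{\phi_{t}}^{-1})_{t \in [0,T]}$ are both continuous in the relevant uniform-$\W_{1}$ topologies (the latter via the estimate \eqref{eq:regularity:W1:tau} and the former from continuity and boundedness of $\widetilde{b}$ applied to $\W_{1}$-convergent measures against Lipschitz integrands), so the task reduces to joint lower semicontinuity in $(\phi,\mu)$ of the modified Dawson-G\"artner functional $\widetilde{I}^{\phi}(\mu)$ defined in \eqref{modifiedI}. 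I would obtain this via a dual variational representation of $\widetilde{I}^{\phi}(\mu)$ as a supremum of affine functionals indexed by smooth compactly supported test functions, each jointly continuous in $(\phi,\mu)$ thanks to the continuity of the shifted drift $(t,x,m) \mapsto \widetilde{b}(t, x+\phi_{t}, m \circ \tau_{-\phi_{t}}^{-1})$ in all of its arguments, in the spirit of \cite{dawson-gartner}. I expect this joint l.s.c.\ step to be the main technical obstacle of the entire proof.

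For the second part, when $\sigma_{0} = 0$ the representation of Theorem \ref{thm:expression:Jbeta} collapses to $J^{0} = I^{0}$, and I would additionally show that $\widetilde{J}^{0,\mu_{0}} = I^{0} + \cR(\,\cdot\,|\mu_{0})$ is a \emph{good} rate function on $C([0,T];\cP^{1}(\R^{d}))$. For an arbitrary sub-level set $\{\widetilde{J}^{0,\mu_{0}} \leq L\}$: the entropy bound $\cR(\nu_{0}|\mu_{0}) \leq L$ combined with the all-exponential-moments hypothesis on $\mu_{0}$ gives $\W_{1}$-compactness of the set of initial marginals via the standard entropy-tilting inequality (compare Theorem \ref{th:T1-general}), while the bound $I^{0}(\nu) \leq L$, together with the boundedness of $\widetilde{b}$ and the non-degeneracy of $\sigma$, yields equicontinuity and uniform-in-time $\W_{1}$-tightness of $t \mapsto \nu_{t}$ through standard Dawson-G\"artner testing against smooth functions applied to the definition \eqref{eq:Ibetaphi}. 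Combined with the lower semicontinuity already established, this makes $\widetilde{J}^{0,\mu_{0}}$ a good rate function. Consequently, for any closed $F \subseteq C([0,T];\cP^{1}(\R^{d}))$ with $L^{\star} := \inf_{F} \widetilde{J}^{0,\mu_{0}} < \infty$, the set $F \cap \{\widetilde{J}^{0,\mu_{0}} \leq L^{\star}+1\}$ is a compact subset of $F$ attaining the same infimum, so the compact-set conclusion transfers immediately to arbitrary closed sets and upgrades the weak LDP of Theorem \ref{th:largedeviations:b} to a full LDP with good rate function.
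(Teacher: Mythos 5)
Your overall strategy --- extract a minimizing sequence $\nu^n\in K_{\delta_n}$, use compactness of $K$ to force convergence $\nu^n\to\nu^\star\in K$, and conclude by lower semicontinuity of $J^{\sigma_0}+\cR(\cdot_0|\mu_0)$ --- is a genuinely different route from the paper's. The paper never proves lower semicontinuity of $J^{\sigma_0}$ on $C([0,T];\cP^1(\R^d))$ directly. Instead it lifts the whole problem to the pair $(\cQ,\phi)\in\cP^1(\R^d\times\C^d_0)\times\C^d_0$ via Lemma \ref{lem:id:rate:function}, which identifies $\inf_{\nu\in K_\delta}(J^{\sigma_0}(\nu)+\cR(\nu_0|\mu_0))$ with $\inf_{\Phi(\cQ,\phi)\in K_\delta}\cR(\cQ|\mu_0\times\ncW)$; there, lower semicontinuity and compactness come for free from Sanov's theorem (goodness of $\cR(\cdot|\mu_0\times\ncW)$), with Proposition \ref{prop:level:sets}(ii) supplying the missing pre-compactness of the shifts $\phi^n$, and the continuity of $\Phi$ (Lemma \ref{le:Phi-uniformlycontinuous}) closing the loop. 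What your route buys is conceptual directness; what the paper's route buys is that the hardest analytic input --- which you correctly identify as the joint lower semicontinuity of $(\phi,\mu)\mapsto\widetilde I^\phi(\mu)$ --- is never needed. That step is only sketched in your proposal. It is plausible via the variational representation of the action functional as a supremum of functionals that are jointly continuous in $(\phi,\mu)$ (the paper itself invokes \cite[Lemma 4.8]{dawson-gartner} in this spirit in the proof of Proposition \ref{prop:expression:Jbeta}), but verifying that this supremum coincides with the $\|\cdot\|_{\nu_t}$-based definition \eqref{modifiedI} for the time-dependent, $\phi$-shifted drift with merely continuous $\phi$ --- including the identification of when both sides are $+\infty$ --- is a substantive piece of work that you have asserted rather than carried out.

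There is also a concrete logical slip in your treatment of the case $\sigma_0=0$. Replacing a closed set $F$ by the compact set $K=F\cap\{\widetilde J^{0,\mu_0}\le L^\star+1\}$ does not let you ``transfer'' the compact-set conclusion, because the enlargements satisfy $K_\delta\subset F_\delta$, so $\inf_{F_\delta}\le\inf_{K_\delta}$ and applying the compact case to $K$ only reproves the trivial inequality $\lim_{\delta}\inf_{F_\delta}\le\inf_F$. The correct use of goodness is the one in the paper: a minimizing sequence $\nu^n\in F_{\delta_n}$ with $\widetilde J^{0,\mu_0}(\nu^n)\le\ell+1$ lies in a compact sub-level set, hence subconverges to some $\nu^\star$, which belongs to $F$ because $F$ is closed and $\delta_n\to0$; lower semicontinuity then gives $\inf_F\le\ell$. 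Your ingredients (goodness plus lower semicontinuity) suffice for this, but the deduction must be rearranged. Finally, note that the paper obtains goodness of the rate function when $\sigma_0=0$ with no tightness estimates at all, simply as the contraction under the continuous map $\cQ\mapsto\Phi(\cQ,0)$ of the good Sanov rate function, via Lemma \ref{lem:id:rate:function}; your direct moment/equicontinuity estimates would need to be written out in full to serve as a substitute.
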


Although the level sets of $J^{\sigma_0}$ are not compact when
$\sigma_0  \neq 0$, we have the following weaker version. 
The proofs of both Propositions \ref{prop:LDP:F:compact} and
  \ref{prop:level:sets} are   given in Section   \ref{ldpsubs-pfmain3}.

\begin{proposition}
\label{prop:level:sets}
For any $\sigma_0 \neq 0$ and $a \geq 0$, there exists a compact subset 
$K \subset C([0,T];\P^1(\RR^d))$ and a constant $\kappa  <  \infty$ such that, 
for any $\nu$ in the level set 
\[ \{\measpath \in C([0,T];\P^1(\RR^d)): J^{\sigma_0}(\measpath) +
\cR(\measpath_{0} | \mu_0) \leq a\}, 
\]
the following hold:
\begin{enumerate}[(i)]
\item $(\nu_t \circ \tau_{{\mathbb M}^{\nu}_t}^{-1})_{t \in [0,T]} \in K$.
\item For any $\phi \in \C_0^d  $ satisfying $I^{\sigma_0\phi}(\nu) \leq a$, the path $({\mathbb M}^{\nu}_t - \sigma_0\phi_t)_{t \in [0,T]}$ lies in $\H^1([0,T];\RR^d)$ and has $\H^1$-norm is less than $\kappa$.
\end{enumerate}
\end{proposition}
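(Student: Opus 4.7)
The plan is to establish both parts by working with shifted flows and exploiting the structure of the modified action functional $\widetilde{I}^{\phi}$, which, for any continuous $\phi$, represents a Dawson--G\"artner type cost for a McKean--Vlasov system \emph{without} common noise but with a time-translated drift $(t,x,m) \mapsto \newb(t,x+\phi_{t},m\circ\tau_{-\phi_{t}}^{-1})$. The crucial point is that for every $\phi \in \C_0^d $, this translated drift satisfies the same uniform bound and Lipschitz estimates as $\newb$ itself (Condition \ref{cond-simplified}(\ref{cond-b})), so a priori estimates can be made \emph{uniformly} in $\phi$.

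I would prove (ii) first. Fix $\phi \in \C_0^d $ with $I^{\sigma_0\phi}(\nu) \leq a$, and set $\tilde\nu_t := \nu_t \circ \tau_{\sigma_0\phi_t}^{-1}$, whose mean is $\mathbb{M}^{\tilde\nu}_t = \mathbb{M}^{\nu}_t - \sigma_0\phi_t$. By Lemma \ref{lem:id:cost functional} (using the extended definition), $\widetilde{I}^{\sigma_0\phi}(\tilde\nu) \leq a$, so $\tilde\nu$ is absolutely continuous and the distributional equation $\dot{\tilde\nu}_t = \widetilde{\cL}_{t,\tilde\nu_t}^{*}[\sigma_0\phi]\tilde\nu_t + g_t$ holds for some $g$ whose $\tilde\nu_t$-weighted $L^2$ cost is bounded by $2a$. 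Testing against the coordinate function $x \mapsto x_i$ (see the obstacle discussion below), one derives
\[
\dot{\mathbb{M}}^{\tilde\nu}_t = \bigl\langle \tilde\nu_t, \newb(t,\cdot+\sigma_0\phi_t, \tilde\nu_t \circ \tau_{-\sigma_0\phi_t}^{-1}) \bigr\rangle + r_t,
\]
where the first term is uniformly bounded by $\sup|\newb|$ and $r \in L^2([0,T];\R^d)$ with norm controlled by $a$. For the $L^2$ bound on $\mathbb{M}^{\tilde\nu}$ itself, the initial value $\mathbb{M}^{\tilde\nu}_0 = \mathbb{M}^{\nu}_0$ is controlled by the entropic inequality $\lambda |\mathbb{M}^{\nu}_0| \leq \log \int e^{\lambda|x|}\, d\mu_0 + \cR(\nu_0|\mu_0)$ applied with a small $\lambda$, using \eqref{eq:exp:integrable} and $\cR(\nu_0|\mu_0) \leq a$; the bound on $\dot{\mathbb{M}}^{\tilde\nu}$ then propagates a sup-norm bound to all $t$. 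Combining yields the desired $\H^1$ bound, with $\kappa$ depending only on $a$, $T$, $\mu_0$, and the coefficients.

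For (i), apply Theorem \ref{thm:expression:Jbeta} to obtain $\widetilde{I}^{\mathbb{M}^{\newb,\nu}}(\tilde\nu^{\sharp}) \leq a$ for $\tilde\nu^{\sharp}_t := \nu_t \circ \tau_{\mathbb{M}^{\newb,\nu}_t}^{-1}$. The same Fokker--Planck analysis as in (ii), with $\sigma_0\phi$ replaced by $\mathbb{M}^{\newb,\nu}$, shows (by testing against cutoff powers of $|x|$) that $\tilde\nu^{\sharp}$ has all polynomial moments bounded uniformly on the level set, and that $t \mapsto \tilde\nu^{\sharp}_t$ is $\tfrac12$-H\"older continuous in $\W_1$. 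Arzel\`a--Ascoli then yields pre-compactness of $\{\tilde\nu^{\sharp}\}$ in $C([0,T];\cP^1(\R^d))$. The target flow $\bar\nu_t := \nu_t \circ \tau_{\mathbb{M}^{\nu}_t}^{-1} = \tilde\nu^{\sharp}_t \circ \tau_{\mathbb{M}^{\nu}_t-\mathbb{M}^{\newb,\nu}_t}^{-1}$ is obtained from $\tilde\nu^{\sharp}$ by a further shift, and the shift $\mathbb{M}^{\nu}-\mathbb{M}^{\newb,\nu}$ can be shown to be uniformly bounded and equicontinuous by applying (ii) to a $\phi$ nearly attaining $J^{\sigma_0}(\nu)$. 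Using \eqref{eq:regularity:W1:tau}, pre-compactness transfers to $\bar\nu$, and the closure of the resulting set may be taken as $K$.

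The main obstacle is the rigorous justification of the formal test-function computation that yields the $L^2$ bound on $\dot{\mathbb{M}}^{\tilde\nu}$: the seminorm $\|\cdot\|_{\tilde\nu_t}$ is defined only via $C^\infty_c(\R^d)$ test functions, whereas $x \mapsto x_i$ is unbounded. This requires approximating $x_i$ by smooth compactly supported functions with gradients uniformly bounded by $1$, and estimating the tail error by \emph{a priori} polynomial moment bounds on $\tilde\nu_t$. These moment bounds must therefore be derived first, independently of the first-moment computation --- for instance, by testing the Fokker--Planck equation against smooth cutoffs of $(1+|x|^2)^{p/2}$ for $p \geq 1$ and invoking a Gr\"onwall-type argument that exploits the uniform boundedness of the translated drift together with the $L^2$ control on the forcing. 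Once this bootstrap is in place, the rest of the proof follows routine lines.
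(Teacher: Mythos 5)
Your argument for part (ii) is essentially the paper's: pass to $\tilde\nu_t=\nu_t\circ\tau_{\sigma_0\phi_t}^{-1}$ via Lemma \ref{lem:id:cost functional}, use the seminorm bound to control $\int_0^T|\langle\dot{\tilde\nu}_t,h\rangle|^2dt$ uniformly over test functions $h$ with bounded first and second derivatives, and approximate the identity by such $h$. The paper closes the approximation step more cheaply than you propose: since $\tilde\nu$ is a fixed path in $C([0,T];\cP^1(\R^d))$, the set $\{\tilde\nu_t\}_{t\in[0,T]}$ is $\W_1$-compact, hence has uniformly integrable first moments, so $\langle\tilde\nu_t,h_p\rangle\to{\mathbb M}^{\tilde\nu}_t$ uniformly in $t$ without any higher-moment bootstrap; the uniform constant $\kappa$ then comes from the closedness of the $\H^1$-ball under uniform convergence (and, as you correctly note, an entropy bound on $|{\mathbb M}^\nu_0|$). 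Your polynomial-moment detour is both unnecessary here and, as explained below, not actually available.

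The genuine gap is in part (i). You claim that testing the Fokker--Planck equation against cutoffs of $(1+|x|^2)^{p/2}$ yields \emph{all polynomial moments of $\tilde\nu^\sharp_t$ bounded uniformly over the level set}. This fails already at $t=0$: the only control on $\nu_0$ is $\cR(\nu_0|\mu_0)\le a$ together with \eqref{eq:exp:integrable}, and the entropy inequality $\langle\nu_0,g\rangle\le\cR(\nu_0|\mu_0)+\log\langle\mu_0,e^g\rangle$ transfers only those moments $g$ for which $e^g$ is $\mu_0$-integrable; since $\mu_0$ is assumed to have finite exponential moments of $\lambda|x|$ only, second and higher moments of $\nu_0$ are not controlled on the level set, and no Gr\"onwall argument can create them at later times. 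Without a uniformly integrable bound on first moments over the whole level set, the time-slices $\{\tilde\nu^\sharp_t\}$ are not shown to be pre-compact in $(\cP^1(\R^d),\W_1)$ (tightness alone is not enough for $\W_1$), so the Arzel\`a--Ascoli step collapses. The paper avoids this entirely: after absorbing the bounded drift into the cost, it observes that $I^0_{(0)}(\tilde\nu)+\cR(\tilde\nu_0|\mu_0)\le C(a)$, identifies this quantity via Lemma \ref{lem:id:rate:function} as $\inf_{\cQ:\Phi_{(0)}(\cQ,0)=\tilde\nu}\cR(\cQ|\mu_0\times\ncW)$, and invokes Sanov's theorem in the $\W_1$-topology together with the contraction principle through the continuous map $\Phi_{(0)}$ to conclude that this is a \emph{good} rate function; compactness of the level set is then automatic, and the shift back to $\nu_t\circ\tau_{{\mathbb M}^\nu_t}^{-1}$ is handled by part (ii) and \eqref{eq:regularity:W1:tau}. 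If you want to salvage a direct PDE proof of (i), you would need to replace the polynomial-moment bootstrap by a uniform-integrability argument at the level of first moments (essentially reproving the $\W_1$-goodness of relative entropy), at which point the Sanov-plus-contraction route is strictly simpler.
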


Proposition \ref{prop:level:sets} shows that the counter-example that
we constructed prior to the statement of the proposition to prove the lack of compactness of the level sets of 
$J^{\sigma_0}$ is somehow typical, as boundedness of the rate function forces the ``centered'' path $(\nu_t \circ \tau_{{\mathbb M}^{\nu}_t}^{-1})_{t \in [0,T]}$ to live in a compact subset.  

\begin{remark}
\label{rem:ldp:whole:space}
Instead of an LDP  for the marginal empirical measures of the system 
\eqref{eq:simplified:LDP}, we could also provide 
an LDP  for the empirical measure of the paths,
as  done in \cite{budhiraja-dupuis-fischer}
and \cite{Fischer} for the case $\sigma_0 =0$. 

In fact, our proof of Theorem \ref{th:largedeviations:b} shows that
the rate function for the latter would take the following variational form:
\begin{equation*}
{\mathcal J}^{\sigma_0}({\mathcal M}) = 
\inf\left\{\cR(\cQ | \mu_0 \times {\ncW}) : \phi \in \C^d_0, \, \cQ \in \cP^1(\R^d \times \C^d_0 ), \, \Psi(\cQ,\phi)={\mathcal M}\right\},
\end{equation*}
for ${\mathcal M} \in \cP^1(\C^d )$, where {$\ncW$} stands for the Wiener measure, and 
 $\Psi$ maps a pair $(\cQ,\phi)$ onto the law 
under $\cQ$ of the solution $x=(x_{t})_{t \in [0,T]}$ of the following
McKean-Vlasov equation
\begin{equation*}
x_{t} = e + \int_{0}^t \nnewb(s,x_{s},\cQ \circ x_s^{-1}) ds + \sigma w_{t} + \sigma_0 \phi_{t}, \quad t \in [0,T],
\end{equation*}
where $(e,w=(w_{t})_{t \in [0,T]})$ denotes the canonical process on the space $\RR^d \times \C^d_0$. 

When $\sigma_0 =0$ and $\cQ$ has first marginal $\mu_0$, this
formulation essentially reduces  to the one obtained in \cite{budhiraja-dupuis-fischer}
and \cite{Fischer}. 
We prefer to focus on the LDP for the flow $m^n_{\bm{X}}$ of
  marginal empirical measures instead of empirical measures on the
  path space, for the following reasons. First, its rate function has a more
  pleasant form,  though this is hardly more than a matter of taste. Second, 
  it is precisely this quantity that governs the interactions between the players.
\end{remark}

\subsection{LDP for the sequence $(m^n_{\boldmath{ X}})_{n \geq 1}$}

By combining Corollary \ref{co:exponential-equivalence}
and Theorem 
\ref{th:largedeviations:b}, we end up with the following statement.

\begin{theorem}
\label{thm:LDP}
Suppose Assumptions \ref{assumption:A} and either Assumption
  \ref{assumption:B} or \ref{assumption:B'} hold, and that the
common distribution $\mu_0$  of the i.i.d.\ initial states $({X_{0}^i})_{i \ge
  1}$ of the solutions $(\bm{X}^n)_{n \geq 1}$ to the Nash equilibrium dynamics 
satisfy the exponential integrability condition \eqref{eq:exp:integrable}.  
Then, the sequence $(m^n_{\bm{X}})_{n \geq 1}$ satisfies 
(as in the statement of Theorem \ref{th:largedeviations:b}) a weak LDP 
with rate function $\tJ^{\sigma_0, \mu_0}$ defined in \eqref{eq-tJ}, 
provided  the drift $\nnewb$ in \eqref{eq-operator} satisfies  
 \begin{equation*}
 \nnewb (t,x,m) = \widehat{b}(x,m,D_{x} U(t,x,m)), \quad t \in [0,T], \ x \in \RR^d, \ m \in \cP^{1}(\RR^d). 
 \end{equation*}
\end{theorem}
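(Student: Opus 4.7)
The plan is a two-step transfer argument that builds directly on Theorem~\ref{th:largedeviations:b} and Corollary~\ref{co:exponential-equivalence}. With the specific choice $\nnewb(t,x,m)=\widehat{b}(x,m,D_xU(t,x,m))$ singled out in the statement, the generic $n$-particle system \eqref{eq:simplified:LDP} coincides exactly with the approximating McKean--Vlasov system $\bm{\overline{X}}$ of \eqref{eq:sec4:overlineX} (as noted in Remark~\ref{rem-conn}). I would therefore first obtain a weak LDP at speed $n$ for $(m^n_{\bm{\overline{X}}})_{n\ge 1}$ by invoking Theorem~\ref{th:largedeviations:b}, and then transfer it to the Nash equilibrium empirical measures $(m^n_{\bm X})_{n\ge 1}$ using the exponential equivalence from Corollary~\ref{co:exponential-equivalence} together with the standard transfer principle for exponentially equivalent sequences (cf.\ \cite[Theorem~4.2.13]{dembo-zeitouni}), whose local proof adapts verbatim to weak LDPs.

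To invoke Theorem~\ref{th:largedeviations:b}, I need to verify Condition~\ref{cond-simplified} for the drift $\nnewb$ above. The initial-condition hypothesis~\ref{cond-a} is precisely the combination of Assumption~\ref{assumption:A}(3) and the extra exponential-integrability hypothesis \eqref{eq:exp:integrable} imposed by Theorem~\ref{thm:LDP}. For~\ref{cond-b}, Assumption~\ref{assumption:A}(1) provides that $\widehat{b}$ is globally Lipschitz in all three of its arguments, while Assumption~\ref{assumption:A}(5) provides that $D_xU$ is bounded and is Lipschitz in $(x,m)$ uniformly in~$t$ and continuous in~$t$. Composition then yields that $\nnewb(t,x,m)=\widehat{b}(x,m,D_xU(t,x,m))$ is jointly continuous and Lipschitz in $(x,m)$ uniformly in~$t$, and its dependence on the $y$-variable of $\widehat{b}$ is automatically bounded. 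Thus Theorem~\ref{th:largedeviations:b} applies and delivers a weak LDP for $(m^n_{\bm{\overline{X}}})_{n\ge 1}$ in $C([0,T];\P^1(\R^d))$ with rate function $\tJ^{\sigma_0,\mu_0}$ defined in \eqref{eq-tJ}.

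For the second step, Corollary~\ref{co:exponential-equivalence} yields, for every $\epsilon>0$,
\[
\lim_{n\to\infty}\frac{1}{n}\log\PP\Bigl(\sup_{t\in[0,T]}\W_{2}(m^n_{\bm{X}_t},m^n_{\bm{\overline{X}}_t})>\epsilon\Bigr)=-\infty.
\]
Since $\W_1\le\W_2$ on $\P^2(\R^d)$ and the topology on $C([0,T];\P^1(\R^d))$ is the uniform-in-time topology induced by $\W_1$, this is precisely exponential equivalence at speed~$n$ in the ambient space of the target LDP. The transfer principle then passes the weak LDP from $(m^n_{\bm{\overline{X}}})_{n\ge 1}$ to $(m^n_{\bm X})_{n\ge 1}$, with the same rate function $\tJ^{\sigma_0,\mu_0}$, concluding the proof. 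The main substantive point that this short plan defers is the verification that Condition~\ref{cond-simplified}(\ref{cond-b}) can tolerate the fact that $\widehat{b}$ is only Lipschitz rather than bounded: in the proof of Theorem~\ref{th:largedeviations:b} the boundedness hypothesis is used solely to obtain exponential tightness, and the required tightness is recovered here via standard Girsanov or truncation arguments that exploit the exponential moments on $\mu_0$. Modulo this technical point, the two-step scheme above produces a clean proof by assembling the already-established Theorem~\ref{th:largedeviations:b} and Theorem~\ref{th:concentration}.
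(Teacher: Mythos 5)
Your proposal is correct and follows essentially the same route as the paper: identify the generic particle system \eqref{eq:simplified:LDP} with $\bm{\overline{X}}$ via the stated choice of $\nnewb$ (Remark \ref{rem-conn}), invoke Theorem \ref{th:largedeviations:b}, and transfer to $m^n_{\bm{X}}$ via the exponential equivalence of Corollary \ref{co:exponential-equivalence}. Two small caveats: the transfer is not quite "verbatim'' from \cite[Theorem 4.2.13]{dembo-zeitouni}, because the rate function is not good and the upper bound carries the $\lim_{\delta\searrow 0}\inf_{F_\delta}$ relaxation, so the paper writes out both bounds explicitly, using for the upper bound the inclusion $(F_\delta)_\varepsilon\subset F_{\delta+\varepsilon}$ and then sending $\varepsilon\to 0$; and your concern about the boundedness requirement in Condition \ref{cond-simplified}(\ref{cond-b}) is legitimate (the paper's proof does not revisit it, implicitly taking Condition \ref{cond-simplified} as satisfied), but your proposed Girsanov/truncation fix is left unsubstantiated and would need to be carried out if one does not simply assume boundedness of $\widehat{b}(x,m,D_xU(t,x,m))$.
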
 
\begin{remark}
Note that the rate function $\tJ^{\sigma_0, \mu_0}$ is defined in terms of the
quantities  $J^\sigma$,  $I^\phi$ and ${\mathcal L}_{t,m}$ specified in 
\eqref{Jsigma}, \eqref{eq:Ibetaphi} and \eqref{eq-operator}, and that
 the dependence of $\tJ^{\sigma_0, \mu_0}$ on the drift $\nnewb$  is
                             reflected in the definition \eqref{eq-operator} of the
                             operator  ${\mathcal L}_{t,m}$.  
\end{remark}
\begin{proof}
We first note that, as already observed in Remark \ref{rem-conn}, with the
definition of $\nnewb$ given as above, $\nnewX$ of \eqref{eq:simplified:LDP} coincides with
$\overline{X}$ of \eqref{eq:sec4:overlineX}. 
The basic idea behind the proof is to apply Theorem
\ref{th:largedeviations:b} to immediately obtain a weak LDP for
$m^n_{\bm{\nnewX}} = m^n_{\bm{\overline{X}}}$, and then apply Corollary
\ref{co:exponential-equivalence} to transfer the weak LDP to
$m^n_{\bm{X}}$.   The proof is fairly standard,  except that
some care is needed because  the rate function does not have compact
level sets.

We first prove the lower bound, that is,  the  analogue of $(i)$ in the statement of 
Theorem \ref{th:largedeviations:b}, but for $(m^n_{\bm{X}})_{n \geq  1}$. 
Without any loss of generality, we can assume that 
$\inf_{\nu \in O} \tJ^{\sigma_0, \mu_0}(\nu) < \infty$, as otherwise the lower bound is trivial. 
Then, for any $\eta >0$, using \eqref{eq-tJ}, we can find $\nu^{(\eta)} \in O$ such that 
\begin{equation*}
\inf_{\nu \in O} \tJ^{\sigma_0, \mu_0}(\nu) \geq 
 J^{\sigma_0}\bigl(\nu^{(\eta)}\bigr) +\cR\bigl(\nu^{(\eta)}_0 | \mu_0\bigr) - \eta.
 \end{equation*}
Since $O$ is open, we can find $\varepsilon >0$ such that the ball
$B(\nu^{(\eta)},\varepsilon) := \{ \nu \in C([0,T];\cP^1(\RR^d)) : \sup_{t \in [0,T]}
\W_1 (\nu_{t},\nu^{(\eta)}_{t}) < \varepsilon\}$ is contained in $O$. 
By (i) of Theorem \ref{th:largedeviations:b}, and the identity
$m^n_{\bm{\nnewX}} = m^n_{\bm{\overline{X}}_\cdot}$, we have 
\begin{equation*}
\begin{split}
\liminf_{n \rightarrow \infty} \frac1n \log \PP\Bigl( m^n_{\bm{\overline{X}}} \in B\bigl(\nu^{(\eta)},\varepsilon/2\bigr)
 \Bigr)
&\geq - \inf_{\nu \in B(\nu^{(\eta)},\varepsilon/2)}  \tJ^{\sigma_0, \mu_0} (\nu) 
\\
&\geq - \bigl[
 J^{\sigma_0}\bigl(\nu^{(\eta)}\bigr) +  
\cR\bigl(\nu_{0}^{(\eta)} | \mu_0\bigr) 
\bigr]
 \\
 &\geq - 
 \inf_{\nu\in O} \bigl( J^{\sigma_0}(\nu) +
  \cR(\nu_{0} | \mu_0) \bigr)
- \eta. 
\end{split}
\end{equation*} 
Since the right-hand side of the last inequality is finite,  using
Corollary \ref{co:exponential-equivalence}, 
we then obtain 
\begin{equation*}
\begin{split}
& \liminf_{n \rightarrow \infty} \frac1n \log  \PP
\bigl(m^n_{\bm{X}}\in O \bigr) \\
&\geq \liminf_{n \rightarrow \infty} \frac1n \log \PP \Bigl( m^n_{\bm{X}} \in B\bigl(\nu^{(\eta)},\varepsilon\bigr) \Bigr)
\\
&\geq  \liminf_{n \rightarrow \infty} \frac1n \log \PP\Bigl( m^n_{\bm{\overline{X}}} \in B\bigl(\nu^{(\eta)},\varepsilon/2\bigr), \sup_{t \in [0,T]} \W_1 (m^n_{\bm{\overline{X}}_t},m^n_{\bm{X}_t}) < \varepsilon/2 \Bigr)
\\
 &\geq - 
 \inf_{\nu \in O} \bigl( J^{\sigma_0}(\nu) +  \cR(\nu_{0} | \mu_0)\bigr)
- \eta. 
\end{split}
\end{equation*}
Letting $\eta$ tend to $0$, this proves the lower bound.  

We now turn to the proof of the upper bound, namely the analog of $(ii)$ in 
Theorem \ref{th:largedeviations:b}. We know that, for any $\varepsilon >0$
and for any closed subset $F \in C([0,T];\cP^{1}(\RR^d))$, 
\begin{equation*}
\begin{split}
&\limsup_{n \rightarrow \infty} \frac1n \log \PP \bigl( m^n_{\bm{X}} \in F \bigr) 
\\
&\leq \limsup_{n \rightarrow \infty} \frac1n \log \Bigl( \PP \bigl( m^n_{\bm{X}} \in F, \sup_{t \in [0,T]} \W_1 (m^n_{\bm{X}_t},m^n_{\bm{\overline{X}}_t}) \leq  \varepsilon
  \bigr) +
\PP \bigl(   \sup_{t \in [0,T]} \W_1
(m^n_{\bm{X}_t},m^n_{\bm{\overline{X}}_t}) > \varepsilon \bigr)   
\Bigr) 
\\
&\leq \limsup_{n \rightarrow \infty} \frac1n \log \Bigl( \PP \bigl(  m^n_{\bm{\overline{X}}} \in F_{\varepsilon}
  \bigr) +
\PP \bigl(   \sup_{t \in [0,T]} \W_1 (m^n_{\bm{X}_t},m^n_{\bm{\overline{X}}_t}) > \varepsilon
 \bigr) \Bigr) 
\\
&\leq \max \Bigl[ \limsup_{n \rightarrow \infty} \frac1n \log \PP \bigl( m^n_{\bm{\overline{X}}} \in F_{\varepsilon}\bigr) ,   \ 
 \limsup_{n \rightarrow \infty} \frac1n \log 
\PP \bigl(   \sup_{t \in [0,T]} \W_1 (m^n_{\bm{X}_t},m^n_{\bm{\overline{X}}_t}) > \varepsilon
  \bigr) \Bigr].
 \end{split}
\end{equation*}
By Corollary \ref{co:exponential-equivalence}, the second argument in the 
maximum is $-\infty$. Hence,
\begin{equation*}
\begin{split}
&\limsup_{n \rightarrow \infty} \frac1n \log \PP \bigl( m^n_{\bm{X}} \in F \bigr) 
\leq \limsup_{n \rightarrow \infty} \frac1n \log \PP \bigl( m^n_{\bm{\overline{X}}} \in F_{\varepsilon}\bigr).
 \end{split}
\end{equation*}
Since $F_{\varepsilon}$ is closed, Theorem
\ref{th:largedeviations:b} (ii) and  the identity
$m^n_{\bm{\nnewX}} = m^n_{\bm{\overline{X}}}$ yield 
\begin{equation*}
\begin{split}
&\limsup_{n \rightarrow \infty} \frac1n \log \PP \bigl( m^n_{\bm{X}} \in F \bigr)
\leq 
 \lim_{\delta \searrow 0}
\inf_{\mu \in (F_{\delta})_{\varepsilon}} \bigl( J^{\sigma_0}(\nu) +  \cR(\nu_{0} | \mu_0) \bigr),
 \end{split}
\end{equation*}
Obviously, $(F_{\delta})_{\varepsilon} \subset F_{\delta+\varepsilon}$, form which we get 
\begin{equation*}
\begin{split}
&\limsup_{n \rightarrow \infty} \frac1n \log \PP \bigl( m^n_{\bm{X}} \in F \bigr)
\leq 
 \lim_{\delta \searrow 0}
\inf_{\mu \in F_{\delta+\varepsilon}} \bigl( J^{\sigma_0}(\nu) +  \cR(\nu_{0} | \mu_0)\bigr).
 \end{split}
\end{equation*}
Letting $\varepsilon$ tend to $0$, we obtain, as required,
\begin{equation*}
\begin{split}
&\limsup_{n \rightarrow \infty} \frac1n \log \PP \bigl( m^n_{\bm{X}} \in F \bigr)
\leq  \lim_{\delta \searrow 0}
\inf_{\nu\in F_{\delta}} \bigl( J^{\sigma_0}(\nu) +  \cR(\nu_{0} | \mu_0)\bigr).
 \end{split}
\end{equation*}
This completes the proof. 
\end{proof}

\subsection{Proof of Theorem 
\ref{th:largedeviations:b}}
\label{ref:subse:proof:ldp}
Our proof relies on the so-called contraction principle, which is somewhat similar to the approach developed in 
\cite{budhiraja-dupuis-fischer}
and
\cite{Fischer}. In particular, the strategy used in this section may
be adapted to obtain an LDP for the empirical distribution of the paths 
of
\eqref{eq:simplified:LDP} (instead of the marginal empirical
distributions), with the rate function having a variational
  representation; see Remark \ref{rem:ldp:whole:space}.

\subsubsection{Case when $\nnewb=0$}
The first step of the proof is to focus on the case when the drift $\nnewb$ is trivial. Then, we can have a look at the
pair
\begin{equation}
\label{eq:qn:Qn:W}
\bigl(\bar \cQ^n,W\bigr) =  \biggl(\frac1n \sum_{i=1}^n \delta_{({\nnewX_{0}^i},B^i)},W\biggr), 
\end{equation}
which we regard as a random element with values in the product space:
\begin{equation*}
\cP^{1} \bigl( \RR^d \times  \C^d_0  \bigr) \times \C^d_0 .
\end{equation*}
As above, $\C^d_0 $ is equipped 
throughout the paragraph
with the uniform topology and $\cP^1(\RR^d \times \C^d_0 )$ is
equipped with the corresponding $1$-Wasserstein distance. Also,   
for a probability measure $\cQ$ on $\RR^d \times \C^d_0 $, 
we denote by $\cR(\cQ |\mu_0  \times \ncW)$ the relative entropy with respect to
$\mu_0  \times \ncW$, where {$\ncW$} is  the Wiener measure on the space $\C^d_0$.
Then, we have the following statement.   

\begin{proposition} \label{pr:weakLDP-driftless}
The pair $(\bar{\cQ}^n,W)_{n \ge 1}$ satisfies the following weak LDP:
\begin{enumerate}[(i)]
\item For any open subset $O$ of $\cP^1(\RR^d \times \C^d_0 ) \times \C_0^d$, 
\begin{equation*}
\liminf_{n \rightarrow \infty} \frac1n \log  
\PP \bigl( (\bar \cQ^n,W) \in O \bigr) 
\geq - \inf_{(\cQ,\phi) \in O}  \cR(\cQ |\mu_0  \times \ncW) ; 
\end{equation*} 
\item For any closed subset $F$ of $\cP^1(\RR^d \times \C^d_0 ) \times \C^d_0 $, 
\begin{equation*}
\limsup_{n \rightarrow \infty} \frac1n  \log \PP \bigl(  (\bar \cQ^n,W) \in F \bigr)
\leq - \lim_{\delta \searrow 0}
\inf_{(\cQ,\phi) \in F_{\delta}} \cR(\cQ |\mu_0  \times \ncW),
\end{equation*}
where 
\begin{equation*}
\begin{split}
F_{\delta} &= \bigl\{(\cQ,\phi) \in \cP^1\bigl(\RR^d \times \C^d_0 \bigr) \times 
\C^d_0  : \inf_{(\cQ',\phi') \in F} \bigl[
\max\bigl(\W_1 (\cQ,\cQ') , \|\phi - \phi'\|_{\infty}\bigr) 
\bigr] 
\leq \delta \bigr\}.
\end{split}
\end{equation*}
\end{enumerate}
\end{proposition}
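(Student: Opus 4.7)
The plan is to exploit the independence of the empirical measure $\bar{\cQ}^n = \tfrac{1}{n}\sum_{i=1}^n \delta_{(\nnewX_0^i,B^i)}$ and the single process $W$, treating them by classical means: $\bar{\cQ}^n$ via a Sanov-type LDP, and $W$ as a ``free'' element whose law contributes no cost on its support $\C^d_0$. Since the trivial rate function $\phi \mapsto 0$ on $\C^d_0$ fails to have compact level sets, the product rate function $(\cQ,\phi) \mapsto \cR(\cQ \mid \mu_0 \times \ncW)$ is not good; this is precisely what forces the LDP to be only weak and motivates the $F_\delta$ relaxation in part $(ii)$. The one non-trivial preliminary is a Sanov-type LDP for $\bar{\cQ}^n$ in the $\W_1$-topology on $\cP^1(\RR^d \times \C^d_0)$ (rather than the weak topology), with good rate function $\cQ \mapsto \cR(\cQ \mid \mu_0 \times \ncW)$. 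This is known to hold provided the reference measure $\mu_0 \times \ncW$ has finite exponential moments of every order with respect to $|x| + \|w\|_\infty$, which follows from Condition \ref{cond-simplified}(\ref{cond-a}) for $\mu_0$ and Fernique's theorem for $\ncW$.

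For the lower bound $(i)$, fix an open $O$ and a target $(\cQ_0,\phi_0) \in O$ with $\cR(\cQ_0 \mid \mu_0 \times \ncW) < \infty$; by definition of the product topology, I would pick an open rectangle $O_1 \times O_2 \subseteq O$ containing $(\cQ_0,\phi_0)$, and use independence to factor
\[
\PP\bigl((\bar{\cQ}^n,W) \in O_1 \times O_2\bigr) = \PP(\bar{\cQ}^n \in O_1) \cdot \ncW(O_2).
\]
The support theorem for Brownian motion yields $\ncW(O_2) > 0$, so $n^{-1}\log\ncW(O_2) \to 0$; the $\W_1$-Sanov lower bound supplies $\liminf_n n^{-1}\log \PP(\bar{\cQ}^n \in O_1) \geq -\cR(\cQ_0 \mid \mu_0 \times \ncW)$. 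Combining and then minimizing over $(\cQ_0,\phi_0) \in O$ gives the announced bound.

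For the upper bound $(ii)$, the key observation is that the candidate rate function depends only on the first coordinate. Projecting $F$ onto $\cP^1(\RR^d \times \C^d_0)$ via the natural map $\pi_1$, one has $\{(\bar{\cQ}^n,W) \in F\} \subseteq \{\bar{\cQ}^n \in \overline{\pi_1(F)}\}$, where the overline denotes $\W_1$-closure; Sanov's upper bound then yields
\[
\limsup_n \tfrac{1}{n}\log\PP\bigl((\bar{\cQ}^n,W) \in F\bigr) \leq -\inf_{\cQ \in \overline{\pi_1(F)}} \cR(\cQ \mid \mu_0 \times \ncW).
\]
To match the $F_\delta$-form of the statement, I would argue that any $\cQ \in \overline{\pi_1(F)}$ is a $\W_1$-limit of a sequence $\cQ_k \in \pi_1(F)$, associated to points $(\cQ_k,\phi_k) \in F$; for every $\delta > 0$ and $k$ large enough, the point $(\cQ,\phi_k)$ satisfies $\max(\W_1(\cQ,\cQ_k), \|\phi_k - \phi_k\|_\infty) = \W_1(\cQ,\cQ_k) \leq \delta$, hence $(\cQ,\phi_k) \in F_\delta$. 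This gives $\inf_{F_\delta} \cR(\cdot \mid \mu_0 \times \ncW) \leq \cR(\cQ \mid \mu_0 \times \ncW)$ for every such $\cQ$, and hence $\lim_{\delta \searrow 0} \inf_{F_\delta} \cR(\cdot \mid \mu_0 \times \ncW) \leq \inf_{\overline{\pi_1(F)}} \cR(\cdot \mid \mu_0 \times \ncW)$, closing the upper bound. The main technical point—and really the only place where one cannot simply invoke standard facts—is the passage from the weak-topology Sanov theorem to its $\W_1$-refinement; once that is in hand, the remaining arguments reduce to routine manipulations with product topology and projections.
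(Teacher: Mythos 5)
Your proposal is correct and follows essentially the same route as the paper's proof: the lower bound factors the probability of a product neighborhood using independence, combining the support theorem for $W$ with Sanov's theorem in the $\W_1$-topology (which, as you note, requires the exponential integrability of $\mu_0\times\ncW$); the upper bound projects $F$ onto the measure coordinate, applies the Wasserstein--Sanov upper bound to the closure of that projection, and recovers the $F_\delta$ relaxation by pairing each limit point with the $\phi$-components of an approximating sequence in $F$. No gaps.
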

\begin{proof}
We start with the proof of  $(i)$.   
First, observe  that
for any $\varepsilon >0$,
 $\cQ \in \cP^1(\RR^d \times \C^d_0 )$ and 
  $\phi \in \C_0^d  $,  the  independence of
  $\bar{\mathcal Q}^n$ and $W$ implies
\begin{equation}
\label{eq:sanov:ldp:lower}
\begin{split}
&\log \PP \Bigl(  \W_1 (\bar{\cQ}^n,\cQ) < \varepsilon, 
\| W - \phi \|_{\infty} < \varepsilon \Bigr)
\\
&= 
\log  \PP \bigl( \W_1 (\bar{\cQ}^n,\cQ) < \varepsilon \bigr)
+ \log  \PP \bigl( \| W - \phi \|_{\infty} < \varepsilon \bigr). 
\end{split}
\end{equation}  
By the support theorem for the trajectories of a Brownian motion
(see \cite[Lemma 3.1]{StrVar72}), 
\begin{equation*}
\lim_{n \rightarrow \infty} \frac1n
\log  \PP \bigl( \| W - \phi \|_{\infty} < \varepsilon \bigr)=0.
\end{equation*}
Also,  on dividing the first term in the second line of 
\eqref{eq:sanov:ldp:lower} by  $n$ and
  taking the limit inferior,  
Sanov's theorem in the $1$-Wasserstein topology (see for instance
\cite{wang-wang-wu}) implies that 
\begin{equation*}
\begin{split}
 \liminf_{n \rightarrow \infty} \frac1n \log  \PP \bigl( \W_1 (\bar{\cQ}^n,\cQ) < \varepsilon  \bigr) &\geq - \inf_{\cQ' \in \cP^1(\RR^d \times \C^d_0 ): \W_1 (\cQ,\cQ') < \varepsilon}
\cR(\cQ' |\mu_0  \times \ncW)  
\\
&\geq
 - \cR(\cQ |\mu_0  \times \ncW), 
\end{split} 
\end{equation*}
 Now, given an open set $O \subset \P^1(\R^d \times \C^d_0) \times \C^d_0$, and  $\eta > 0$,  choose $(\cQ,\phi) \in O$ such that
\begin{equation*}
\inf_{(\cQ',\phi') \in O} \cR(\cQ' |\mu_0  \times \ncW)  \geq \cR(\cQ |\mu_0  \times \ncW) - \eta.  
\end{equation*}
By choosing $\varepsilon > 0$ such that the set  
\begin{equation*}
\begin{split}
&\bigl\{ (\cQ',\phi') \in 
\cP^1\bigl(\RR^d \times \C^d_0 \bigr)\times 
\C_0^d :  \max\bigl(\W_1 (\cQ',\cQ), \| \phi'-\phi \|_{\infty}\bigr)
< \varepsilon \bigr\}
\end{split}
\end{equation*}
 is contained in $O$, we get
\begin{equation*}
\begin{split}
\liminf_{n \rightarrow \infty} \frac1n
\log  \PP \Bigl( (\bar \cQ^n,W) \in O \Bigr)
 &\geq -  \cR(\cQ |\mu_0  \times \ncW)
\\
&\geq - \inf_{(\cQ',\phi') \in O} \cR(\cQ' |\mu_0  \times \ncW)  - \eta.
\end{split} 
\end{equation*}
The proof of $(i)$ follows on sending $\eta$ to $0$.

We now prove the upper bound $(ii)$. 
Consider a closed set $F$ in the product space $\cP^1(\RR^d
  \times \C^d_0 ) \times \C_0^d$,  and let 
$$F' = \bigl\{\cQ : \exists \phi \in \C_0^d  , \ (\cQ,\phi) \in F \bigr\},$$ which may not be closed. 
Then, the LDP for the sequence $(\bar{\cQ}^n)_{n \geq 1}$ yields
\begin{equation*}
 \limsup_{n \rightarrow \infty}
\frac1n
 \log  \PP\bigl((\bar{\cQ}^n,W)  \in F \bigr)
 \leq - \inf_{\cQ \in \textrm{\rm cl}(F')} \cR(\cQ |\mu_0  \times \ncW),  
 \end{equation*}
 where 
$ \textrm{\rm cl}(F')$ is the closure of $F'$. 
 In order to complete the proof, it suffices to note that, if 
$\cQ \in \textrm{\rm cl}(F')$, then there exists a sequence 
$(\cQ^n,\phi^n) \in F$ such that 
{$ \W_1 (\cQ,\cQ^n)  \rightarrow 0$}.
Hence, for any $\delta >0$, we can choose 
$n$ large enough such that $(\cQ,\phi^n) \in F_{\delta}$. 
Therefore, 
\begin{equation*}
\inf_{\cQ \in \textrm{\rm cl}(F')} \cR(\cQ |\mu_0  \times \ncW)
\geq 
\inf_{(\cQ,\phi) \in F_{\delta}}  \cR(\cQ |\mu_0  \times \ncW),
\end{equation*}
which completes the proof. 
\end{proof}

\subsubsection{Contraction principle for non-zero  drift}
We now consider the general case with an arbitrary drift $\nnewb$ that
satisfies 
Condition \ref{cond-simplified}.  
Let  $e$ and $w =(w_{t})_{t \in [0,T]}$ 
denote the canonical variables on $\RR^d \times {\C^d_0}$, and 
for $(\cQ,\phi) \in \cP^1(\RR^d  \times  \C_0^d  ) \times \C^d_0 $ as above, consider the McKean-Vlasov equation:
\begin{equation*}
x_{t} =  e + \int_{0}^t \nnewb\bigl(s,x_{s}, \cQ \circ x_s^{-1} \bigr) ds + \sigma w_{t} + \sigma_0 \phi_{t},  \quad t \in [0,T],
\end{equation*}
on the space $\RR^d \times \C^d$ equipped with the probability measure $\cQ$ on the Borel $\sigma$-field. 
Here,  $\cQ \circ x_s^{-1}$ stands for the law of $x_{s}$ under 
$\cQ$. Under Condition \ref{cond-simplified}, the above equation has
a unique solution $x$.  Let 
$\Psi$ be the  mapping that takes  $(\cQ,\phi)$ to the probability
measure $\cQ \circ x^{-1}$ on $\C^d,$ 
and  let $\Phi$ be the mapping that takes  $(\cQ,\phi)$ to 
the flow of marginal measures $(\cQ \circ x^{-1}_t)_{t \in
  [0,T]}$. Note that  then  
$\Psi(\cQ,\phi)$ is an element of 
$\cP^1(\C^d )$
and
$\Phi(\cQ,\phi)$ is an element of 
$C([0,T];\cP^1(\RR^d))$, and we have the following 
useful relation for each $n$: 
\begin{equation}
\label{eq-Phi}
m^n_{\bm{X}} =  \Phi\bigl(\bar{\cQ}^n,W). 
\end{equation} 
 It is easily verfied  that the mapping $\Phi$
is  continuous. Actually, we prove a slightly  stronger property:

\begin{lemma} \label{le:Phi-uniformlycontinuous}
The mapping $\Phi$ is uniformly continuous from the space $\cP^1(\RR^d \times {\C^d_0} ) \times \C_0^d  $ into 
$C([0,T];\cP^1(\RR^d))$. 
\end{lemma}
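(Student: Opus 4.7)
The plan is to establish that $\Phi$ is in fact globally Lipschitz from $\cP^1(\RR^d \times \C^d_0) \times \C^d_0$ (equipped with any of the natural product metrics) into $C([0,T];\cP^1(\RR^d))$, from which uniform continuity is immediate. Fix two inputs $(\cQ,\phi),(\cQ',\phi')$ and let $\pi$ be a $\W_1$-optimal coupling of $\cQ$ and $\cQ'$ on $\RR^d \times \C^d_0$ endowed with the sum metric $|e-e'|+\|w-w'\|_\infty$. On the probability space $((\RR^d\times\C^d_0)^2,\pi)$ with canonical variables $(e,w,e',w')$, I would let $x$ and $x'$ denote the unique strong solutions of the McKean--Vlasov equations associated with $(\cQ,\phi)$ and $(\cQ',\phi')$, respectively. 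Because the marginals of $\pi$ are $\cQ$ and $\cQ'$, the $\pi$-laws of $x_t$ and $x'_t$ coincide with $\Phi(\cQ,\phi)_t$ and $\Phi(\cQ',\phi')_t$, so $(x_t,x'_t)$ is an admissible coupling of those flows and
\begin{equation*}
\W_1\bigl(\Phi(\cQ,\phi)_t,\Phi(\cQ',\phi')_t\bigr) \leq \E_\pi|x_t - x'_t|.
\end{equation*}

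The next step is a Gronwall estimate on $\E_\pi|x_t - x'_t|$. Subtracting the two equations and invoking the Lipschitz property of $\nnewb$ from Condition \ref{cond-simplified}(\ref{cond-b}) with constant $\newL$,
\begin{equation*}
|x_t-x'_t| \leq |e-e'| + \|\sigma\|\|w-w'\|_\infty + \|\sigma_0\|\|\phi-\phi'\|_\infty + \newL\int_0^t \bigl(|x_s-x'_s| + \W_1(\cQ\circ x_s^{-1},\cQ'\circ(x'_s)^{-1})\bigr)\,ds,
\end{equation*}
where $\|\sigma\|$, $\|\sigma_0\|$ denote the operator norms. The key observation is that under $\pi$ the pair $(x_s,x'_s)$ simultaneously realizes a coupling of $\cQ\circ x_s^{-1}$ and $\cQ'\circ(x'_s)^{-1}$, hence $\W_1(\cQ\circ x_s^{-1},\cQ'\circ(x'_s)^{-1}) \leq \E_\pi|x_s-x'_s|$. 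Taking expectations under $\pi$ and using the optimality of $\pi$, which gives $\E_\pi|e-e'|+\E_\pi\|w-w'\|_\infty = \W_1(\cQ,\cQ')$, Gronwall's inequality yields
\begin{equation*}
\sup_{t\in[0,T]}\E_\pi|x_t - x'_t| \leq C\bigl(\W_1(\cQ,\cQ') + \|\phi-\phi'\|_\infty\bigr),
\end{equation*}
for a constant $C$ depending only on $T,\newL,\sigma,\sigma_0$. Combining with the previous display gives the desired Lipschitz bound on $\Phi$, and in particular its uniform continuity.

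The argument is essentially routine; the only subtle point is the self-referential form of the drift, which is closed by the observation that the two solutions, built on the same coupled probability space, simultaneously provide a coupling of their own unknown marginal laws, so the intrinsic $\W_1$-distance appearing in the pathwise bound is absorbed into the Gronwall step. The boundedness of $\nnewb$ from Condition \ref{cond-simplified}(\ref{cond-b}) plays no role in this estimate; only its Lipschitz regularity in the spatial and measure arguments is used.
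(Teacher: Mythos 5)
Your argument is correct and is essentially the paper's own proof: both couple $\cQ$ and $\cQ'$ on $(\RR^d\times\C^d_0)^2$, solve the two McKean--Vlasov equations on the coupled space, observe that the pair $(x_s,x'_s)$ itself couples the two unknown marginal flows so the $\W_1$-term is absorbed into the Gronwall estimate, and conclude a bound of the form $C(\W_1(\cQ,\cQ')+\|\phi-\phi'\|_\infty)$. The only cosmetic differences are that you take expectations before applying Gronwall (the paper applies it pathwise first and then integrates) and that you use an optimal rather than $\varepsilon$-optimal coupling; neither affects validity.
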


\begin{proof}
Consider two probability measures $\cQ$ and $\cQ'$ on $\RR^d \times \C^d_0$ 
and 
two paths $\phi$ and $\phi'$ in $\C_0^d  $ 
such that 
$\W_1 (\cQ,\cQ') < \varepsilon$  
and 
$\| \phi - \phi' \|_{\infty} < \varepsilon$, for some 
$\varepsilon >0$. By definition of the $1$-Wasserstein distance, we know that there exists a probability measure ${\mathcal M}$ on 
$(\RR^d \times \C^d_0)^2$, with $\cQ$ and $\cQ'$ as marginal distributions, such that 
\begin{equation*}
\begin{split}
&\int_{(\RR^d \times \C^d_0)^2} 
\max \bigl( {| e - e' |},
 \| w - w' \|_{\infty}
 \bigr)
 d\cM\bigl((e,w),(e',w')\bigr) < \varepsilon. 
\end{split}
\end{equation*}
Denoting by $(e,w)$ and $(e',w')$ the canonical processes on $(\RR^d \times \C^d_0)^2$, we consider the system of two equations:
\begin{equation*}
\begin{split}
&x_{t} = e+ \int_{0}^t \nnewb\bigl(s,x_{s},\cM  \circ x_s^{-1}\bigr) ds + \sigma w_{t} + \sigma_0 \phi_{t}, 
\\
&x_{t}' = e'+ \int_{0}^t \nnewb\bigl(s,x_{s}',
\cM \circ (x'_s)^{-1}\bigr) ds + \sigma w_{t}' + 
\sigma_0\phi_{t}', \quad t \in [0,T].
\end{split}
\end{equation*}
By Gronwall's lemma, there exists $C < \infty$ (possibly depending on $\sigma$ and $\sigma_0$)  such that for every $t
\in [0,T]$, 
\begin{equation*}
\begin{split}
&| x_{t} - x_{t}' | \leq C \biggl( \vert e - e' \vert 
+ 
\| w - w' \|_{\infty} + \| \phi - \phi' \|_{\infty} 
+ \int_{0}^t  ds
\int_{(\R^d \times \C^d_0)^2}
\vert x_{s} - x_{s}' \vert \, d  \cM
\biggr). 
\end{split}
\end{equation*} 
Integrating with respect to $\cM$, applying Gronwall's lemma once
again and allowing the constant $C$ to increase from line to line, we
obtain 
\begin{equation*}
\int_{(\R^d \times \C^d_0)^2}
\vert x_{t} - x_{t}' \vert \, d  \cM
\leq 3 C \varepsilon, \quad t \in [0,T], 
\end{equation*}
which implies 
\begin{equation*}
\sup_{t \in [0,T]} \W_1  \bigl(\cM \circ x_t^{-1},
\cM \circ (x'_t)^{-1}\bigr) \leq 3C \varepsilon.  
\end{equation*}
It is clear that, for all $t \in [0,T]$, 
$\cM \circ x_t^{-1} = [\Phi(\cQ,\phi)]_{t}$
and 
$\cM \circ (x'_t)^{-1} = [\Phi(\cQ',\phi')]_{t}$, from which we
conclude that 
\begin{equation*}
\sup_{t \in [0,T]} \W_1  \bigl([\Phi(\cQ,\phi)]_{t} , [\Phi(\cQ',\phi')]_{t}\bigr) \leq 3C \varepsilon, 
\end{equation*}
which completes the proof. 
\end{proof}

\subsubsection{Proof of Theorem 
\ref{th:largedeviations:b}}

We can now  make use of the contraction principle to prove 
Theorem \ref{th:largedeviations:b}. 
We start with the proof of the lower bound $(i)$ in the statement of Theorem \ref{th:largedeviations:b}. 
 For any open set $O$ of $C([0,T];\cP^1(\RR^d))$,  the relation 
 \eqref{eq-Phi},  the continuity property of $\Phi$ established in 
Lemma \ref{le:Phi-uniformlycontinuous} and Proposition {\ref{pr:weakLDP-driftless}} yield 
\begin{equation*}
\liminf_{n \rightarrow \infty}
\frac1n \log \PP \bigl( m^n_{\bm{X}} \in O  \bigr) 
\geq - \inf_{\phi \in \C_0^d  }\inf_{\cQ \in \cP^1(\RR^d \times  \C_0^d)): \Phi(\cQ,\phi) \in O}  \cR(\cQ |\mu_0 \times \ncW).  
\end{equation*}  
By Lemma \ref{lem:id:rate:function} below, the right-hand side is equal to 
\begin{equation*}
\begin{split}
-\inf_{\nu \in O}  \inf_{\phi \in \C^d_0 }\left( I^{\sigma_0\phi}\bigl( \nu \bigr) + \cR(\nu_{0} |\mu_0) \right),
\end{split}
\end{equation*}
where recall that  $I^{\cdot}$ is the functional defined in
\eqref{def-extratefn}.  
This completes the proof of the lower bound. 

We turn to the proof of the upper bound $(ii)$. Similarly, for any closed  set
$F \subset C([0,T];\cP^1(\RR^d))$
\begin{equation*}
\limsup_{n \rightarrow \infty}
\frac1n \log \PP \bigl( m^n_{\bm{X}} \in F \bigr) 
\leq - \lim_{\delta\searrow 0}
\inf_{(\cQ,\phi) \in (\Phi^{-1}(F))_{\delta}}\cR(\cQ |\mu_0 \times \ncW). 
\end{equation*}
By the uniform continuity of $\Phi$ (Lemma \ref{le:Phi-uniformlycontinuous}), for any $\eta >0$, we can choose 
$\delta >0$ small enough such that for any 
$(\cQ,\phi) \in (\Phi^{-1}(F))_{\delta}$, $\Phi(\cQ,\phi)$ belongs to 
$F_{\eta}$. Therefore, 
\begin{equation*}
\limsup_{n \rightarrow \infty}
\frac1n \log \PP \bigl( m^n_{\bm{X}} \in F \bigr) 
\leq - \lim_{\eta\searrow 0}  
\inf_{\Phi(\cQ,\phi) \in F_{\eta}}\cR(\cQ | \mu_0 \times \ncW). 
\end{equation*}
To complete the proof, apply Lemma  \ref{lem:id:rate:function}  once again to conclude that 
\begin{equation*}
\begin{split}
\inf_{(\cQ,\phi):\Phi(\cQ,\phi) \in F_{\eta}}\cR(\cQ |\mu_0 \times \ncW)
&= \inf_{\nu \in F_{\eta}}
\inf_{\phi \in \C^d }
\inf_{\cQ : \Phi(\cQ,\phi) = \nu} \cR(\cQ | {\mu_0}\times \ncW)
\\
&={\inf_{\nu \in F_{\eta}} \widetilde{J}^{\sigma_{0},\mu_{0}}(\nu)},
\end{split}  
\end{equation*}
which completes the proof. \hfill\qedsymbol
\vskip 4pt

\subsection{Proof of auxiliary lemmas}
\label{ldpsubs:pfaux}

We now prove the auxiliary Lemma \ref{lem:id:rate:function} below.
This relies on  Lemma \ref{lem:id:cost functional}, which we first
prove.

\begin{proof}[Proof of Lemma \ref{lem:id:cost functional}]
Fix $\nu \in C([0,T];\P^1(\R^d))$ and $\phi \in {\H^1_0}([0,T];\R^d)$. 
 It is straightforward to check that $\nu = (\nu_t)_{t \in [0,T]}$ is
 absolutely continuous if and only if $\widetilde{\nu} := (\nu_t \circ
 \tau_{{\phi}_t}^{-1})_{t \in [0,T]}$ is. Now, suppose that $\nu$ is
 absolutely continuous, and  let us compute the time-derivative of
 $\widetilde{\nu}$. For any test function $h \in C_c^\infty(\R^d)$ and 
 $0 \le s < t \le T$, we have 
\begin{align*}
\langle \widetilde{\nu}_t - \widetilde{\nu}_s,h\rangle &= \langle \nu_t, h(\cdot - {\phi}_t)\rangle - \langle \nu_s,h(\cdot-{\phi}_s)\rangle \\
	&= \langle \nu_t-\nu_s,h(\cdot-{\phi}_s)\rangle + \langle \nu_t, h(\cdot-{\phi}_t) - h(\cdot-{\phi}_s)\rangle.
\end{align*}
Assume first that ${\phi}$ is continuously differentiable. Then, by the
absolute continuity of $t \mapsto \nu_t$, the
  continuity of $h$ and ${\phi}$ and the fact that $h$ has compact
  support, we may {divide by $t-s$ and then} send {$s \rightarrow t$ (for a fixed value of $t$)} in the above to obtain 
\begin{align}
\frac{d}{dt}\langle \widetilde{\nu}_t,h\rangle = \langle \dot{\nu}_t,h(\cdot-{\phi}_t)\rangle - \langle \nu_t, \dot{{\phi}}_t \cdot Dh(\cdot-{\phi}_t)\rangle, \label{pf:I=I-1}
\end{align}
where the derivative $\dot{{\phi}}_t$  is understood in a {(time-)}distributional sense. By approximation, noting that ${\H^1}$-convergence implies sup-norm convergence, we can lift the restriction that ${\phi}$ is continuously differentiable and merely require that ${\phi} \in {\H^1_0}([0,T];\R^d)$.

Next, we claim that, for any $h \in C_c^\infty(\R^d)$,
\begin{align}
\langle \widetilde{\cL}^*_{t,\widetilde{\nu}_t}[{\phi}]\widetilde{\nu}_t, \, h(\cdot)\rangle = \langle\cL^*_{t,\nu_t}\nu_t, \, h(\cdot  - {\phi}_t)\rangle. \label{pf:I=I-2}
\end{align}
The proof is simple:
\begin{align*}
\langle \widetilde{\nu}_t, \, \widetilde{\cL}_{t,\widetilde{\nu}_t}[{\phi}]h\rangle &= \left\langle \nu_t \circ \tau_{{\phi}_t}^{-1}, \, \frac{1}{2}\mathrm{Tr}[\sigma\sigma^\top D^2h(\cdot)] + Dh(\cdot) \cdot \nnewb\bigl(t,\cdot + {\phi}_t, \widetilde{\nu}_t \circ \tau_{-{\phi}_t}^{-1}\bigr)\right\rangle \\
	&= \left\langle \nu_t, \, \frac{1}{2}\mathrm{Tr}[\sigma\sigma^\top D^2h(\cdot - {\phi}_t)] + Dh(\cdot - {\phi}_t) \cdot \nnewb(t,\cdot, \nu_t)\right\rangle \\
	&= \langle \nu_t,\, \cL_{t,\nu_t}h(\cdot - {\phi}_t)\rangle.
\end{align*}
Combining \eqref{pf:I=I-1} and \eqref{pf:I=I-2}, we may calculate, for $h \in C_c^\infty(\R^d)$,
\begin{align*}
\langle \dot{\widetilde{\nu}}_t - \widetilde{\cL}^*_{t,\widetilde{\nu}_t}[{\phi}]\widetilde{\nu}_t, \, h\rangle &= \frac{d}{dt}\langle \widetilde{\nu}_t,h\rangle - \langle \widetilde{\cL}^*_{t,\widetilde{\nu}_t}[{\phi}]\widetilde{\nu}_t, \, h\rangle \\
	&= \langle \dot{\nu}_t,h(\cdot-{\phi}_t)\rangle - \langle \nu_t, \dot{{\phi}}_t \cdot Dh(\cdot-{\phi}_t)\rangle - \langle \cL_{t,\nu_t}^*\nu_t,\, h(\cdot - {\phi}_t)\rangle.
\end{align*}
Hence,
\begin{align*}
\|\dot{\widetilde{\nu}}_t - \widetilde{\cL}^*_{t,\widetilde{\nu}_t}[{\phi}]\widetilde{\nu}_t\|_{\widetilde{\nu}_t}^2 &= \sup_{\stackrel{h \in C_c^\infty(\R^d):}{\langle\widetilde{\nu}_t,|Dh|^2\rangle \neq 0}}\frac{\langle \dot{\widetilde{\nu}}_t - \widetilde{\cL}^*_{t,\widetilde{\nu}_t}[{\phi}]\widetilde{\nu}_t, \, h\rangle^2 }{\langle\widetilde{\nu}_t,|Dh|^2\rangle} \\
	&= \sup_{\stackrel{h \in C_c^\infty(\R^d):}{\langle\nu_t,|Dh(\cdot - {\phi}_t)|^2\rangle \neq 0}}\frac{\left(\langle \dot{\nu}_t,h(\cdot-{\phi}_t)\rangle - \langle \nu_t, \dot{{\phi}}_t \cdot Dh(\cdot-{\phi}_t)\rangle - \langle \cL_{t,\nu_t}^*\nu_t,\, h(\cdot - {\phi}_t)\rangle\right)^2 }{\langle\nu_t,|Dh(\cdot - {\phi}_t)|^2\rangle} \\
	&= \sup_{\stackrel{h \in C_c^\infty(\R^d)}{\langle\nu_t,|Dh|^2\rangle \neq 0}}\frac{\left(\langle \dot{\nu}_t,h\rangle - \langle \nu_t, \dot{{\phi}}_t \cdot Dh\rangle - \langle \cL_{t,\nu_t}^*\nu_t,\, h\rangle\right)^2 }{\langle\nu_t,|Dh|^2\rangle} \\
	&= \|\dot{\nu}_t - \cL^*_{t,\nu_t}\nu_t + \mathrm{div}(\dot{{\phi}}_t\nu_t)\|_{\nu_t}^2.
\end{align*}
Comparing the definitions of $I^{{\phi}}$ and $\widetilde{I}^{{\phi}}$, the proof is complete.
\end{proof}

\begin{lemma} \label{lem:id:rate:function}
For $\nu \in C([0,T];\cP^1(\RR^d))$ and $\phi \in \C_0^d$,
\begin{equation*}
\inf_{\cQ \in \cP^1(\RR^d \times \C_0^d): \Phi(\cQ,\phi) = \nu} \cR(\cQ |\mu_0 \times \ncW) = \widetilde{I}^{\sigma_0\phi}\left((\nu_t \circ \tau_{\sigma_0\phi_t}^{-1})_{t \in [0,T]}\right)  + \cR(\nu_{0} |\mu_0).
\end{equation*}
\end{lemma}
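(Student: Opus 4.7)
My plan is to treat both directions via Girsanov's theorem together with a Markov (mimicking) projection argument, interpreting the identity as an entropy-vs.-action duality for the McKean-Vlasov system with the common-noise path $\sigma_0\phi$ frozen and the initial law tilted.

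For the $\ge$ inequality, consider any admissible $\cQ \in \cP^1(\R^d \times \C_0^d)$ with $\Phi(\cQ,\phi)=\nu$ and $\cR(\cQ|\mu_0\times\ncW)<\infty$. Since $x_0 = e$ in the SDE defining $\Phi$, the constraint forces $\cQ\circ e^{-1} = \nu_0$, and the chain rule for relative entropy (disintegrating $\cQ = \cQ_0\otimes\cQ^e$ along the first coordinate) yields
\begin{equation*}
\cR(\cQ|\mu_0\times\ncW) = \cR(\nu_0|\mu_0) + \int_{\R^d}\cR(\cQ^e|\ncW)\,\nu_0(de).
\end{equation*}
Girsanov's theorem gives a predictable process $\beta = \beta(e,w)$ with $w_t = \int_0^t \beta_s\,ds + \tilde B_t$ for a $\cQ$-Brownian motion $\tilde B$, such that the right-hand integral above equals $\tfrac12\EE_\cQ\int_0^T|\beta_s|^2 ds$. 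Setting $\tilde x_t := x_t - \sigma_0\phi_t$ and using $\cQ\circ x_t^{-1} = \nu_t$, the process $\tilde x$ solves
\begin{equation*}
d\tilde x_t = \bigl[\nnewb(t,\tilde x_t+\sigma_0\phi_t,\tilde\nu_t\circ\tau_{-\sigma_0\phi_t}^{-1}) + \sigma\beta_t\bigr]\,dt + \sigma\,d\tilde B_t,
\end{equation*}
where $\tilde\nu_t := \nu_t\circ\tau_{\sigma_0\phi_t}^{-1}$. Introducing the conditional drift $\gamma(t,y) := \EE_\cQ[\sigma\beta_t \mid \tilde x_t = y]$, Itô's formula applied to smooth test functions gives, in the distributional sense, the Fokker-Planck equation $\dot{\tilde\nu}_t - \widetilde\cL^*_{t,\tilde\nu_t}[\sigma_0\phi]\tilde\nu_t = -\mathrm{div}(\tilde\nu_t\,\gamma(t,\cdot))$, proving in particular that $\tilde\nu$ is absolutely continuous. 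Combining Jensen's inequality $\EE_\cQ|\sigma\beta_t|^2 \ge \langle\tilde\nu_t,|\gamma(t,\cdot)|^2\rangle$ with the Cauchy-Schwarz-type bound $\|\dot{\tilde\nu}_t - \widetilde\cL^*_{t,\tilde\nu_t}[\sigma_0\phi]\tilde\nu_t\|_{\tilde\nu_t}^2 \le \langle\tilde\nu_t,|\gamma(t,\cdot)|^2\rangle$ coming directly from the seminorm's definition, and integrating, we obtain $\tfrac12\EE_\cQ\int_0^T|\beta_s|^2 ds \ge \widetilde I^{\sigma_0\phi}(\tilde\nu)$. Summing with $\cR(\nu_0|\mu_0)$ yields the lower bound.

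For the $\le$ direction, fix $\tilde\nu$ with $\widetilde I^{\sigma_0\phi}(\tilde\nu)+\cR(\nu_0|\mu_0)<\infty$. The seminorm in \eqref{eq:Ibetaphi}, together with a Riesz representation in the Hilbert space completion of $C_c^\infty(\R^d)$ under the quadratic form $\langle\tilde\nu_t,|D\cdot|^2\rangle$ and a measurable selection in $t$, produces vector fields $\gamma(t,\cdot)$ such that $\dot{\tilde\nu}_t - \widetilde\cL^*_{t,\tilde\nu_t}[\sigma_0\phi]\tilde\nu_t = -\mathrm{div}(\tilde\nu_t\,\gamma(t,\cdot))$ and $\tfrac12\int_0^T\langle\tilde\nu_t,|\gamma(t,\cdot)|^2\rangle dt \le \widetilde I^{\sigma_0\phi}(\tilde\nu)+\varepsilon$. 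Letting $\tilde x^\ast$ be the strong solution of the (now linear) SDE with drift $\widetilde b(t,\tilde x^\ast,\tilde\nu_t)+\gamma(t,\tilde x^\ast)$ and diffusion $\sigma$, and $x^\ast := \tilde x^\ast + \sigma_0\phi_t$, standard uniqueness for the associated linear Fokker-Planck equation forces $\L(x^\ast_t) = \nu_t$ for every $t$. Defining $\cQ^\ast := \L(x^\ast_0, w - \int_0^\cdot\sigma^{-1}\gamma(s,\tilde x^\ast_s)\,ds)$ under an appropriately tilted measure (initial tilt by $d\nu_0/d\mu_0$ and Girsanov exponential for the extra drift $\sigma^{-1}\gamma$), Girsanov yields $\cR(\cQ^\ast|\mu_0\times\ncW) = \cR(\nu_0|\mu_0)+\tfrac12\int_0^T\langle\tilde\nu_t,|\gamma(t,\cdot)|^2\rangle dt \le \cR(\nu_0|\mu_0)+\widetilde I^{\sigma_0\phi}(\tilde\nu)+\varepsilon$. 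Sending $\varepsilon\downarrow 0$ completes the proof.

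The main obstacle lies in the $\le$ direction: extracting a measurable-in-$t$ vector field $\gamma$ from the abstract seminorm representation and ensuring enough regularity so that the subsequent Fokker-Planck uniqueness and Girsanov construction are valid. A standard way out is to first approximate $\tilde\nu$ by smoother flows (mollification in space plus time) along which $\gamma$ can be taken bounded and Lipschitz, then pass to the limit using the lower semicontinuity of $\widetilde I^{\sigma_0\phi}$ and of the relative entropy. The $\ge$ direction is relatively routine once one recognizes the Markov projection trick, though one must be mindful that the projected drift $\gamma$ has only $L^2(\tilde\nu_t\,dt)$ regularity, which is sufficient for the Fokker-Planck identity above.
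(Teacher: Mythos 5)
Your proposal is correct in outline but takes a genuinely different, more self-contained route than the paper. The paper does not redo the variational analysis: after the same change of variables $x_t = y_t + \sigma_0\phi_t$ and the same entropy chain rule $\cR(\cQ|\mu_0\times\ncW)=\cR(q|\mu_0)+\int\cR(\cQ^{x_0}|\ncW)\,dq(x_0)$, it uses the nondegeneracy of $\sigma$ to make the It\^o map $\Xi(x_0,\cdot)$ (solving the SDE with the measure flow $\Phi^*(\cQ)$ frozen) a bijection of $\C^d_0$ onto $\C^d$, then invokes the contraction property of relative entropy together with the Donsker--Varadhan formula to identify $\cR(\cQ|q\times\ncW)$ with the Dawson--G\"artner functional $L^{(1)}_q(\cQ\circ\Xi^{-1})$, and finally cites \cite[Lemma 4.6 and Section 4.5]{dawson-gartner} to equate the infimum with $\widetilde{I}^{\sigma_0\phi}$. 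You instead prove both inequalities directly: Girsanov plus the mimicking projection $\gamma(t,y)=\E_\cQ[\sigma\beta_t\mid\widetilde{x}_t=y]$ and Jensen for the lower bound, and a Riesz-representation/Fokker--Planck/Girsanov-tilt construction for the upper bound. What your approach buys is independence from the Dawson--G\"artner machinery; what it costs is that the entire technical content of their Section 4.5 (measurable-in-$t$ selection of $\gamma$, uniqueness of the linear Fokker--Planck equation for a drift that is only $L^2(\widetilde{\nu}_t\,dt)$, and the mollification/lower-semicontinuity limit) now rests on you and is sketched rather than proved. Two smaller points deserve care: (a) your Cauchy--Schwarz step $\|\dot{\widetilde{\nu}}_t-\widetilde{\cL}^*_{t,\widetilde{\nu}_t}[\sigma_0\phi]\widetilde{\nu}_t\|^2_{\widetilde{\nu}_t}\le\langle\widetilde{\nu}_t,|\gamma(t,\cdot)|^2\rangle$ is exact only if the seminorm carries the weight $\langle m,|\sigma^\top D\varphi|^2\rangle$ rather than $\langle m,|D\varphi|^2\rangle$ as literally written in the paper (otherwise a constant involving $\|\sigma^{-1}\|$ intervenes); and (b) in the upper bound one should verify explicitly that the constructed $\cQ^\ast$ satisfies the McKean--Vlasov constraint $\Phi(\cQ^\ast,\phi)=\nu$, which follows from uniqueness of the McKean--Vlasov fixed point once the marginals of $x^\ast$ under $\cQ^\ast$ are shown to equal $\nu$.
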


Observe that the first term on the right-hand side in Lemma \ref{lem:id:rate:function} coincides with $I^{\sigma_{0} \phi}((\nu_{t})_{t \in [0,T]})$ when $\phi \in {\mathcal H}^1_0([0,T];\RR^d)$; when  $\phi \not \in {\mathcal H}^1_0([0,T];\RR^d)$, we called it $I^{\sigma_{0} \phi}((\nu_{t})_{t \in [0,T]})$.)

\begin{proof}[Proof of Lemma \ref{lem:id:rate:function}]
 First,  let $(e,w)$ be the
coordinate maps on $\R^d \times \C^d$, as before, and  let 
$\Phi^* : \P^1(\R^d \times \C^d_0) \rightarrow C([0,T];\P^1(\R^d))$ be
the mapping that takes, {for a frozen $\phi \in \C_{0}^d$,}  $\cQ$ to the flow of marginal laws of the solution $(y_t)_{t \in [0,T]}$ of the McKean-Vlasov equation:
\begin{equation*} 
y_{t} = e + \int_{0}^t \nnewb\bigl(s,y_{s}+\sigma_0 \phi_{s},\cQ \circ (\tau_{-\sigma_0 \phi_s} y_s)^{-1}\bigr) ds + \sigma w_{t}, \quad t \in [0,T].
\end{equation*}
We now claim that $\Phi(\cQ,\phi)=\nu$ if and only $\Phi^*(\cQ)_t =
\nu_t \circ \tau_{\sigma_0 \phi_t}^{-1}$ for all $t \in [0,T]$, which
can  be seen by performing  the change of variables $(x_{t} = y_{t} + \sigma_0 \phi_{t})_{t \in [0,T]}$
where $(x_t)_{t \in [0,T]}$ solves 
\begin{equation*} 
x_{t} = e + \int_{0}^t \nnewb\bigl(s,x_{s},\cQ\circ x_s^{-1}\bigr) ds + \sigma w_{t} +  \sigma_0\phi_{t}, \quad t \in [0,T].
\end{equation*}

Hence, since $\phi_0=0$, it suffices now to show that 
\begin{equation}
\inf_{\cQ \in \cP^1(\RR^d \times \C^d_0 ) : \, \Phi^*(\cQ) = \nu} \cR(\cQ |\mu_0 \times \ncW) = \widetilde{I}^{\sigma_0\phi}(\nu)  + \cR(\nu_{0} |\mu_0). \label{pf:relativeentropylemma1}
\end{equation}

We start from the left-hand side of \eqref{pf:relativeentropylemma1}, for a fixed $\cQ \in \cP^1(\RR^d \times \C^d_0 )$. By Theorem D.13 in \cite{dembo-zeitouni},
\begin{equation}
\label{eq:relative:entropy:product}
\cR(\cQ |\mu_0 \times \ncW)
= \cR(q | \mu_0) + \int_{\RR^d} \cR(\cQ^{x_{0}} |\ncW) dq(x_{0}),
\end{equation}
with {$q \in \P(\R^d)$ denoting the first marginal of $\cQ \in \P(\R^d\times\C^d_0)$, and with} $(\cQ^{x_{0}})_{x_{0} \in \RR^d}$ denoting a regular conditional probability distribution {of the $\C^d$ coordinate given the $\R^d$ coordinate,} under $\cQ$. In particular, {replacing $\mu_0$ by $q$ in \eqref{eq:relative:entropy:product}}, we see that the second term in the right-hand side identifies with $\cR(\cQ  | q \times \ncW)$.  

Now, for $(e,w) \in \R^d \times \C^d_0$, let $\Xi(e,w) \in \C^d$ denote the solution $y$ of the equation 
\begin{equation}
\label{eq:sec:7:edo:y} 
y_{t} = e + \int_{0}^t {\widetilde b}\bigl(s,y_{s}+\sigma_0 \phi_{s},\Phi^*(\cQ)_t \circ \tau_{-\sigma_0 \phi_s}^{-1}\bigr) ds + \sigma w_{t}, \quad t \in [0,T],
\end{equation}
noting of course that $\cQ \circ y_t^{-1} = \Phi^*(\cQ)_t$ for each $t \in [0,T]$, by construction. 
The nondegeneracy of $\sigma$ (see Assumption \ref{assumption:A}(2))
ensures that the map $\Xi(x_{0},\cdot)$ is one-to-one from $\C^d_0$ to
$\C^d$, for a fixed $x_0 \in \R^d$.  Hence, by the contraction
property for relative entropy, 
\begin{equation*}
\cR(\cQ^{x_{0}} |\ncW) = \cR\bigl(\cQ^{x_{0}} \circ \Xi(x_{0},\cdot)^{-1} | \ncW \circ \Xi(x_{0},\cdot)^{-1} \bigr). 
\end{equation*} 
By the Donsker-Varadhan formula, see for instance \cite[Lemma 6.2.13]{dembo-zeitouni},  we have
\begin{equation}
\label{eq:donsker:varadhan}
\begin{split}
\cR(\cQ^{x_{0}} |\ncW)
&= \sup_{F \in C_{b}(\C^d )}
\biggl[ \int_{\C^d } F\bigl( \Xi(x_{0},\cdot)\bigr) \,d\cQ^{x_{0}}  - \log \biggl( \int_{\C^d } e^{F( \Xi(x_{0},\cdot))} \,d\ncW \biggr) \biggr],
\end{split}
\end{equation} 
{where $C_b(\C^d)$ is the set of bounded continuous functions on $\C^d$.}
The above right-hand side is denoted by 
$L^{(1)}_{\delta_{x_{0}}}(\cQ^{x_{0}} \circ \Xi(x_{0},\cdot)^{-1})$ in 
\cite{dawson-gartner}, see Lemma 4.6 therein.  Using that
same notation here, 
by \eqref{eq:relative:entropy:product}, we end up with 
\begin{equation}
\cR(\cQ  |q \times \ncW) = \int_{\R^d}\cR(\cQ^{x_{0}} |\ncW)\,dq(x_0) =  \int_{\RR^d} L^{(1)}_{\delta_{x_{0}}}(\cQ^{x_{0}} \circ \Xi(x_{0},\cdot)^{-1}) \,dq(x_{0}). \label{eq:entropy-111}
\end{equation} 
{Now, passing the integral inside the supremum in \eqref{eq:donsker:varadhan}}, we obtain
\begin{align}
\cR(\cQ  |q \times \ncW) &\ge  \sup_{F \in C_{b}(\C^d )} \int_{\RR^d} dq(x_{0}) \biggl[ \int_{\C^d } F\bigl( \Xi(x_{0},\cdot)\bigr) \,d\cQ^{x_{0}} - \log \biggl( \int_{\C^d } e^{F( \Xi(x_{0},\cdot))} \,d\ncW \biggr) \biggr] \nonumber \\
	&= \sup_{F \in C_{b}(\C^d )} \biggl[ \int_{\R^d \times \C^d }F\bigl( \Xi(\cdot,\cdot)\bigr) \,d\cQ  -  \int_{\RR^d} dq(x_{0})  \log \biggl( \int_{\C^d } e^{F( \Xi(x_{0},\cdot))} \,d\ncW \biggr) \biggr]  \label{eq:convexity:ldp:0} \\
	&=: L^{(1)}_{q}(\cQ \circ \Xi^{-1}), \nonumber
\end{align}
where the definition in the last line agrees with the notation in \cite[Lemma 4.6]{dawson-gartner}. In fact, the converse inequality holds as well:  Because $\Xi$ is a one-to-one map of $\R^d \times \C^d_0$ to $\C^d$, we again use the contraction property of relative entropy to get
\begin{equation*}
\begin{split}
\cR(\cQ | q \times \ncW)
&=\cR \bigl(\cQ \circ \Xi^{-1}  | (q \times \ncW) \circ \Xi^{-1})\bigr) \\
&= \sup_{F \in C_{b}(\C^d )}\biggl[
\int_{\RR^d \times \C^d } F \circ \Xi \,d\cQ  - \log \biggl( \int_{\RR^d \times \C^d } e^{F \circ \Xi} d(q \times \ncW) \biggr)\biggr].
\end{split}
\end{equation*}
By Jensen's inequality and concavity of $\log$, this is bounded above by the right-hand side of \eqref{eq:convexity:ldp:0}, which shows that $\cR(\cQ  |q \times \ncW) = L^{(1)}_{q}(\cQ \circ \Xi^{-1})$.
Using this along with \eqref{eq:entropy-111} in \eqref{eq:relative:entropy:product}, we end up with
\begin{equation*}
\cR(\cQ |\mu_0 \times \ncW) = \cR(q |\mu_0) + L^{(1)}_{q}(\cQ \circ \Xi^{-1}). 
\end{equation*}
Recalling that $q$ denotes the first marginal of $\cQ$ and that $\Phi^*(\cQ)_0=q$,
we have
\begin{equation*}
\inf_{\cQ \, : \, \Phi^*(\cQ) = \nu}\cR(\cQ |\mu_0 \times {\ncW})
= \inf_{\cQ \, : \, \Phi^*(\cQ) = \nu}\bigl[ 
\cR(\nu_{0} |\mu_0) + L^{(1)}_{q}(\cQ \circ \Xi^{-1}) \bigr]. 
\end{equation*}
Finally, return to \eqref{eq:sec:7:edo:y} and observe that $(\cQ \circ \Xi_t^{-1})_{t \in [0,T]}$ coincides with $\Phi^*(\cQ)$. Also, for any two probability measures $\nu_{0}$ and $P$ on $\RR^d$ and 
$\cC^d$, with $\nu_{0}$ being the image of 
$P$ by the mapping $(x_{t})_{t \in [0,T]}
\mapsto x_{0}$, there exists a unique 
${\mathcal Q} \in \cP(\RR^d \times \cC^d_{0})$
such that $P = \cQ \circ \Xi^{-1}$; if 
$P$ is integrable then 
${\mathcal Q}$ is also integrable. Because, $t \mapsto \phi_t$ is continuous, the drift $(t,x) \mapsto b(t,x+\sigma_0\phi_t,\Phi^*(\cQ)_t \circ \tau_{-\sigma_0 \phi_s}^{-1})$ is nice enough that we may apply \cite[Lemma 4.6]{dawson-gartner} as well as Section 4.5 therein to conclude
\begin{equation*}
\inf_{\cQ \, : \, \Phi^*(\cQ) = \nu}\cR(\cQ | \mu_0 \times {\ncW}) = \cR(\nu_{0} |\mu_0) + \widetilde{I}^{\sigma_0\phi}(\nu). 
\end{equation*}
Importantly, to check the above equality, we can assume that 
$\cR(\nu_{0} \vert \mu_{0}) < \infty$, in which 
case $\nu_{0} \in \cP^1(\RR^d)$; hence,
by 
\cite[(4.11)]{dawson-gartner} with $\nu=\nu_{0}$, it is straightfoward
to verify that the minimum of  the right-hand side of 
\cite[(4.10)]{dawson-gartner}
may be restricted to 
the $P$'s that are integrable. By the previous argument, those $P$ can be written in the form 
${\mathcal Q} \circ \Xi^{-1}$, with ${\mathcal Q}
\in \cP^1(\RR^d \times \cC^d_{0})$, which yields the above identity.

\end{proof}

\subsection{Proofs of Propositions \ref{prop:LDP:F:compact} and  \ref{prop:level:sets}}
 
We start with the proof of Proposition 
 \ref{prop:level:sets}.
\begin{proof}
Take a path $\nu$ such that $J^{\sigma_0}(\nu) + \cR(\nu_0 |\mu_0)\leq a$. 
Then, modifying without any loss of generality the value of $a$,
we can find $\phi \in \C_0^d  $ such that 
$I^{\sigma_0\phi}(\nu) + \cR(\nu_{0}  | \mu_0) \leq a$. 
By Lemma 
\ref{lem:id:cost functional}, we deduce that the path $(\widetilde{\nu}_{t} = 
\nu_t \circ \tau_{\sigma_0\phi_t}^{-1})_{t \in [0,T]}$
is absolutely continuous. 
Also, for any test function $h \in C^{\infty}_c(\RR^d)$ such that $|D_x h|$ and $|D_x^2 h|$ are bounded by $2$, we have
\begin{equation*}
\int_{0}^T \bigl| \langle \dot{\widetilde{\nu}}_{t},h\rangle  
\bigr\vert^2 dt \leq C(a),
\end{equation*}
where $C(a)$ is a constant only depending on $a$ and the uniform bounds on $b$, $\sigma$, and $\sigma_0$. 
We can easily find a sequence of functions $(h_{p})_{p \geq 1}$ 
in $\cC^\infty_{c}(\RR^d)$
converging to the identity function, uniformly on compact subsets, 
and satisfying at the same time the two constraints $\| D_{x} h_{p}\|_{\infty} \leq 2$ and $\| D_{x}^2 h_{p}\|_{\infty} \leq 2$. 
Using the fact that $\widetilde{\nu} \in C([0,T];\cP^1(\RR^d))$, we have
\begin{equation*}
\lim_{p \rightarrow \infty} \sup_{t \in [0,T]} 
\bigl\vert \langle \widetilde{\nu}_{t}, h_{p}\rangle - {\mathbb M}^{\widetilde{\nu}}_t \bigr\vert = 0. 
\end{equation*}
Since the set $\{\psi \in \H^1([0,T];\RR^d) : \| \psi \|_{{\H^1}}\leq \sqrt{C(a)}\}$ is closed for the uniform topology, we deduce that 
${\mathbb M}^{\widetilde{\nu}} = {\mathbb M}^{\nu} - \sigma_0\phi$ is in ${\H^1}([0,T];\RR^d)$ and has ${\H^1}$-norm bounded by $\sqrt{C(a)}$.
This proves claim (ii).

Also, from 
Lemma \ref{lem:id:cost functional}
we know that 
\begin{equation*}
\widetilde{I}^{\sigma_0\phi}(\widetilde{\nu}) + \cR(\widetilde{\nu}_{0}  | \mu_{0}) = {I^{\sigma_0 \phi}}(\nu) + \cR(\nu_0 |\mu_0) \leq a. 
\end{equation*}
Returning to the definition \eqref{eq:Ibetaphi} of the action functional 
and using the fact that $\widetilde{b}$ is bounded, we can find a new constant, still denoted by  
$C(a)$ (and depending only on the same quantities as above), such that 
\begin{equation*}
I_{(0)}^{0}(\widetilde{\nu})  + \cR(\widetilde{\nu}_{0}  | \mu_0) \leq C(a), 
\end{equation*}
where $I_{(0)}^{0}$ is the action functional $I^0$ in the case when $\nnewb \equiv 0$ (i.e., when $\cL_{t,m} = \frac12 \mathrm{Tr}[\sigma\sigma^\top D_x^2]$).
By Lemma \ref{lem:id:rate:function},
\begin{equation*}
I_{(0)}^{0}(\widetilde{\nu})  + \cR(\widetilde{\nu}_{0}  | \mu_0) = \inf_{\cQ : \Phi_{(0)}(\cQ,0) = \widetilde{\nu}} \cR(\cQ |\mu_0 \times {\ncW}),
\end{equation*}
where $\Phi_{(0)}$ is the map $\Phi$ in the case when $\nnewb \equiv 0$. 
By Sanov's theorem for the $1$-Wasserstein topology, see \cite{wang-wang-wu}, $\cR$ is a good rate function on $\cP^1(\C^d )$. 
Hence, by the contraction principle, the left-hand side forms a good rate function on $C([0,T];\cP^1(\RR^d))$. We deduce 
that there exists a compact set $K \subset C([0,T];\cP^1(\RR^d))$, depending only on $a>0$, such that $\widetilde{\nu} \in K$. Now, $\nu_t \circ \tau_{{\mathbb M}^{\nu}_t}^{-1} = \widetilde{\nu}_t \circ \tau_{{\mathbb M}^{\nu}_t-\sigma_0\phi_t}^{-1}$ for all $t$. 
Using 
\eqref{eq:regularity:W1:tau} and modifying the definition of $K$, we easily deduce that 
$(\nu_t \circ \tau_{{\mathbb M}^{\nu}_t}^{-1})_{t \in [0,T]}$ is in $K$, which completes the proof of (i). 
\end{proof}

We turn to the proof of Proposition 
 \ref{prop:LDP:F:compact}.

\begin{proof}
We start with the first claim. We observe that the quantity 
$\inf_{\nu \in K_{\delta}}( J^{\sigma_0}(\nu) + \cR(\nu_0  | \mu_{0}))$ is non-decreasing as $\delta$ decreases. 
 In particular, 
\begin{equation*}
 \lim_{\delta \searrow 0}\inf_{\nu \in K_{\delta}}( J^{\sigma_0}(\nu) + \cR(\nu_0  | \mu_{0})) \leq
\inf_{\nu \in K}( J^{\sigma_0}(\nu) + \cR(\nu_0  | \mu_{0})).
\end{equation*}
In order to prove the converse bound, we proceed as follows.
By the above inequality, we can assume that 
the left-hand side is finite, as otherwise there is nothing to prove.
Recall from Lemma \ref{lem:id:rate:function} that
\begin{equation}
\begin{split}
\inf_{\Phi(\cQ,\phi) \in K_{\delta}}\cR(\cQ  | \mu_0 \times \ncW)
&= \inf_{\nu \in K_{\delta}}( J^{\sigma_0}(\nu) + \cR(\nu_0  | \mu_{0})).
\end{split} \label{pf:compactlimit}
\end{equation}
Since the right-hand side is less than some $C > 0$ independent of $\delta$, the left-hand side can be rewritten as 
\begin{equation*}
\inf\{ \cR(\cQ  | \mu_0 \times \ncW) : (\cQ,\phi) \text{ s.t. }\Phi(\cQ,\phi) \in K_{\delta}, \cR(\cQ  | \mu_0 \times \ncW) \leq C\}.
\end{equation*}

Consider now a sequence $(\cQ^n,\phi^n)_{n \geq 1}$ in $\P^1(\R^d\times\C^d_0) \times \C^d_0$, with $\phi^n \in 
{\C^d_{0}}$ and {$\cR(\cQ^n  | \mu_0 \times \ncW) \leq C$}, yielding a $1/n$-approximation of the infimum when $\delta=1/n$. Let $\nu^n = \Phi(\cQ^n,\phi^n) \in K_{1/n}$, and notice that $(\nu^n)_{n \ge 1}$ is pre-compact in $C([0,T];\P^1(\R^d))$ by compactness of $K$. Proposition  \ref{prop:level:sets} ensures that $(\sigma_0\phi^n)_{n \ge 1}$ must too be pre-compact in $\C^d_0$, and thus without loss of generality we may assume $(\phi^n)_{n \ge 1}$ is pre-compact as well. Finally, because $\cR(\cdot  | \mu_0 \times \ncW)$ is a good rate function on $\P^1(\R^d \times \C^d_0)$ by \cite{wang-wang-wu}, we deduce that $(\cQ^n)_{n\ge 1}$ is pre-compact.
Relabel the subsequence and assume that
$(\mu^n,\cQ^n,\phi^n)_{n \geq 1}$ converges to some
$(\mu,\cQ,\phi)$. 
By the continuity of $\Phi$ (see Lemma
\ref{le:Phi-uniformlycontinuous}), $\nu = \Phi(\cQ,\phi) \in
K$. Hence, by the lower semicontinuity of relative entropy, we get
\begin{equation*}
\cR(\cQ  | \mu_0 \times \ncW) \le \liminf_{n \rightarrow \infty}\cR(\cQ^n  | \mu_0 \times \ncW) = 
 \lim_{\delta \searrow 0} \inf_{\Phi(\cQ,\phi) \in K_{\delta}}\cR(\cQ  | \mu_0 \times \ncW). 
\end{equation*}
Lemma \ref{lem:id:rate:function} implies that \eqref{pf:compactlimit} holds also without the $\delta$, i.e.,
\begin{equation*}
\begin{split}
\inf_{\Phi(\cQ,\phi) \in K}\cR(\cQ  | \mu_0 \times \ncW)
&= \inf_{\nu \in K}( J^{\sigma_0}(\nu) + \cR(\nu_0  | \mu_{0})),
\end{split}
\end{equation*}
and the proof of the first claim is complete.

It remains to prove the second claim. In the case when $\sigma_0 =0$, the fact that $J^0(\cdot) + \cR(\cdot_{0}  | \mu_0)$ is a good rate function is a consequence of the proof of Proposition 
 \ref{prop:level:sets}. Equivalently,  we can invoke Lemma \ref{lem:id:rate:function},
 which asserts that 
\begin{equation*}
J^0(\nu)  + \cR(\nu_{0}  | \mu_0) = \inf_{\cQ : \Phi(\cQ,0) = \nu} \cR(\cQ | \mu_0 \times \ncW).
\end{equation*}
Since $\cR$ is a good rate function on $\cP^1(\C^d )$ and $\Phi$ is continuous, the left-hand side forms a good rate function on $C([0,T];\cP^1(\RR^d))$.  So, whenever 
$(\inf_{\nu \in F_{\delta}} ( J^{\sigma_0}(\nu)+\cR(\nu_{0} |\mu_0)))_{\delta >0}$ is bounded, we may restrict $\nu$ in a compact set, and the passage to the limit works exactly as before.
\end{proof}

\subsection{Proofs of Proposition \ref{prop:expression:Jbeta}, Theorem
  \ref{thm:expression:Jbeta} and Corollary \ref{cor:expression:Jbeta}}
\label{ldpsubs-pfmain3}

We start with the proof of Proposition \ref{prop:expression:Jbeta}. 

\begin{proof}[Proof of Proposition \ref{prop:expression:Jbeta}]
The proof relies on another formulation of the rate function
$I^{\sigma_0\phi}$. Let $C_{c}^{1,2}([0,T] \times \RR^d)$ denote the
set of compactly supported functions $\phi$ on $[0,T] \times \RR^d$
possessing one time derivative and two space derivatives. By \cite[Lemma 4.8]{dawson-gartner}, we claim that for $\phi \in C^2_0([0,T];\R^d)$:
\begin{equation*}
\begin{split}
&I^{\sigma_0 \phi}(\nu)
\\
& = \sup_{\psi \in C_{c}^{1,2}([0,T] \times \RR^d)}
\biggl[ \langle \nu_{T},\psi_{T}\rangle - \langle \nu_{0},\psi_{0}\rangle 
- \int_{0}^T \Bigl\langle \nu_{t},\bigl( \partial_{t} + \cL_{t,\nu_t} \bigr) \psi_{t} 
+ \sigma_0 \dot{\phi}_{t} \cdot D_x \psi_{t}
+ \frac12 \bigl\vert \sigma^\top D_x \psi_{t} \vert^2 \Bigr\rangle dt \biggr],
\end{split}
\end{equation*}
where we write $\psi_t(x)=\psi(t,x)$.
Since $\nu \in \P^1(C([0,T];\cP^1(\RR^d)))$, we can allow $\psi$ in the supremum to be at most of linear growth in $x$, uniformly in time, with bounded derivatives.
Now consider the change of variables $\widetilde{\psi}_{t}(x) = \psi_{t}(x) - \sigma_0\dot{\phi}_t \cdot (\sigma\sigma^\top)^{-1}x$. We then  have
\begin{align*}
\partial_t\widetilde{\psi}_t(x) &= \partial_t\psi_t(x) - \sigma_0\ddot{\phi}_t \cdot (\sigma\sigma^\top)^{-1}x \\
D_x\widetilde{\psi}_t &= D_x\psi_t - (\sigma\sigma^\top)^{-1}\sigma_0\dot{\phi}_t \\
\cL_{t,\nu_t}\widetilde{\psi}_t(x) &= \cL_{t,\nu_t}\psi_t(x) - \nnewb(t,x,\nu_t) \cdot [(\sigma\sigma^\top)^{-1}\sigma_0\dot{\phi}_t].
\end{align*}
We then find that 
\begin{align*}
&I^{\sigma_0 \phi}(\nu) 
\\
&= \sup_{\psi \in C_{c}^{1,2}([0,T] \times \RR^d)}
\biggl[ \langle \nu_{T},\psi_{T}\rangle - \langle \nu_{0},\psi_{0}\rangle 
- \int_{0}^T \Bigl\langle \nu_{t},\bigl( \partial_{t} + \cL_{t,\nu_t} \bigr) \psi_{t} 
+ \frac12 \bigl\vert \sigma^\top D_x \psi_{t} \vert^2 \Bigr\rangle dt \biggr]
\\
&\hspace{15pt} -  \Bigl( 
{\mathbb M}^{\nu}_{{T}} \cdot  [(\sigma\sigma^\top)^{-1}\sigma_0\dot \phi_{T}] - {\mathbb M}^{\nu}_{{0}} \cdot[(\sigma\sigma^\top)^{-1}\sigma_0 \dot \phi_{0}] \Bigr)  \\
&\hspace{15pt} + \int_{0}^T \Bigl( {\mathbb M}^{\nu}_{{t}} \cdot [(\sigma\sigma^\top)^{-1} \sigma_0 \ddot{\phi}_{t}]  +  \langle \nu_{t},\nnewb(t,\cdot,\nu_{t}) \rangle \cdot [
(\sigma\sigma^\top)^{-1}\sigma_0\dot{\phi}_{t}] + \frac12 [\sigma_0 \dot{\phi}_{t}] \cdot [(\sigma\sigma^\top)^{-1}\sigma_0\dot{\phi}_t] \Bigr) dt. 
\end{align*}
The first term on the right-hand side is $I^0(\nu)$. By expanding the term on the second line by integration by parts, we get 
\begin{align}
I^{\sigma_0 \phi}(\nu) &= I^0(\nu) + \int_{0}^T \Bigl[ - \dot{\mathbb
  M}^{\nu}_{{t}} +  \langle \nu_{t}, \nnewb(t,\cdot,\nu_{t}) \rangle + \frac12 \sigma_0 \dot{\phi}_{t} \Bigr] \cdot [(\sigma\sigma^\top)^{-1}\sigma_0\dot{\phi}_t] dt. \label{pf:Jexpression1}
\end{align}
Note that this shows that $I^{\sigma_0 \phi}(\nu) < \infty$ if and
only if $I^{0}(\nu) < \infty$. We wish to extend the identity
\eqref{pf:Jexpression1} to $\phi \in {\H^1_0}([0,T];\R^d)$. As the
right-hand side above is clearly continuous in ${\H^1_0}([0,T];\R^d)$, we
must only show that the left-hand side is as well, at least when
suitable terms are finite. Fix a sequence $\phi^n \in
C^2_0([0,T];\R^d)$, converging in ${\H^1}$-norm to some $\phi \in
{\H^1_0}([0,T];\R^d)$. First, use the definition to see that, for a
finite constant $C$ depending on $\sigma_{0}$,
\begin{align}
I^{\sigma_0 \phi^n}(\nu) &\le I^{\sigma_0 \phi}(\nu) + C\int_0^T\|\dot{\nu}_t - \cL^*_{t,\nu_t}\nu_t + \mathrm{div}(\nu_t\sigma_0\dot{\phi}_t)\|_{\nu_t}|\dot{\phi}_t-\dot{\phi}_t^n|dt + \frac{C}{2}\int_0^T|\dot{\phi}_t-\dot{\phi}_t^n|^2dt 
\nonumber
\\
	&\le I^{\sigma_0 \phi}(\nu) + C[I^{\sigma_0 \phi}(\nu)]^{1/2}\|\phi-\phi^n\|_{\H^1} + \frac{C}{2}\|\phi-\phi^n\|_{\H^1} ^2. \label{pf:Jinequality1}
\end{align}
Similarly, 
\begin{align}
I^{\sigma_0 \phi}(\nu) &\le I^{\sigma_0 \phi^n}(\nu) + [I^{\sigma_0 \phi^n}(\nu)]^{1/2}\|\phi-\phi^n\|_{\H^1} + \frac{1}{2}\|\phi-\phi^n\|_{\H^1} ^2.
 \label{pf:Jinequality2}
\end{align}
If $I^{\sigma_0 \phi}(\nu)=\infty$, then $I^{\sigma_0 \phi^n}(\nu)=\infty$ for all $n$, and likewise $I^0(\nu)=\infty$. In this case the identity \eqref{pf:Jexpression1} holds for $\phi$. If $I^{\sigma_0 \phi}(\nu)<\infty$, {then} \eqref{pf:Jinequality1} implies $\sup_nI^{\sigma_0 \phi^n}(\nu) < \infty$. Then, \eqref{pf:Jinequality1} and \eqref{pf:Jinequality2} together imply that $I^{\sigma_0 \phi^n}(\nu) \rightarrow I^{\sigma_0 \phi}(\nu)$, and again \eqref{pf:Jexpression1}  holds for $\phi$.

Now that we know \eqref{pf:Jexpression1} holds for all $\phi \in {\H^1_0}([0,T];\R^d)$, we take the infimum on both sides.
{To do this, note that if $S=R^\top R$ for some positive definite $d \times d$ matrix $R$, if $V$ a subspace of $\R^d$, and  if ${\Proj}$
 the orthogonal projection from $\R^d$ to the subspace $RV$, then for any $y \in \R^d$ we have $\inf_{x \in V}Sx \cdot (\tfrac12 x - y) = -\tfrac12|\Proj Ry|^2$. With $R = \sigma^{-1}$ and $V$ equal to the image of $\sigma_0$, we find}
\begin{align*}
\inf_{\phi \in \H^1_0([0,T];\R^d)}I^{\sigma_0 \phi}(\nu) &= I^0(\nu) - \frac12 \int_{0}^T \left|  {{\Proj}_{\sigma^{-1}\sigma_0}\sigma^{-1} } \left(
\dot {\mathbb M}_t^{\nu} -  \langle \nu_{t},\nnewb(t,\cdot,\nu_{t})
\rangle
 \right) \right|^2 dt.
\end{align*}
In particular,
\begin{equation*}
J^{\sigma_0}(\nu) \leq  I^0(\nu) -   \frac12 \int_{0}^T \left| {{\Proj}_{\sigma^{-1}\sigma_0} \sigma^{-1} }  \left(\dot {\mathbb M}^{\nu}_{t} -  \langle \nu_{t},\nnewb(t,\cdot,\nu_{t}) \rangle\right) \right|^2 dt. 
\end{equation*}
If the left-hand side is infinite, the proof is over. 
If it is finite, we know from Proposition 
\ref{prop:level:sets} that the infimum over $\C_0^d  $ in the definition of $J^{\sigma_0}$ can be reduced to an infimum over ${\H^1}([0,T];\RR^d)$, since ${{\mathbb M}^{\nu}}$ is in ${\H^1}([0,T];\RR^d)$. This completes the proof. 
\end{proof}

We now turn to the proof of Theorem \ref{thm:expression:Jbeta}. The proof of Corollary \ref{cor:expression:Jbeta} is similar, so we omit it.

\begin{proof}
Note that the operator $\sigma {\Proj}_{\sigma^{-1} \sigma_0}$ in the definition of ${\mathbb M}^{\widetilde b,\nu}$ ensures that there exists $\widetilde{\phi} \in \C^d_0$ such that ${\mathbb M}^{\widetilde b,\nu} = \sigma_0\widetilde{\phi}$. 
Thanks to Lemma \ref{lem:id:cost functional}, this permits the following change of variables:
\begin{equation*}
\begin{split}
J^{\sigma_0}(\nu) &= \inf_{\phi \in \C_0^d  } \widetilde{I}^{\sigma_0 \phi}\left( \bigl( \nu_t \circ \tau_{\sigma_0\phi_t}^{-1}
\bigr)_{t \in [0,T]}\right) \\
	&= \inf_{\phi \in \C_0^d  } \widetilde{I}^{\sigma_0 (\phi + \widetilde{\phi})}\left(\bigl((\nu_t \circ \tau_{ \sigma_0\widetilde{\phi}_t}^{-1}) \circ \tau_{\sigma_0\phi_t}^{-1}
	\bigr)_{t \in [0,T]}\right) \\
 &=: \widetilde{J}^{\sigma_0, \widetilde{\phi}}\Bigl(
 \bigl(
 \nu_t \circ \tau_{\sigma_0\widetilde{\phi}_t}^{-1} \bigr)
_{t \in [0,T]}
 \Bigr) ,
 \end{split}
\end{equation*}
where, for $\psi \in \C^d_0$, we define $\widetilde{J}^{\sigma_0,
  {\psi}}$ just like $J^{\sigma_0}$ but with the drift modified to
$(t,x,m) \mapsto  \nnewb(t,x + \sigma_0 \psi_t, m\circ \tau_{-\sigma_0\psi_t}^{-1})$. More precisely,
\[
\widetilde{J}^{\sigma_0, \psi}(\nu) := \inf_{\phi \in \C^d_0}\widetilde{I}^{\sigma_0( \phi + \psi)}\left(
\bigl(\nu_t \circ \tau_{\sigma_0\phi_t}^{-1}\bigr)_{t \in [0,T]}\right).
\]
The analog of Proposition \ref{prop:expression:Jbeta} for this modified drift now implies that if $\nu$ has mean path in ${\H^1_0}([0,T];\P^1(\R^d))$ then
\begin{align*}
\widetilde{J}^{\sigma_0, \widetilde{\phi}}(\nu) &= \widetilde{I}^{\sigma_0\widetilde{\phi}}(\nu) - \frac12 \int_0^T\left| {{\Proj}_{\sigma^{-1}\sigma_0} \sigma^{-1} }  \left(\dot {\mathbb M}^{\nu}_t - \langle \nu_t, \nnewb(t, \cdot + \sigma_0\widetilde{\phi}_t, \nu_t \circ \tau_{-\sigma_0\widetilde{\phi}_t}^{-1})\rangle \right)\right|^2dt.
\end{align*}
The mean path of $\tilde{\nu} = (\nu_t \circ \tau_{{\mathbb M}^{\nnewb,\nu}_t}^{-1} = \nu_t \circ \tau_{\sigma_0\widetilde{\phi}_t}^{-1})_{t \in [0,T]}$ is precisely
\[
{\mathbb M}_{t}^{\tilde{\nu}} = {\mathbb M}_{t}^{\nu} -  \sigma {\Proj}_{\sigma^{-1}\sigma_0} \sigma^{-1}\left({\mathbb M}_{t}^{\nu} - {\mathbb M}^{\nu}_0 - \int_0^t\langle \nu_s,\nnewb(s,\cdot,\nu_s)\rangle ds\right),
\]
so the above yields
\[
\widetilde{J}^{\sigma_0, \widetilde{\phi}}\Bigl(\bigl(\nu_t \circ \tau_{\sigma_0\widetilde{\phi}_t}^{-1} \bigr)_{t \in [0,T]}
\Bigr) {= \widetilde{I}^{\sigma_0\widetilde{\phi}}(\tilde{\nu})} = \widetilde{I}^{{\mathbb M}^{\nnewb,\nu}}\Bigl(\bigl(\nu_t \circ \tau_{{\mathbb M}^{\nnewb,\nu}_t}^{-1}\bigr)_{t \in [0,T]}\Bigr).
\]
\end{proof}

\section{Examples} \label{se:examples}

This section discusses two explicitly solvable models that do not fit our assumptions \ref{assumption:A}. Nonetheless, 
we show that our strategy for deriving limit theorems by comparison with a more classical McKean-Vlasov system is still successful in 
these cases.

\subsection{A linear-quadratic model}
\label{subs-lqmodel}

In this section we discuss how our ideas apply to the mean field game model of systemic risk proposed in \cite{carmona-fouque-sun}.
Here, $d=1$, $\sigma$ and $\sigma_0$ are positive constants, the action space  $A = \R$, and 
for some  $\cc, \epsilon, \bb > 0$ and $0 \le q^2 \le \epsilon$ we have
\begin{align*} 
b(x,m,a) &= \bb(\overline{m} - x) + a, \\
f(x,m,a) &= \frac{1}{2}a^2 - qa(\overline{m}-x) + \frac{\epsilon}{2}(\overline{m}-x)^2, \\
g(x,m) &= \frac{\cc}{2}(\overline{m} - x)^2,
\end{align*}
where $\overline{m} = \int_{\R}y\,dm(y)$. 
Both the drift and cost functions induce a herding behavior toward the population average; see \cite{carmona-fouque-sun} for a thorough discussion.

It was shown in \cite[(3.24)]{carmona-fouque-sun}
 that the unique closed loop Nash equilibrium dynamics is given by: 
\begin{align}
\alpha^i_t = \left[q + \varphi^n_t\left(1-\frac{1}{n}\right)\right](\overline{X}_t - X^i_t), \quad t \in [0,T], \label{def:Nash-open}
\end{align}
where $\overline{X}_t = \frac{1}{n}\sum_{i=1}^nX^i_t$, and where $\varphi^n$ is the unique solution to the Riccati equation:
\begin{align*}
\dot{\varphi}^n_t = 2(\bb+q)\varphi^n_t + \left(1-{\frac{1}{n^2}}\right)|\varphi^n_t|^2 - (\epsilon - q^2), \quad\quad \varphi^n_T = \cc.
\end{align*}
The explicit solution takes the form 
\begin{align}
\varphi^n_t = \frac{-(\epsilon-q^2)\left(e^{(\delta_n^+-\delta_n^-)(T-t)}-1\right) - \cc\left(\delta^+_n e^{(\delta_n^+-\delta_n^-)(T-t)}-\delta_n^-\right)}{\left(\delta_n^-e^{(\delta_n^+-\delta_n^-)(T-t)} - \delta_n^+\right) - \cc\left(1-\frac{1}{n^2}\right)\left(e^{(\delta_n^+-\delta_n^-)(T-t)}-1\right)}, \label{def:Riccati1}
\end{align}
where 
\begin{align}
\delta_n^{\pm} = -(\bb+q) \pm \sqrt{(\bb+q)^2 + \left(1-\frac{1}{n^2}\right)(\epsilon-q^2)}. \label{def:Riccati2}
\end{align}
In particular, the Nash equilibrium state process is given by the solution $\bm{X}=(X^1,\ldots,X^n)$ of the SDE system:
\begin{align}
\label{eq:CFS:1}
dX^i_t = \left(\bb + q + \varphi^n_t\left(1-\frac{1}{n}\right)\right)(\overline{X}_t - X^i_t)dt + \sigma dB^i_t + \sigma_0 dW_t, \quad t \in [0,T]. 
\end{align}

It is straightforward to show that $\varphi^n_t \rightarrow \varphi^\infty_t$ as $n\rightarrow\infty$, uniformly in $t \in [0,T]$, where $\varphi^\infty$ is the unique solution to the Riccati equation
\begin{align*}
\dot{\varphi}^{\infty}_t = 2(\bb+q)\varphi^{\infty}_t + |\varphi^{{\infty}}_t|^2 - (\epsilon - q^2), \quad\quad \varphi^{\infty}_T = \cc.
\end{align*}
The explicit solution is of the same form given by \eqref{def:Riccati1} and \eqref{def:Riccati2}, with $n=\infty$.
It follows that $\bm{X}=(X^1,\ldots,X^n)$ should be ``close'' in some sense to the solution $\bm{Y}=(Y^1,\ldots,Y^n)$ of the auxiliary SDE system:
\begin{align}
\label{eq:CFS:2}
dY^i_t = (\bb + q + \varphi^\infty_t)(\overline{Y}_t - Y^i_t)dt + \sigma dB^i_t + \sigma_0 dW_t,
\end{align}
initialized at the same points $Y^i_0=X^i_0$.
Of course, it should be noted that 
the process ${\boldsymbol Y}$ plays here the same role as the process $\overline{\boldsymbol X}$ in 
\eqref{eq:sec4:overlineX}, 
the solution 
$U$ to the master equation being given in the current framework by:
\begin{equation*}
U(t,x,m) = \frac{\varphi_{t}^\infty}2 \bigl( \overline m - x \bigr)^2.  
\end{equation*}
In this regard, the fact that ${\boldsymbol X}$ and ${\boldsymbol Y}$ should be ``close'' is completely analogous to the statements of Theorems \ref{th:mainestimate} and \ref{th:mainestimate2}.
Here, we prefer to use ${\boldsymbol Y}$ instead of the notation $\overline{\boldsymbol X}$ used in previous sections, to avoid any confusion with the empirical mean process that appears in 
\eqref{def:Nash-open}.

To compare \eqref{eq:CFS:1} and \eqref{eq:CFS:2}, we use the fact that $\bm{X}_0 = \bm{Y}_0$, and  we apply Gronwall's inequality to find a constant $ C < \infty$ such that
\begin{align}
\label{eq:sec:8:gronwall}
\frac{1}{n}\sum_{i=1}^n\|X^i-Y^i\|_{{\infty}} \le C\left\|
\left(1-\frac{1}{n}\right)
\varphi^n - \varphi^\infty\right\|_{{\infty}} \frac{1}{n}\sum_{i=1}^n\|X^i\|_{{\infty}}, \ a.s.,
\end{align}
where, as usual, $\|\cdot\|_{\infty}$ denotes the supremum norm on $[0,T]$.  
On the other hand, the equation \eqref{eq:CFS:1} and Gronwall's inequality yield
\begin{align*}
\frac{1}{n}\sum_{i=1}^n\|X^i\|_{{\infty}} \le C\left(1 + \frac{1}{n}\sum_{i=1}^n|X^i_0| + \frac{1}{n}\sum_{i=1}^n\|B^i\|_{{\infty}} + \|W\|_{{\infty}} \right), \ a.s. \ .
\end{align*}
As soon as {$(X^i_0)_{i \geq 1}$} are i.i.d.\ and subgaussian (e.g., $\E[\exp(\kappa|X^1_0|^2)] < \infty$ for some $\kappa > 0$), we find a uniform subgaussian bound on these averages; that is, there exist constants $C < \infty, \delta > 0$, independent of $n$, such that 
\begin{align*}
\PP\left(\frac{1}{n}\sum_{i=1}^n\|X^i\|_{{\infty}} > a\right) \le \exp(-\delta^2 a^2), \text{ for all } a \ge C, \ n \in \N.
\end{align*}
Assuming without any loss of generality that the constant $C$ in the last display coincides with the one in  \eqref{eq:sec:8:gronwall}, and letting $r_n = C\left\|
\left(1-\frac{1}{n}\right)
\varphi^n - \varphi^\infty\right\|_{{\infty}}$, we find that, for $a \ge Cr_n$,
\begin{align}
\PP\left(\W_{{1,\C^d}}(m^n_{\bm{X}},m^n_{\bm{Y}}) > a\right) &\le \PP\left(\frac{1}{n}\sum_{i=1}^n\|X^i-Y^i\|_{{\infty}} > a\right) \le \PP\left(\frac{r_n}{n}\sum_{i=1}^n\|X^i\|_{{\infty}} > a\right) \nonumber \\
	&\le \exp \left(-\delta^2a^2/r_n^2\right). \label{def:CFSmodel-keyestimate}
\end{align}
It is straightforward to check that $r_n = O(1/n)$, which implies in particular the exponential equivalence of $(m^n_{\bm{X}})$ and $(m^n_{\bm{Y}})$, in the sense that 
\begin{align*}
\lim_{n\rightarrow\infty}\frac{1}{n}\log\PP\left(\W_{1,\C^d}(m^n_{\bm{X}},m^n_{\bm{Y}}) > a\right) = -\infty, \text{ for all } a > 0.
\end{align*}
Moreover, the concentration estimates of Section \ref{se:concentration-statements} are all valid; all that was used in the proofs were the estimates in \eqref{def:CFSmodel-keyestimate} and the concentration bounds for McKean-Vlasov systems of Sections \ref{se:MKVconcentration-sub} and \ref{se:MKVexpectationbounds}.

\subsubsection*{Derivation of the LDP}

As made clear in Section 
\ref{se:largedeviations-proofs}, 
\eqref{def:CFSmodel-keyestimate} is the cornerstone to 
get an LDP for 
$(m^n_{\bm{X}})_{n \geq 1}$. 
Indeed, we can have the LDP
for $(m^n_{\boldsymbol Y})_{n \geq 1}$
by adapting the arguments of Section 
\ref{se:largedeviations-proofs}, but this requires some care as the drift here is no longer bounded. 

Most of the derivation of Theorem  \ref{th:largedeviations:b} is based upon on the contraction principle: the fact that the drift is unbounded is not a problem for duplicating the proof. 
 In fact, the assumption that $b$ is bounded is used only a few times in Section \ref{se:largedeviations-proofs}, mainly for the derivation of Propositions 
\ref{prop:LDP:F:compact}
and \ref{prop:level:sets}. We explain below how to accommodate the unboundedness of $b$. 

Notice in particular that, specialized to the present setting, the rate function of the weak LDP (see Theorem \ref{th:largedeviations:b}) has the form
\begin{equation*}
J^{\sigma_{0}}(\nu) + {\mathcal R}(\nu_{0} \vert \mu_{0}),
\end{equation*}
where
\begin{equation*}
J^{\sigma_{0}}(\nu) =
 \inf_{\phi \in {\mathcal C^d_{0}}}I^0 \Bigl( \bigl(\nu_{t} \circ \tau_{\sigma_{0} \phi_{t}}^{-1}\bigr)_{t \in [0,T]} \Bigr),
\end{equation*}
$I^0$ standing for Dawson and Gartner's rate function  as defined in the statement of Lemma 
\ref{lem:id:cost functional} with the drift ${\widetilde b} : (t,x,\mu) \mapsto \bb+q+\varphi_{t}^{\infty}(x- \overline{\mu})$
and with $\overline{\mu}$ denoting the mean of $\mu$. Remarkably, 
{
since
$\widetilde{b}(t,x +
\phi_t, m \circ \tau_{-\phi_t}^{-1})=\widetilde{b}(t,x,m)$,}
$I^0$ is completely independent of $\phi$, which ultimately  leads to nice formulas in our setting. 

When $\sigma_{0} = 0$, there is no need to push further the analysis. So, for the rest of this short discussion, we can assume $\sigma_{0}>0$. To proceed,  we observe that, due to the special form of interaction in the dynamics, we can easily shift the path $\phi$ appearing in the definition of 
$J^{\sigma_{0}}(\nu)$. Indeed, we can rewrite $J^{\sigma_{0}}(\nu)$ {(first changing $\sigma_{0} \phi$ into 
$\phi$ and then shifting $\phi$)} as 
\begin{equation*}
J^{\sigma_{0}}(\nu) =
 \inf_{\phi \in {\mathcal C^d_{0}}}I^0 \biggl( \Bigl( \bigl(\nu_{t} \circ \tau_{{\mathbb M}^{\nu}_{t} - {\mathbb M}^{\nu}_{0}}^{-1} \bigr) \circ \tau_{ \phi_{t}}^{-1}\Bigr)_{t \in [0,T]} \biggr).
\end{equation*}
The key fact to observe here is that 
$\nu_{t} \circ \tau_{{\mathbb M}^{\nu}_{t}-{\mathbb M}^{\nu}_{0}}^{-1}$ has zero mean.

When $\phi$ is smooth enough, Lemma \ref{lem:id:cost functional} provides another representation for 
$I^0 ( (\nu_{t} \circ \tau_{\phi_{t}}^{-1})_{t \in [0,T]} )$ and  the relation 
\eqref{pf:Jexpression1}
 in the proof of Proposition 
\ref{prop:expression:Jbeta} remains true as well.  Thus, combining the special form of the drift together with \eqref{pf:Jexpression1}, we see that, 
when $(\nu_{t})_{t \in [0,T]}$ has a constant 
mean, we have 
\begin{equation*}
I^0 \Bigl( \bigl(\nu_{t} \circ \tau_{\phi_{t}}^{-1}\bigr)_{t \in [0,T]} \Bigr) \geq 
I^0 \Bigl( \bigl(\nu_{t}\bigr)_{t \in [0,T]} \Bigr).
\end{equation*}
Arguing as in 
\eqref{pf:Jinequality1}--\eqref{pf:Jinequality2}, the latter remains true when $\phi$ lies in ${\H^1_0}([0,T];\RR^d)$. We now want to check that this remains true when $\phi \in {\mathcal C}^d_{0}$. 
To do so we must revisit the first step in the proof of Proposition 
\ref{prop:LDP:F:compact}. 
If 
\begin{equation*}
I^0 \Bigl( \bigl(\nu_{t} \circ \tau_{\phi_{t}}^{-1}\bigr)_{t \in [0,T]} \Bigr) \leq a,
\end{equation*}
for some $a>0$, we can
 find a constant $C(a,\nu)$ 
such that 
${\mathbb M}^{\nu} - \phi$ lies in ${\H^1}([0,T];\RR^d)$
with an ${\H^1}$ norm less than $C(a,\nu)$. The main difference  with the proof of Proposition 
\ref{prop:LDP:F:compact} is that the constant $C$ here depends on $\nu$, but it suffices to check that necessarily 
$\phi$ lies in ${\H^1}([0,T];\RR^d)$.  Therefore,  (still in the case where $(\nu_{t})_{t \in [0,T]}$ has a constant 
mean) we end up with 
\begin{equation*}
\inf_{\phi \in {\mathcal C}^d_{0}}
I^0 \Bigl( \bigl(\nu_{t} \circ \tau_{\phi_{t}}^{-1}\bigr)_{t \in [0,T]} \Bigr) =
I^0 \Bigl( \bigl(\nu_{t}\bigr)_{t \in [0,T]} \Bigr).
\end{equation*}
In the general case when the mean is not constant, this yields
\begin{equation*}
 J^{\sigma_{0}}(\nu)
=
I^0  \Bigl( \bigl(\nu_{t} \circ \tau_{{\mathbb M}^{\nu}_{t} - {\mathbb M}^{\nu}_{0}}^{-1} \bigr)_{t \in [0,T]} \Bigr).
\end{equation*}
Then, if needed, we can revisit the proof of 
Proposition \ref{prop:LDP:F:compact}
to specialize the upper bound in the case of compact sets. The only fact that is needed from Proposition 
\ref{prop:level:sets}
 is that the aforementioned constant $C(a,\nu)$ is uniform in $\nu$ in compact subsets, which can be shown to be true. This suffices to obtain the complete form of the LDP, as stated in Theorem 
\ref{thm:sec:3:weak:ldp}.

\subsection{A Merton-type model}
We now turn to one of the models of \cite{lacker-zariphopoulou}, which fails to fit our general assumptions for a number of reasons. As in Section \ref{subs-lqmodel}, the coefficients are unbounded and the Hamiltonian is non-Lipschitz. But now both volatility terms are controlled, and agents are more heterogeneous in the sense that each is assigned a certain \emph{type vector}, denoted by $\zeta_i = (X^i_0,\delta_i,\theta_i,\mu_i,\sigma_i,\nu_i)$ and belonging to the space:
\[
\Z := \left\{(x,\delta,\theta,\mu,\sigma,\nu) \in \R \times (0,\infty) \times [0,1] \times (0,\infty) \times [0,\infty)^2 : \sigma + \nu \ge c\right\},
\]
where $c > 0$ is fixed.
Suppose henceforth that we are given an infinite sequence of deterministic type vectors $(\zeta_i)_{i \in \N}$. Assume also, for simplicity, that all of these parameters are uniformly bounded from above.

The $n$-player game is described by a state process $\bm{X}=(X^1,\ldots,X^n)$ given by:
\[
dX^i_t = \alpha^i_t(\mu_idt + \nu_idB^i_t + \sigma_idW_t),
\]
where each $X^i_t$ is one-dimensional. Agent $i$ chooses $(\alpha^i_t)_{t \in [0,T]}$ to try to maximize the expected utility
\begin{align*}
-\E\left[\exp\left(-\frac{1}{\delta_i}\left(X^i_T - \theta_i\overline{X}_T\right)\right)\right],
\end{align*}
where $\overline{X}_T = \frac{1}{n}\sum_{k=1}^nX^k_T$. 
This is essentially Merton's problem of portfolio optimization, under exponential utility, but with each agent concerned not only with absolute wealth but also with relative wealth, as measured by the average $\overline{X}_T$. The parameter $\theta_i \in [0,1]$ determines the tradeoff between absolute and relative performance concerns; see \cite{lacker-zariphopoulou} for a complete discussion.

We express the equilibrium in terms of the 
constant
\begin{align*}
\eta_n := \left.\frac{1}{n}\sum_{k=1}^n\frac{\delta_k\mu_k\sigma_k}{\sigma_k^2 + \nu_k^2(1-\theta_k/n)} \right/ \left(1 - \frac{1}{n}\sum_{k=1}^n\frac{\theta_k\sigma_k^2}{\sigma_k^2 + \nu_k^2(1-\theta_k/n)}\right),
\end{align*}
assuming the denominator is nonzero (which certainly holds if $\theta_k < 1$ for at least one $k$).
It is shown in \cite[Theorem 3]{lacker-zariphopoulou} that there exists a Nash equilibrium in which agent $i$ chooses the constant (i.e., time- and state-independent) control
\[
\alpha^n_i := \frac{\delta_i\mu_i + \eta_n\theta_i\sigma_i}{\sigma_i^2 + \nu_i^2(1-\theta_i/n)}.
\]
The corresponding state process is given by:
\begin{align*}
X^i_t = X^i_0 + \alpha^n_i\mu_i t + \alpha^n_i\nu_i B^i_t + \alpha^n_i\sigma_i W_t.
\end{align*}

Now, as in the previous section, we can show that $\bm{X}$ is very close to a particle system $\bm{Y}=(Y^1,\ldots,Y^n)$, where
\[
Y^i_t = X^i_0 + {\widetilde{\alpha}^n_{i}}\mu_i t + {\widetilde{\alpha}^n_{i}}\nu_i B^i_t + {\widetilde{\alpha}^n_{i}}\sigma_i W_t,
\]
and where
\begin{align*}
\widetilde{\alpha}^n_i &:= \frac{\delta_i\mu_i + \eta_n\theta_i\sigma_i}{\sigma_i^2 + \nu_i^2}, \\
\widetilde{\eta}_n &:= \left.\frac{1}{n}\sum_{k=1}^n\frac{\delta_k\mu_k\sigma_k}{\sigma_k^2 + \nu_k^2} \right/ \left(1 - \frac{1}{n}\sum_{k=1}^n\frac{\theta_k\sigma_k^2}{\sigma_k^2 + \nu_k^2}\right).
\end{align*}
More precisely, note that the uniform bounds on the type parameters ensure that there exists $\tilL > 0$ such that $|\widetilde{\alpha}^n_i - \alpha^n_i| \le \tilL/n$ for all $n \ge 2$ and all $i$, and we conclude that:
\begin{align*}
\|X^i-Y^i\|_{{\infty}} &\le \frac{\tilL}{n}\left(\mu_i T + \nu_i\|B^i\|_{{\infty}} + \sigma_i\|W\|_{{\infty}}\right).
\end{align*}
By assuming {that} $(X^i_0)_{i \geq 1}$ are i.i.d.\ and subgaussian as in the previous subsection, it is straightforward to show that there exist constants $C,\delta > 0$, independent of $n$, such that
\begin{align}
\PP\left(\frac{1}{n}\sum_{i=1}^n\|X^i-Y^i\|_{{\infty}} > a\right) \le \exp(-\delta^2n^2a^2), \text{ for all } a \ge C/n, \ n \ge 2. \label{def:LZmodel-keyestimate}
\end{align}
Again, this estimate allows us to transfer limit theorems and concentration estimates for $\bm{Y}$ over to $\bm{X}$.

While $\bm{Y}$ is not exactly a standard McKean-Vlasov system because of the type parameters, it is close enough that we can do some similar analysis. Let us illustrate one simple way to study the limiting behavior of $m^n_{\bm{Y}}$. Define a map $\Psi : \P(\Z \times \C^1) \times \C^1 \rightarrow \P(\C^1)$ by setting $\Psi(Q,w)$ equal to the image of $Q \circ \widehat{Y}_w^{-1}$, where $\widehat{Y}_w : \Z \times \C^1 \rightarrow \C^1$ is defined for each $w \in \C^1$ by setting 
\begin{align*}
\widehat{Y}_w(\zeta,\ell)(t) = x_0 + \frac{\delta\mu + \overline{Q}_1\theta\sigma}{\sigma^2+\nu^2}\left(\mu t + \nu \ell(t) + \sigma w(t)\right),
\end{align*} 
where $\zeta = (x_0,\delta,\theta,\mu,\sigma,\nu)$, and where
\[
\overline{Q}_1 := \int_{\Z \times \C^1}\left.\frac{\delta\mu\sigma}{\sigma^2 + \nu^2} \right/ \left(1 - \frac{\theta\sigma^2}{\sigma^2 + \nu^2}\right) \, Q(d\zeta,d\ell).
\]
We may then write 
\begin{align*}
m^n_{\bm{Y}} = \Psi\left(\frac{1}{n}\sum_{i=1}^n\delta_{(\zeta_i,B^i)},W\right).
\end{align*}
For a fixed $M > 0$, it is easily checked that the map $\Psi$ is continuous (with respect to weak convergence) when restricted to the subset of $(Q,w)$ for which $\delta\mu\sigma \le M$ and $1-\theta\sigma^2/(\sigma^2+\nu^2) \ge 1/M$ holds for $Q$-a.e.\ $(\zeta,\ell)$. Therefore, we may easily identify the limit of $m^n_{\bm{Y}}$ as $n\rightarrow\infty$, as long as $\frac{1}{n}\sum_{i=1}^n\delta_{(\zeta_i,B^i)}$ converges a.s. Moreover, if the type vectors $\zeta_i$ are i.i.d.\, then the sequence of empirical measures $\frac{1}{n}\sum_{i=1}^n\delta_{(\zeta_i,B^i)}$ satisfies an LDP, according to Sanov's theorem. If $\sigma_i=0$ for all $i$, so there is no common noise, then $\Psi(Q,w)$ does not depend on $w$, and we may deduce an LDP for $m^n_{\bm{Y}}$ from the contraction principle. If the common noise is present, we can either deduce an LDP conditionally on $W$ (i.e., quenched), or we can deduce an unconditional (i.e., annealed) \emph{weak LDP}, as is done in Propositions \ref{pr:weakLDP-driftless} and Theorem \ref{th:largedeviations:b} in  a general setting.

\section{Conclusions and open problems} \label{se:conclusion}

In this paper and the companion \cite{dellacram18a}, we have seen how a sufficiently well behaved solution to the master equation can be used to derive asymptotics for mean field games, in the form of a law of large numbers, central limit theorem, and LDP, as well as non-asymptotic concentration bounds. This worked under a class of reasonable but restrictive assumptions, notably including boundedness of various derivatives of the master equation. Without this boundedess, it is not clear if we can always expect the Nash system $m^n_{\bm{X}}$ and the McKean-Vlasov system $m^n_{\bm{\overline{X}}}$ to share the same large deviations, or to be exponentially equivalent as in Theorem \ref{th:concentration}. In the two examples we presented in Section \ref{se:examples} there were no difficulties, but it is not clear how much regularity we really need for the master equation.   

To comment more on this point, note that the proof of our main estimate Theorem \ref{th:mainestimate} (given in \cite[Section 4]{dellacram18a}) was in many ways parallel to Lipschitz FBSDE estimates. To cover linear-quadratic models we should allow the first derivatives of $U(t,x,m)$ to grow linearly in $x$ and $\W_1(m,\delta_0)$ and the Hamiltonian to have quadratic growth in both $x$ and $\alpha$. This leads to a quadratic FBSDE system, as we encountered in the proof of Theorem \ref{th:mainestimate2} (given in \cite[Section 4]{dellacram18a}), but with unbounded coefficients controlled only in terms of the forward component. This would certainly require a much more delicate analysis.

Technical assumptions notwithstanding, there is an interesting gap in the current state of the limit theory for closed-loop versus open-loop equilibria. The papers \cite{lacker2016general,fischer2017connection} provide laws of large numbers for open-loop equilibria, with the key advantage of addressing the non-unique regime, that is, when there are multiple mean field equilibria. A sequence of $n$-player equilibria may have multiple limit points as $n\rightarrow\infty$, but any such limit point is a mean field equilibrium in a suitable weak sense. In the closed-loop setting, there are no limit theorems addressing the non-unique regime, which is important in light of the fact that non-uniqueness is a key feature of many game theoretic models. On the other hand, we now have a central limit theorem and LDP for closed-loop equilibria, in the unique regime, and no such results are known for open-loop equilibria.  However, it is worth mentioning that analogous LDPs have been established in the non-unique regime in the simpler setting of static games \cite{LacRam17}.

\bibliographystyle{amsplain}
\bibliography{MFG-CLT-LDP-bib}

\end{document}